\documentclass[graybox,envcountsame,envcountsect]{svmult}

\smartqed 

\usepackage{mathptmx}       
\usepackage{helvet}         
\usepackage{courier}        

\usepackage{graphicx}        


\usepackage[bottom]{footmisc} 


\usepackage{amsfonts, amsmath, amssymb, mathrsfs, eucal}

\newcommand{\esssup}{\mathop{\mathrm{ess\,sup}}}
\newcommand{\essinf}{\mathop{\mathrm{ess\,inf}}}
\newcommand{\card}{\mathop{\mathrm{card}}}


\begin{document}

\title*{Ergodicity, Decisions, and Partial Information}
\author{Ramon van Handel}
\institute{Ramon van Handel \at Sherrerd Hall 227,
Princeton University, Princeton, NJ 08544, USA.
\email{rvan@princeton.edu}}
\maketitle

\vskip-3cm

\abstract{
In the simplest sequential decision problem for an ergodic stochastic
process $X$, at each time $n$ a decision $u_n$ is made as a function of
past observations $X_0,\ldots,X_{n-1}$, and a loss $l(u_n,X_n)$ is
incurred.  In this setting, it is known that one may choose (under a
mild integrability assumption) a decision strategy whose pathwise
time-average loss is asymptotically smaller than that of any other
strategy.  The corresponding problem in the case of partial information
proves to be much more delicate, however: if the process $X$ is not
observable, but decisions must be based on the observation of a
different process $Y$, the existence of pathwise optimal strategies  
is not guaranteed.
The aim of this paper is to exhibit connections between pathwise optimal 
strategies and notions from ergodic theory.  The sequential decision 
problem is developed in the general setting of an ergodic dynamical system 
$(\Omega,\mathcal{B},\mathbf{P},T)$ with partial information 
$\mathcal{Y}\subseteq\mathcal{B}$.  The existence of pathwise optimal 
strategies grounded in two basic properties: the conditional ergodic 
theory of the dynamical system, and the complexity of the loss function. 
When the loss function is not too complex, a general sufficient condition 
for the existence of pathwise optimal strategies is that the dynamical 
system is a conditional $K$-automorphism relative to the past observations 
$\bigvee_{n\ge 0} T^n \mathcal{Y}$.  If the conditional ergodicity 
assumption is strengthened, the complexity assumption can be weakened. 
Several examples demonstrate the interplay between complexity and 
ergodicity, which does not arise in the case of full information. Our 
results also yield a decision-theoretic characterization of weak mixing in 
ergodic theory, and establish pathwise optimality of ergodic nonlinear 
filters.}

\section{Introduction}

Let $X=(X_k)_{k\in\mathbb{Z}}$ be a stationary and ergodic stochastic 
process.  A decision maker must select at the beginning of each day $k$ 
a decision $u_k$ depending on the past observations 
$X_0,\ldots,X_{k-1}$.  At the end of the day, a loss $l(u_k,X_k)$ is 
incurred.  The decision maker would like to minimize her time-average loss
$$
	L_T(\mathbf{u}) = \frac{1}{T}\sum_{k=1}^T l(u_k,X_k).
$$
How should she go about selecting a decision strategy $\mathbf{u}= 
(u_k)_{k\ge 1}$?  

There is a rather trivial answer to this question.  Taking the 
expectation of the time-average loss, we obtain for any strategy
$\mathbf{u}$ using the tower property
\begin{align*}
	\mathbf{E}[L_T(\mathbf{u})] &=
	\mathbf{E}\Bigg[\frac{1}{T}\sum_{k=1}^T 
	\mathbf{E}[l(u_k,X_k)|X_0,\ldots,X_{k-1}]
	\Bigg] \\ &\ge
	\mathbf{E}\Bigg[\frac{1}{T}\sum_{k=1}^T 
	\min_{u}
	\mathbf{E}[l(u,X_k)|X_0,\ldots,X_{k-1}]
	\Bigg] =
	\mathbf{E}[L_T(\mathbf{\tilde u})],
\end{align*}
where $\mathbf{\tilde u}=(\tilde u_k)_{k\ge 1}$ is defined as
$\tilde u_k = \mathop{\mathrm{arg\,min}}_u 
\mathbf{E}[l(u,X_k)|X_0,\ldots,X_{k-1}]$ (we disregard for the moment
integrability and measurability issues, existence of minima, and the like;
such issues will be properly addressed in our results).  Therefore,
the strategy $\mathbf{\tilde u}$ minimizes the \emph{mean} time-average loss
$\mathbf{E}[L_T(\mathbf{u})]$.

However, there are conceptual reasons to be dissatisfied with this 
obvious solution.  In many decision problems, one only observes a single 
sample path of the process $X$.  For example, if $X_k$ is the return of 
a financial market in day $k$ and $L_T(\mathbf{u})$ is the loss of an 
investment strategy $\mathbf{u}$, only one sample path of the model is 
ever realized: we do not have the luxury of averaging our investment 
loss over multiple ``alternative histories''.  The choice of a strategy 
for which the mean loss is small does not guarantee, \emph{a priori}, 
that it will perform well on the one and only realization that happens 
to be chosen by nature. Similarly, if $X_k$ models the state of the 
atmosphere and $L_T(\mathbf{u})$ is the error of a weather prediction 
strategy, we face a similar conundrum.  In such situations, the use of 
stochastic models could be justified by some sort of ergodic theorem, 
which states that the mean behavior of the model with respect to 
different realizations captures its time-average behavior over a single 
sample path.  Such an ergodic theorem for sequential decisions was
obtained by Algoet \cite[Theorem 2]{Alg94} under a mild integrability assumption.

\begin{theorem}[Algoet \cite{Alg94}]
\label{thm:algoet}
Suppose that $|l(u,x)|\le\Lambda(x)$ with $\Lambda\in L\log L$.
Then
$$
	\liminf_{T\to\infty}\{L_T(\mathbf{u})-L_T(\mathbf{\tilde u})\}
	\ge 0\quad\mbox{a.s.}
$$
for every strategy $\mathbf{u}$: that is, the mean-optimal strategy
$\mathbf{\tilde u}$ is pathwise optimal.
\end{theorem}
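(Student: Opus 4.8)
The plan is to bracket the time-average loss of \emph{every} strategy, and that of the mean-optimal strategy $\tilde{\mathbf{u}}$, between the same ergodic average. By stationarity I may realize $X$ on a path space carrying a measure-preserving ergodic shift $\theta$ with $X_k=X_0\circ\theta^k$; write $\mathcal{F}_{k-1}=\sigma(X_0,\ldots,X_{k-1})$ for the observed past and $\mathcal{F}_\infty^{k-1}=\sigma(X_j:j\le k-1)$ for the entire past. Let
$$
H_\infty=\min_u\mathbf{E}[l(u,X_0)\mid\mathcal{F}_\infty^{-1}],\qquad W^*=\mathbf{E}[H_\infty],
$$
so that $\min_u\mathbf{E}[l(u,X_k)\mid\mathcal{F}_\infty^{k-1}]=H_\infty\circ\theta^k$ by stationarity. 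The theorem follows once I show (A) $\liminf_T L_T(\mathbf{u})\ge W^*$ a.s.\ for every strategy, and (B) $\lim_T L_T(\tilde{\mathbf{u}})=W^*$ a.s.; indeed then $\liminf_T\{L_T(\mathbf{u})-L_T(\tilde{\mathbf{u}})\}\ge\liminf_T L_T(\mathbf{u})-\lim_T L_T(\tilde{\mathbf{u}})=W^*-W^*=0$.

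For (A) I would condition on the \emph{entire} past. Since each $u_k$ is $\mathcal{F}_{k-1}$- and hence $\mathcal{F}_\infty^{k-1}$-measurable, optimality of the pointwise minimizer gives $\mathbf{E}[l(u_k,X_k)\mid\mathcal{F}_\infty^{k-1}]\ge H_\infty\circ\theta^k$. Writing $l(u_k,X_k)=\mathbf{E}[l(u_k,X_k)\mid\mathcal{F}_\infty^{k-1}]+R_k$ splits off a martingale-difference sequence $R_k$ for the filtration $(\mathcal{F}_\infty^{k})_k$, with $|R_k|\le(\Lambda(X_0)+\mathbf{E}[\Lambda(X_0)\mid\mathcal{F}_\infty^{-1}])\circ\theta^k$, a \emph{stationary} integrable bound (membership in $L\log L$ being preserved under conditioning by Jensen's inequality). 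The pointwise ergodic theorem gives $\frac1T\sum_{k\le T}H_\infty\circ\theta^k\to W^*$, while a martingale strong law of large numbers gives $\frac1T\sum_{k\le T}R_k\to 0$: under a stationary $L^1$ bound $G$ the truncated three-series conditions are readily verified, e.g.\ $\sum_k\mathbf{P}(G>k)\le\mathbf{E}[G]<\infty$ and $\sum_k k^{-2}\,\mathbf{E}[G^2;G\le k]\le 2\,\mathbf{E}[G]<\infty$. Summing the two limits yields (A).

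For (B) the loss of $\tilde{\mathbf{u}}$ decomposes as $l(\tilde u_k,X_k)=\min_u\mathbf{E}[l(u,X_k)\mid\mathcal{F}_{k-1}]+\tilde M_k$, where $\tilde M_k$ is again a martingale difference with a stationary $L^1$ bound and averages to zero exactly as above. The delicate term is the conditional value $\min_u\mathbf{E}[l(u,X_k)\mid\mathcal{F}_{k-1}]=H_k\circ\theta^k$, where $H_k=\min_u\mathbf{E}[l(u,X_0)\mid X_{-k},\ldots,X_{-1}]$ depends on $k$ through the \emph{depth} of conditioning: the summand is shifted by $\theta^k$ \emph{and} changes with $k$, so Birkhoff's theorem does not apply directly. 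By (increasing-filtration) martingale convergence $H_k\to H_\infty$ a.s., and Maker's generalized ergodic theorem then gives $\frac1T\sum_{k\le T}H_k\circ\theta^k\to\mathbf{E}[H_\infty]=W^*$ — \emph{provided} $\sup_k|H_k|$ is integrable. This is precisely where the hypothesis enters: $\sup_k|H_k|\le\sup_k\mathbf{E}[\Lambda(X_0)\mid X_{-k},\ldots,X_{-1}]$ is the maximal function of a Doob martingale, and by Doob's maximal inequality it lies in $L^1$ exactly because $\Lambda\in L\log L$. Combining, $\lim_T L_T(\tilde{\mathbf{u}})=W^*$ a.s., giving (B).

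The main obstacle is the conditional-value term in (B): one must average a sequence that is simultaneously shifted by the dynamics and refined by a growing conditioning $\sigma$-field, which lies outside the reach of the pointwise ergodic theorem and forces the use of Maker's theorem. The role of the $L\log L$ assumption is then pinned down entirely by the requirement that the martingale maximal function $\sup_k\mathbf{E}[\Lambda\mid X_{-k},\ldots,X_{-1}]$ be integrable; by contrast, the martingale-fluctuation strong laws need only stationary $L^1$ domination and are routine. Measurable selection of the minimizers $\tilde u_k$, together with the existence-of-minima and regularity issues set aside in the informal derivation (in particular the interchange of the minimum with the martingale limit), would be discharged separately.
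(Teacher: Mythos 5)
Your overall architecture --- sandwiching every strategy and $\tilde{\mathbf{u}}$ at a common value, martingale strong laws under stationary $L^1$ envelopes for the fluctuation terms, and Maker's theorem with Doob's maximal inequality as the precise point where $L\log L$ enters --- is sound, and step (A) is correct as stated. The genuine gap is in (B), at exactly the step you set aside as a technicality. The assertion that $H_k\to H_\infty$ a.s.\ ``by increasing-filtration martingale convergence'' is not a deferred regularity issue: martingale convergence applies to $\mathbf{E}[l(u,X_0)\mid X_{-k},\ldots,X_{-1}]$ for each \emph{fixed} $u$, and the minimum over $u$ does not pass to the limit. In fact the minima $H_k$ do converge a.s.\ and in $L^1$ --- this is Lemma~\ref{lem:neveu} of the paper, proved by Neveu's supermartingale argument, not by martingale convergence --- but the limit is $M:=\essinf_{u\in\mathbb{U}_0}\mathbf{E}[l(u,X_0)\mid\mathcal{F}_\infty^{-1}]$, the essential infimum over decisions measurable with respect to some \emph{finite} window of the past, whereas your $H_\infty$ is the pointwise infimum over constants given the infinite past, i.e.\ the essential infimum over the strictly larger class $\mathbb{U}_{-\infty,0}$. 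These two quantities can genuinely differ: this is Algoet's own counterexample with a discontinuous loss, discussed in the paper's footnote on $L^\star$ following Example~\ref{ex:fullinf}, and it is why the paper stresses that the distinction $\mathbb{U}_n\subsetneq\mathbb{U}_{-\infty,n}$ ``will be essential.'' Whenever $\mathbf{E}[M]>\mathbf{E}[H_\infty]=W^*$, your claim (B) is false (one has $L_T(\tilde{\mathbf{u}})\to\mathbf{E}[M]$, not $W^*$), and your sandwich only yields $\liminf_T\{L_T(\mathbf{u})-L_T(\tilde{\mathbf{u}})\}\ge W^*-\mathbf{E}[M]$, which is negative and proves nothing.

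The repair keeps your skeleton but runs the sandwich at $\mathbf{E}[M]$ instead of $W^*$. In (A), use that each admissible $u_k$ is measurable with respect to the finite window $\sigma(X_0,\ldots,X_{k-1})$ and hence lies in the class defining $M$, so that $\mathbf{E}[l(u_k,X_k)\mid\mathcal{F}_\infty^{k-1}]\ge M\circ\theta^k$ --- a tighter bound than $H_\infty\circ\theta^k$; this is exactly how Corollary~\ref{cor:mtgdiff} obtains a lower bound that matches the achievable loss (the essential infimum there runs over $\mathbb{U}_k$, not over all $\mathcal{G}_k^\infty$-measurable decisions). Birkhoff plus your martingale SLLN then give $\liminf_T L_T(\mathbf{u})\ge\mathbf{E}[M]$ for every admissible $\mathbf{u}$. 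In (B), replace the false limit identification by Lemma~\ref{lem:neveu} ($H_k\to M$ a.s.\ and in $L^1$, needing only $L^1$ domination), after which your Maker step goes through verbatim with $\sup_k|H_k|\le\sup_k\mathbf{E}[\Lambda(X_0)\mid X_{-k},\ldots,X_{-1}]\in L^1$ by Doob's maximal inequality and $\Lambda\in L\log L$. With these two substitutions the proof closes, and it is then a genuinely different argument from anything in the paper, which does not prove Theorem~\ref{thm:algoet} (it cites \cite{Alg94}) and whose own machinery, as acknowledged after Corollary~\ref{cor:smeanopt}, cannot recover it in full: your $L\log L$/maximal-function domination replaces the paper's uniform-convergence (complexity) assumption as the license for Maker's theorem under finite-past conditioning.
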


The proof of this result follows from a simple martingale argument. What 
is remarkable is that the details of the model do not enter the picture 
at all: nothing is assumed on the properties of $X$ or $l$ beyond some 
integrability (ergodicity is not needed, and a similar result holds even 
in the absence of stationarity, cf.\ \cite[Theorem 3]{Alg94}).  This 
provides a universal justification for optimizing the mean loss: the 
much stronger pathwise optimality property is obtained ``for free.'' 


In the proof of Theorem \ref{thm:algoet}, it is essential that the 
decision maker has full information on the history $X_0,\ldots,X_{k-1}$ 
of the process $X$.  However, the derivation of the mean-optimal 
strategy can be done in precisely the same manner in the more general 
setting where only partial or noisy information is available.  To 
formalize this idea, let $Y=(Y_k)_{k\in\mathbb{Z}}$ be the stochastic 
process observable by the decision maker, and suppose that the pair 
$(X,Y)$ is stationary and ergodic.  The loss incurred at time $k$ is 
still $l(u_k,X_k)$, but now $u_k$ may depend on the observed data 
$Y_0,\ldots,Y_{k-1}$ only.  It is easily seen that in this setting, the 
mean-optimal strategy $\mathbf{\tilde u}$ is given by $\tilde u_k = 
\mathop{\mathrm{arg\,min}}_u\mathbf{E}[l(u,X_k)|Y_0,\ldots, Y_{k-1}]$, 
and it is tempting to assume that $\mathbf{\tilde u}$ is also 
pathwise optimal.  Surprisingly, this is very far from being the case.

\begin{example}[Weissman and Merhav \cite{WM04}]
\label{ex:merhav}
Let $X_0\sim\mathrm{Bernoulli}(1/2)$ and let $X_k=1-X_{k-1}$ and $Y_k=0$
for all $k$.  Then $(X,Y)$ is stationary and ergodic:
$Y_k=0$ indicates that we are in the setting of \emph{no}
information (that is, we must make blind decisions).
Consider the loss $l(u,x) = (u-x)^2$.  Then the mean-optimal strategy
$\tilde u_k=1/2$ satisfies $L_T(\mathbf{\tilde u})=1/4$ for all $T$.
However, the strategy $u_k=k\mathop{\mathrm{mod}}2$ satisfies
$L_T(\mathbf{u})=0$ for all $T$ with probability $1/2$.  Therefore, 
$\mathbf{\tilde u}$ is not pathwise optimal.  In fact, it is easily seen
that no pathwise optimal strategy exists.
\end{example}

Example \ref{ex:merhav} illustrates precisely the type of conundrum that 
was so fortuitously ruled out in the full information setting by Theorem 
\ref{thm:algoet}.  Indeed, it would be hard to argue that either 
$\mathbf{u}$ or $\mathbf{\tilde u}$ in Example \ref{ex:merhav} is 
superior: a gambler placing blind bets $u_k$ on a sequence of games with 
loss $l(u_k,X_k)$ may prefer either strategy depending on his demeanor. 
The example may seem somewhat artificial, however, as the hidden 
process $X$ has infinitely long memory; the gambler can therefore beat 
the mean-optimal strategy by simply guessing the outcome of the first 
game.  But precisely the same phenomenon can appear when $(X,Y)$ is
nearly memoryless.

\begin{example}
\label{ex:filt}
Let $(\xi_k)_{k\in\mathbb{Z}}$ be i.i.d.\ $\mathrm{Bernoulli}(1/2)$, and 
let $X_k=(\xi_{k-1},\xi_k)$ and $Y_k=|\xi_k-\xi_{k-1}|$ for all $k$. 
Then $(X,Y)$ is a stationary $1$-dependent sequence: $(X_k,Y_k)_{k\le 
n}$ and $(X_k,Y_k)_{k\ge n+2}$ are independent for every $k$.  We 
consider the loss $l(u,x) = (u-x_1)^2$.  It is easily seen 
that $X_k$ is independent of $Y_1,\ldots,Y_{k-1}$, so that the 
mean-optimal strategy $\tilde u_k=1/2$ satisfies $L_T(\mathbf{\tilde 
u})=1/4$ for all $T$.  On the other hand, note that
$\xi_{k-1} = (\xi_0+Y_1+\cdots+Y_{k-1})\mathop{\mathrm{mod}}2$.  It follows
that the strategy $u_k=(Y_1+\cdots+Y_{k-1})\mathop{\mathrm{mod}}2$
satisfies $L_T(\mathbf{u})=0$ for all $T$ with probability $1/2$.
\end{example}

Evidently, pathwise optimality cannot be taken for granted 
in the partial information setting even in the simplest of examples: in 
contrast to the full information setting, the existence of pathwise 
optimal strategies depends both on specific ergodicity properties of the 
model $(X,Y)$ and (as will be seen later) on the complexity on the loss 
$l$.  What mechanism is responsible for pathwise optimality under 
partial information is not very well understood. Weissman and Merhav 
\cite{WM04}, who initiated the study of this problem,
give a strong sufficient condition in the binary 
setting.  Little is known beyond their result, beside one particularly
special case of quadratic loss and additive noise considered by Nobel 
\cite{Nob03}.\footnote{ 
	It should be noted that the papers \cite{Alg94,WM04,Nob03}, in addition 
	to studying the pathwise optimality problem, also aim to obtain 
	\emph{universal} decision schemes that achieve the optimal asymptotic 
	loss without any knowledge of the law of $X$ (note that to compute 
	the mean-optimal strategy $\mathbf{\tilde u}$ one must know the joint 
	law of $(X,Y)$).  Such strategies ``learn'' the law of 
	$X$ on the fly from the observed data.  In the setting of partial 
	information, such universal schemes cannot exist 
	without very specific assumptions on the information structure: for 
	example, in the blind setting (cf.\ Example \ref{ex:merhav}), there 
	is no information and thus universal strategies cannot exist.
	What conditions are required for the existence of universal strategies
	is an interesting question that is beyond the scope of this paper.
}

The aim of this paper is twofold.  On the one hand, we will give general 
conditions for pathwise optimality under partial information, and 
explore some tradeoffs inherent in this setting.  On the other hand, we 
aim to exhibit some connections between the pathwise optimality problem 
and certain notions and problems in ergodic theory, such as conditional 
mixing and individual ergodic theorems for subsequences.  To make such 
connections in their most natural setting, we begin by rephrasing the 
decision problem in the general setting of ergodic dynamical systems.

\subsection{The dynamical system setting}

Let $T$ be an invertible measure-preserving transformation of a 
probability space $(\Omega,\mathcal{B},\mathbf{P})$.  $T$ defines the 
time evolution of the dynamical system 
$(\Omega,\mathcal{B},\mathbf{P},T)$: if the system is initially in state 
$\omega\in\Omega$, then at time $k$ the system is in the state 
$T^k\omega$.  The state of the system is not directly observable, 
however.  To model the available information, we fix a $\sigma$-field 
$\mathcal{Y}\subseteq\mathcal{B}$ of events that can be observed at a 
single time.  Therefore, if we have observed the system in the time 
interval $[m,n]$, the information contained in the observations is given 
by the $\sigma$-field 
$\mathcal{Y}_{m,n}=\bigvee_{k\in[m,n]}T^{-k}\mathcal{Y}$.

In this general setting, the decision problem is defined as follows. Let 
$\ell:U\times\Omega\to\mathbb{R}$ be a given loss function, where $U$ is the 
set of possible decisions.  At each time $k$, a decision $u_k$ is made
and a loss $\ell_k(u_k):= \ell(u_k,T^k\omega)$ is incurred.  The 
decision can only depend on the observations: that is, a 
strategy $\mathbf{u}=(u_k)_{k\ge 1}$ is admissible if $u_k$ is 
$\mathcal{Y}_{0,k}$-measurable for every $k$. The time-average loss is 
given by 
$$
	L_T(\mathbf{u}):=\frac{1}{T}\sum_{k=1}^T\ell_k(u_k).
$$
The basic question we aim to answer is whether there exists a
pathwise optimal strategy, that is, a strategy $\mathbf{u}^\star$ such that
for every admissible strategy $\mathbf{u}$
$$
	\liminf_{T\to\infty}\{L_T(\mathbf{u})-L_T(\mathbf{u}^\star)\}\ge 0
	\quad\mbox{a.s.}
$$
The stochastic process setting discussed above can be recovered as
a special case.

\begin{example} 
\label{ex:stproc}
Let $(X,Y)$ be a stationary and ergodic stochastic process, where $X_k$ 
takes values in the measurable space $(E,\mathcal{E})$ and $Y_k$ takes 
values in the measurable space $(F,\mathcal{F})$.  We can realize 
$(X,Y)$ as the coordinate process on the canonical path space 
$(\Omega,\mathcal{B},\mathbf{P})$ where $\Omega=E^\mathbb{Z}\times 
F^\mathbb{Z}$, $\mathcal{B}=\mathcal{E}^\mathbb{Z} 
\otimes\mathcal{F}^\mathbb{Z}$, and $\mathbf{P}$ is the law of $(X,Y)$. 
Let $T:\Omega\to\Omega$ be the canonical shift $(T(x,y))_n = 
(x_{n+1},y_{n+1})$.  Then $(\Omega,\mathcal{B},\mathbf{P},T)$ is an 
ergodic dynamical system.  If we choose the observation $\sigma$-field
$\mathcal{Y} = \sigma\{Y_0\}$ and the loss $\ell(u,\omega) =
l(u,X_1(\omega))$, we recover the decision problem with partial information
for the stochastic process $(X,Y)$ as it was introduced above.  More
generally, we could let the loss depend arbitrarily on future or past
values of $(X,Y)$.
\end{example}

Let us briefly discuss the connection between pathwise optimal strategies
and classical ergodic theorems.  The key observation in the derivation of
the mean-optimal strategy $\tilde u_k = \mathop{\mathrm{arg\,min}}_u
\mathbf{E}[\ell_k(u)|\mathcal{Y}_{0,k}]$ is that by the tower property
$$
	\mathbf{E}\Bigg[
	\frac{1}{T}\sum_{k=1}^T\ell_k(u_k)
	\Bigg]=
	\mathbf{E}\Bigg[
	\frac{1}{T}\sum_{k=1}^T
	\mathbf{E}[\ell_k(u_k)|\mathcal{Y}_{0,k}]
	\Bigg].
$$
As the summands on the right-hand side depend only on the observed
information, we can minimize inside the sum to obtain the mean-optimal
strategy $\mathbf{\tilde u}$.  Precisely the same considerations would 
show that $\mathbf{\tilde u}$ is pathwise optimal if we could prove the
ergodic counterpart of the tower property of conditional expectations
$$
	\frac{1}{T}\sum_{k=1}^T
	\{\ell_k(u_k)-\mathbf{E}[\ell_k(u_k)|\mathcal{Y}_{0,k}]\}
	\xrightarrow{T\to\infty}0\quad\mbox{a.s.}\quad\mbox{?}
$$
The validity of such a statement is far from obvious, however.

In the special case of blind decisions (that is, $\mathcal{Y}$ is the trivial
$\sigma$-field) the ``ergodic tower property'' reduces to the question of
whether, given $f_k(\omega):=\ell(u_k,\omega)$,
$$
	\frac{1}{T}\sum_{k=1}^T \{f_k-\mathbf{E}[f_k]\}\circ T^k
	\xrightarrow{T\to\infty}0\quad\mbox{a.s.}\quad\mbox{?}
$$
If the functions $f_k$ do not depend on $k$, this is precisely the 
individual ergodic theorem. However, an individual ergodic theorem need 
not hold for arbitrary sequences $f_k$.  Special cases of this problem 
have long been investigated in ergodic theory.  For example, if 
$f_k=a_kf$ for some fixed function $f$ and bounded sequence 
$(a_k)\subset\mathbb{R}$, the problem reduces to a weighted individual 
ergodic theorem, see \cite{BL85} and the references therein.  If 
$a_k\in\{0,1\}$ for all $k$, the problem reduces further to an 
individual ergodic theorem along a subsequence (at least if the sequence 
has positive density), cf.\ \cite{Con73,BL85} and the references 
therein.  A general characterization of such ergodic properties does not 
appear to exist, which suggests that it is probably very difficult to 
obtain necessary and sufficient conditions for pathwise optimality. The 
situation is better for mean (rather than individual) ergodic theorems, 
cf.\ \cite{BB86} and the references therein, and we will also obtain 
more complete results in a weaker setting.  

The more interesting case where the information $\mathcal{Y}$ is 
nontrivial provides additional complications. In this situation, the 
``ergodic tower property'' could be viewed as a type of 
\emph{conditional} ergodic theorem, in between the individual ergodic 
theorem and Algoet's result \cite{Alg94}. Our proofs are based on an 
elaboration of this idea.

\subsection{Some representative results}

The essence of our results is that, when the loss $\ell$ is not too 
complex, pathwise optimal strategies exist under suitable conditional 
mixing assumptions on the ergodic dynamical system 
$(\Omega,\mathcal{B},\mathbf{P},T)$. To this end, we introduce 
conditional variants of two standard notions in ergodic theory: weak 
mixing and $K$-automorphisms.

\begin{definition}
\label{def:cwkmix}
An invertible dynamical system $(\Omega,\mathcal{B},\mathbf{P},T)$ is
said to be \emph{conditionally weak mixing} relative to a $\sigma$-field
$\mathcal{Z}$ if for every $A,B\in\mathcal{B}$
$$
	\frac{1}{T}\sum_{k=1}^T
	|\mathbf{P}[A\cap T^kB|\mathcal{Z}]
	- \mathbf{P}[A|\mathcal{Z}]\,
	\mathbf{P}[T^kB|\mathcal{Z}]
	|
	\xrightarrow{T\to\infty}0\quad\mbox{in }L^1.
$$
\end{definition}

\begin{definition}
\label{def:ckauto}
An invertible dynamical system $(\Omega,\mathcal{B},\mathbf{P},T)$
is called a \emph{conditional $K$-automorphism} relative to a
$\sigma$-field $\mathcal{Z}\subset\mathcal{B}$ 
if there is a $\sigma$-field $\mathcal{X}\subset\mathcal{B}$ such that
\begin{enumerate}
\item $\mathcal{X}\subset T^{-1}\mathcal{X}$.
\item $\bigvee_{k=1}^\infty T^{-k}\mathcal{X}=\mathcal{B}$ $\mod\mathbf{P}$.
\item $\bigcap_{k=1}^\infty (\mathcal{Z}\vee T^k\mathcal{X})=
\mathcal{Z}$ $\mod\mathbf{P}$.
\vskip.2cm
\end{enumerate}
\end{definition}

When the $\sigma$-field $\mathcal{Z}$ is trivial, these definitions 
reduce\footnote{
	To be precise, our definitions are time-reversed with 
	respect to the textbook definitions; however, $T$ is a 
	$K$-automorphism if and only if $T^{-1}$ is a $K$-automorphism
	\cite[p.\ 110]{Wal82}, and the corresponding statement for
	weak mixing is trivial.  Therefore, our definitions are equivalent
	to those in \cite{Wal82}.
} to the usual notions of weak mixing and $K$-automorphism, cf.\ 
\cite{Wal82}.  Similar conditional mixing conditions also appear in the
ergodic theory literature, see \cite{Rud04} and the references
therein.

An easily stated consequence of our main results, for example,
is the following.

\begin{theorem}
\label{thm:soptfinite}
Suppose that $(\Omega,\mathcal{B},\mathbf{P},T)$ is a
conditional $K$-automorphism relative to $\mathcal{Y}_{-\infty,0}$.
Then the mean-optimal strategy $\mathbf{\tilde u}$ 
is pathwise optimal for every loss function $\ell:U\times\Omega\to
\mathbb{R}$ such that $U$ is finite and $|\ell(u,\omega)|\le\Lambda(\omega)$
with $\Lambda\in L^1$.
\end{theorem}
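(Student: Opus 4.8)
The plan is to reduce the assertion to an ``ergodic tower property'' and then establish the latter by combining a martingale reduction with the conditional mixing supplied by the $K$-automorphism hypothesis.

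First I would show that it suffices to prove
\[
 R_T(\mathbf{u}) := \frac{1}{T}\sum_{k=1}^T\big\{\ell_k(u_k)-\mathbf{E}[\ell_k(u_k)\mid\mathcal{Y}_{0,k}]\big\}\xrightarrow{T\to\infty}0\quad\text{a.s.}
\]
for every admissible strategy $\mathbf{u}$. Indeed, writing $L_T(\mathbf{u})-L_T(\mathbf{\tilde u})=R_T(\mathbf{u})-R_T(\mathbf{\tilde u})+\frac1T\sum_k\{\mathbf{E}[\ell_k(u_k)\mid\mathcal{Y}_{0,k}]-\mathbf{E}[\ell_k(\tilde u_k)\mid\mathcal{Y}_{0,k}]\}$ and noting that the last sum is nonnegative pointwise by the very definition of $\tilde u_k=\mathop{\mathrm{arg\,min}}_u\mathbf{E}[\ell_k(u)\mid\mathcal{Y}_{0,k}]$ (a genuine minimizer exists and is measurable since $U$ is finite), the desired $\liminf_T\{L_T(\mathbf u)-L_T(\mathbf{\tilde u})\}\ge 0$ follows once $R_T\to 0$ a.s.\ for both $\mathbf u$ and the admissible strategy $\mathbf{\tilde u}$.

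Since $U$ is finite I would then decompose $R_T(\mathbf u)=\sum_{u\in U}\frac1T\sum_k\mathbf 1_{\{u_k=u\}}\,h_k^u$ with $h_k^u=g_u\circ T^k-\mathbf{E}[g_u\circ T^k\mid\mathcal{Y}_{0,k}]$ and $g_u:=\ell(u,\cdot)$, each indicator $\mathbf 1_{\{u_k=u\}}$ being $\mathcal{Y}_{0,k}$-measurable. It therefore suffices to prove, for a fixed $g\in L^1$ and an \emph{arbitrary fixed} sequence of sets $A_k\in\mathcal{Y}_{0,k}$, that $\frac1T\sum_k\mathbf 1_{A_k}(g\circ T^k-\mathbf{E}[g\circ T^k\mid\mathcal{Y}_{0,k}])\to 0$ a.s. A routine truncation $g=g\mathbf 1_{|g|\le N}+g\mathbf 1_{|g|>N}$, with the tail controlled by Birkhoff's theorem applied to $|g|\mathbf 1_{|g|>N}$ and its conditional expectations, reduces matters to bounded $g$; it is precisely the finiteness of $U$ (hence the finiteness of the family $\{g_u\}$, i.e.\ the low \emph{complexity} of the loss) that lets the integrability hypothesis be weakened all the way to $L^1$, no uniform control over $u$ being needed. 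The difficulty is that the conditioning $\sigma$-field in $h_k^u$ grows with $k$, so these are not martingale increments. My next step is to replace the finite-horizon filter by the stationary one: by stationarity $\mathbf{E}[g\circ T^k\mid\mathcal{Y}_{0,k}]=\mathbf{E}[g\mid\mathcal{Y}_{-k,0}]\circ T^k$, and L\'evy's theorem gives $\mathbf{E}[g\mid\mathcal{Y}_{-k,0}]\to\mathbf{E}[g\mid\mathcal{Y}_{-\infty,0}]$ a.s.\ and in $L^1$. A nonstationary averaging lemma (if $f_k\to 0$ a.s.\ with $|f_k|\le F\in L^1$, then $\frac1T\sum_k f_k\circ T^k\to 0$ a.s., proved by dominating $f_k$ for $k\ge m$ by $\sup_{j\ge m}|f_j|$ and invoking the ergodic theorem) then shows $\frac1T\sum_k|\mathbf{E}[g\circ T^k\mid\mathcal{Y}_{0,k}]-\mathbf{E}[g\circ T^k\mid\mathcal{Y}_{-\infty,k}]|\to 0$ a.s. Since $|\mathbf 1_{A_k}|\le 1$, it remains to prove $\frac1T\sum_k\mathbf 1_{A_k}\tilde h_k\to 0$ a.s., where $\tilde h_k:=g\circ T^k-\mathbf{E}[g\circ T^k\mid\mathcal{Y}_{-\infty,k}]$ and now $(\mathcal{Y}_{-\infty,k})_k$ is a genuine filtration with $A_k\in\mathcal{Y}_{-\infty,k}$ and $\mathbf{E}[\tilde h_k\mid\mathcal{Y}_{-\infty,k}]=0$.

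Even so, $\tilde h_k$ is not $\mathcal{Y}_{-\infty,k}$-measurable, so the $\mathbf 1_{A_k}\tilde h_k$ are not martingale differences and their correlations must be destroyed by mixing. Computing second moments, the off-diagonal terms reduce, after conditioning on $\mathcal{Y}_{-\infty,k}$ and shifting by $T^{-k}$, to Ces\`aro averages of conditional covariances $\mathbf{E}|\mathrm{Cov}(\,\cdot\,,\,\cdot\mid\mathcal{Y}_{-\infty,0})|$ of functions separated by a growing time lag. These averages vanish by conditional weak mixing relative to $\mathcal{Z}=\mathcal{Y}_{-\infty,0}$ (Definition~\ref{def:cwkmix}), which is implied by the conditional $K$-automorphism hypothesis exactly as $K\Rightarrow$ weak mixing in the unconditional theory; this yields convergence of $\frac1T\sum_k\mathbf 1_{A_k}\tilde h_k$ to $0$ in $L^2$, giving the mean (rather than pathwise) optimality of $\mathbf{\tilde u}$. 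The main obstacle is the upgrade from this mean statement to \emph{almost sure} convergence, which is what pathwise optimality demands and which weak mixing alone does not supply—this is precisely the gap between the individual and the mean ergodic theorem flagged in the introduction. I expect to close it using the full strength of the $K$-property: a maximal inequality for the relevant averaging operators, obtained by transference from the increasing filtration $T^{-k}\mathcal{X}$ generating $\mathcal{B}$ together with the relative triviality of the remote future (conditions (1)--(3) of Definition~\ref{def:ckauto}), combined with almost sure convergence on a dense class and a Banach-principle extension, the boundedness of $g$ controlling the fluctuations between the geometric subsequence times used in a Borel--Cantelli step.
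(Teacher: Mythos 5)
Your opening reduction (pathwise optimality of $\mathbf{\tilde u}$ follows from an ergodic tower property), the finite-$U$ decomposition, and the passage from $\mathcal{Y}_{0,k}$ to $\mathcal{Y}_{-\infty,k}$ via L\'evy's theorem plus a Maker-type averaging lemma are sound for \emph{bounded} losses, and they parallel devices the paper itself uses. But the proof collapses at its center. After these reductions you must show $S_T:=\frac{1}{T}\sum_{k}\mathbf{1}_{A_k}\tilde h_k\to 0$ \emph{almost surely}, and what you actually prove is only the $L^2$/in-probability statement via conditional weak mixing; that computation is precisely the paper's proof of \emph{weak} pathwise optimality (Lemma \ref{lem:wkerg}, Theorem \ref{thm:wkopt}), and it cannot be softly upgraded: by Example \ref{ex:conze} (Conze), for a generic weak mixing transformation there exist bounded finite-$U$ blind losses and admissible strategies for which exactly this almost sure convergence fails. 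Thus the entire content of the theorem sits in your final sentence, which names tools (maximal inequality, dense class, Banach principle, Borel--Cantelli along geometric times) but supplies no argument: no dense class is specified, a.s.\ convergence on it is not proved, and a Borel--Cantelli step would require \emph{summable} bounds on $\mathbf{P}[|S_T|>\varepsilon]$, i.e.\ quantitative mixing rates, which neither conditional weak mixing nor the qualitative $K$-hypothesis supplies (this is the contrast the paper draws with the Weissman--Merhav condition). The missing idea is the paper's: introduce the two-parameter $\sigma$-fields $\mathcal{G}_k^n=\mathcal{Y}_{-\infty,k}\vee T^{n-k}\mathcal{X}$ built from the generating field $\mathcal{X}$ of Definition \ref{def:ckauto}; the inclusions $\mathcal{G}_k^{j+1}\subseteq\mathcal{G}_k^j\subseteq\mathcal{G}_{k+1}^{j+1}$ turn the truncated level increments into bounded martingale-difference sequences, giving a.s.\ convergence by the martingale law of large numbers (Lemma \ref{lem:mtgdiff}); the two ends of the telescope are handled by the ergodic theorem, finiteness of $U$ making the martingale and reverse-martingale convergences uniform; and the conditional $K$-property identifies $\bigcap_n\mathcal{G}_k^n$ with $\mathcal{Y}_{-\infty,k}$ (Corollary \ref{cor:mtgdiff}). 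The martingale structure lives over the $\mathcal{X}$-index, not over the time index of the observation filtration---this is what your sketch never constructs, and with it your Banach-principle plan would be completable but redundant, since a.s.\ convergence on the natural dense class (bounded $T^{-m}\mathcal{X}$-measurable functions) \emph{is} that martingale argument.

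A second, independent gap concerns integrability. Because you insist on the tower property relative to $\mathcal{Y}_{0,k}$ for \emph{every} admissible strategy, your truncation step must control $\limsup_T\frac{1}{T}\sum_k\mathbf{E}[\,|g|\mathbf{1}_{|g|>N}\circ T^k\mid\mathcal{Y}_{0,k}]$ almost surely. These are not Birkhoff averages: $\mathbf{E}[\,|g|\mathbf{1}_{|g|>N}\circ T^k\mid\mathcal{Y}_{0,k}]=\mathbf{E}[\,|g|\mathbf{1}_{|g|>N}\mid\mathcal{Y}_{-k,0}]\circ T^k$ involves a conditioning window growing with $k$, so Maker's theorem needs $\sup_k\mathbf{E}[\,\cdot\mid\mathcal{Y}_{-k,0}]\in L^1$, which for merely integrable integrands is Doob's $L\log L$ obstruction---the very reason Theorem \ref{thm:algoet} assumes $\Lambda\in L\log L$, whereas the present theorem claims $\Lambda\in L^1$. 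The paper sidesteps this by never proving the $\mathcal{Y}_{0,k}$-tower property for arbitrary strategies: it proves the lower bound $\liminf_T L_T(\mathbf{u})\ge L^\star$ by conditioning on \emph{fixed} $\sigma$-fields shifted by $T^k$ (so every domination is a Birkhoff average of a fixed $L^1$ function), and separately proves $L_T(\mathbf{\tilde u})\to L^\star$ for the mean-optimal strategy alone, via the sandwich from Lemmas \ref{lem:meanopt} and \ref{lem:neveu} followed by Maker's theorem with the fixed dominating function $\mathbf{E}[\Lambda\mid\mathcal{G}_0^\infty]$ (Corollary \ref{cor:smeanopt}). As written, your reduction reaches at best the bounded-loss case.
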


This result gives a general sufficient condition for pathwise 
optimality when the decision space $U$ is finite.  In the stochastic
process setting (Example \ref{ex:stproc}), the conditional $K$-property
would follow from the validity of the $\sigma$-field identity
$$
	\bigcap_{k=1}^\infty(\mathcal{Y}_{-\infty,0}\vee
	\mathcal{X}_{-\infty,-k})=\mathcal{Y}_{-\infty,0}
	\quad\mathop{\mathrm{mod}}\mathbf{P},
$$
where $\mathcal{X}_{-\infty,k}=\sigma\{X_i:i\le k\}$ (choose 
$\mathcal{X}:=\mathcal{X}_{-\infty,0}\vee\mathcal{Y}_{-\infty,0}$ in 
Definition \ref{def:ckauto}).  In the Markovian setting, this is a 
familiar identity in filtering theory: it is precisely the necessary and 
sufficient condition for the optimal filter to be ergodic, see
section \ref{sec:filt} below.  Our results therefore lead to a 
new pathwise optimality property of nonlinear filters.  Conversely, 
results from filtering theory yield a broad class of (even 
non-Markovian) models for which the conditional $K$-property can be 
verified \cite{vH09,TvH12}. It is interesting to note that despite the 
apparent similarity between the conditions for filter ergodicity and 
pathwise optimality, there appears to be no direct connection between 
these phenomena, and their proofs are entirely distinct.  Let us also 
note that, in the full information setting ($Y_k=X_k$) the conditional 
$K$-property holds trivially, which explains the deceptive simplicity of 
Algoet's result.

While the conditional ergodicity assumption of Theorem 
\ref{thm:soptfinite} is quite general, the requirement that the decision 
space $U$ is finite is a severe restriction on the complexity of the 
loss function $\ell$.  We have stated Theorem \ref{thm:soptfinite} here 
in order to highlight the basic ingredients for the existence of a 
pathwise optimal strategy. The assumption that $U$ is finite will be 
replaced by various complexity assumptions on the loss $\ell$; such 
extensions will be developed in the sequel.  While some complexity 
assumption on the loss is needed in the partial information setting,
there is a tradeoff between the complexity and ergodicity: if the notion 
of conditional ergodicity is strengthened, then the complexity assumption
on the loss can be weakened.

All our pathwise optimality results are corollaries of a general master 
theorem, Theorem \ref{thm:sopt} below, that ensures the existence of a 
pathwise optimal strategy under a certain uniform version of the 
$K$-automorphism property.  However, in the absence of further 
assumptions, this theorem does not ensure that the mean-optimal strategy 
$\mathbf{\tilde u}$ is in fact pathwise optimal: the pathwise optimal 
strategy constructed in the proof may be difficult to compute.  We do 
not know, in general, whether it is possible that a pathwise optimal 
strategy exists, while the mean-optimal strategy fails to be pathwise 
optimal.  In order to gain further insight into such questions, we 
introduce another notion of optimality that is intermediate between 
pathwise and mean optimality. A strategy $\mathbf{u}^\star$ is said to 
be weakly pathwise optimal if
$$
	\mathbf{P}[L_T(\mathbf{u})-L_T(\mathbf{u}^\star)\ge -\varepsilon]
	\xrightarrow{T\to\infty}1\quad\mbox{for every }\varepsilon>0.
$$
It is not difficult to show that if a weakly pathwise optimal strategy
exists, then the mean-optimal strategy $\mathbf{\tilde u}$ must also be
weakly pathwise optimal.  However, the notion of weak pathwise optimality
is distinctly weaker than pathwise optimality.  For example, we will prove
the following counterpart to Theorem \ref{thm:soptfinite}.

\begin{theorem}
\label{thm:woptfinite}
Suppose that $(\Omega,\mathcal{B},\mathbf{P},T)$ is
conditionally weak mixing relative to $\mathcal{Y}_{-\infty,0}$.
Then the mean-optimal strategy $\mathbf{\tilde u}$ 
is weakly pathwise optimal for every loss function $\ell:U\times\Omega\to
\mathbb{R}$ such that $U$ is finite and $|\ell(u,\omega)|\le\Lambda(\omega)$
with $\Lambda\in L^1$.
\end{theorem}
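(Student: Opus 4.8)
The plan is to establish the ``ergodic tower property'' in the weak (in-probability) form dictated by weak pathwise optimality, exploiting the finiteness of $U$ to reduce everything to a conditional mean ergodic theorem for a single fixed loss function. First I would write, for any admissible $\mathbf u$,
\[
  L_T(\mathbf u)-L_T(\mathbf{\tilde u}) = A_T + B_T + C_T,
\]
with $A_T=\frac1T\sum_{k=1}^T\{\ell_k(u_k)-\mathbf E[\ell_k(u_k)\mid\mathcal Y_{0,k}]\}$, $C_T=\frac1T\sum_{k=1}^T\{\mathbf E[\ell_k(\tilde u_k)\mid\mathcal Y_{0,k}]-\ell_k(\tilde u_k)\}$, and $B_T=\frac1T\sum_{k=1}^T\mathbf E[\ell_k(u_k)-\ell_k(\tilde u_k)\mid\mathcal Y_{0,k}]\ge0$ by the very definition of $\tilde u_k$ as the conditional minimizer. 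Since $B_T\ge0$, it suffices to prove that $A_T\to0$ and $C_T\to0$ in probability; then $\mathbf P[L_T(\mathbf u)-L_T(\mathbf{\tilde u})\ge-\varepsilon]\ge\mathbf P[A_T+C_T\ge-\varepsilon]\to1$. Because $U$ is finite, an admissible $U$-valued strategy $\mathbf v$ is described by the adapted partition $A_k^v=\{v_k=v\}\in\mathcal Y_{0,k}$, so both $A_T$ and $C_T$ are finite sums over $v\in U$ of terms $\frac1T\sum_k g_k^v\,\mathbf 1_{A_k^v}$ with $g_k^v=\ell_k(v)-\mathbf E[\ell_k(v)\mid\mathcal Y_{0,k}]$. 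Thus the whole theorem reduces to the statement: for each fixed $v$ and any $\mathcal Y_{0,k}$-measurable weights $w_k\in\{0,1\}$, one has $\frac1T\sum_{k=1}^T g_k^v w_k\to0$ in probability. Writing $\phi:=\ell(v,\cdot)$ (so $\ell_k(v)=\phi\circ T^k$) and truncating via $\phi=\phi\mathbf 1_{\Lambda\le M}+\phi\mathbf 1_{\Lambda>M}$, the tail contributes at most $2\,\mathbf E[\Lambda\mathbf 1_{\Lambda>M}]$ in $L^1$ uniformly in $T$ (by stationarity), so it is enough to treat bounded $\phi$.

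For bounded $\phi$, I would first replace the adapted centering $\mathbf E[\cdot\mid\mathcal Y_{0,k}]$ by the stationary centering $\mathbf E[\cdot\mid\mathcal Y_{-\infty,k}]$. Setting $\mathcal Z:=\mathcal Y_{-\infty,0}$ and $\Phi:=\phi-\mathbf E[\phi\mid\mathcal Z]$ (so that $\mathbf E[\Phi\mid\mathcal Z]=0$), the identity $T^{-k}\mathcal Y_{-\infty,0}=\mathcal Y_{-\infty,k}$ gives $\tilde g_k:=\phi\circ T^k-\mathbf E[\phi\circ T^k\mid\mathcal Y_{-\infty,k}]=\Phi\circ T^k$, a single fixed function transported by $T$. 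The replacement error satisfies $\mathbf E\big|\frac1T\sum_k(g_k^v-\tilde g_k)w_k\big|\le\frac1T\sum_k\delta_k$ with $\delta_k=\|\mathbf E[\phi\mid\mathcal Y_{-\infty,0}]-\mathbf E[\phi\mid\mathcal Y_{-k,0}]\|_{L^1}\to0$ by the martingale convergence theorem (as $\mathcal Y_{-k,0}\uparrow\mathcal Y_{-\infty,0}$), so its Ces\`aro average tends to $0$. It therefore remains to show that $\tilde R_T:=\frac1T\sum_{k=1}^T(\Phi\circ T^k)w_k\to0$, which I would do in $L^2$. Expanding $\mathbf E[\tilde R_T^2]$, the diagonal is bounded by $\|\Phi\|_\infty^2/T\to0$, so everything hinges on the off-diagonal correlations $\mathbf E[(\Phi\circ T^j)w_j(\Phi\circ T^k)w_k]$ for $j<k$.

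The heart of the argument---and the step I expect to be the main obstacle---is that these weights are \emph{random and adapted}, so $\tilde R_T$ is a weighted conditional ergodic average rather than a correlation sum for a fixed pair of functions, and a naive attempt to control it uniformly over all adapted weight sequences leads to an intractable supremum over $\mathcal Z$-measurable sets. The resolution is to condition on the \emph{larger} field $\mathcal Y_{-\infty,k}$: since $w_j,w_k$ are $\mathcal Y_{0,j}\vee\mathcal Y_{0,k}\subseteq\mathcal Y_{-\infty,k}$-measurable, they factor out, and bounding $|w_jw_k|\le1$ gives
\[
  \big|\mathbf E[(\Phi\circ T^j)w_j(\Phi\circ T^k)w_k]\big|
  \le \mathbf E\big[\,\big|\mathbf E[(\Phi\circ T^j)(\Phi\circ T^k)\mid\mathcal Y_{-\infty,k}]\big|\,\big]
  = \theta_{k-j},
\]
where, after applying the measure-preserving map $T^{-k}$ and using $T^k\mathcal Y_{-\infty,k}=\mathcal Z$, the right-hand side equals the weight-free quantity $\theta_m=\big\|\mathbf E[\Phi\,(\Phi\circ T^{-m})\mid\mathcal Z]\big\|_{L^1}$. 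Thus the weights are eliminated entirely and the off-diagonal sum is at most $\frac2T\sum_{m=1}^{T}\theta_m$. Finally, extending Definition \ref{def:cwkmix} from indicators to the bounded function $\Phi$ by bilinearity and $L^2$-approximation, and using that the centering $\mathbf E[\Phi\mid\mathcal Z]=0$ annihilates the product term $\mathbf E[\Phi\mid\mathcal Z]\,\mathbf E[\Phi\circ T^{-m}\mid\mathcal Z]$, conditional weak mixing relative to $\mathcal Z$ yields exactly $\frac1T\sum_{m=1}^T\theta_m\to0$. Hence $\mathbf E[\tilde R_T^2]\to0$, which together with the truncation and replacement estimates gives $A_T,C_T\to0$ in probability and completes the proof.
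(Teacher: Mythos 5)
Your proof is correct, and its analytic engine is the same as the paper's, but the surrounding architecture is genuinely different. The core step --- expanding the second moment of the centered weighted sum, conditioning on the past $\mathcal{Y}_{-\infty,k}$ of the \emph{later} index so that the adapted weights factor out and are bounded by one, shifting back to time zero, and invoking conditional weak mixing extended from indicators to $L^2$ functions (with the centering annihilating the product term) --- is precisely the paper's proof of Lemma \ref{lem:wkerg}, specialized to finite $U$, where the $\esssup$ over $\mathbb{U}_0$ in the hypothesis of Theorem \ref{thm:wkopt} reduces to a finite sum over pairs of fixed decisions. Where you diverge is in how weak pathwise optimality is extracted from this. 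The paper proves the general Theorem \ref{thm:wkopt} and deduces Theorem \ref{thm:woptfinite} as a corollary: it compares both strategies to the optimal asymptotic loss $L^\star$, showing $L_T(\mathbf{u})\ge L^\star-\varepsilon$ with probability tending to one for every admissible $\mathbf{u}$ (Corollary \ref{cor:wkerg}, via the ergodic theorem) and $L_T(\mathbf{\tilde u})\to L^\star$ in $L^1$ (Lemma \ref{lem:neveu0}), the latter requiring the Neveu-type uniform supermartingale convergence of conditional essential infima (Lemma \ref{lem:neveu}) and Maker's generalized ergodic theorem. You instead compare $L_T(\mathbf{u})$ and $L_T(\mathbf{\tilde u})$ directly through the decomposition $A_T+B_T+C_T$, using the defining property of $\tilde u_k$ to get $B_T\ge 0$, and you repair the mismatch between the adapted centering $\mathbf{E}[\,\cdot\,|\mathcal{Y}_{0,k}]$ and the stationary centering $\mathbf{E}[\,\cdot\,|\mathcal{Y}_{-\infty,k}]$ by plain martingale convergence in Ces\`aro mean, separately for each of the finitely many fixed decisions $v$. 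This buys a more elementary and self-contained proof (no $L^\star$, no Lemma \ref{lem:neveu}, no Maker's theorem), but it yields less: it does not identify the limiting value of $L_T(\mathbf{\tilde u})$, and it uses finiteness of $U$ in two places (the reduction to indicator weights and the per-$v$ replacement step), so it does not extend to the uniform setting of Theorem \ref{thm:wkopt}. One small repair is needed: the strategy $\mathbf{\tilde u}$ fixed in the paper (Lemma \ref{lem:meanopt}) is only a $k^{-1}$-approximate conditional minimizer, so in general $B_T\ge -\frac{1}{T}\sum_{k=1}^T k^{-1}\to 0$ rather than $B_T\ge 0$ (alternatively, for finite $U$ an exact measurable minimizer does exist); either way your conclusion is unaffected.
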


There is a genuine gap between Theorems \ref{thm:woptfinite} and 
\ref{thm:soptfinite}: in fact, a result of Conze \cite{Con73} on 
individual ergodic theorems for subsequences shows that there is 
a loss function $\ell$ such that for a generic (in the weak topology) 
weak mixing system, a mean-optimal blind strategy $\mathbf{\tilde u}$ 
fails to be pathwise optimal.

While weak pathwise optimality may not be as conceptually appealing as 
pathwise optimality, the weak pathwise optimality property is easier to 
characterize.  In particular, we will show that the conditional weak 
mixing assumption in Theorem \ref{thm:woptfinite} is not only 
sufficient, but also necessary, in the special case that $\mathcal{Y}$ 
is an invariant $\sigma$-field (that is, 
$\mathcal{Y}=T^{-1}\mathcal{Y}$). Invariance of $\mathcal{Y}$ is 
somewhat unnatural in decision problems, as it implies that no 
additional information is gained over time as more observations are 
accumulated.  On the other hand, invariance of $\mathcal{Z}$ in 
Definitions \ref{def:cwkmix} and \ref{def:ckauto} is precisely the 
situation of interest in applications of conditional mixing in ergodic 
theory (e.g., \cite{Rud04}).  The interest of this result is therefore 
that it provides a decision-theoretic characterization of the 
(conditional) weak mixing property.

\subsection{Organization of this paper}

The remainder of the paper is organized as follows.  In section 
\ref{sec:main}, we state and discuss the main results of this paper.  We 
also give a number of examples that illustrate various aspects of our 
results.  Our main results require two types of assumptions: conditional 
mixing assumptions on the dynamical system, and complexity assumptions 
on the loss.  In section \ref{sec:verifying} we discuss various methods 
to verify these assumptions, as well as further examples and consequences (such 
as pathwise optimality of nonlinear filters).  Finally, the proofs of 
our main results are given in section \ref{sec:proofs}.

\section{Main results}
\label{sec:main}

\subsection{Basic setup and notation} 

Throughout this paper, we will consider the following setting:
\begin{itemize} 
\item $(\Omega,\mathcal{B},\mathbf{P})$ is a probability space.
\item $\mathcal{Y}\subseteq\mathcal{B}$ is a sub-$\sigma$-field.
\item $T:\Omega\to\Omega$ is an invertible measure-preserving
ergodic transformation.
\item $(U,\mathcal{U})$ is a measurable space.
\end{itemize}
As explained in the introduction, we aim to make sequential decisions
in the ergodic dynamical system $(\Omega,\mathcal{B},\mathbf{P},T)$.
The decisions
take values in the decision space $U$, and the $\sigma$-field $\mathcal{Y}$
represents the observable part of the system.  We define
$$
	\mathcal{Y}_{m,n} = \bigvee_{k=m}^nT^{-k}\mathcal{Y}
	\qquad\mbox{for }{-\infty}\le m\le n\le\infty,
$$
that is, $\mathcal{Y}_{m,n}$ is the $\sigma$-field generated by the observations
in the time interval $[m,n]$.  An admissible decision strategy
must depend causally on the observed data.

\begin{definition}
A strategy $\mathbf{u}=(u_k)_{k\ge 1}$ is called \emph{admissible} 
if it is $\mathcal{Y}_{0,k}$-adapted, that is,
$u_k:\Omega\to U$ is $\mathcal{Y}_{0,k}$-measurable for every $k\ge 1$.
\end{definition}

It will be convenient to introduce the following notation.
For every $m\le n$, define
$$
	\mathbb{U}_{m,n} = \{u:\Omega\to U~{:}~u\mbox{ is }
	\mathcal{Y}_{m,n}\mbox{-measurable}\},\qquad
	\mathbb{U}_n = \bigcup_{-\infty<m\le n}\mathbb{U}_{m,n}.
$$
Thus a strategy $\mathbf{u}$ is admissible whenever $u_k\in\mathbb{U}_{0,k}$
for all $k$.  Note that $\mathbb{U}_n \subsetneq \mathbb{U}_{-\infty,n}$: 
this distinction will be essential for the validity of our results.

To describe the loss of a decision strategy, we introduce a loss function
$\ell$.
\begin{itemize}
\item $\ell:U\times\Omega\to\mathbb{R}$ is a measurable function and
$|\ell(u,\omega)|\le\Lambda(\omega)$ with $\Lambda\in
L^1$.
\end{itemize}
If $|\ell(u,\omega)|\le\Lambda(\omega)$ with $\Lambda\in
L^p$, the loss is said to be dominated in $L^p$.
As indicated above, we will always assume\footnote{
	Non-dominated loss functions may also be of significant interest,
	see \cite{Nob03} for example.  We will restrict attention to
	dominated loss functions, however, which suffice in many cases of 
	interest.
}
that our loss functions are
dominated in $L^1$.

The loss function $\ell(u,\omega)$ represents the cost incurred by 
the decision $u$ when the system is in state $\omega$.  In particular,
the cost of the decision $u_k$ at time $k$ is given by $\ell(u_k,T^k\omega)
=\ell_k(u_k)$, where we define for notational simplicity
$$
	\ell_n(u):\Omega\to\mathbb{R},\qquad
	\ell_n(u)(\omega)=\ell(u,T^n\omega).
$$
Our aim is to select an admissible strategy $\mathbf{u}$
that minimizes the time-average loss 
$$
	L_T(\mathbf{u}) = \frac{1}{T}\sum_{k=1}^T \ell_k(u_k)
$$
in a suitable sense.

\begin{definition}
\label{def:sopt}
An admissible strategy $\mathbf{u}^\star$ is \emph{pathwise 
optimal} if
$$
	\liminf_{T\to\infty}\{L_T(\mathbf{u})-L_T(\mathbf{u}^\star)\}
	\ge 0\quad\mbox{a.s.}
$$
for every admissible strategy $\mathbf{u}$.
\end{definition}

\begin{definition}
\label{def:wopt}
An admissible strategy $\mathbf{u}^\star$ is \emph{weakly pathwise 
optimal} if
$$
	\mathbf{P}[
	L_T(\mathbf{u})-L_T(\mathbf{u}^\star) \ge -\varepsilon
	]
	\xrightarrow{T\to\infty}1
	\quad\mbox{for every }\varepsilon>0
$$
for every admissible strategy $\mathbf{u}$.
\end{definition}

\begin{definition}
\label{def:mopt}
An admissible strategy $\mathbf{u}^\star$ is \emph{mean
optimal} if
$$
	\liminf_{T\to\infty}\{\mathbf{E}[L_T(\mathbf{u})]-
	\mathbf{E}[L_T(\mathbf{u}^\star)]\}
	\ge 0
$$
for every admissible strategy $\mathbf{u}$.
\end{definition}

These notions of optimality are progressively weaker: a pathwise optimal 
strategy is clearly weakly pathwise optimal, and a weakly pathwise 
optimal strategy is mean optimal (as the loss function is assumed to be
dominated in $L^1$).

In the introduction, it was stated that $\tilde u_k=
\mathop{\mathrm{arg\,min}}_{u\in U}\mathbf{E}[\ell_k(u)|\mathcal{Y}_{0,k}]$ 
defines a mean-optimal strategy.  This disregards some technical issues,
as the $\mathrm{arg\,min}$ may not exist or be measurable.
It suffices, however, to consider a slight reformulation.

\begin{lemma}
\label{lem:meanopt}
There exists an admissible strategy $\mathbf{\tilde u}$ such that
$$
	\mathbf{E}[\ell_k(\tilde u_k)|\mathcal{Y}_{0,k}]
	\le
	\essinf_{u\in\mathbb{U}_{0,k}}
	\mathbf{E}[\ell_k(u)|\mathcal{Y}_{0,k}] +
	k^{-1}\quad\mbox{a.s.}
$$
for every $k\ge 1$. In particular, $\mathbf{\tilde u}$ is mean-optimal.
\end{lemma}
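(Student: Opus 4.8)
The plan is to reduce the statement to a measurable selection problem for the conditional expected loss at each fixed time $k$, and then assemble the selections into a strategy. Fix $k\ge 1$. Since the loss is dominated in $L^1$ and $T$ preserves $\mathbf{P}$, the random variable $\ell_k(u)$ is integrable for every $u\in\mathbb{U}_{0,k}$, so each $\mathbf{E}[\ell_k(u)\mid\mathcal{Y}_{0,k}]$ is a well-defined $\mathcal{Y}_{0,k}$-measurable function, and the family
$$
	\mathcal{F}_k := \bigl\{\mathbf{E}[\ell_k(u)\mid\mathcal{Y}_{0,k}] : u\in\mathbb{U}_{0,k}\bigr\}
$$
is bounded below by the integrable function $-\mathbf{E}[\Lambda\circ T^k\mid\mathcal{Y}_{0,k}]$. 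Its essential infimum $g_k := \essinf_{u\in\mathbb{U}_{0,k}}\mathbf{E}[\ell_k(u)\mid\mathcal{Y}_{0,k}]$ is therefore well-defined and may be taken $\mathcal{Y}_{0,k}$-measurable, being realized by a countable subfamily of $\mathcal{F}_k$. The content of the lemma is that a single $\mathcal{Y}_{0,k}$-measurable decision $\tilde u_k$ attains $g_k$ to within $k^{-1}$ pointwise; this is the step I expect to be the main obstacle, and it rests on a closure property of $\mathcal{F}_k$.

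The key structural observation is that $\mathcal{F}_k$ is closed under pointwise minimum, hence downward directed. Indeed, given $u,v\in\mathbb{U}_{0,k}$, the set $A=\{\mathbf{E}[\ell_k(u)\mid\mathcal{Y}_{0,k}]\le\mathbf{E}[\ell_k(v)\mid\mathcal{Y}_{0,k}]\}$ lies in $\mathcal{Y}_{0,k}$, so the decision $w$ defined to equal $u$ on $A$ and $v$ on $A^c$ is again $\mathcal{Y}_{0,k}$-measurable, i.e.\ $w\in\mathbb{U}_{0,k}$. Because $A\in\mathcal{Y}_{0,k}$, the indicators pull out of the conditional expectation and one obtains $\mathbf{E}[\ell_k(w)\mid\mathcal{Y}_{0,k}]=\min\{\mathbf{E}[\ell_k(u)\mid\mathcal{Y}_{0,k}],\mathbf{E}[\ell_k(v)\mid\mathcal{Y}_{0,k}]\}$. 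A standard fact about essential infima of downward-directed families then furnishes a sequence $u^{(n)}\in\mathbb{U}_{0,k}$ with $\mathbf{E}[\ell_k(u^{(n)})\mid\mathcal{Y}_{0,k}]\downarrow g_k$ almost surely.

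With this decreasing sequence in hand, I would paste the selections together. Set $B_n=\{\mathbf{E}[\ell_k(u^{(n)})\mid\mathcal{Y}_{0,k}]\le g_k+k^{-1}\}\in\mathcal{Y}_{0,k}$; the a.s.\ decrease to $g_k$ gives $B_n\uparrow\Omega$ up to a null set. Let $C_1=B_1$ and $C_n=B_n\setminus B_{n-1}$, a $\mathcal{Y}_{0,k}$-measurable partition of $\Omega$ (mod $\mathbf{P}$), and define $\tilde u_k$ to equal $u^{(n)}$ on $C_n$ (and any fixed value on the residual null set). Then $\tilde u_k\in\mathbb{U}_{0,k}$, so the resulting $\mathbf{\tilde u}$ is admissible. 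Since $\ell_k(\tilde u_k)=\sum_n\mathbf{1}_{C_n}\ell_k(u^{(n)})$ a.s.\ with integrable dominating function $\Lambda\circ T^k$, dominated convergence for conditional expectations and $C_n\in\mathcal{Y}_{0,k}$ yield $\mathbf{E}[\ell_k(\tilde u_k)\mid\mathcal{Y}_{0,k}]=\sum_n\mathbf{1}_{C_n}\mathbf{E}[\ell_k(u^{(n)})\mid\mathcal{Y}_{0,k}]\le g_k+k^{-1}$ a.s., which is exactly the claimed bound.

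Finally, the mean-optimality assertion follows with no further work. For any admissible $\mathbf{u}$ one has $u_k\in\mathbb{U}_{0,k}$, so $\mathbf{E}[\ell_k(u_k)\mid\mathcal{Y}_{0,k}]\ge g_k\ge\mathbf{E}[\ell_k(\tilde u_k)\mid\mathcal{Y}_{0,k}]-k^{-1}$; taking expectations via the tower property gives $\mathbf{E}[\ell_k(u_k)]\ge\mathbf{E}[\ell_k(\tilde u_k)]-k^{-1}$. Averaging over $k\le T$ and using $\frac1T\sum_{k=1}^T k^{-1}\le T^{-1}(1+\log T)\to 0$ shows $\liminf_{T\to\infty}\{\mathbf{E}[L_T(\mathbf{u})]-\mathbf{E}[L_T(\mathbf{\tilde u})]\}\ge 0$, so $\mathbf{\tilde u}$ is mean-optimal. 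The only delicate point in the whole argument is the directedness-plus-selection step of the second and third paragraphs; everything else is routine.
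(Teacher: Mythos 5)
Your proof is correct and takes essentially the same route as the paper: both realize the essential infimum through a countable subfamily of $\mathbb{U}_{0,k}$, assemble $\tilde u_k$ by pasting these decisions on a $\mathcal{Y}_{0,k}$-measurable partition (your $C_n$; the paper's level sets of a first-entry index $\tau$), and finish mean-optimality with the identical tower-property and Ces\`aro argument. The only divergence is your directedness step (the lattice pasting trick giving a monotone sequence), which is valid but unnecessary: the paper bypasses it by taking $\tau$ to be the first index $n$ whose conditional loss lies within $k^{-1}$ of the essential infimum, which works even for a non-monotone countable family.
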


\begin{proof}
It follows from the construction of the essential supremum
\cite[p.\ 49]{Pol02} that there exists a countable family
$(U^n)_{n\in\mathbb{N}}\subseteq\mathbb{U}_{0,k}$ such that
$$
	\essinf_{u\in\mathbb{U}_{0,k}}\mathbf{E}[\ell_k(u)|\mathcal{Y}_{0,k}] =
	\inf_{n\in\mathbb{N}}\mathbf{E}[\ell_k(U^n)|\mathcal{Y}_{0,k}].
$$
Define the random variable
$$
	\tau = \inf\Big\{n:
	\mathbf{E}[\ell_k(U^n)|\mathcal{Y}_{0,k}]\le
	\essinf_{u\in\mathbb{U}_{0,k}}\mathbf{E}[\ell_k(u)|\mathcal{Y}_{0,k}]
	+k^{-1}\Big\}.
$$
Note that $\tau<\infty$ a.s.\ as
$\essinf_{u\in\mathbb{U}_{0,k}}\mathbf{E}[\ell_k(u)|\mathcal{Y}_{0,k}]\ge
-\mathbf{E}[\Lambda\circ T^k|\mathcal{Y}_{0,k}]>-\infty$ a.s.
We therefore define $\tilde u_k=U^\tau$. To show that $\mathbf{\tilde u}$ 
is mean optimal, it suffices to note that
$$
	\mathbf{E}[L_T(\mathbf{u})] -
	\mathbf{E}[L_T(\mathbf{\tilde u})] =
	\frac{1}{T}\sum_{k=1}^T
	\mathbf{E}\Big[
	\mathbf{E}[\ell_k(u_k)|\mathcal{Y}_{0,k}]
	- \mathbf{E}[\ell_k(\tilde u_k)|\mathcal{Y}_{0,k}]
	\Big]
	\ge
	-\frac{1}{T}\sum_{k=1}^T k^{-1}
$$
for any admissible strategy $\mathbf{u}$ and $T\ge 1$.
\qed\end{proof}

In particular, we emphasize that a mean-optimal strategy $\mathbf{\tilde 
u}$ always exists.  In the remainder of this paper, we will fix a 
mean-optimal strategy $\mathbf{\tilde u}$ as in Lemma \ref{lem:meanopt}.

\subsection{Pathwise optimality}
\label{sec:popt}

Our results on the existence of pathwise optimal strategies are all 
consequences of one general result, Theorem \ref{thm:sopt}, that will be 
stated presently. The essential assumption of this general result is 
that the properties of the conditional $K$-automorphism (Definition 
\ref{def:ckauto}) hold uniformly with respect to the loss function 
$\ell$.  Note that, in principle, the assumptions of this result do not 
imply that $(\Omega,\mathcal{B},\mathbf{P},T)$ is a conditional 
$K$-automorphism, though this will frequently be the case.

\begin{theorem}[Pathwise optimality]
\label{thm:sopt}
Suppose that for some $\sigma$-field $\mathcal{X}\subset\mathcal{B}$
\begin{enumerate}
\item $\mathcal{X}\subset T^{-1}\mathcal{X}$.
\item The following martingales converge uniformly:
\begin{align*}
	&\esssup_{u\in\mathbb{U}_0}\big|
	\mathbf{E}[\ell_0(u)|\mathcal{Y}_{-\infty,0}\vee T^{-n}\mathcal{X}]-\ell_0(u)
	\big|\xrightarrow{n\to\infty}0 \quad\mbox{in }L^1, \\
	&\esssup_{u\in\mathbb{U}_0}\big|
	\mathbf{E}[\ell_0(u)|\mathcal{Y}_{-\infty,0}\vee T^{n}\mathcal{X}]-
	\mathbf{E}[\ell_0(u)|\textstyle{\bigcap_{k=1}^\infty}
	(\mathcal{Y}_{-\infty,0}\vee T^{k}\mathcal{X})]
	\big|\xrightarrow{n\to\infty}0\quad\mbox{in }L^1.
\end{align*}
\item The remote past does not affect the asymptotic loss:
$$
	L^\star :=
	\mathbf{E}\bigg[
	\essinf_{u\in\mathbb{U}_0}\mathbf{E}[\ell_0(u)|
	\mathcal{Y}_{-\infty,0}]
	\bigg] =
	\mathbf{E}\bigg[
	\essinf_{u\in\mathbb{U}_0}\mathbf{E}[\ell_0(u)|
	\textstyle{\bigcap_{k=1}^\infty}
        (\mathcal{Y}_{-\infty,0}\vee T^{k}\mathcal{X})]
	\bigg].
$$
\end{enumerate}
Then there exists an admissible strategy $\mathbf{u}^\star$ such 
that for every admissible strategy $\mathbf{u}$
$$
	\liminf_{T\to\infty} \{L_T(\mathbf{u})-L_T(\mathbf{u}^\star)\}
	\ge 0\quad\mbox{a.s.},
	\qquad
	\lim_{T\to\infty}L_T(\mathbf{u}^\star) = L^\star
	\quad
	\mbox{a.s.},
$$
that is, $\mathbf{u}^\star$ is pathwise optimal and
$L^\star$ is the optimal long time-average loss.
\end{theorem}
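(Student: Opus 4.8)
The plan is to prove the theorem in two independent halves and then combine them. Write (I) for the \emph{universal lower bound} $\liminf_{T\to\infty}L_T(\mathbf{u})\ge L^\star$ a.s.\ for every admissible $\mathbf{u}$, and (II) for the existence of one admissible strategy $\mathbf{u}^\star$ with $\limsup_{T\to\infty}L_T(\mathbf{u}^\star)\le L^\star$ a.s. Granting both, (I) applied to $\mathbf{u}^\star$ forces $\lim_T L_T(\mathbf{u}^\star)=L^\star$, whence $\liminf_T\{L_T(\mathbf{u})-L_T(\mathbf{u}^\star)\}\ge L^\star-L^\star=0$ for every admissible $\mathbf{u}$, which is exactly the assertion. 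It is convenient to pass to \emph{stationary coordinates}: setting $w_k:=u_k\circ T^{-k}$, admissibility gives $w_k\in\mathbb{U}_{-k,0}\subseteq\mathbb{U}_0$ and $\ell_k(u_k)=\ell_0(w_k)\circ T^k$, so that $L_T(\mathbf{u})=\frac1T\sum_{k=1}^T\ell_0(w_k)\circ T^k$. Abbreviate $\mathcal{H}_j:=\mathcal{Y}_{-\infty,0}\vee T^j\mathcal{X}$, which is decreasing in $j$ by assumption (1), and $\mathcal{X}_\infty:=\bigcap_{k\ge 1}\mathcal{H}_k$.

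For the lower bound (I), observe that since each $w_k\in\mathbb{U}_0$ we have $\mathbf{E}[\ell_0(w_k)\mid\mathcal{X}_\infty]\ge g^\star$ a.s., where $g^\star:=\essinf_{u\in\mathbb{U}_0}\mathbf{E}[\ell_0(u)\mid\mathcal{X}_\infty]$ satisfies $\mathbf{E}[g^\star]=L^\star$ by assumption (3); as $T$ is ergodic, Birkhoff's theorem gives $\frac1T\sum_{k=1}^T g^\star\circ T^k\to L^\star$ a.s. It therefore suffices to prove the conditional ergodic theorem
$$
	\frac1T\sum_{k=1}^T\big\{\ell_0(w_k)-\mathbf{E}[\ell_0(w_k)\mid\mathcal{X}_\infty]\big\}\circ T^k
	\xrightarrow{T\to\infty}0\quad\text{a.s.},
$$
for then $L_T(\mathbf{u})$ differs by a vanishing amount from $\frac1T\sum_{k=1}^T\mathbf{E}[\ell_0(w_k)\mid\mathcal{X}_\infty]\circ T^k\ge\frac1T\sum_{k=1}^T g^\star\circ T^k\to L^\star$. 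To establish it I fix $N$ and split each summand along the chain $\mathcal{H}_N\subseteq\cdots\subseteq\mathcal{H}_{-N}$:
$$
	\ell_0(w)-\mathbf{E}[\ell_0(w)\mid\mathcal{X}_\infty]
	=\underbrace{\big(\ell_0(w)-\mathbf{E}[\ell_0(w)\mid\mathcal{H}_{-N}]\big)}_{(a)}
	+\underbrace{\sum_{j=-N}^{N-1}\delta_j(w)}_{(b)}
	+\underbrace{\big(\mathbf{E}[\ell_0(w)\mid\mathcal{H}_{N}]-\mathbf{E}[\ell_0(w)\mid\mathcal{X}_\infty]\big)}_{(c)},
$$
with $\delta_j(w):=\mathbf{E}[\ell_0(w)\mid\mathcal{H}_j]-\mathbf{E}[\ell_0(w)\mid\mathcal{H}_{j+1}]$. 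The boundary terms $(a)$ and $(c)$ are dominated, uniformly in $w\in\mathbb{U}_0$, by $A_N:=\esssup_{u\in\mathbb{U}_0}|\ell_0(u)-\mathbf{E}[\ell_0(u)\mid\mathcal{H}_{-N}]|$ and $C_N:=\esssup_{u\in\mathbb{U}_0}|\mathbf{E}[\ell_0(u)\mid\mathcal{H}_N]-\mathbf{E}[\ell_0(u)\mid\mathcal{X}_\infty]|$; assumption (2) says exactly that $\mathbf{E}[A_N],\mathbf{E}[C_N]\to 0$, and applying Birkhoff to the fixed functions $A_N,C_N$ bounds the time-average of $|(a)|+|(c)|$ by $\mathbf{E}[A_N]+\mathbf{E}[C_N]+o(1)$.

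The term $(b)$ is the crux, and the only place where the time-variation of the decisions $w_k$ must be confronted; I expect it to be the main obstacle. \emph{The key point is that for each fixed $j$ the shifted increments $\phi^{(j)}_k:=\delta_j(w_k)\circ T^k$ form a martingale difference sequence.} Indeed, shifting by $T^k$ turns the pair $\mathcal{H}_j\supseteq\mathcal{H}_{j+1}$ into $\mathcal{Y}_{-\infty,k}\vee T^{j-k}\mathcal{X}\supseteq\mathcal{Y}_{-\infty,k}\vee T^{j-k+1}\mathcal{X}$, so $\phi^{(j)}_k$ is measurable with respect to the larger field while $\mathbf{E}[\phi^{(j)}_k\mid\mathcal{Y}_{-\infty,k}\vee T^{j-k+1}\mathcal{X}]=0$; and since $T^j\mathcal{X}$ is decreasing one checks that $\mathcal{Y}_{-\infty,i}\vee T^{j-i}\mathcal{X}\subseteq\mathcal{Y}_{-\infty,k}\vee T^{j-k+1}\mathcal{X}$ for every $i\le k-1$, so that $\phi^{(j)}_1,\dots,\phi^{(j)}_{k-1}$ are measurable with respect to the very field that annihilates $\phi^{(j)}_k$. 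Hence $\mathbf{E}[\phi^{(j)}_k\mid\phi^{(j)}_1,\dots,\phi^{(j)}_{k-1}]=0$, and the martingale law of large numbers yields $\frac1T\sum_{k=1}^T\phi^{(j)}_k\to 0$ a.s.; summing over the finitely many $j\in\{-N,\dots,N-1\}$ shows the time-average of $(b)$ vanishes. Because only domination in $L^1$ is assumed, I would carry out this step first for the truncated loss $(\ell\wedge M)\vee(-M)$, whose martingale differences are bounded so that the law of large numbers is elementary, and then absorb the truncation error uniformly in the strategy by applying Birkhoff to the fixed function $\Lambda\mathbf{1}_{\{\Lambda>M\}}$ and letting $M\to\infty$. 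Combining $(a)$, $(b)$, $(c)$ and letting $N\to\infty$ proves the conditional ergodic theorem, and with it (I). Recognizing the hidden martingale structure is what removes the apparent difficulty that the integrands $w_k$ change with $k$, while the \emph{uniform} convergence in assumption (2) is precisely what collapses the boundary terms onto the single dominating functions $A_N,C_N$ to which the ergodic theorem applies.

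For achievability (II), I would first produce finite-memory decisions $u^{(j)}\in\mathbb{U}_0$ with $\mathbf{E}[\ell_0(u^{(j)})]\to L^\star$. These exist because, by the first expression in assumption (3), $L^\star=\mathbf{E}[\essinf_{u\in\mathbb{U}_0}\mathbf{E}[\ell_0(u)\mid\mathcal{Y}_{-\infty,0}]]$; selecting the pointwise minimiser over finitely many members of a countable family realizing the essential infimum, and then replacing the infinite-memory conditioning by $\mathcal{Y}_{-m,0}$ for large $m$ via martingale convergence, yields such finite-memory near-minimizers. For each fixed $j$ the shifted strategy $k\mapsto u^{(j)}\circ T^k$ is admissible once $k$ exceeds the memory of $u^{(j)}$, and Birkhoff gives $L_T\to\mathbf{E}[\ell_0(u^{(j)})]$ a.s. Concatenating these strategies in blocks, with switch times growing rapidly enough (a standard diagonalization using Egorov's theorem and Borel--Cantelli to handle the non-uniformity of the a.s.\ Birkhoff convergence, integrability being controlled throughout by $\Lambda$), produces a single admissible $\mathbf{u}^\star$ whose time-average loss is eventually governed by the current block and hence satisfies $\limsup_T L_T(\mathbf{u}^\star)\le L^\star$ a.s. Together with (I) this completes the proof.
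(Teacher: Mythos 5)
Your proposal is correct, and it splits into two halves that relate to the paper's proof in different ways. The lower-bound half (I) is essentially the paper's own argument (Lemma \ref{lem:mtgdiff} and Corollary \ref{cor:mtgdiff}) transcribed into stationary coordinates: the same telescoping of $\ell_0(w_k)-\mathbf{E}[\ell_0(w_k)|\bigcap_j(\mathcal{Y}_{-\infty,0}\vee T^j\mathcal{X})]$ along the decreasing chain of $\sigma$-fields, the same identification of the shifted increments $\delta_j(w_k)\circ T^k$ as a (truncated, hence bounded) martingale difference sequence with respect to the filtration $\mathcal{Y}_{-\infty,k+1}\vee T^{j-k}\mathcal{X}$, and the same use of Birkhoff's theorem plus Assumption 2 to collapse the boundary terms onto the fixed dominating functions $A_N$, $C_N$. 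The achievability half (II) is genuinely different. The paper builds $\mathbf{u}^\star$ from the mean-optimal strategy: the uniform supermartingale convergence of essential infima (Lemma \ref{lem:neveu}, following Neveu) together with Assumption 3 gives $L^1$-convergence of $\mathbf{E}[\ell_k(\tilde u_k)|\mathcal{G}_k^\infty]\circ T^{-k}$ to the conditional essential infimum (Corollary \ref{cor:neveu}); one passes to an a.s.-convergent subsequence $k_n$, sets $u_k^\star=\tilde u_{k_n}\circ T^{k-k_n}$ on blocks, and then Maker's generalized ergodic theorem \cite[Corollary 10.8]{Kal02} applied to this dominated, a.s.-convergent sequence of conditional losses yields $L_T(\mathbf{u}^\star)\to L^\star$ with no growth condition on the blocks at all. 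You instead manufacture finite-memory near-optimizers $u^{(j)}\in\mathbb{U}_0$ with $\mathbf{E}[\ell_0(u^{(j)})]\to L^\star$ (countable realization of the essential infimum, a $\mathcal{Y}_{-m,0}$-measurable argmin selector, martingale convergence --- this works, and incidentally shows $\inf_{u\in\mathbb{U}_0}\mathbf{E}[\ell_0(u)]=L^\star$), and control the concatenated strategy by plain Birkhoff plus Borel--Cantelli over rapidly growing blocks. Both routes are sound. Yours is more elementary and cleanly separates the hypotheses: Assumptions 1--3 enter only through the lower bound, while your upper bound holds in any ergodic system with dominated loss. What it costs is precisely the bookkeeping you defer as ``standard'' --- controlling $T$ in the transition region just after each switch by dilution and the bound $|\ell|\le\Lambda$, and beating the random Birkhoff stabilization times by the choice of block boundaries --- which is exactly the work that Maker's theorem does for free in the paper; conversely, the paper's intermediate results (notably Lemma \ref{lem:neveu}) are reused in the proofs of Corollary \ref{cor:smeanopt} and Theorem \ref{thm:wkopt}, and its construction keeps $\mathbf{u}^\star$ tied to the mean-optimal strategy.
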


The proof of this result will be given in section \ref{sec:proofsopt} 
below.

Before going further, let us discuss the conceptual nature of the 
assumptions of Theorem
\ref{thm:sopt}.  The assumptions encode two separate requirements:
\begin{enumerate}
\item Assumption 3 of Theorem \ref{thm:sopt} should be viewed as a
mixing assumption on the dynamical system 
$(\Omega,\mathcal{B},\mathbf{P},T)$ that is tailored to the decision
problem.  Indeed, 
$\mathcal{Y}_{-\infty,0}$ represents the information contained in the 
observations, while $\bigcap_{k=1}^\infty (\mathcal{Y}_{-\infty,0} \vee 
T^k\mathcal{X})$ includes in addition the remote past of the generating 
$\sigma$-field $\mathcal{X}$.  The assumption states that 
knowledge of the remote past of the unobserved part of the model cannot 
be used to improve our present decisions.
\item Assumption 2 of Theorem \ref{thm:sopt} should be viewed
as a complexity assumption on the loss function $\ell$.  Indeed,
in the absence of the essential suprema, these statements
hold automatically by the martingale convergence theorem.  The assumption
requires that the convergence is in fact uniform in $u\in\mathbb{U}_0$.
This will be the case when the loss function is not too complex.  
\end{enumerate}
The assumptions of Theorem \ref{thm:sopt} can be 
verified in many cases of interest.  In section \ref{sec:verifying}
below, we will discuss various methods that can be used to verify 
both the conditional mixing and the complexity assumptions of 
Theorem \ref{thm:sopt}.

In general, neither the conditional mixing nor the complexity assumption can 
be dispensed with in the presence of partial information.

\begin{example}[Assumption 3 is essential]
\label{ex:asp3}
We have seen in Examples \ref{ex:merhav} and \ref{ex:filt} in the 
introduction that no pathwise optimal strategy exists.  In both these
examples Assumption 2 is satisfied, that is, the loss function is not 
too complex (this will follow from general complexity results, cf.\ 
Example \ref{ex:meansq} in
section \ref{sec:verifying} below).  On the other hand, it is easily 
seen that the conditional mixing Assumption 3 is violated.
\end{example}

\begin{example}[Assumption 2 is essential]
\label{ex:asp2}
Let $X=(X_k)_{k\in\mathbb{Z}}$ be the stationary Markov chain in 
$[0,1]$ defined by $X_{k+1}=(X_k+\varepsilon_{k+1})/2$ for all $k$,
where $(\varepsilon_k)_{k\in\mathbb{Z}}$ is an i.i.d.\ sequence of
$\mathrm{Bernoulli}(1/2)$ random variables.  
We consider the setting of blind decisions with the loss function 
$\ell_k(u)=\lfloor 2^uX_k\rfloor\mathop{\mathrm{mod}}2$, $u\in U=\mathbb{N}$.  Note that
$$
	X_k = \sum_{i=0}^\infty 2^{-i-1}\varepsilon_{k-i},\qquad
	\quad\ell_k(u) = \varepsilon_{k-u+1}.
$$
We claim that no pathwise optimal strategy can exist.  Indeed, consider
for fixed $r\ge 0$ the strategy $\mathbf{u}^r$ such that $u^r_k=k+r$.
Then $\ell_k(u_k^r) = \varepsilon_{1-r}$ for all $k$.  Therefore,
$$
	\varepsilon_{1-r} - \limsup_{T\to\infty}L_T(\mathbf{u}^\star) =
	\liminf_{T\to\infty}\{L_T(\mathbf{u}^r)-L_T(\mathbf{u}^\star)\}
	\ge 0\quad\mbox{a.s.}\quad\mbox{for all }r\ge 0
$$
for every pathwise optimal strategy $\mathbf{u}^\star$.  In particular,
$$
	0=
	\inf_{r\ge 0}\varepsilon_{1-r} \ge \limsup_{T\to\infty}L_T(\mathbf{u}^\star)
	\ge \liminf_{T\to\infty}L_T(\mathbf{u}^\star)\ge 0\quad\mbox{a.s.}
$$
As $|L_T(\mathbf{u}^\star)|\le 1$ for all $T$, it follows by dominated
convergence that a pathwise optimal strategy $\mathbf{u}^\star$ must satisfy
$\mathbf{E}[L_T(\mathbf{u}^\star)]\to 0$ as $T\to\infty$.
But clearly $\mathbf{E}[L_T(\mathbf{u})]=1/2$ for every $T$ and strategy
$\mathbf{u}$, which entails a contradiction.

Nonetheless, in this example the dynamical system is a $K$-automorphism 
(even a Bernoulli shift), so that that Assumption 3 is easily satisfied. 
As no pathwise optimal strategy exists, this must be caused by the 
failure of Assumption 2.  For example, for the natural choice 
$\mathcal{X}=\sigma\{X_k:k\le 0\}$, Assumption 3 holds as 
$\bigcap_kT^k\mathcal{X}$ is trivial by the Kolmogorov zero-one law, but 
it is easily seen that the second equation of Assumption 2 fails.
Note that the function $l(u,x)=\lfloor 2^ux\rfloor\mathop{\mathrm{mod}}2$ becomes
increasingly oscillatory as $u\to\infty$; this is precisely the type of
behavior that obstructs uniform convergence in Assumption 2 (akin to
``overfitting'' in statistics).
\end{example}

\begin{example}[Assumption 2 is essential, continued]
\label{ex:asp2ctd}
In the previous example, pathwise optimality fails due to failure of the 
second equation of Assumption 2.  We now give a variant of this example
where the first equation of Assumption 2 fails.

Let $X=(X_k)_{k\in\mathbb{Z}}$ be an i.i.d.\ sequence of
$\mathop{\mathrm{Bernoulli}}(1/2)$ random variables.
We consider the setting of blind decisions with the loss
function $\ell_k(u)=X_{k+u}$, $u\in U=\mathbb{N}$.  We claim that
no pathwise optimal strategy can exist.  Indeed, consider for
$r=0,1$ the strategy $\mathbf{u}^r$ defined by 
$u_k=2^{r+n+1}-k$ for $2^n\le k<2^{n+1}$, $n\ge 0$.  Then
$$
	L_{2^n-1}(\mathbf{u}^r) =
	\frac{1}{2^n-1}
	\sum_{m=0}^{n-1}\sum_{k=2^m}^{2^{m+1}-1} X_{k+u_k} =
	\frac{2^n}{2^n-1}
	\sum_{m=0}^{n-1} 2^{-(n-m)} X_{2^{r+m+1}}.
$$
Suppose that $\mathbf{u}^\star$ is pathwise optimal.  Then
$$
	\liminf_{T\to\infty}\mathbf{E}[L_T(\mathbf{u}^0)
	\wedge L_T(\mathbf{u}^1)-L_T(\mathbf{u^\star})] \ge
	\mathbf{E}\Big[
	\liminf_{T\to\infty}\{
	L_T(\mathbf{u}^0)
	\wedge L_T(\mathbf{u}^1)-L_T(\mathbf{u^\star})\}\Big]
	\ge 0.
$$
But a simple computation shows that
$\mathbf{E}[L_{2^n-1}(\mathbf{u}^0)
\wedge L_{2^n-1}(\mathbf{u}^1)]$ converges as $n\to\infty$ to a quantity
strictly less than $1/2=\mathbf{E}[L_T(\mathbf{u}^\star)]$, so that
we have a contradiction.

Nonetheless, in this example Assumption 3 and the second line of 
Assumption 2 are easily satisfied, e.g., for the natural choice
$\mathcal{X}=\sigma\{X_k:k\le 0\}$.  However, the first line of
Assumption 2 fails, and indeed no pathwise optimal strategy exists.
\end{example}

It is evident from the previous examples that an assumption on both 
conditional mixing and on complexity of the loss function is needed, in 
general, to ensure existence of a pathwise optimal strategy.  In this 
light, the complete absence of any such assumptions in the full 
information case is surprising. The explanation is simple, however: all 
assumptions of Theorem \ref{thm:sopt} are automatically satisfied in 
this case.

\begin{example}[Full information]
\label{ex:fullinf}
Let $X=(X_k)_{k\in\mathbb{Z}}$ be any stationary ergodic process, and
consider the case of full information: that is, we choose the observation
$\sigma$-field $\mathcal{Y}=\sigma\{X_0\}$ and the loss
$\ell(u,\omega)=l(u,X_1(\omega))$.  Then all assumptions of
Theorem \ref{thm:sopt} are satisfied: indeed, if we choose
$\mathcal{X}=\sigma\{X_k:k\le 0\}$, then 
$\mathcal{Y}_{-\infty,0}=\mathcal{Y}_{-\infty,0}\vee T^k\mathcal{X}$ for
all $k\ge 0$, so that Assumption 3 and the second line of Assumption 2
hold trivially.  Moreover, $\ell_0(u)$ is $T^{-k}\mathcal{X}$-measurable
for every $u\in\mathbb{U}_0$ and $k\ge 1$, and thus the first line
of Assumption 2 holds trivially.  It follows that in the full 
information setting, a pathwise optimal strategy always exists.
\end{example}

In a sense, Theorem \ref{thm:sopt} provides additional insight even in the 
full information setting: it provides an explanation as to why the case 
of full information is so much simpler than the partial information 
setting.  Moreover, Theorem \ref{thm:sopt} provides an explicit 
expression for the optimal asymptotic loss $L^\star$, which
is not given in \cite{Alg94}.\footnote{
	In \cite[Appendix II.B]{Alg94} it is shown that under a
	continuity assumption on the loss function $l$, the optimal asymptotic
	loss in the full information setting is given by
	$\mathbf{E}[\inf_u\mathbf{E}[l(u,X_1)|X_{0},X_{-1},\ldots]]$.
	However, a counterexample is given of a discontinuous loss function
	for which this expression does not yield the optimal asymptotic loss.
	The key difference with the expression for $L^\star$ given
	in Theorem \ref{thm:sopt} is that in the latter the essential
	infimum runs over $u\in\mathbb{U}_0$, while it is implicit in
	\cite{Alg94} that the infimum in the above expression is an 
	essential infimum over $u\in\mathbb{U}_{-\infty,0}$.
	As the counterexample in \cite{Alg94} shows, these quantities 
	need not coincide in the absence of continuity assumptions.}

However, it should be emphasized that Theorem \ref{thm:sopt} does not 
state that the mean-optimal strategy $\mathbf{\tilde u}$ is pathwise 
optimal; it only guarantees the existence of some pathwise optimal 
strategy $\mathbf{u}^\star$.  In contrast, in the full information 
setting, Theorem \ref{thm:algoet} ensures pathwise optimality of the 
mean-optimal strategy.  This is of practical importance, as the 
mean-optimal strategy can in many cases be computed explicitly or by 
efficient numerical methods, while the pathwise optimal strategy 
constructed in the proof of Theorem \ref{thm:sopt} may be difficult to 
compute.  We do not know whether it is possible in the general setting 
of Theorem \ref{thm:sopt} that a pathwise optimal strategy exists, but 
that the mean-optimal strategy $\mathbf{\tilde u}$ is not pathwise 
optimal.  Pathwise optimality of the mean-optimal strategy 
$\mathbf{\tilde u}$ can be shown, however, under somewhat stronger 
assumptions.  The following corollary is proved in section 
\ref{sec:proofsmeanopt} below.

\begin{corollary}
\label{cor:smeanopt}
Suppose that for some $\sigma$-field $\mathcal{X}\subset\mathcal{B}$
\begin{enumerate}
\item $\mathcal{X}\subset T^{-1}\mathcal{X}$.
\item The following martingales converge uniformly:
\begin{align*}
	\esssup_{u\in\mathbb{U}_0}&\big|
	\mathbf{E}[\ell_0(u)|\mathcal{Y}_{-\infty,0}\vee T^{-n}\mathcal{X}]-\ell_0(u)
	\big|\xrightarrow{n\to\infty}0\quad\mbox{in }L^1, \\
	\esssup_{u\in\mathbb{U}_0}&\big|
	\mathbf{E}[\ell_0(u)|\mathcal{Y}_{-\infty,0}\vee T^{n}\mathcal{X}]-
	\mathbf{E}[\ell_0(u)|\textstyle{\bigcap_{k=1}^\infty}
	(\mathcal{Y}_{-\infty,0}\vee T^{k}\mathcal{X})]
	\big|\xrightarrow{n\to\infty}0\quad\mbox{in }L^1, \\
	\esssup_{u\in\mathbb{U}_{-n,0}}&\big|
	\mathbf{E}[\ell_0(u)|\mathcal{Y}_{-n,0}]-
	\mathbf{E}[\ell_0(u)|\mathcal{Y}_{-\infty,0}]
	\big|\xrightarrow{n\to\infty}0 \quad\mbox{a.s.}
\end{align*}
\item The remote past does not affect the present:
$$
	\mathbf{E}[\ell_0(u)|\mathcal{Y}_{-\infty,0}]
	=
	\mathbf{E}[\ell_0(u)|
	\textstyle{\bigcap_{k=1}^\infty}
        (\mathcal{Y}_{-\infty,0}\vee T^{k}\mathcal{X})]
	\qquad\mbox{for all }
	u\in\mathbb{U}_0.
$$
\end{enumerate}
Then the mean-optimal strategy $\mathbf{\tilde u}$ (Lemma 
\ref{lem:meanopt}) satisfies $L_T(\mathbf{\tilde u})\to L^\star$ a.s.\ 
as $T\to\infty$.  In particular, it follows from Theorem \ref{thm:sopt} 
that $\mathbf{\tilde u}$ is pathwise optimal.
\end{corollary}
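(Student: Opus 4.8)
The plan is to deduce everything from Theorem~\ref{thm:sopt} and then reduce the corollary to a single pathwise upper bound. First I would check that the three hypotheses here are stronger than those of Theorem~\ref{thm:sopt}: hypotheses~1 and~2 contain the two hypotheses of the theorem, while hypothesis~3 asserts the pointwise identity $\mathbf{E}[\ell_0(u)|\mathcal{Y}_{-\infty,0}]=\mathbf{E}[\ell_0(u)|\bigcap_{k}(\mathcal{Y}_{-\infty,0}\vee T^k\mathcal{X})]$ for every $u\in\mathbb{U}_0$; taking the essential infimum over a countable family realizing each $\essinf$ then yields the integrated identity required in hypothesis~3 of Theorem~\ref{thm:sopt}. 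Thus Theorem~\ref{thm:sopt} supplies a pathwise optimal $\mathbf{u}^\star$ with $L_T(\mathbf{u}^\star)\to L^\star$ a.s. Applying its pathwise optimality to $\mathbf{u}=\mathbf{\tilde u}$ gives $\liminf_T L_T(\mathbf{\tilde u})\ge L^\star$ a.s. Moreover, once $L_T(\mathbf{\tilde u})\to L^\star$ is known, the final assertion is immediate: for any admissible $\mathbf{u}$, splitting $L_T(\mathbf{u})-L_T(\mathbf{\tilde u})=\{L_T(\mathbf{u})-L_T(\mathbf{u}^\star)\}+\{L_T(\mathbf{u}^\star)-L_T(\mathbf{\tilde u})\}$ and using $L_T(\mathbf{u}^\star),L_T(\mathbf{\tilde u})\to L^\star$ gives $\liminf_T\{L_T(\mathbf{u})-L_T(\mathbf{\tilde u})\}\ge0$. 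Everything therefore reduces to proving $\limsup_T L_T(\mathbf{\tilde u})\le L^\star$ a.s.

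For the upper bound I would pass to shifted coordinates. Since $\mathcal{Y}_{0,k}=T^{-k}\mathcal{Y}_{-k,0}$, one writes $\tilde u_k=\tilde v_k\circ T^k$ with $\tilde v_k\in\mathbb{U}_{-k,0}$, so that $\ell_k(\tilde u_k)=\ell_0(\tilde v_k)\circ T^k$ and, by measure preservation, $\mathbf{E}[\ell_k(\tilde u_k)|\mathcal{Y}_{0,k}]=\phi_k\circ T^k$ with $\phi_k=\mathbf{E}[\ell_0(\tilde v_k)|\mathcal{Y}_{-k,0}]$. Lemma~\ref{lem:meanopt}, transported by $T^{-k}$, gives $\phi_k\le g_k+k^{-1}$ where $g_k=\essinf_{v\in\mathbb{U}_{-k,0}}\mathbf{E}[\ell_0(v)|\mathcal{Y}_{-k,0}]$. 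Using the third displayed convergence of hypothesis~2, namely $\delta_n:=\esssup_{v\in\mathbb{U}_{-n,0}}|\mathbf{E}[\ell_0(v)|\mathcal{Y}_{-n,0}]-\mathbf{E}[\ell_0(v)|\mathcal{Y}_{-\infty,0}]|\to0$ a.s., I would show $g_k\to g:=\essinf_{v\in\mathbb{U}_0}\mathbf{E}[\ell_0(v)|\mathcal{Y}_{-\infty,0}]$ a.s.: the bound $g_k\ge g-\delta_k$ is immediate, and $\limsup_k g_k\le g$ follows by testing against a fixed countable family realizing $g$ and letting $\mathcal{Y}_{-k,0}\uparrow\mathcal{Y}_{-\infty,0}$. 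Since $\mathbf{E}[g]=L^\star$, a generalized (Breiman) ergodic theorem gives $\tfrac1T\sum_{k=1}^T\phi_k\circ T^k\to L^\star$ a.s. Because only $\Lambda\in L^1$ is assumed, the maximal function $\sup_k\mathbf{E}[\Lambda|\mathcal{Y}_{-k,0}]$ need not be integrable, so I would first truncate the loss at level $R$, apply Breiman in the bounded case, and remove the truncation via $\tfrac1T\sum_{k}\Lambda\mathbf{1}_{\Lambda>R}\circ T^k\to\mathbf{E}[\Lambda\mathbf{1}_{\Lambda>R}]\to0$.

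It then remains to control the residual $\tfrac1T\sum_{k=1}^T D_k$ with $D_k=\ell_k(\tilde u_k)-\mathbf{E}[\ell_k(\tilde u_k)|\mathcal{Y}_{0,k}]$ and to show it tends to $0$ a.s.; this is the ergodic counterpart of the tower property and is the heart of the matter. The main obstacle is that $\sum_k D_k$ is \emph{not} a martingale: $\ell_k(\tilde u_k)$ depends on the unobserved state and is not adapted to $(\mathcal{Y}_{0,k})$, so no martingale law of large numbers applies and the conditional mixing must be used. My plan is to invoke $\delta_k\to0$ to replace $D_k$ by the centered term $\tilde D_k=\{\ell_0(\tilde v_k)-\mathbf{E}[\ell_0(\tilde v_k)|\mathcal{Y}_{-\infty,0}]\}\circ T^k$, the moving-average error $\tfrac1T\sum_k\delta_k\circ T^k$ being handled by the same truncation-plus-Birkhoff device, and then to run the conditional ergodic argument underlying Theorem~\ref{thm:sopt}: hypothesis~3 (the remote past of $\mathcal{X}$ does not affect the present conditional losses) together with the two $L^1$ uniform convergences of hypothesis~2 is precisely what forces the time-average of the $\tilde D_k$ to vanish a.s. I expect this last step to be the crux, the point at which mixing and complexity genuinely enter, exactly as in Theorem~\ref{thm:sopt}; the truncation bookkeeping forced by $\Lambda\in L^1$ alone is a secondary but unavoidable technical nuisance.
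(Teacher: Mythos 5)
Your high-level architecture matches the paper's: reduce everything to $L_T(\mathbf{\tilde u})\to L^\star$ a.s., split $L_T(\mathbf{\tilde u})$ into a conditional-expectation average plus a centered residual, prove $g_k\to g$ a.s.\ (your direct argument is a workable substitute for Lemma \ref{lem:neveu}), and handle the residual with the machinery of Theorem \ref{thm:sopt}. Indeed, once hypothesis~3 is used to rewrite $\mathbf{E}[\ell_0(\tilde v_k)|\mathcal{Y}_{-\infty,0}]$ as $\mathbf{E}[\ell_0(\tilde v_k)|\mathcal{G}_0^\infty]$, with $\mathcal{G}_0^\infty:=\bigcap_k(\mathcal{Y}_{-\infty,0}\vee T^k\mathcal{X})$, your $\tilde D_k$ are exactly $\ell_k(\tilde u_k)-\mathbf{E}[\ell_k(\tilde u_k)|\mathcal{G}_k^\infty]$, whose Cesàro average vanishes a.s.\ by Corollary \ref{cor:mtgdiff}. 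The genuine gap is in the other two pieces, both of which require Cesàro-averaging conditional expectations relative to the \emph{growing} $\sigma$-fields $\mathcal{Y}_{-k,0}$: the main term $\frac1T\sum_k\phi_k\circ T^k$ and the transfer error $\frac1T\sum_k\delta_k\circ T^k$. Breiman/Maker requires $\sup_k|\phi_k|\in L^1$ (resp.\ $\sup_k\delta_k\in L^1$); since $|\phi_k|\le\mathbf{E}[\Lambda|\mathcal{Y}_{-k,0}]$, this supremum is a Doob maximal function of an $L^1$ variable, which is integrable only under an $L\log L$-type condition --- this is precisely why Theorem \ref{thm:algoet} assumes $\Lambda\in L\log L$, whereas the corollary only assumes $\Lambda\in L^1$. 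Your truncation-plus-Birkhoff device does not repair this: truncating the loss at level $R$ leaves error terms of the form $\mathbf{E}[\Lambda\mathbf{1}_{\Lambda>R}|\mathcal{Y}_{-k,0}]\circ T^k$, which are again conditional expectations relative to growing fields, \emph{not} the fixed function $\Lambda\mathbf{1}_{\Lambda>R}$ composed with $T^k$; Birkhoff does not apply to them, and without domination one can only bound such nonnegative averages in probability (Markov/Fatou), not their a.s.\ $\limsup$. (The same device does work inside Lemma \ref{lem:mtgdiff} of the paper only because there the conditioning fields $\mathcal{G}_k^j=T^{-k}\mathcal{G}_0^j$ are shifts of a \emph{fixed} field.)

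The paper's proof shows how to sidestep this, and it explains why hypothesis~3 is stated as a pointwise identity rather than the integrated identity of Theorem \ref{thm:sopt}: all transfers are performed at the level of a.s.-convergent sequences indexed by $k$, \emph{before} any averaging, so that $\delta_k$ is never Cesàro-averaged at all. Concretely: by Lemma \ref{lem:neveu} and the definition of $\mathbf{\tilde u}$ one has $\phi_k\to g$ a.s.; the third line of hypothesis~2 then gives $\mathbf{E}[\ell_0(\tilde v_k)|\mathcal{Y}_{-\infty,0}]\to g$ a.s.; hypothesis~3, applied pointwise to each $\tilde v_k\in\mathbb{U}_0$, identifies this sequence with $\psi_k:=\mathbf{E}[\ell_0(\tilde v_k)|\mathcal{G}_0^\infty]$, which is dominated by the single integrable function $\mathbf{E}[\Lambda|\mathcal{G}_0^\infty]$ because $\mathcal{G}_0^\infty$ is fixed and $\mathcal{G}_k^\infty=T^{-k}\mathcal{G}_0^\infty$. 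Maker's theorem is then applied exactly once, to $\psi_k$, giving $\frac1T\sum_k\mathbf{E}[\ell_k(\tilde u_k)|\mathcal{G}_k^\infty]\to L^\star$ a.s.\ with no truncation needed, and Corollary \ref{cor:mtgdiff} disposes of the residual; your concluding assembly (lower bound from Theorem \ref{thm:sopt}, hence pathwise optimality of $\mathbf{\tilde u}$) then goes through verbatim. In short: your decomposition is centered at the wrong $\sigma$-field, and re-centering at $\mathcal{G}_k^\infty$ via hypothesis~3 is not a cosmetic choice but the step that makes the $\Lambda\in L^1$ hypothesis sufficient.
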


The assumptions of Corollary \ref{cor:smeanopt} are stronger than those 
of Theorem \ref{thm:sopt} in two respects.  First, Assumption 3 is 
slightly strengthened; however, this is a very mild requirement.  More 
importantly, a third martingale is assumed to converge uniformly 
(pathwise!) in Assumption 2.  The latter is not an innocuous 
requirement: while the assumption holds in many cases of interest, 
substantial regularity of the loss function is needed (see section 
\ref{sec:unicplx} for further discussion).  In particular, this 
requirement is not automatically satisfied in the case of full 
information, and Theorem \ref{thm:algoet} therefore does not follow in 
its entirety from our results.  It remains an open question whether it 
is possible to establish pathwise optimality of the mean-optimal strategy 
$\mathbf{\tilde u}$ under a substantial weakening of the assumptions
of Corollary \ref{cor:smeanopt}.

A particularly simple regularity assumption on the loss is that the 
decision space $U$ is finite.  In this case uniform convergence is 
immediate, so that the assumptions of Corollary \ref{cor:smeanopt} 
reduce essentially to the $\mathcal{Y}_{-\infty,0}$-conditional 
$K$-property. Therefore, evidently Corollary \ref{cor:smeanopt} implies 
Theorem \ref{thm:soptfinite}.  More general conditions that ensure the 
validity of the requisite assumptions will be discussed in section 
\ref{sec:verifying}.

\subsection{Weak pathwise optimality}

In the previous section, we have seen that a pathwise optimal strategy 
$\mathbf{u}^\star$ exists under general assumptions.  However, 
unlike in the full information case, it is not clear whether in general 
(without a nontrivial complexity assumption) the mean-optimal strategy 
$\mathbf{\tilde u}$ is pathwise optimal.  In the present section, we 
will aim to obtain some additional insight into this issue by 
considering the notion of weak pathwise optimality (Definition 
\ref{def:wopt}) that is intermediate between pathwise optimality and 
mean optimality.  This notion is more regularly behaved than pathwise 
optimality; in particular, it is straightforward to prove the following 
simple result.

\begin{lemma}
\label{lem:wkimplmn}
Suppose that a weakly pathwise optimal strategy $\mathbf{u}^\star$ exists.
Then the mean-optimal strategy $\mathbf{\tilde u}$ is also weakly pathwise
optimal.
\end{lemma}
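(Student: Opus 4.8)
The plan is to derive weak pathwise optimality of $\mathbf{\tilde u}$ by transferring it from the given weakly pathwise optimal strategy $\mathbf{u}^\star$, using the mean optimality of $\mathbf{\tilde u}$ together with a uniform integrability argument to bridge the gap. Write $D_T := L_T(\mathbf{u}^\star) - L_T(\mathbf{\tilde u})$. The crux is to establish the \emph{reverse} comparison
$$
	\mathbf{P}[D_T \ge -\varepsilon] \xrightarrow{T\to\infty} 1
	\qquad\mbox{for every }\varepsilon>0, \qquad (*)
$$
that is, that $\mathbf{\tilde u}$ asymptotically performs no worse than $\mathbf{u}^\star$ in the weak sense. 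Once $(*)$ is in hand, the conclusion is routine: for an arbitrary admissible strategy $\mathbf{u}$ and any $\varepsilon>0$, weak pathwise optimality of $\mathbf{u}^\star$ (Definition \ref{def:wopt}) gives $L_T(\mathbf{u}) - L_T(\mathbf{u}^\star) \ge -\varepsilon/2$ with probability tending to $1$, while $(*)$ gives $L_T(\mathbf{u}^\star) - L_T(\mathbf{\tilde u}) \ge -\varepsilon/2$ with probability tending to $1$; adding these on the intersection of the two high-probability events yields $\mathbf{P}[L_T(\mathbf{u}) - L_T(\mathbf{\tilde u}) \ge -\varepsilon]\to 1$, which is exactly weak pathwise optimality of $\mathbf{\tilde u}$.

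To prove $(*)$ I would assemble three facts about $D_T$. First, applying the weak pathwise optimality of $\mathbf{u}^\star$ to the admissible strategy $\mathbf{\tilde u}$ controls the upper tail: $\mathbf{P}[D_T \le \varepsilon]\to 1$ for every $\varepsilon>0$. Second, mean optimality controls the expectation: since $\mathbf{\tilde u}$ is mean optimal (Lemma \ref{lem:meanopt}), testing Definition \ref{def:mopt} against the admissible strategy $\mathbf{u}^\star$ gives $\liminf_{T\to\infty}\mathbf{E}[D_T] \ge 0$. Third, the family $\{D_T\}$ is uniformly integrable: since $|\ell(u,\cdot)|\le\Lambda$ with $\Lambda\in L^1$, one has $|D_T|\le 2M_T$ where $M_T := \frac1T\sum_{k=1}^T\Lambda\circ T^k$, and $M_T\to\mathbf{E}[\Lambda]$ in $L^1$ by the ergodic theorem, whence $\{M_T\}$—and therefore $\{D_T\}$—is uniformly integrable.

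The main work, and the only genuinely delicate point, is to combine these ingredients into control of the \emph{lower} tail of $D_T$. Fixing $\varepsilon>0$ and setting $A_T := \{D_T\le\varepsilon\}$, I would estimate the positive part by $D_T^+\le\varepsilon$ on $A_T$ and $D_T^+\le 2M_T$ on $A_T^c$; since $\mathbf{P}[A_T^c]\to 0$ and $\{M_T\}$ is uniformly integrable, $\mathbf{E}[2M_T\mathbf{1}_{A_T^c}]\to 0$, giving $\limsup_{T\to\infty}\mathbf{E}[D_T^+]\le\varepsilon$. Writing $\mathbf{E}[D_T^-] = \mathbf{E}[D_T^+] - \mathbf{E}[D_T]$ and invoking $\liminf_{T\to\infty}\mathbf{E}[D_T]\ge 0$ then yields $\limsup_{T\to\infty}\mathbf{E}[D_T^-]\le\varepsilon$; as $\varepsilon>0$ was arbitrary, $\mathbf{E}[D_T^-]\to 0$. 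Finally Markov's inequality gives $\mathbf{P}[D_T < -\delta]\le\mathbf{E}[D_T^-]/\delta\to 0$ for every $\delta>0$, which is precisely $(*)$. The subtlety to keep an eye on is that the loss is only dominated in $L^1$ rather than bounded, so convergence in probability of the upper tail together with control of the mean does not by itself control the lower tail; it is exactly the uniform integrability supplied by the ergodic theorem that closes this gap.
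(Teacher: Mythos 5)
Your proof is correct and follows essentially the same route as the paper's: both rest on the same three ingredients (weak pathwise optimality of $\mathbf{u}^\star$ tested against $\mathbf{\tilde u}$, mean optimality of $\mathbf{\tilde u}$ tested against $\mathbf{u}^\star$, and uniform integrability of $\frac{1}{T}\sum_{k=1}^T\Lambda\circ T^k$ via the ergodic theorem), and your bound on $\mathbf{E}[D_T^+]$ is, up to a sign convention, exactly the paper's truncation estimate for $\mathbf{E}[(L_T(\mathbf{\tilde u})-L_T(\mathbf{u}^\star))_-]$. The only differences are cosmetic: you finish via Markov's inequality and an intersection of events, where the paper phrases the same conclusion as $L^1$ convergence of $L_T(\mathbf{\tilde u})-L_T(\mathbf{u}^\star)$.
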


\begin{proof}
Let $\Lambda_T=\frac{1}{T}\sum_{k=1}^T\Lambda\circ T^k$. 
As $|L_T(\mathbf{u})|\le\Lambda_T$ for any strategy $\mathbf{u}$,
we have
$$
	\mathbf{E}[(L_T(\mathbf{\tilde u})-L_T(\mathbf{u}^\star))_-] \le
	\varepsilon\, \mathbf{P}[L_T(\mathbf{\tilde u})-L_T(\mathbf{u}^\star)\ge-\varepsilon]
	+ \mathbf{E}[2\Lambda_T\,
	\mathbf{1}_{L_T(\mathbf{\tilde u})-L_T(\mathbf{u}^\star)<-\varepsilon}]
$$
for any $\varepsilon>0$.  Note that the sequence $(\Lambda_T)_{T\ge 
1}$ is uniformly integrable as $\Lambda_T\to\mathbf{E}[\Lambda]$ in 
$L^1$ by the ergodic theorem. Therefore, using weak pathwise optimality 
of $\mathbf{u}^\star$, it follows that $\mathbf{E}[(L_T(\mathbf{\tilde 
u})-L_T(\mathbf{u}^\star))_-] \to 0$ as $T\to\infty$.  We therefore
have
$$
	\limsup_{T\to\infty}
	\mathbf{E}[|L_T(\mathbf{\tilde u})-L_T(\mathbf{u}^\star)|] =
	-\liminf_{T\to\infty}
	\{\mathbf{E}[L_T(\mathbf{u}^\star)]-\mathbf{E}[L_T(\mathbf{\tilde u})]
	\} \le 0
$$
by mean-optimality of $\mathbf{\tilde u}$.  It follows easily that
$\mathbf{\tilde u}$ is also pathwise optimal.
\qed\end{proof}

While Theorem \ref{thm:sopt} does not ensure that the mean-optimal 
strategy $\mathbf{\tilde u}$ is pathwise optimal, the previous lemma 
guarantees that $\mathbf{\tilde u}$ is at least weakly pathwise optimal. 
However, we will presently show that the latter conclusion may follow 
under considerably weaker assumptions than those of of Theorem 
\ref{thm:sopt}.  Indeed, just as pathwise optimality was established for 
conditional $K$-automorphisms, we will establish weak optimality for 
conditionally weakly mixing automorphisms.

Let us begin by developing a general result on weak pathwise optimality,
Theorem \ref{thm:wkopt} below, that plays the role of Theorem \ref{thm:sopt}
in the present setting.  The essential assumption of this general result
is that the conditional weak mixing property (Definition \ref{def:cwkmix})
holds uniformly with respect to the loss function $\ell$.  For
simplicity of notation, let us define as in Theorem \ref{thm:sopt}
the optimal asymptotic loss
$$
	L^\star :=
	\mathbf{E}\Big[
	\essinf_{u\in \mathbb{U}_0}
	\mathbf{E}[\ell_0(u)|\mathcal{Y}_{-\infty,0}]
	\Big]
$$
(let us emphasize, however, the Assumption 3 of Theorem \ref{thm:sopt} 
need not hold in the present setting!)  In addition, let us define
the modified loss functions
$$
	\bar\ell_0(u) := 
	\ell_0(u)-\mathbf{E}[\ell_0(u)|\mathcal{Y}_{-\infty,0}],\qquad
	\bar\ell_0^M(u) := 
	\ell_0(u)\mathbf{1}_{\Lambda\le M}
	-\mathbf{E}[\ell_0(u)\mathbf{1}_{\Lambda\le M}
	|\mathcal{Y}_{-\infty,0}].
$$
The proof of the following theorem will be
given in section \ref{sec:proofwkopt}.

\begin{theorem}
\label{thm:wkopt}
Suppose that the uniform conditional mixing assumption
$$
	\lim_{M\to\infty}
	\limsup_{T\to\infty}
	\bigg\|
	\frac{1}{T}\sum_{k=1}^T
	\esssup_{u,u'\in\mathbb{U}_{0}}
	|\mathbf{E}[\{\bar\ell_0^M(u)\circ T^{-k}\} ~\bar\ell_0^M(u')|
	\mathcal{Y}_{-\infty,0}]|
	\bigg\|_1 = 0
$$
holds. Then the mean-optimal strategy $\mathbf{\tilde u}$
is weakly pathwise optimal, and the optimal long time-average loss 
satisfies the ergodic theorem $L_T(\mathbf{\tilde u})\to L^\star$ in $L^1$.
\end{theorem}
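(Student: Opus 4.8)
The plan is to isolate the two ingredients hidden in the ``ergodic tower property'': a concentration step, which is where the mixing hypothesis does its work, and a convergence-of-values step, governed by martingale convergence and the ordinary ergodic theorem. For any admissible $\mathbf{u}$ I would write
$$
	L_T(\mathbf{u}) = \frac{1}{T}\sum_{k=1}^T\bar\ell_k(u_k)
	+ \frac{1}{T}\sum_{k=1}^T\mathbf{E}[\ell_k(u_k)\mid\mathcal{Y}_{-\infty,k}],
$$
where $\bar\ell_k(u):=\ell_k(u)-\mathbf{E}[\ell_k(u)\mid\mathcal{Y}_{-\infty,k}]$, the conditional expectation of a random $u_k$ being read in the frozen sense so that $\mathbf{E}[\bar\ell_k(u_k)\mid\mathcal{Y}_{-\infty,k}]=0$. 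The key structural point is that one centers relative to the \emph{entire} past $\mathcal{Y}_{-\infty,k}$ at time $k$, not relative to the observed $\mathcal{Y}_{0,k}$. I will establish (A) that the centered average tends to $0$ in probability \emph{uniformly} over admissible $\mathbf{u}$, and (B) that the conditional-mean average is bounded below by $L^\star-o_P(1)$ for every $\mathbf{u}$ while it converges to $L^\star$ for $\mathbf{\tilde u}$. Granting (A) and (B), the lower bound $L_T(\mathbf{u})\ge L^\star-o_P(1)$ holds simultaneously for all competitors, whereas $L_T(\mathbf{\tilde u})\to L^\star$; weak pathwise optimality is then immediate, and the $L^1$ convergence follows from the uniform integrability of $(\Lambda_T)$.

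For (A), which is the heart of the argument and the only place the hypothesis enters, I first truncate, setting $\bar\ell_k^M(u)=\bar\ell_0^M(u)\circ T^k$ and bounding the remainder $\bar\ell_k(u)-\bar\ell_k^M(u)$ by $\Lambda\circ T^k\,\mathbf{1}_{\Lambda\circ T^k>M}$ together with its conditional expectation, whose time-average has $L^1$ norm at most $2\,\mathbf{E}[\Lambda\mathbf{1}_{\Lambda>M}]\to 0$ by the ergodic theorem, uniformly in $T$ and $\mathbf{u}$. For the bounded part I expand
$$
	\mathbf{E}\Big[\Big(\tfrac{1}{T}\textstyle\sum_{k=1}^T\bar\ell_k^M(u_k)\Big)^{2}\Big]
	= \frac{1}{T^2}\sum_{j,k}\mathbf{E}[\bar\ell_j^M(u_j)\,\bar\ell_k^M(u_k)].
$$
The $T$ diagonal terms contribute $O(M^2/T)$. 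For $j<k$ with $m=k-j$, conditioning on $\mathcal{Y}_{-\infty,k}$ (w.r.t.\ which $u_j,u_k$ are measurable) and using the freezing lemma together with measure preservation (so that $T^{k}$ carries the lag-$m$ conditional covariance at times $-m,0$ to times $j,k$) gives
$$
	|\mathbf{E}[\bar\ell_j^M(u_j)\bar\ell_k^M(u_k)]|
	\le \mathbf{E}\Big[\esssup_{u,u'\in\mathbb{U}_0}
	\big|\mathbf{E}[\{\bar\ell_0^M(u)\circ T^{-m}\}\,\bar\ell_0^M(u')\mid\mathcal{Y}_{-\infty,0}]\big|\Big]
	=: \mathbf{E}[C_m^M],
$$
exactly the lag-$m$ quantity in the hypothesis. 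Summing, the off-diagonal contribution is at most $\tfrac{2}{T}\sum_{m=1}^{T}\mathbf{E}[C_m^M]$, and since the summands are nonnegative the $L^1$ norm passes through the sum, so the hypothesis reads $\lim_M\limsup_T\tfrac1T\sum_{m=1}^T\mathbf{E}[C_m^M]=0$. Hence $\tfrac1T\sum_k\bar\ell_k^M(u_k)\to 0$ in $L^2$ uniformly in $\mathbf{u}$ in the $\lim_M\limsup_T$ sense; with the truncation estimate this yields (A).

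For (B), since $u_k\in\mathbb{U}_{0,k}$ corresponds under $T^{k}$ to $v\in\mathbb{U}_{-k,0}$, stationarity gives $\mathbf{E}[\ell_k(u_k)\mid\mathcal{Y}_{-\infty,k}]\ge\Phi_k\circ T^k$ with $\Phi_k=\essinf_{v\in\mathbb{U}_{-k,0}}\mathbf{E}[\ell_0(v)\mid\mathcal{Y}_{-\infty,0}]\downarrow\Phi:=\essinf_{v\in\mathbb{U}_0}\mathbf{E}[\ell_0(v)\mid\mathcal{Y}_{-\infty,0}]$ and $\mathbf{E}[\Phi]=L^\star$. Splitting $\Phi_k\circ T^k=\Phi\circ T^k+(\Phi_k-\Phi)\circ T^k$, the ergodic theorem handles the first piece and a Cesàro estimate using $\|\Phi_k-\Phi\|_1\to 0$ the second, so $\tfrac1T\sum_k\Phi_k\circ T^k\to L^\star$ in $L^1$, giving the uniform lower bound. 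For the upper bound on $\mathbf{\tilde u}$ I invoke Lemma \ref{lem:meanopt}: taking expectations and shifting, $\mathbf{E}[L_T(\mathbf{\tilde u})]\le\tfrac1T\sum_k(A_k+k^{-1})$ with $A_k=\mathbf{E}[\essinf_{v\in\mathbb{U}_{-k,0}}\mathbf{E}[\ell_0(v)\mid\mathcal{Y}_{-k,0}]]$. As the finite-horizon family $\{\mathbf{E}[\ell_0(v)\mid\mathcal{Y}_{-k,0}]:v\in\mathbb{U}_{-k,0}\}$ is directed downward with $\mathbb{U}_{-k,0}$-measurable selectors, $A_k=\inf_{v\in\mathbb{U}_{-k,0}}\mathbf{E}[\ell_0(v)]\downarrow\inf_{v\in\mathbb{U}_0}\mathbf{E}[\ell_0(v)]$. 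Combined with the lower bound this forces $\mathbf{E}[L_T(\mathbf{\tilde u})]\to L^\star$; and since $(L_T(\mathbf{\tilde u})-L^\star)_-\to 0$ in probability with $(\Lambda_T)$ uniformly integrable, $L_T(\mathbf{\tilde u})\to L^\star$ in $L^1$.

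The main obstacle is the final identity in (B): one must verify $\inf_{v\in\mathbb{U}_0}\mathbf{E}[\ell_0(v)]=L^\star=\mathbf{E}[\essinf_{v\in\mathbb{U}_0}\mathbf{E}[\ell_0(v)\mid\mathcal{Y}_{-\infty,0}]]$. The inequality $\ge$ is automatic, but equality is \emph{not} a formal consequence of directedness: the selector attaining the pointwise essential infimum over $v\in\mathbb{U}_0$ is $\mathcal{Y}_{-\infty,0}$-measurable, whereas decisions in $\mathbb{U}_0$ may use only a \emph{finite} past---this is precisely the distinction $\mathbb{U}_0\subsetneq\mathbb{U}_{-\infty,0}$ emphasized in the setup. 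I would resolve it by measurable approximation: given a countable minimizing family, approximate the $\mathcal{Y}_{-\infty,0}$-measurable selection sets in $L^1$ by sets in $\mathcal{Y}_{-m,0}$ for large $m$, producing finite-past decisions whose mean loss approaches $\mathbf{E}[\Phi]$, the error being controlled by uniform integrability of the dominating conditional losses. The remaining delicate points are measure-theoretic rather than conceptual---the joint measurability and freezing needed to replace the random $u_j,u_k$ by the esssup over $\mathbb{U}_0$ in (A), and the commutation of the essential infimum with $T^k$. I expect the mixing hypothesis to be used \emph{only} in (A); the value convergence in (B) rests on martingale convergence, the ergodic theorem, and this approximation, requiring no mixing.
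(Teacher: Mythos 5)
Your proposal is correct, and its two core steps coincide with the paper's own proof: your step (A) is exactly the paper's Lemma \ref{lem:wkerg} (truncation at level $M$, second-moment expansion, freezing plus stationarity to dominate the off-diagonal terms by the lag-$m$ mixing coefficients, diagonal handled as $O(M^2/T)$), and your lower bound in (B) is Corollary \ref{cor:wkerg}. Where you genuinely deviate is in proving $L_T(\mathbf{\tilde u})\to L^\star$. The paper's Lemma \ref{lem:neveu0} rests on Lemma \ref{lem:neveu}, a Neveu-style uniform supermartingale convergence result showing $\essinf_{u\in\mathbb{U}_{-k,0}}\mathbf{E}[\ell_0(u)|\mathcal{Y}_{-k,0}]\to\essinf_{u\in\mathbb{U}_0}\mathbf{E}[\ell_0(u)|\mathcal{Y}_{-\infty,0}]$ a.s.\ and in $L^1$ (the limit identified via an exchange of doubly monotone limits of expectations), followed by a Cesàro/Maker argument. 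You instead work purely at the level of expectations: you exchange $\essinf$ and $\mathbf{E}$ at each \emph{finite} horizon by directedness (valid there, since the selection sets are $\mathcal{Y}_{-k,0}$-measurable, so the combined decision stays in $\mathbb{U}_{-k,0}$), reducing everything to the identity $\inf_{v\in\mathbb{U}_0}\mathbf{E}[\ell_0(v)]=L^\star$; you correctly flag that this identity is not formal---directedness fails over $\mathbb{U}_0$ because the optimal selection sets are only $\mathcal{Y}_{-\infty,0}$-measurable, precisely the $\mathbb{U}_0\subsetneq\mathbb{U}_{-\infty,0}$ distinction---and you repair it by taking a countable minimizing family, a finite truncation, and approximating the $\mathcal{Y}_{-\infty,0}$-measurable selection partition by a partition in $\mathcal{Y}_{-m,0}$, with the error controlled by $\mathbf{E}[\Lambda\mathbf{1}_{A}]$ over sets $A$ of small probability. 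That argument is sound, and it is more elementary than the paper's: no supermartingale theory is needed, because weak pathwise optimality only requires statements in $L^1$ and in probability. What the paper's stronger Lemma \ref{lem:neveu} buys is the almost-sure convergence of the conditional essential infima, which is what the pathwise results (Theorem \ref{thm:sopt} via Corollary \ref{cor:neveu}, and Corollary \ref{cor:smeanopt}) actually need; your expectation-level substitute would not suffice there, but for Theorem \ref{thm:wkopt} it does.
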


\begin{remark}
We have assumed throughout that the loss function $\ell$ is dominated in 
$L^1$.  If the loss is in fact dominated in $L^2$, that is,
$|\ell(u,\omega)|\le\Lambda(\omega)$ with $\Lambda\in L^2$, then the
assumption of Theorem \ref{thm:wkopt} is evidently implied by the
natural assumption
$$
	\frac{1}{T}\sum_{k=1}^T
	\esssup_{u,u'\in\mathbb{U}_{0}}
	|\mathbf{E}[\{\bar\ell_0(u)\circ T^{-k}\}~\bar\ell_0(u')|
	\mathcal{Y}_{-\infty,0}]|
	\xrightarrow{T\to\infty}0\quad\mbox{in }L^1, 
$$ 
and in this case $L_T(\mathbf{\tilde u})\to L^\star$ in $L^2$ (by 
dominated convergence). The additional truncation in Theorem 
\ref{thm:wkopt} is included only to obtain a result that holds in $L^1$. 
\end{remark}

Conceptually, as in Theorem \ref{thm:sopt}, the assumption of Theorem 
\ref{thm:wkopt} combines a conditional mixing assumption and a 
complexity assumption.  Indeed, the conditional weak mixing property 
relative to $\mathcal{Y}_{-\infty,0}$ (Definition \ref{def:cwkmix}) 
implies that
$$
	\frac{1}{T}\sum_{k=1}^T
	|\mathbf{E}[\{f\circ T^{-k}\}~g|\mathcal{Y}_{-\infty,0}]
	- \mathbf{E}[f\circ T^{-k}|\mathcal{Y}_{-\infty,0}]\,
	\mathbf{E}[g|\mathcal{Y}_{-\infty,0}]
	|
	\xrightarrow{T\to\infty}0
	\quad\mbox{in }L^1
$$
for every $f,g\in L^2$ (indeed, for simple functions $f,g$ this follows 
directly from the definition, and the claim for general $f,g$ follows by 
approximation in $L^2$).  Therefore, in the absence of the essential supremum,
the assumption of
Theorem \ref{thm:wkopt} reduces essentially to the assumption that the
dynamical system $(\Omega,\mathcal{B},\mathbf{P},T)$ is conditionally
weak mixing relative to $\mathcal{Y}_{-\infty,0}$.  
However, Theorem \ref{thm:wkopt} requires in addition that
the convergence in the definition of the conditional weak mixing property
holds uniformly with respect to the possible decisions $u\in\mathbb{U}_0$.
This will be the case when the loss function $\ell$ is not too complex
(cf.\ section \ref{sec:verifying}).
For example, in the extreme case where the decision space $U$ is finite,
uniformity is automatic, and thus Theorem \ref{thm:woptfinite} in the
introduction follows immediately from Theorem \ref{thm:wkopt}.

Recall that a pathwise optimal strategy is necessarily weakly pathwise 
optimal.  This is reflected, for example, in Theorems \ref{thm:soptfinite}
and \ref{thm:woptfinite}: indeed, note that 
\begin{align*}
	&
	\|\mathbf{P}[A\cap T^kB|\mathcal{Z}]-
	\mathbf{P}[A|\mathcal{Z}]\,\mathbf{P}[T^kB|\mathcal{Z}]\|_1
	\\
	&=
	\|\mathbf{E}[\{\mathbf{1}_A-\mathbf{P}[A|\mathcal{Z}]\}\,
	\mathbf{1}_{T^kB}|\mathcal{Z}]\|_1
	\\
	&\le
	\|\mathbf{E}[\{\mathbf{1}_A-\mathbf{P}[A|\mathcal{Z}]\}\,
	\mathbf{P}[T^kB|T^{k-n}\mathcal{X}]|\mathcal{Z}]\|_1
	+
	\|\mathbf{1}_{T^kB}-\mathbf{P}[T^kB|T^{k-n}\mathcal{X}]\|_1
	\\
	&\le
	\|\mathbf{P}[A|\mathcal{Z}\vee T^{k-n}\mathcal{X}]
	-\mathbf{P}[A|\mathcal{Z}]\|_1
	+
	\|\mathbf{1}_{B}-\mathbf{P}[B|T^{-n}\mathcal{X}]\|_1
\end{align*}
for any $n,k$, so that the conditional $K$-property implies the 
conditional weak mixing property (relative to any $\sigma$-field 
$\mathcal{Z}$) by letting $k\to\infty$, then $n\to\infty$.  Along the 
same lines, one can show that a slight variation of the assumptions of 
Theorem \ref{thm:sopt} imply the assumption of Theorem \ref{thm:wkopt} 
(modulo minor issues of truncation, which could have been absorbed in 
Theorem \ref{thm:sopt} also at the expense of heavier notation).  It is 
not entirely obvious, at first sight, how far apart the conclusions of our main 
results really are.  For example, in the setting of full information, 
cf.\ Example \ref{ex:fullinf}, the assumption 
of Theorem \ref{thm:wkopt} holds automatically (as then 
$\bar\ell_0^M(u)\circ T^{-k}$ is $\mathcal{Y}_{-\infty,0}$-measurable 
for every $u\in\mathbb{U}_0$ and $k\ge 1$).  Moreover, the reader can 
easily verify that in all the examples we have given where no pathwise 
optimal strategy exists (Examples \ref{ex:merhav}, \ref{ex:filt}, 
\ref{ex:asp2}, \ref{ex:asp2ctd}), even the existence of a weakly 
pathwise optimal strategy fails.  It is therefore tempting to assume
that in a typical situation where a weakly pathwise optimal
strategy exists, there will likely also be a pathwise optimal strategy.
The following example, which is a manifestation of a rather surprising
result in ergodic theory due to Conze \cite{Con73},
provides some evidence to the contrary.  

\begin{example}[Generic transformations]
\label{ex:conze}
In this example, we fix the probability space $(\Omega,\mathcal{B},
\mathbf{P})$, where $\Omega=[0,1]$ with its Borel
$\sigma$-field $\mathcal{B}$ and the Lebesgue measure $\mathbf{P}$.
We will consider the decision space $U=\{0,1\}$ and loss function
$\ell$ defined as
$$
	\ell(u,\omega) = -u\,(\mathbf{1}_{[0,1/2]}(\omega)-1/2)
	\qquad\mbox{for }(u,\omega)\in U\times\Omega.
$$
Moreover, we will consider the setting of blind decisions,
that is, $\mathcal{Y}$ is trivial.

We have not yet defined a transformation $T$.  Our aim is to prove the 
following: \emph{for a generic invertible measure-preserving 
transformation $T$, there is a mean-optimal strategy $\mathbf{\tilde u}$ 
that is weakly pathwise optimal but not pathwise optimal}.  This 
shows not only that there can be a substantial gap between Theorems 
\ref{thm:soptfinite} and \ref{thm:woptfinite}, but that this is in fact the 
typical situation (at least in the sense of weak topology).

Let us recall some basic notions.  Denote by $\mathscr{T}$ the set
of all invertible measure-preserving transformations of 
$(\Omega,\mathcal{B},\mathbf{P})$.  The weak topology on $\mathscr{T}$
is the topology generated by the basic neighborhoods
$B(T_0,B,\varepsilon)=\{T\in\mathscr{T}:\mathbf{P}[TB\mathop{\triangle}T_0B]<
\varepsilon\}$ for all $T_0\in\mathscr{T}$, $B\in\mathcal{B}$,
$\varepsilon>0$.  A property is said to hold for a \emph{generic}
transformation if it holds for every transformation $T$ in a dense
$G_\delta$ subset of $\mathscr{T}$.  A well-known result of Halmos
\cite{Hal44} states that a generic transformation is weak mixing.
Therefore, for a generic transformation, any mean-optimal strategy
$\mathbf{\tilde u}$ is weakly pathwise optimal
by Theorem \ref{thm:woptfinite}.  This proves the first
part of our statement.

Of course, in the present setting, 
$\mathbf{E}[\ell_k(u)|\mathcal{Y}_{0,k}]= \mathbf{E}[\ell_k(u)]=0$ for 
every decision $u\in U$.  Therefore, \emph{every} admissible strategy 
$\mathbf{u}$ is mean-optimal, and the optimal mean loss is given by 
$L^\star=0$, regardless of the choice of transformation 
$T\in\mathscr{T}$.  It is natural to choose a stationary strategy 
$\mathbf{\tilde u}$ (for example, $\tilde u_k=1$ for all $k$) so that 
$\lim_{T\to\infty}L_T(\mathbf{\tilde u})=L^\star$ a.s.  We will show 
that for a generic transformation, the strategy $\mathbf{\tilde u}$ is 
not pathwise optimal.  To this end, it evidently suffices to find 
another strategy $\mathbf{u}$ such that 
$\liminf_{T\to\infty}L_T(\mathbf{u})<L^\star$ with positive probability. 

To this end, we use the following result of Conze that can be read off 
from the proof of \cite[Theorem 5]{Con73}: there exists a sequence 
$n_k\uparrow\infty$ with $k/n_k\to 1/2$ such that 
for every $0<\alpha<1$ and $1/2<\lambda<1$, a generic transformation $T$
satisfies
$$
	\mathbf{P}\bigg[
	\limsup_{N\to\infty}
	\frac{1}{N}\sum_{k=1}^N \mathbf{1}_{[0,1/2]}\circ T^{n_k}
	\ge \lambda
	\bigg]\ge 1-\alpha.
$$
Define the strategy $\mathbf{u}$ such that
$u_n=1$ if $n=n_k$ for some $k$, and $u_n=0$ otherwise.  Then,
for a generic transformation $T$, we have with probability
at least $1-\alpha$
$$
	\liminf_{T\to\infty}L_{n_T}(\mathbf{u}) =
	-
	\limsup_{T\to\infty}
	\frac{1}{n_T}\sum_{k=1}^T
	(\mathbf{1}_{[0,1/2]}\circ T^{n_k}-1/2) 
	\le -\frac{2\lambda-1}{4}.
$$
In words, we have shown that for a generic transformation $T$, the
time-average loss of the mean-optimal strategy $\mathbf{\tilde u}$ 
exceeds that of the strategy $\mathbf{u}$ infinitely often by
almost $1/4$ with almost unit probability.  Thus the mean-optimal
strategy $\mathbf{\tilde u}$ fails to be pathwise optimal in a very
strong sense, and our claim is established.
\end{example}

Example \ref{ex:conze} only shows that there is a mean-optimal strategy 
$\mathbf{\tilde u}$ that is weakly pathwise optimal but not pathwise 
optimal. It does not make any statement about whether or not a pathwise 
optimal strategy $\mathbf{u}^\star$ actually exists.  However, we do not 
know of any mechanism that might lead to pathwise optimality in such a 
setting. We therefore conjecture that for a generic transformation a 
pathwise optimal strategy in fact fails to exist at all, so that (unlike 
in the full information setting) pathwise optimality and weak pathwise 
optimality are really distinct notions.

The result of Conze used in Example \ref{ex:conze} originates from a 
deep problem in ergodic theory that aims to understand the validity of 
individual ergodic theorems for subsequences, cf.\ \cite{Con73,BL85} and 
the references therein. A general characterization of such ergodic 
properties does not appear to exist, which suggests that the pathwise 
optimality property may be difficult to characterize beyond general 
sufficient conditions such as Theorem \ref{thm:sopt}.  In contrast, the
weak pathwise optimality property is much more regularly behaved.
The following theorem, which will be proved in section \ref{sec:proofwkmixcv}
below, provides a complete characterization of weak pathwise optimality
in the special case that the observation field $\mathcal{Y}$
is invariant.

\begin{theorem}
\label{thm:wkmixcv}
Let $(\Omega,\mathcal{B},\mathbf{P},T)$ be an ergodic dynamical system,
and suppose that $(\Omega,\mathcal{B},\mathbf{P})$ is a standard probability
space and that $\mathcal{Y}\subseteq\mathcal{B}$ is an invariant $\sigma$-field
(that is, $\mathcal{Y}=T^{-1}\mathcal{Y}$).  Then
the following are equivalent:
\begin{enumerate}
\item $(\Omega,\mathcal{B},\mathbf{P},T)$ conditionally weak mixing relative to
$\mathcal{Y}$.
\item For every bounded loss function $\ell:U\times\Omega\to\mathbb{R}$ with
finite decision space $\card U<\infty$, there exists a weakly pathwise
optimal strategy.
\end{enumerate}
\end{theorem}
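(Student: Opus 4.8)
The plan is to treat the two implications separately, the forward one being essentially a citation and the converse carrying all the weight. Invariance of $\mathcal{Y}$ collapses the filtration: since $T^{-1}\mathcal{Y}=\mathcal{Y}$ gives $T^{-k}\mathcal{Y}=\mathcal{Y}$ for all $k$, we have $\mathcal{Y}_{m,n}=\mathcal{Y}$ for every $m\le n$, and in particular $\mathcal{Y}_{-\infty,0}=\mathcal{Y}$. Thus ``conditionally weak mixing relative to $\mathcal{Y}$'' in $(1)$ is literally ``conditionally weak mixing relative to $\mathcal{Y}_{-\infty,0}$'', so the implication $(1)\Rightarrow(2)$ is exactly Theorem \ref{thm:woptfinite} (for finite $U$ the uniformity over $u$ is automatic), the weakly pathwise optimal strategy being the mean-optimal $\mathbf{\tilde u}$.

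For the converse I would argue by contraposition: assuming $(1)$ fails, I would exhibit a bounded loss with finite $U$ for which $\mathbf{\tilde u}$ is not weakly pathwise optimal; by Lemma \ref{lem:wkimplmn} this rules out the existence of \emph{any} weakly pathwise optimal strategy, so $(2)$ fails. The first step is to rewrite the mixing defect analytically. Using invariance one checks $\mathbf{E}[\phi\,(\psi\circ T^k)\mid\mathcal{Y}]=\mathbf{P}[A\cap T^kB\mid\mathcal{Y}]-\mathbf{P}[A\mid\mathcal{Y}]\,\mathbf{P}[T^kB\mid\mathcal{Y}]$ for the centered indicators $\phi=\mathbf{1}_A-\mathbf{P}[A\mid\mathcal{Y}]$ and $\psi=\mathbf{1}_B-\mathbf{P}[B\mid\mathcal{Y}]$, so failure of $(1)$ means $\frac1T\sum_{k=1}^T|\mathbf{E}[\phi\,(\psi\circ T^k)\mid\mathcal{Y}]|\not\to0$ in $L^1$ for some $A,B$. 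By polarization and a standard reduction to the diagonal (equivalently, conditional ergodicity of $T\times T$ relative to $\mathcal{Y}\otimes\mathcal{Y}$ fails) I reduce to a single $\mathcal{Y}$-centered function whose conditional autocorrelations $r_m:=\mathbf{E}[\psi\,(\psi\circ T^m)\mid\mathcal{Y}]$ satisfy $\frac1T\sum_m|r_m|\not\to0$; here the standard-space hypothesis is used to pass to regular conditional probabilities and the disintegration of $\mathbf{P}$ over the invariant field $\mathcal{Y}$.

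The heart of the matter is that non-decay of the conditional correlations is an \emph{almost periodicity} phenomenon relative to $\mathcal{Y}$. Invoking the relative Koopman--von Neumann / Furstenberg--Zimmer dichotomy for the Koopman operator $f\mapsto f\circ T$ over the invariant factor $\mathcal{Y}$, failure of conditional weak mixing produces a nonzero $\mathcal{Y}$-centered $h\in L^2$ that is relatively almost periodic; in the cleanest case a relative eigenfunction $h\circ T=\lambda h$ with a $\mathcal{Y}$-measurable unimodular multiplier $\lambda$, whose cocycle $\lambda^{(k)}:=\lambda\,(\lambda\circ T)\cdots(\lambda\circ T^{k-1})$ is $\mathcal{Y}$-measurable and satisfies $h\circ T^k=\lambda^{(k)}h$. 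I then take $U=\{0,1\}$ and the loss $\ell(u,\omega)=-u\,\mathrm{Re}\,h(\omega)$, for which $\mathbf{E}[\ell_0(u)\mid\mathcal{Y}]=0$, every strategy is mean-optimal, $L^\star=0$, and the stationary choice $\tilde u_k\equiv0$ gives $L_T(\mathbf{\tilde u})=0$. Crucially, the phase $\lambda^{(k)}$ is $\mathcal{Y}$-measurable, hence \emph{known} to the decision maker, so the admissible strategy $u_k=\mathbf{1}_{\{\mathrm{Re}\,\lambda^{(k)}>0\}}$ is legitimate, and
\[
  -L_T(\mathbf{u})=\frac1T\sum_{k=1}^T u_k\,\mathrm{Re}(\lambda^{(k)}h)=\Big(\tfrac1T\sum_k (\mathrm{Re}\,\lambda^{(k)})_+\Big)\mathrm{Re}\,h-\Big(\tfrac1T\sum_k\mathbf{1}_{\{\mathrm{Re}\,\lambda^{(k)}>0\}}\mathrm{Im}\,\lambda^{(k)}\Big)\mathrm{Im}\,h,
\]
where the two $\mathcal{Y}$-measurable coefficients are controlled by an equidistribution argument for the cocycle: the first has a strictly positive liminf on a set of positive measure while the second averages out, so $L_T(\mathbf{u})$ drops below $L^\star=0$ by a fixed amount with non-vanishing probability, contradicting weak pathwise optimality of $\mathbf{\tilde u}$.

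The main obstacle is precisely this last structural input. Two points need genuine care: (i) extracting an exploitable relative eigenfunction (or, failing a clean eigenvalue, a relatively almost periodic approximant for which a $\mathcal{Y}$-measurable betting schedule still produces a persistent advantage) from the mere non-decay of $\frac1T\sum_m|r_m|$, which is where the relative spectral theory enters; and (ii) showing that tracking the known phase $\lambda^{(k)}$ actually yields a bias bounded away from zero, i.e. a relative mean ergodic theorem / equidistribution statement for the cocycle guaranteeing that $\frac1T\sum_k(\mathrm{Re}\,\lambda^{(k)})_+$ does not collapse. Once these are in hand, converting a non-vanishing conditional fluctuation into a non-vanishing probability $\mathbf{P}[L_T(\mathbf{u})<-\varepsilon]\ge\delta$ is a routine Paley--Zygmund / bounded-second-moment estimate, which completes the contrapositive and hence the equivalence.
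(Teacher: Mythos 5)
Your forward implication is fine and is exactly the paper's: invariance gives $\mathcal{Y}_{m,n}=\mathcal{Y}=\mathcal{Y}_{-\infty,0}$ for all $m\le n$, so Theorem \ref{thm:woptfinite} applies verbatim. Your high-level plan for the converse also matches the paper: argue by contraposition, reduce failure of conditional weak mixing to non-decay of the conditional autocorrelations of a single $\mathcal{Y}$-centered function (this is the paper's Lemma \ref{lem:diagcmix}, which it proves by a closed-subspace argument in $L^2$ --- note that ``polarization'' alone does not handle the absolute values), exhibit an admissible strategy beating the mean-optimal one with non-vanishing probability, and invoke Lemma \ref{lem:wkimplmn}. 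The gap is in the execution, and it sits precisely at the two points you yourself defer. First, the Furstenberg--Zimmer dichotomy does \emph{not} produce a relative eigenfunction $h\circ T=\lambda h$ in general: failure of relative weak mixing over a nontrivial factor yields only a nontrivial relatively almost periodic function (a finite-rank module). Rank-one modules, i.e.\ relative eigenfunctions, need not exist at all --- e.g.\ for an ergodic compact group extension of the factor $\mathcal{Y}$ by a group such as $SU(2)$ with no nontrivial characters --- so your ``cleanest case'' is genuinely special whenever $\mathcal{Y}$ is nontrivial, which is the case of interest here; your fallback sentence for the almost periodic case is not an argument. Second, even in the eigenfunction case the construction as written can fail: if $h$ is (conditionally) purely imaginary your loss $-u\,\mathrm{Re}\,h$ is identically zero, and for phase distributions concentrated on $\{\pm i\}$ the tracking payoff of $u_k=\mathbf{1}_{\{\mathrm{Re}\,\lambda^{(k)}>0\}}$ vanishes identically. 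The deterministic tracker implicitly \emph{guesses} that the unknown phase of $h(\omega)$ is near $0$; to make the argument work one must run over a menu of phase guesses (or randomize the guess) and prove the asserted cocycle equidistribution, neither of which is in your sketch. So the heart of the converse is missing.

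What the paper does instead --- and this is the idea your proposal lacks --- is to bypass all spectral structure by randomization. Using standardness, Lemma \ref{lem:couple} produces a $(\mathcal{Y}\otimes\mathcal{I})$-measurable map $\iota:\Omega\times[0,1]\to\Omega$ with $\mathbf{E}[X|\mathcal{Y}](\omega)=\int_0^1 X(\iota(\omega,\lambda))\,d\lambda$; given the centered $H$ with non-decaying conditional autocorrelations, it takes the loss $\ell(u,\omega)=u\,H(\omega)$ and the strategy $u_k(\omega)=H(T^k\iota(\omega,\lambda))$, which is admissible because it is $\mathcal{Y}$-measurable (here invariance of $\mathcal{Y}$ is used). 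This strategy bets on an independent draw from the conditional law of the hidden fluctuation, so its loss is $A_T^\lambda=\frac1T\sum_k H(T^k\iota(\omega,\lambda))H(T^k\omega)$, and a Fubini computation shows $\int_0^1\mathbf{E}[(A_T^\lambda)^2]\,d\lambda$ dominates the square of the averaged conditional autocorrelation. Choosing a good $\lambda_0$, a sign, and discretizing $U$ yields $\limsup_T\mathbf{P}[L_T(\mathbf{u}')-L_T(\mathbf{\tilde u})<-\varepsilon/16]>0$ against the mean-optimal strategy $\tilde u_k\equiv 0$. In effect the paper randomizes the ``guess'' of the unobserved phase/vector rather than extracting it structurally; if you wanted to close gaps (i) and (ii) of your proposal in the general relatively-almost-periodic case, you would essentially be forced to reinvent this randomization device, on top of importing the Furstenberg--Zimmer machinery that the paper's elementary second-moment argument never needs.
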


The invariance of $\mathcal{Y}$ is automatic in the setting 
of blind decisions (as $\mathcal{Y}$ is trivial), in which 
case Theorem \ref{thm:wkmixcv} yields a decision-theoretic 
characterization of the weak mixing property. In more general 
observation models, invariance of $\mathcal{Y}$ may be an 
unnatural requirement from the point of view of decisions under partial 
information, as it implies that there is no information gain over time.  
On the other hand, applications of the notion of conditional weak mixing 
relative to a $\sigma$-field $\mathcal{Z}$ in ergodic theory almost 
always assume that $\mathcal{Z}$ is invariant (e.g., \cite{Rud04}). 
Theorem \ref{thm:wkmixcv} yields a decision-theoretic interpretation of 
this property by choosing $\mathcal{Y}=\mathcal{Z}$.

\section{Complexity and conditional ergodicity}
\label{sec:verifying}

\subsection{Universal complexity assumptions}
\label{sec:unicplx}

The goal of this section is to develop complexity assumptions on the 
loss function $\ell$ that ensure that the uniform convergence 
assumptions in our main results hold regardless of any properties of the 
transformation $T$ or observations $\mathcal{Y}$.  While such universal 
complexity assumptions are not always necessary (for example, in the 
full information setting uniform convergence holds regardless of the 
loss function), they frequently hold in practice and provide easily 
verifiable conditions that ensure that our results hold in a broad class
of decision problems with partial information.

The simplest assumption is Grothendieck's notion of equimeasurability 
\cite{Gro55}.

\begin{definition}
\label{def:equimeas}
The loss function $\ell:U\times\Omega\to\mathbb{R}$ on the probability
space $(\Omega,\mathcal{B},\mathbf{P})$ is said to be \emph{equimeasurable}
if for every $\varepsilon>0$, there exists $\Omega_\varepsilon\in\mathcal{B}$
with $\mathbf{P}[\Omega_\varepsilon]\ge 1-\varepsilon$
such that the class of functions 
$\{\ell_0(u)\mathbf{1}_{\Omega_\varepsilon}:u\in U\}$ 
is totally bounded in $L^\infty(\mathbf{P})$.
\end{definition}

The beauty of this simple notion is that it ensures uniform convergence
of almost anything.  In particular, we obtain the following results.

\begin{lemma}
\label{lem:equi}
Suppose that the loss function $\ell$ is equimeasurable.  Then
Assumption 2 of Corollary \ref{cor:smeanopt} holds, and thus
Assumption 2 of Theorem \ref{thm:sopt} holds as well, provided
that $\mathcal{X}$ is a generating $\sigma$-field (that is,
$\bigvee_n T^{-n}\mathcal{X}=\mathcal{B}$).
\end{lemma}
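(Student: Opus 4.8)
The plan is to deduce the uniform-in-$u$ convergence required by Assumption~2 from the ordinary, pointwise-in-$u$ martingale convergence theorem, using equimeasurability to absorb the essential supremum over decisions. Each of the three displays in Assumption~2 of Corollary~\ref{cor:smeanopt} asserts that $\esssup_{u}|\mathbf{E}[\ell_0(u)|\mathcal{G}_n]-\mathbf{E}[\ell_0(u)|\mathcal{G}_\infty]|\to 0$ for an appropriate filtration: for the first line $\mathcal{G}_n=\mathcal{Y}_{-\infty,0}\vee T^{-n}\mathcal{X}\uparrow\mathcal{B}$ (the proviso $\bigvee_nT^{-n}\mathcal{X}=\mathcal{B}$ is precisely what makes $\mathcal{G}_\infty=\mathcal{B}$, so that the limit is $\ell_0(u)$); for the second, $\mathcal{G}_n=\mathcal{Y}_{-\infty,0}\vee T^{n}\mathcal{X}\downarrow\bigcap_k(\mathcal{Y}_{-\infty,0}\vee T^{k}\mathcal{X})$ using $\mathcal{X}\subset T^{-1}\mathcal{X}$; and for the third, $\mathcal{G}_n=\mathcal{Y}_{-n,0}\uparrow\mathcal{Y}_{-\infty,0}$. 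First I would record the key structural fact that every admissible $u$ occurring in these displays is measurable with respect to both $\mathcal{G}_n$ and $\mathcal{G}_\infty$ (its generating $\sigma$-field lies in $\mathcal{Y}_{-\infty,0}$ for the first two displays and in $\mathcal{Y}_{-n,0}$ for the third, each contained in both conditioning fields). This lets me treat $u$ as a frozen parameter that pulls through the conditional expectations, so the goal reduces to producing a single bound on the essential supremum that does not depend on $u$.

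The workhorse is a decomposition against the equimeasurability set. Fix $\varepsilon>0$ and let $\Omega_\varepsilon$ satisfy $\mathbf{P}[\Omega_\varepsilon^c]\le\varepsilon$ with $\{\ell_0(v)\mathbf{1}_{\Omega_\varepsilon}:v\in U\}$ totally bounded in $L^\infty$. Splitting $\ell_0(u)=\ell_0(u)\mathbf{1}_{\Omega_\varepsilon}+\ell_0(u)\mathbf{1}_{\Omega_\varepsilon^c}$, the tail term contributes at most $\mathbf{E}[\Lambda\mathbf{1}_{\Omega_\varepsilon^c}|\mathcal{G}_n]+\mathbf{E}[\Lambda\mathbf{1}_{\Omega_\varepsilon^c}|\mathcal{G}_\infty]$, uniformly in $u$, using only $|\ell_0(u)|\le\Lambda$. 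For the main term I fix $\delta>0$, choose a finite $\delta$-net $\ell_0(v_1)\mathbf{1}_{\Omega_\varepsilon},\dots,\ell_0(v_m)\mathbf{1}_{\Omega_\varepsilon}$, and partition $U=\bigsqcup_i B_i$ measurably so that $\|(\ell_0(v)-\ell_0(v_i))\mathbf{1}_{\Omega_\varepsilon}\|_\infty\le\delta$ on $B_i$; joint measurability of $\ell$ makes the sets $B_i$ lie in $\mathcal{U}$, because $v\mapsto\|(\ell_0(v)-\ell_0(v_i))\mathbf{1}_{\Omega_\varepsilon}\|_\infty=\sup_{p\ge1}\|(\ell_0(v)-\ell_0(v_i))\mathbf{1}_{\Omega_\varepsilon}\|_{L^p}$ is measurable. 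Replacing $\ell_0(u)\mathbf{1}_{\Omega_\varepsilon}$ by $\sum_i\mathbf{1}_{\{u\in B_i\}}\ell_0(v_i)\mathbf{1}_{\Omega_\varepsilon}$ changes each conditional expectation by at most $\delta$ (conditional expectation is an $L^\infty$-contraction), and since each $\mathbf{1}_{\{u\in B_i\}}$ is $\mathcal{G}_\infty$-measurable it pulls out of both conditionings. The upshot is the $u$-free estimate
\begin{align*}
\esssup_{u}\big|\mathbf{E}[\ell_0(u)|\mathcal{G}_n]-\mathbf{E}[\ell_0(u)|\mathcal{G}_\infty]\big|
&\le 2\delta+\max_{i\le m}\big|\mathbf{E}[\ell_0(v_i)\mathbf{1}_{\Omega_\varepsilon}|\mathcal{G}_n]-\mathbf{E}[\ell_0(v_i)\mathbf{1}_{\Omega_\varepsilon}|\mathcal{G}_\infty]\big|\\
&\quad{}+\mathbf{E}[\Lambda\mathbf{1}_{\Omega_\varepsilon^c}|\mathcal{G}_n]+\mathbf{E}[\Lambda\mathbf{1}_{\Omega_\varepsilon^c}|\mathcal{G}_\infty].
\end{align*}

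It then remains to pass to the limit in the order $n\to\infty$, $\delta\to0$, $\varepsilon\to0$. The finite maximum involves only the fixed bounded functions $\ell_0(v_i)\mathbf{1}_{\Omega_\varepsilon}$, and so vanishes almost surely and in $L^1$ by the martingale convergence theorem---in the upward form for the first and third displays and the downward form for the second. For the first two displays this already yields $L^1$ convergence: taking expectations, the maximum disappears, $2\delta$ is arbitrary, and $\mathbf{E}[\Lambda\mathbf{1}_{\Omega_\varepsilon^c}|\mathcal{G}_n]+\mathbf{E}[\Lambda\mathbf{1}_{\Omega_\varepsilon^c}|\mathcal{G}_\infty]$ integrates to $2\mathbf{E}[\Lambda\mathbf{1}_{\Omega_\varepsilon^c}]\to0$ by absolute continuity of the integral ($\Lambda\in L^1$). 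The hard part is the third display, which demands \emph{almost sure} convergence: here the leftover term $\mathbf{E}[\Lambda\mathbf{1}_{\Omega_\varepsilon^c}|\mathcal{Y}_{-\infty,0}]$ is a genuine random variable and cannot be integrated away. Sending $n\to\infty$ and $\delta\to0$ first gives $\limsup_n\esssup_u|\cdots|\le 2\mathbf{E}[\Lambda\mathbf{1}_{\Omega_\varepsilon^c}|\mathcal{Y}_{-\infty,0}]$ a.s.; I then take $\varepsilon=\varepsilon_p\downarrow0$ along a countable sequence, and since $\mathbf{E}[\Lambda\mathbf{1}_{\Omega_{\varepsilon_p}^c}|\mathcal{Y}_{-\infty,0}]\to0$ in $L^1$, a subsequence converges to $0$ almost surely, which forces the nonnegative $\limsup$ to vanish on a set of full probability. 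The two points needing genuine care are thus the measurable selection of the net (handled by joint measurability of $\ell$) and this final almost sure limit; everything else is routine bookkeeping of the three martingale limits. Since the first two displays coincide with Assumption~2 of Theorem~\ref{thm:sopt}, that assumption is established at the same time.
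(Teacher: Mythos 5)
Your proof is correct and follows essentially the same route as the paper's: a finite $L^\infty$-net supplied by equimeasurability reduces the essential supremum over decisions to a finite maximum plus $2\delta$ plus tail terms involving $\Lambda\mathbf{1}_{\Omega_\varepsilon^c}$, after which the upward and downward martingale convergence theorems applied to the finitely many net functions, followed by the limits $n\to\infty$, $\delta\to 0$, $\varepsilon\to 0$, finish the argument. The only place you go beyond the paper---which proves the first display in detail and declares the remaining ones ``essentially identical''---is your explicit treatment of the almost-sure third display, where the leftover term $\mathbf{E}[\Lambda\mathbf{1}_{\Omega_\varepsilon^c}|\mathcal{Y}_{-\infty,0}]$ cannot be integrated away and you correctly dispose of it by passing to a subsequence $\varepsilon_p\downarrow 0$ along which it vanishes almost surely.
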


\begin{proof}
Let us establish the first line of Assumption 2.  Fix $\varepsilon>0$
and $\Omega_\varepsilon$ as in Definition \ref{def:equimeas}.
Then there exist $N<\infty$ measurable functions 
$l_1,\ldots,l_N:\Omega\to\mathbb{R}$ such that for every
$u\in U$, there exists $k(u)\in\{1,\ldots,N\}$ such that
$$
	\|\ell_0(u)\mathbf{1}_{\Omega_\varepsilon}-
	l_{k(u)}\mathbf{1}_{\Omega_\varepsilon}\|_\infty\le\varepsilon
$$
(and $u\mapsto k(u)$ can clearly be chosen to be measurable).
It follows that
\begin{align*}
	\esssup_{u\in\mathbb{U}_0}
	\big|
	\mathbf{E}[\ell_0(u)|\mathcal{Y}_{-\infty,0}\vee T^{-n}\mathcal{X}]
	-\ell_0(u)
	\big| \le
	&\max_{1\le k\le N}
	\big|
	\mathbf{E}[l_{k}\mathbf{1}_{\Omega_\varepsilon}
	|\mathcal{Y}_{-\infty,0}\vee T^{-n}\mathcal{X}]
	-l_{k}\mathbf{1}_{\Omega_\varepsilon}
	\big|  \\ 
	&+2\varepsilon
	+\mathbf{E}[\Lambda\mathbf{1}_{\Omega_\varepsilon^c}
	|\mathcal{Y}_{-\infty,0}\vee T^{-n}\mathcal{X}]
	+\Lambda\mathbf{1}_{\Omega_\varepsilon^c}.
\end{align*}
As $\mathcal{X}$ is generating, the martingale convergence theorem gives
$$
	\limsup_{n\to\infty}
	\bigg\|
	\esssup_{u\in\mathbb{U}_0}
	\big|
	\mathbf{E}[\ell_0(u)|\mathcal{Y}_{-\infty,0}\vee T^{-n}\mathcal{X}]
	-\ell_0(u)
	\big|\bigg\|_1 \le
	2\varepsilon+\mathbf{E}[2\Lambda\mathbf{1}_{\Omega_\varepsilon^c}].
$$
Letting $\varepsilon\downarrow 0$ yields the first line of Assumption 2.  
The remaining statements of Assumption 2 follow by an 
essentially identical argument.
\qed\end{proof}

\begin{lemma}
Suppose that the following conditional mixing assumption holds:
$$
	\lim_{M\to\infty}
	\limsup_{T\to\infty}
	\bigg\|
	\frac{1}{T}\sum_{k=1}^T
	|\mathbf{E}[\{\bar\ell_0^M(u)\circ T^{-k}\} ~\bar\ell_0^M(u')|
	\mathcal{Y}_{-\infty,0}]|
	\bigg\|_1 = 0
	\quad\mbox{for every }u,u'\in U.
$$
If the loss function $\ell$ is equimeasurable, then the
assumption of Theorem \ref{thm:wkopt} holds.
\end{lemma}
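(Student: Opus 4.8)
The plan is to exploit equimeasurability exactly as in the proof of Lemma~\ref{lem:equi}: the only gap between the pointwise hypothesis (which fixes \emph{deterministic} decisions $u,u'\in U$) and the conclusion (whose essential supremum ranges over \emph{random} decisions $u,u'\in\mathbb{U}_0$) is a matter of uniformity, and equimeasurability is precisely the device that supplies it. First I would fix $\varepsilon>0$, take $\Omega_\varepsilon$ as in Definition~\ref{def:equimeas}, and use total boundedness in $L^\infty(\mathbf{P})$ of $\{\ell_0(u)\mathbf{1}_{\Omega_\varepsilon}:u\in U\}$ to produce finitely many representatives $a_1,\dots,a_N\in U$ (a net may be taken from the family itself) such that every $u\in U$ admits $k(u)\in\{1,\dots,N\}$ with $\|\ell_0(u)\mathbf{1}_{\Omega_\varepsilon}-\ell_0(a_{k(u)})\mathbf{1}_{\Omega_\varepsilon}\|_\infty\le\varepsilon$. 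The goal is then to bound the essential supremum over $u,u'\in\mathbb{U}_0$ by a \emph{maximum} over the finite index set $\{1,\dots,N\}^2$, to which the hypothesis applies term by term.

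The algebraic heart of the argument is the following. Since every $u\in\mathbb{U}_0$ is $\mathcal{Y}_{-\infty,0}$-measurable, the selector $\mathbf{1}_{\{k(u)=j\}}$ is $\mathcal{Y}_{-\infty,0}$-measurable, and so is its shift $\mathbf{1}_{\{k(u)=j\}}\circ T^{-k}$ (because $T^{-k}\mathcal{Y}_{-\infty,0}=\mathcal{Y}_{-\infty,-k}\subseteq\mathcal{Y}_{-\infty,0}$). Replacing $\bar\ell_0^M(u)$ by the quantized loss $\Phi_u:=\sum_{j}\mathbf{1}_{\{k(u)=j\}}\,\bar\ell_0^M(a_j)$, these selectors pull out of the conditional expectation, so that
\begin{align*}
	&\mathbf{E}[\{\Phi_u\circ T^{-k}\}\,\Phi_{u'}\mid\mathcal{Y}_{-\infty,0}] \\
	&\quad=\sum_{j,j'}\{\mathbf{1}_{\{k(u)=j\}}\!\circ T^{-k}\}\,\mathbf{1}_{\{k(u')=j'\}}\,
	\mathbf{E}[\{\bar\ell_0^M(a_j)\circ T^{-k}\}\,\bar\ell_0^M(a_{j'})\mid\mathcal{Y}_{-\infty,0}].
\end{align*}
As the selector products are nonnegative and sum to one over $(j,j')$, the left-hand side is dominated pointwise by $\max_{j,j'}|\mathbf{E}[\{\bar\ell_0^M(a_j)\circ T^{-k}\}\,\bar\ell_0^M(a_{j'})\mid\mathcal{Y}_{-\infty,0}]|$, a bound independent of $u,u'$. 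Averaging over $k$, taking $\|\cdot\|_1$ and $\limsup_{T}$, and then $M\to\infty$, each of the $N^2$ resulting terms vanishes by the pointwise hypothesis; this disposes of the quantized part.

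It remains to control the quantization error $\Delta_u:=\bar\ell_0^M(u)-\Phi_u$. Writing $\psi_u:=\sum_j\mathbf{1}_{\{k(u)=j\}}\ell_0(a_j)$, one has $|\ell_0(u)-\psi_u|\le\varepsilon+2\Lambda\mathbf{1}_{\Omega_\varepsilon^c}$ pointwise (closeness on $\Omega_\varepsilon$, domination by $\Lambda$ off it), whence $|\Delta_u|\le D$ for an $M$-independent $D$ with $\|D\|_1\le 2\varepsilon+4\,\mathbf{E}[\Lambda\mathbf{1}_{\Omega_\varepsilon^c}]=:\delta(\varepsilon)$ and $\delta(\varepsilon)\to0$ as $\varepsilon\to0$. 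Expanding $\mathbf{E}[\{\bar\ell_0^M(u)\circ T^{-k}\}\,\bar\ell_0^M(u')\mid\mathcal{Y}_{-\infty,0}]$ into the quantized term plus the three cross terms containing $\Delta_u$ or $\Delta_{u'}$, and using the uniform truncation bounds $|\bar\ell_0^M(u)|\le 2M$ and $|\Phi_u|\le 2M$, I would bound each cross contribution (after $\frac1T\sum_k$ and $\|\cdot\|_1$, using that $T$ preserves $\mathbf{P}$ and that conditional expectation is an $L^1$-contraction) by a constant multiple of $M\,\delta(\varepsilon)$, the bound being uniform in $u,u'$.

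The main obstacle is precisely this factor $M\,\delta(\varepsilon)$: the truncation that renders the losses $L^1$-dominated forces the $L^1$-small error $\Delta$ to be paired with a factor controlled only by the truncation level $2M$, so the cross terms carry a factor of $M$. Hence one cannot fix $\varepsilon$ and send $M\to\infty$ (the quantized term then vanishes but the cross terms blow up), nor fix $M$ and send $\varepsilon\to0$ (the net size $N(\varepsilon)$ grows without control). The two requirements are, however, consistent in \emph{direction}---a finer net (smaller $\delta(\varepsilon)$) is compatible with a larger $M$---so I expect to reconcile them by letting $\varepsilon=\varepsilon(M)\downarrow0$ along a schedule fixed by a diagonal construction: inductively choose ever finer nets and truncation thresholds so that $M\,\delta(\varepsilon(M))\to0$ while $M$ always exceeds the ($\varepsilon$-dependent) convergence threshold of the finitely many deterministic bilinear forms. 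Carrying out this bookkeeping carefully is the one delicate point; everything else is the equimeasurability-plus-pull-out reduction already used in Lemma~\ref{lem:equi}.
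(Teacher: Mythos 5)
Your quantization step is right, and it is indeed the heart of the equimeasurability reduction: choosing the net from the family itself (so the pointwise hypothesis applies to the net points $a_j$), and pulling the selectors $\mathbf{1}_{\{k(u)=j\}}$ and their shifts out of the conditional expectation, which is legitimate because $T^k\mathcal{Y}_{-\infty,0}=\mathcal{Y}_{-\infty,-k}\subseteq\mathcal{Y}_{-\infty,0}$ for $k\ge 0$. The gap is in how you close the argument. From your crude cross-term estimate you only know $\mathrm{LHS}(M)\le Q(M,\varepsilon)+CM\delta(\varepsilon)$ for every $\varepsilon$, where $Q(M,\varepsilon)$ is the contribution of the $\varepsilon$-net and $Q(\cdot,\varepsilon)\to 0$ as $M\to\infty$ for each \emph{fixed} $\varepsilon$, at a rate on which the hypothesis gives no control whatsoever. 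This information is abstractly insufficient to conclude $\mathrm{LHS}(M)\to 0$: take $\mathrm{LHS}\equiv 1$ and $Q(M,\varepsilon)=\max(0,1-CM\delta(\varepsilon))$, which satisfies all the constraints. Concretely, your diagonal schedule needs, for every large $M$, some $\varepsilon$ with both $M_0(\varepsilon)\le M$ (where $M_0(\varepsilon)$ is the threshold beyond which the net terms at scale $\varepsilon$ are small) and $CM\delta(\varepsilon)\le\gamma$; since $M_0(\varepsilon)$ may grow arbitrarily fast as $\varepsilon\downarrow 0$ (faster than $1/\delta(\varepsilon)$, say), the windows $[M_0(\varepsilon),\gamma/(C\delta(\varepsilon))]$ need not cover a neighborhood of infinity and can in fact all be empty. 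So this is not delicate bookkeeping: with the bound you set up, the proof cannot be completed.

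The repair is to not throw away the average over $k$ before estimating the cross terms; then no factor of $M$ survives at all. Bound $\esssup_u|\Phi_u|\le G_M:=\Lambda\mathbf{1}_{\Lambda\le M}+\mathbf{E}[\Lambda\mathbf{1}_{\Lambda\le M}|\mathcal{Y}_{-\infty,0}]$ and $\esssup_{u'}|\Delta_{u'}|\le D:=2\varepsilon+2\Lambda\mathbf{1}_{\Omega_\varepsilon^c}+\mathbf{E}[2\varepsilon+2\Lambda\mathbf{1}_{\Omega_\varepsilon^c}|\mathcal{Y}_{-\infty,0}]$; note $D$ does not depend on $M$, $\|D\|_1\le 4\varepsilon+4\mathbf{E}[\Lambda\mathbf{1}_{\Omega_\varepsilon^c}]$, while $\mathbf{E}[G_M]\le 2\mathbf{E}[\Lambda]$ and $G_M\le 2M$. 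Then by conditional Jensen and the tower property,
$\frac{1}{T}\sum_{k=1}^T\big\|\esssup_{u,u'}|\mathbf{E}[\{\Phi_u\circ T^{-k}\}\,\Delta_{u'}|\mathcal{Y}_{-\infty,0}]|\big\|_1\le\mathbf{E}\big[\big(\frac{1}{T}\sum_{k=1}^TG_M\circ T^{-k}\big)D\big]$,
and by the ergodic theorem (applied to $T^{-1}$) together with dominated convergence—the truncation bound $G_M\le 2M$ enters \emph{only} as the domination device $2MD\in L^1$ for fixed $M$—this converges as $T\to\infty$ to $\mathbf{E}[G_M]\,\mathbf{E}[D]\le 2\mathbf{E}[\Lambda]\,\|D\|_1$, a bound \emph{uniform in} $M$. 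The other cross terms are handled symmetrically (for the $\Delta\Delta$ term use $|\Delta_u|\le D\wedge 4M$). Now the limits pass in the same order as in Lemma \ref{lem:equi}: $\limsup_{T\to\infty}$ leaves the finitely many net terms plus $C(\mathbf{E}[\Lambda]+\|D\|_1)\|D\|_1$; letting $M\to\infty$ with the net \emph{fixed} kills the net terms by the hypothesis; letting $\varepsilon\downarrow 0$ kills the remainder, giving the assumption of Theorem \ref{thm:wkopt}.
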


\begin{proof}
The proof is very similar to that of Lemma \ref{lem:equi} and is therefore
omitted.
\qed\end{proof}

As an immediate consequence of these lemmas, we have:

\begin{corollary}
\label{cor:equiisgreat}
The conclusions of Theorems \ref{thm:soptfinite} and \ref{thm:woptfinite}
remain in force if the assumption that $U$ is finite is replaced by the
assumption that $\ell$ is equimeasurable.
\end{corollary}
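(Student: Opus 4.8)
The plan is to prove the two assertions separately, in each case by checking that equimeasurability, combined with the relevant conditional mixing hypothesis, supplies exactly the assumptions of the corresponding general theorem: Corollary \ref{cor:smeanopt} for the pathwise statement (extending Theorem \ref{thm:soptfinite}), and Theorem \ref{thm:wkopt} for the weak pathwise statement (extending Theorem \ref{thm:woptfinite}). In both cases the two lemmas immediately preceding this corollary do essentially all the work, so what remains is to match hypotheses.

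For the pathwise statement, I would invoke the conditional $K$-automorphism property, which furnishes a $\sigma$-field $\mathcal{X}$ satisfying the three conditions of Definition \ref{def:ckauto} relative to $\mathcal{Z}=\mathcal{Y}_{-\infty,0}$. Condition (1), $\mathcal{X}\subset T^{-1}\mathcal{X}$, is precisely Assumption 1 of Corollary \ref{cor:smeanopt}. Condition (2), that $\mathcal{X}$ is generating, lets me apply Lemma \ref{lem:equi}: since $\ell$ is equimeasurable, all three uniform martingale convergences of Assumption 2 hold. Finally, condition (3), namely $\bigcap_{k\ge 1}(\mathcal{Y}_{-\infty,0}\vee T^k\mathcal{X})=\mathcal{Y}_{-\infty,0}$ mod $\mathbf{P}$, forces the two conditioning $\sigma$-fields appearing in Assumption 3 to coincide, whence $\mathbf{E}[\ell_0(u)|\mathcal{Y}_{-\infty,0}]=\mathbf{E}[\ell_0(u)|\bigcap_{k\ge 1}(\mathcal{Y}_{-\infty,0}\vee T^k\mathcal{X})]$ for every $u\in\mathbb{U}_0$; this is Assumption 3. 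Corollary \ref{cor:smeanopt} then gives $L_T(\mathbf{\tilde u})\to L^\star$ a.s.\ and the pathwise optimality of $\mathbf{\tilde u}$.

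For the weak pathwise statement, I would first verify the per-pair mixing hypothesis of the second (unlabeled) lemma above from conditional weak mixing, and then let equimeasurability upgrade it to the uniform assumption of Theorem \ref{thm:wkopt}. The key observation is that, by construction, $\bar\ell_0^M(u')$ is centered relative to $\mathcal{Y}_{-\infty,0}$, i.e.\ $\mathbf{E}[\bar\ell_0^M(u')|\mathcal{Y}_{-\infty,0}]=0$. Applying the conditional-weak-mixing consequence recorded just after Theorem \ref{thm:wkopt} to the bounded (hence $L^2$) functions $f=\bar\ell_0^M(u)$ and $g=\bar\ell_0^M(u')$, the subtracted product of conditional expectations vanishes, leaving, for each fixed $u,u'\in U$ and $M$, the $L^1$ convergence of the Cesàro averages of $|\mathbf{E}[\{\bar\ell_0^M(u)\circ T^{-k}\}\,\bar\ell_0^M(u')|\mathcal{Y}_{-\infty,0}]|$ to zero. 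Taking $\limsup_{T\to\infty}$ and then $M\to\infty$ yields precisely the hypothesis of that lemma; equimeasurability then promotes it to the assumption of Theorem \ref{thm:wkopt}, which delivers weak pathwise optimality of $\mathbf{\tilde u}$ and $L_T(\mathbf{\tilde u})\to L^\star$ in $L^1$.

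Since the analytic content is entirely contained in Lemma \ref{lem:equi}, the second preceding lemma, Corollary \ref{cor:smeanopt}, and Theorem \ref{thm:wkopt}, I do not expect a genuine obstacle. The only point requiring care is the bookkeeping in the weak pathwise case, specifically recognizing that the centering of $\bar\ell_0^M(u')$ under $\mathcal{Y}_{-\infty,0}$ is exactly what reduces the full conditional-weak-mixing identity (with its subtracted product term) to the bare covariance-type quantity appearing in the lemma's hypothesis.
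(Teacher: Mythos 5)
Your proposal is correct and follows exactly the paper's intended route: the paper derives this corollary as an immediate consequence of Lemma \ref{lem:equi} (feeding the conditional $K$-property into Corollary \ref{cor:smeanopt}) and the unlabeled lemma (feeding conditional weak mixing, via the centering of $\bar\ell_0^M$, into Theorem \ref{thm:wkopt}). Your bookkeeping—matching the three conditions of Definition \ref{def:ckauto} to the assumptions of Corollary \ref{cor:smeanopt}, and using $\mathbf{E}[\bar\ell_0^M(u')|\mathcal{Y}_{-\infty,0}]=0$ to reduce the weak-mixing identity to the covariance hypothesis—is precisely what the paper leaves implicit.
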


We now give a simple condition for equimeasurability
that suffices in many cases. It is closely
related to a result of Mokobodzki (cf.\ \cite[Theorem IX.19]{DM88}).

\begin{lemma}
\label{lem:moko}
Suppose that $U$ is a compact metric space and that
$u\mapsto\ell(u,\omega)$ is continuous for a.e.\ $\omega\in\Omega$.
Then $\ell$ is equimeasurable.
\end{lemma}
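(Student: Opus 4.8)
The plan is to deduce equimeasurability from an almost-uniform equicontinuity property obtained via Egorov's theorem. Recall that $\ell_0(u)=\ell(u,\cdot)$, so we must produce, for each $\varepsilon>0$, a set $\Omega_\varepsilon$ with $\mathbf{P}[\Omega_\varepsilon]\ge 1-\varepsilon$ on which $\{\ell(u,\cdot):u\in U\}$ is totally bounded in $L^\infty(\mathbf{P})$. Since $U$ is compact metric, for a.e.\ $\omega$ the map $u\mapsto\ell(u,\omega)$ is not merely continuous but uniformly continuous; the idea is that a large set on which this uniform continuity is itself uniform in $\omega$, combined with the total boundedness of $U$, will furnish a finite sup-norm net.

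First I would record the relevant modulus of continuity. Fixing a countable dense set $D\subseteq U$ and writing $\rho$ for the metric on $U$, define
\[
	w(\eta,\omega)=\sup\{|\ell(u,\omega)-\ell(u',\omega)|:u,u'\in D,\ \rho(u,u')\le\eta\}.
\]
Being a countable supremum of measurable functions, $w(\eta,\cdot)$ is measurable; it is nondecreasing in $\eta$; and, by uniform continuity of $u\mapsto\ell(u,\omega)$ together with density of $D$, one has $w(\eta,\omega)\downarrow 0$ as $\eta\downarrow 0$ for a.e.\ $\omega$.

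Next I would combine two reductions. Choose $M<\infty$ with $\mathbf{P}[\Lambda>M]<\varepsilon/2$, so that on $\{\Lambda\le M\}$ every $\ell(u,\cdot)$ is bounded by $M$ and hence lies in $L^\infty$. Applying Egorov's theorem to the sequence $w(\eta_j,\cdot)\downarrow 0$ along any $\eta_j\downarrow 0$, I obtain a set $A$ with $\mathbf{P}[A]\ge 1-\varepsilon/2$ on which the convergence is uniform, and set $\Omega_\varepsilon:=A\cap\{\Lambda\le M\}$, which has measure at least $1-\varepsilon$. To verify total boundedness, fix $\delta>0$; uniform convergence on $A$ gives some $\eta$ with $w(\eta,\omega)\le\delta$ for all $\omega\in\Omega_\varepsilon$, and total boundedness of $U$ furnishes a finite net $u_1,\dots,u_N\in D$ with the property that every $u\in U$ lies within $\eta/2$ of some $u_i$. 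For such $u$ and $u_i$, continuity and density give $|\ell(u,\omega)-\ell(u_i,\omega)|\le w(\eta,\omega)\le\delta$ on $\Omega_\varepsilon$, whence $\|\ell(u,\cdot)\mathbf{1}_{\Omega_\varepsilon}-\ell(u_i,\cdot)\mathbf{1}_{\Omega_\varepsilon}\|_\infty\le\delta$; thus $\{\ell(u_i,\cdot)\mathbf{1}_{\Omega_\varepsilon}\}_{i\le N}$ is a finite $\delta$-net, as required.

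The routine but slightly delicate points are the measurability of $w$ and the passage from $D$-pairs to arbitrary $u\in U$ in the net step, both handled by separability of $U$ and continuity of $u\mapsto\ell(u,\omega)$. The conceptual heart is the use of Egorov's theorem to upgrade pointwise uniform continuity in $u$ to equicontinuity that is uniform over a set of nearly full measure. The main obstacle I anticipate is ensuring that the net point $u_i$ can be taken in $D$ while keeping the distance bound compatible with the definition of $w$; this is why I take the net at scale $\eta/2$, leaving room to approximate $u$ by points of $D$ at distance $\le\eta$ from $u_i$ and pass to the limit using continuity of $u\mapsto\ell(u,\omega)$.
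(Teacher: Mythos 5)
Your proof is correct and follows essentially the same route as the paper's: a countable dense subset of $U$, a measurable modulus of continuity (the paper's $b_n$ is your $w(1/n,\cdot)$), uniform continuity from compactness of $U$, and Egorov's theorem to get the set $\Omega_\varepsilon$. The only differences are cosmetic — you build the finite $\delta$-net directly from a net in $U$ where the paper deduces total boundedness via sequential compactness in $L^\infty$, and your additional truncation on $\{\Lambda\le M\}$ is a careful touch ensuring the restricted functions genuinely lie in $L^\infty$.
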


\begin{proof}
As $U$ is a compact metric space (with metric $d$), 
it is certainly separable.  Let
$U_0\subseteq U$ be a countable dense set, and define the functions
$$
	b_n = \sup_{u,u'\in U_0:d(u,u')\le n^{-1}}|\ell_0(u)-\ell_0(u')|.
$$
$b_n$ is measurable, as it is the supremum of countably many random
variables.  Moreover,
for almost every $\omega$, the function $u\mapsto\ell(u,\omega)$ is
uniformly continuous (being continuous on a compact metric space).  Therefore,
$b_n\downarrow 0$ a.s.\ as $n\to\infty$.

By Egorov's theorem, there exists for every $\varepsilon>0$ a set
$\Omega_\varepsilon$ with $\mathbf{P}[\Omega_\varepsilon]\ge 1-\varepsilon$
such that $\|b_n\mathbf{1}_{\Omega_\varepsilon}\|_\infty\downarrow 0$.
We claim that $\{\ell_0(u)\mathbf{1}_{\Omega_\varepsilon}:u\in U\}$ is
compact in $L^\infty$.  Indeed, for any sequence $(u_n)_{n\ge 1}\subseteq U$
we may choose a subsequence $(u_{n_k})_{k\ge 1}$ that converges to 
$u_\infty\in U$.  Then for every $r$, we have
$|\ell_0(u_{n_k})-\ell_0(u_\infty)|\le b_r$
for all $k$ sufficiently large, and therefore
$\|\ell_0(u_{n_k})\mathbf{1}_{\Omega_\varepsilon}-\ell_0(u_\infty)
\mathbf{1}_{\Omega_\varepsilon}\|_\infty\to 0$.
\qed\end{proof}

Let us give two standard examples of decision problems (cf.\ \cite{Alg94,Nob03}).

\begin{example}[$\ell_p$-prediction]
\label{ex:meansq}
Consider the stochastic process setting $(X,Y)$, and let $f$ be a bounded
function.  The aim is, at each time $k$, to choose a predictor $u_k$ of 
$f(X_{k+1})$ on the basis of the observation history $Y_0,\ldots,Y_k$.  
We aim to minimize the pathwise time-average $\ell_p$-prediction loss 
$\frac{1}{T}\sum_{k=1}^T |u_k-f(X_{k+1})|^p$ ($p\ge 1$). 
This is a particular decision problem with partial information, where 
the loss function is given by $\ell_0(u) = |u-f(X_1)|^p$ and the decision 
space is $U=[\inf_xf(x),\sup_xf(x)]$.  It is immediate that $\ell$
is equimeasurable by Lemma \ref{lem:moko}.
\end{example}

\begin{example}[Log-optimal portfolios]
Consider a market with $d$ securities (e.g., $d-1$ stocks and one bond)
whose returns in day $k$ are given by the random variable $X_k$ with values 
in $\mathbb{R}_+^d$. The decision space $U=\{p\in\mathbb{R}_+^{d}:
\sum_{i=1}^dp_i=1\}$ is the simplex: $u_k^i$ represents the fraction of
wealth invested in the $i$th security in day $k$.  The total wealth at time
$T$ is therefore given by $\prod_{k=1}^T\langle u_k,X_k\rangle$.
We only have access to partial information $Y_k$ in day $k$, e.g., from
news reports. We aim
to choose an investment strategy on the basis of the available information
that maximizes the wealth, or, equivalently,
its growth $\frac{1}{T}\sum_{k=1}^T \log\langle u_k,X_k\rangle$.
This corresponds to a decision problem with partial information for 
the loss function $\ell_0(u)=-\log\langle u,X_0\rangle$.

In order for the loss to be dominated in $L^1$, we impose the mild 
assumption $\mathbf{E}[\Lambda]<\infty$ with $\Lambda = 
\sum_{i=1}^d|\log X_0^i|$. We claim that the loss $\ell$ is then also 
equimeasurable.  Indeed, as $\mathbf{E}[\Lambda]<\infty$, the returns 
must satisfy $X_0^i>0$ a.s.\ for every $i$.  Therefore, equimeasurability
follows directly from Lemma \ref{lem:moko}.
\end{example}

As we have seen above, equimeasurability follows easily when the loss 
function possesses some mild pointwise continuity properties. However, 
there are situations when this may not be the case.  In particular, 
suppose that $\ell(u,\omega)$ only takes the values $0$ and $1$, that 
is, our decisions are sets (as may be the case, for example, in 
predicting the shape of an oil spill or in sequential classification 
problems).  In such a case, equimeasurability will rarely hold, and it 
is of interest to investigate alternative complexity assumptions.  As we 
will presently explain, equimeasurability is almost necessary to obtain 
a universal complexity assumption for Corollary \ref{cor:smeanopt}; 
however, in the setting of Theorem \ref{thm:sopt}, the assumption can
be weakened considerably.

The simplicity of the equimeasurability assumption hides the fact that 
there are two distinct uniformity assumptions in Corollary 
\ref{cor:smeanopt}: we require uniform convergence of both martingales 
and reverse martingales, which are quite distinct phenomena (cf.\ 
\cite{HJ90,vH12ugc}).  The uniform convergence of 
martingales can be restrictive.

\begin{example}[Uniform martingale convergence]
Let $(\mathcal{G}_n)_{n\ge 1}$ be a filtration such that each
$\mathcal{G}_n=\sigma\{\pi_n\}$ is generated by a finite measurable 
partition $\pi_n$ of the probability space $(\Omega,\mathcal{B},\mathbf{P})$.
Let $L:\mathbb{N}\times\Omega\to\mathbb{R}$ a bounded function
such that
$L(u,\cdot\,)$ is $\mathcal{G}_\infty$-measurable for every $u\in\mathbb{N}$.
Then $\mathbf{E}[L(u,\cdot\,)|\mathcal{G}_n]\to L(u,\cdot\,)$ a.s.\ for every
$u$.  We claim that if this martingale convergence is in fact uniform, that is,
$$
	\sup_{u\in\mathbb{N}}|\mathbf{E}[L(u,\cdot\,)|\mathcal{G}_n]-
	L(u,\cdot\,)|\xrightarrow{n\to\infty}0\quad\mbox{in }L^1,
$$
then $L$ must necessarily be equimeasurable.  
To see this, let us first extract a subsequence $n_k\uparrow\infty$
along which the uniform martingale convergence holds a.s.  
Fix $\varepsilon>0$.  By Egorov's theorem, there exists a set
$\Omega_\varepsilon$ with $\mathbf{P}[\Omega_{\varepsilon}]\ge 1-\varepsilon$
such that
$$
	\sup_{u\in\mathbb{N}}
	\|\mathbf{E}[L(u,\cdot\,)|\mathcal{G}_{n_k}]
	\mathbf{1}_{\Omega_\varepsilon}-
	L(u,\cdot\,)\mathbf{1}_{\Omega_\varepsilon}
	\|_\infty\xrightarrow{k\to\infty}0.
$$
Therefore,
for every $\alpha>0$, there exists $k$ such that
$$
	\sup_{u\in\mathbb{N}}
	\|\alpha\lfloor\alpha^{-1}\mathbf{E}[L(u,\cdot\,)|\mathcal{G}_{n_k}]
	\mathbf{1}_{\Omega_\varepsilon}\rfloor-
	L(u,\cdot\,)\mathbf{1}_{\Omega_\varepsilon}
	\|_\infty\le 2\alpha.
$$
But as $\mathcal{G}_n$ is finitely generated, we can write
$$
	\mathbf{E}[L(u,\cdot\,)|\mathcal{G}_n]\mathbf{1}_{\Omega_\varepsilon} =
	\sum_{P\in\pi_n}L_{n,u,P}\mathbf{1}_{P\cap\Omega_\varepsilon}, 
$$ 
with $|L_{n,u,P}|\le \|L\|_\infty$ for all $n,u,P$.  In particular, 
$\{\alpha\lfloor\alpha^{-1}\mathbf{E}[L(u,\cdot\,)|\mathcal{G}_{n}] 
\mathbf{1}_{\Omega_\varepsilon}\rfloor:u\in\mathbb{N}\}$ is a finite 
family of random variables for every $n$.  We have therefore established 
that the family 
$\{L(u,\cdot\,)\mathbf{1}_{\Omega_\varepsilon}:u\in\mathbb{N}\}$ is 
totally bounded in $L^\infty$. 
\end{example}

In the context of Corollary \ref{cor:smeanopt}, the previous example can 
be interpreted as follows.  Suppose that the observations are 
finite-valued, that is, $\mathcal{Y}$ is a finitely generated 
$\sigma$-field. Let us suppose, for simplicity, that the decision space 
$U$ is countable (the same conclusion holds for general $U$ modulo some 
measurability issues). Then, if the third line of Assumption 2 in 
Corollary \ref{cor:smeanopt} holds, then the conditioned loss 
$\mathbf{E}[\ell_0(u)|\mathcal{Y}_{-\infty,0}]$ is necessarily 
equimeasurable.  While it is possible that the conditioned loss is 
equimeasurable even when the loss $\ell$ is not (e.g., in the case of 
blind decisions), this is somewhat unlikely to be the case given a 
nontrivial observation structure.  Therefore, it appears that 
equimeasurability is almost necessary to obtain universal complexity 
assumptions in the setting of Corollary \ref{cor:smeanopt}.

The situation is much better in the setting of Theorem \ref{thm:sopt},
however.  While the first line of Assumption 2 in Theorem \ref{thm:sopt}
is still a uniform martingale convergence property, the $\sigma$-field
$\mathcal{X}$ cannot be finitely generated except in trivial cases.
In fact, in many cases the loss $\ell$ will be $T^{-n}\mathcal{X}$-measurable
for some $n<\infty$, in which case the first line of Assumption 2 is
automatically satisfied (in particular, in the stochastic process setting, 
this will be the case for \emph{finitary} loss $\ell_0(u)=
l(u,X_{n_1},\ldots,X_{n_k})$ if we choose 
$\mathcal{X}=\sigma\{X_k,Y_k:k\le 0\}$).  The remainder of Assumption 2
is a uniform reverse martingale convergence property, which holds
under much weaker assumptions.

\begin{definition}
The loss $\ell:U\times\Omega\to\mathbb{R}$ on
$(\Omega,\mathcal{B})$ is said to be \emph{universally bracketing} if
for every probability measure $\mathbf{P}$ and $\varepsilon,M>0$, the 
family $\{\ell_0(u)\mathbf{1}_{\Lambda\le M}:u\in U\}$ can be covered by finitely many brackets
$\{f:g\le f\le h\}$ with $\|g-h\|_{L^1(\mathbf{P})}\le\varepsilon$.
\end{definition}

\begin{lemma}
Let $(\Omega,\mathcal{B})$ be a standard space, and let
$\mathcal{X},\mathcal{Y}$ be countably generated.
Suppose the loss $\ell$ is universally bracketing and finitary 
(that is, for some $n\in\mathbb{Z}$, $\ell_0(u)$ is 
$T^{-n}\mathcal{X}$-measurable for all $u\in U$).
Then Assumption 2 of Theorem \ref{thm:sopt} holds.
\end{lemma}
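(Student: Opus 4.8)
The plan is to dispatch the first line of Assumption~2 of Theorem~\ref{thm:sopt} using the finitary hypothesis alone, and to treat the second line as a \emph{uniform reverse martingale convergence} problem, handled by an $L^1$-bracketing argument together with L\'evy's downward theorem. First I would record the monotonicity of the conditioning fields. Writing $\mathcal{G}_m := \mathcal{Y}_{-\infty,0}\vee T^m\mathcal{X}$, Assumption~1 ($\mathcal{X}\subseteq T^{-1}\mathcal{X}$) gives $T^{m}\mathcal{X}\supseteq T^{m+1}\mathcal{X}$ and $T^{-m}\mathcal{X}\subseteq T^{-m-1}\mathcal{X}$, so $(\mathcal{G}_m)_{m\ge 1}$ is decreasing with $\bigcap_m\mathcal{G}_m=\mathcal{G}_\infty := \bigcap_{k\ge 1}(\mathcal{Y}_{-\infty,0}\vee T^k\mathcal{X})$. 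Since the finitary hypothesis makes each $\ell_0(u)$ measurable with respect to $T^{-n}\mathcal{X}\subseteq T^{-m}\mathcal{X}$ for $m\ge n$, the conditional expectation in the first line equals $\ell_0(u)$ and the first essential supremum vanishes identically for all $m\ge n$, irrespective of $u$. Thus only the second line requires work.

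For the second line I would first truncate: since $|\ell_0(u)|\le\Lambda$ with $\Lambda\in L^1$, replacing $\ell_0(u)$ by $\ell_0^M(u):=\ell_0(u)\mathbf{1}_{\Lambda\le M}$ changes each conditional expectation by at most a term dominated (uniformly in $u$) by $\mathbf{E}[\Lambda\mathbf{1}_{\Lambda>M}|\,\cdot\,]$, whose $L^1$ norm tends to $0$; so it suffices to prove uniform $L^1$ convergence for the bounded family $\{\ell_0^M(u)\}$ at each fixed $M$. Fixing $\varepsilon>0$ and applying universal bracketing, I obtain finitely many brackets $[g_i,h_i]$, $i\le N$, with $g_i\le h_i$ and $\|h_i-g_i\|_1\le\varepsilon$, covering $\{\ell_0^M(v):v\in U\}$; truncating the brackets to $[-M,M]$ keeps them bounded without increasing their width. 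The crucial structural point is that every admissible decision $u\in\mathbb{U}_0$ is $\mathcal{Y}_{-\infty,0}$-measurable, hence measurable with respect to \emph{both} $\mathcal{G}_m$ and $\mathcal{G}_\infty$. Choosing a measurable selection $v\mapsto\iota(v)$ of bracket indices and setting $\tilde g=g_{\iota(u)}$, $\tilde h=h_{\iota(u)}$, the indicators $\mathbf{1}[\iota(u)=i]$ are measurable with respect to both conditioning fields and therefore pull out of the conditional expectations. Combined with the sandwich $\tilde g\le\ell_0^M(u)\le\tilde h$ and monotonicity, this bounds $\esssup_{u\in\mathbb{U}_0}|\mathbf{E}[\ell_0^M(u)|\mathcal{G}_m]-\mathbf{E}[\ell_0^M(u)|\mathcal{G}_\infty]|$ above by $\max_{i\le N}(|\mathbf{E}[h_i|\mathcal{G}_m]-\mathbf{E}[h_i|\mathcal{G}_\infty]|+|\mathbf{E}[g_i|\mathcal{G}_m]-\mathbf{E}[g_i|\mathcal{G}_\infty]|)$ plus a bracket-width term $\max_{i\le N}\mathbf{E}[h_i-g_i|\mathcal{G}_\infty]$.

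To finish, the first maximum is a finite maximum of reverse martingale differences, each tending to $0$ in $L^1$ by L\'evy's downward theorem, so its $L^1$ norm vanishes as $m\to\infty$; one then sends $\varepsilon\to 0$ (and $M\to\infty$) along a refinement of the bracketing to absorb the width term. The main obstacle is precisely the bridge between the essential supremum over \emph{random} decisions $u\in\mathbb{U}_0$ and the bracketing of the \emph{deterministic} family $\{\ell_0^M(v):v\in U\}$. The sandwich $\tilde g\le\ell_0^M(u)\le\tilde h$ is a statement about the ``diagonal'' function $\omega\mapsto\ell(u(\omega),\omega)$, which is not literally a member of the bracketed class; substituting the random value $v=u(\omega)$ into the $\mathbf{P}$-a.s.\ bracket inequalities is not a mere union bound but genuinely requires the standing assumptions — $(\Omega,\mathcal{B})$ standard and $\mathcal{X},\mathcal{Y}$ countably generated — through a jointly measurable (regular-conditional-probability) version of the conditional expectations, the freezing property of $\mathcal{G}$-measurable functions under the conditional law, and the realization of the essential supremum along a countable subfamily (so exceptional null sets may be collected). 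A second delicate point is the width term: because only the $L^1$ size of each \emph{individual} bracket is controlled a priori, driving $\max_i\mathbf{E}[h_i-g_i|\mathcal{G}_\infty]$ to zero must exploit the availability of finite bracketings at every scale, and this quantitative step is where I expect the argument to be most technical.
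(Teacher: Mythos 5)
Your handling of the first line of Assumption 2 is correct and coincides with the paper's: the finitary hypothesis together with $\mathcal{X}\subset T^{-1}\mathcal{X}$ makes $\ell_0(u)$ measurable with respect to $\mathcal{Y}_{-\infty,0}\vee T^{-m}\mathcal{X}$ for all $m\ge n$, so that supremum vanishes identically. The ``diagonal'' issue you flag is also not the real obstruction: brackets sandwich each $\ell_0^M(v)$ pointwise, the event $\{\iota(u)=i\}$ is $\mathcal{Y}_{-\infty,0}$-measurable and hence lies in both conditioning fields, and conditional expectations localize on such events; together with the reduction of the essential supremum to a countable supremum this part can be made rigorous.

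The genuine gap is the width term, and as proposed the argument cannot be completed. After sending $m\to\infty$ by the downward martingale theorem, your bracketing bound leaves
\begin{equation*}
\limsup_{m\to\infty}\bigg\|\esssup_{u\in\mathbb{U}_0}\big|\mathbf{E}[\ell_0^M(u)|\mathcal{G}_m]-\mathbf{E}[\ell_0^M(u)|\mathcal{G}_\infty]\big|\bigg\|_1
\le \mathbf{E}\Big[\max_{i\le N}\mathbf{E}[h_i-g_i|\mathcal{G}_\infty]\Big],
\end{equation*}
and the only a priori control is $\mathbf{E}[h_i-g_i]\le\varepsilon$ for each $i$ \emph{individually}. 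Nonnegative variables $Z_i$ with $\mathbf{E}[Z_i]\le\varepsilon$ can satisfy $\mathbf{E}[\max_i Z_i]=N\varepsilon$ (take $Z_i=N\varepsilon\mathbf{1}_{A_i}$ with $A_i$ disjoint of probability $1/N$), so the bound is only $N(\varepsilon)\,\varepsilon$, which need not vanish as $\varepsilon\to 0$ since the number of brackets $N(\varepsilon)$ is uncontrolled; refining the bracketing makes $N$ larger, not the bound smaller. This is precisely where uniform \emph{reverse martingale} convergence differs from a Glivenko--Cantelli theorem: there the centering $\mathbf{E}[h_i-g_i]$ is a deterministic number $\le\varepsilon$, whereas here the limit is the random quantity $\mathbf{E}[\,\cdot\,|\mathcal{G}_\infty]$, whose conditional bracket widths can be large on small sets. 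Closing the gap requires, in essence, running the universal bracketing under the regular conditional probabilities $\mathbf{P}[\,\cdot\,|\mathcal{G}_\infty]$ themselves, so that the widths become a.s.\ constants bounded by $\varepsilon$ under each conditional law -- this is where standardness, countable generation, and the \emph{universal} quantifier in the bracketing hypothesis do real work, and it brings in nontrivial measurability and conditional-triviality issues. That is the content of the result the paper actually invokes at this point, \cite[Corollary 1.4(2$\Rightarrow$7)]{vH12ugc}: the paper's own proof consists of the trivial first-line observation plus this citation (with a footnote replacing pointwise separability by the countable reduction of the essential supremum), and does not attempt the direct elaboration you outline.
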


\begin{proof}
The finitary assumption trivially implies the first line of Assumption 2.
The second line follows along the lines of the proof of
\cite[Corollary 1.4(2$\Rightarrow$7)]{vH12ugc}.\footnote{
	The pointwise separability assumption in 
	\cite[Corollary 1.4(2$\Rightarrow$7)]{vH12ugc} is
	not needed here, as the essential supremum can be reduced
	to a countable supremum as in the proof of Lemma \ref{lem:meanopt}.
}
\qed\end{proof}

To show that universal bracketing can be much weaker than equimeasurability,
we give a simple example in the context of set estimation.

\begin{example}[Confidence intervals]
Consider the stochastic process setting $(X,Y)$ where $X$ takes values 
in the set $[-1,1]$, and fix $\varepsilon>0$.  We would like to pin down 
the value of $X_k$ up to precision $\varepsilon$; that is, we want to 
choose $u_k\in[-1,1]$ as a function of the observations $Y_0,\ldots,Y_k$ 
such that $u_k\le X_k<u_k+\varepsilon$ as often as possible.  This
is a partial information decision problem with loss function
$\ell_0(u)=\mathbf{1}_{\mathbb{R}\backslash[u,u+\varepsilon[}(X_0)$.

The proof of the universal bracketing property of $\ell$ is standard.
Given $\mathbf{P}$ and $\varepsilon>0$, we choose
$-1=a_0<a_1<\cdots<a_n=1$ (for some finite $n$) in such a way that
$\mathbf{P}[a_i<X_0<a_{i+1}]\le\varepsilon$ for all $i$ (note that
every atom of $X_0$ with probability greater than $\varepsilon$ is
one of the values $a_i$).  Put each function $\ell_0(u)$ such that
$u=a_i$ or $u+\varepsilon=a_i$ for some $i$ in its own bracket, and
consider the additional brackets 
$\{f:\mathbf{1}_{\mathbb{R}\backslash]a_{i-1},a_{j+1}[}\le f\le
\mathbf{1}_{\mathbb{R}\backslash[a_i,a_j]}\}$ for all $1\le i\le j<n$.
Then evidently each of the brackets has diameter not exceeding
$2\varepsilon$, and for every $u\in U$ the function $\ell_0(u)$ is 
included in one of the brackets thus constructed.

On the other hand, whenever the law of $X_0$ is not purely atomic, the 
loss $\ell$ cannot be equimeasurable. Indeed, as 
$\|\ell_0(u)\mathbf{1}_{\Omega_\varepsilon}-\ell_0(u') 
\mathbf{1}_{\Omega_\varepsilon}\|_\infty=1$ whenever 
$\ell_0(u)\mathbf{1}_{\Omega_\varepsilon}\ne\ell_0(u') 
\mathbf{1}_{\Omega_\varepsilon}$, it is impossible for 
$\{\ell_0(u)\mathbf{1}_{\Omega_\varepsilon}:u\in U\}$ to be totally 
bounded in $L^\infty$ for any infinite set $\Omega_\varepsilon$ (and 
therefore for any set of sufficiently large measure). 
\end{example}

In \cite{vH12ugc} a detailed characterization is given of the universal 
bracketing property.  In particular, it is shown that a uniformly 
bounded, separable loss $\ell$ on a standard measurable space 
is universally bracketing if and only if $\{\ell_0(u):u\in U\}$ is a 
universal Glivenko-Cantelli class, that is, a class of functions for 
which the law of large numbers always holds uniformly. Many useful 
methods have been developed in empirical process theory to verify this 
property, cf.\ \cite{Dud99,VW96}.  In particular, for a separable 
$\{0,1\}$-valued loss, a very useful sufficient condition is that 
$\{\ell_0(u):u\in U\}$ is a Vapnik-Chervonenkis class.  We refer to
\cite{vH12ugc,Dud99,VW96} for further details.

\subsection{Conditional absolute regularity}

In the previous section, we have developed universal complexity 
assumptions that are applicable regardless of other details of the 
model. In the present section, we will in some sense take the opposite 
approach: we will develop a sufficient condition for a stronger version 
of the conditional $K$-property (in the stochastic process setting) 
under which no complexity assumptions are needed.  This shows that there 
is a tradeoff between mixing and complexity; if the mixing assumption is 
strengthened, then the complexity assumption can be weakened.  
An additional advantage of the sufficient condition to be presented is that
it is in practice one of the most easily verifiable conditions that ensures the
conditional $K$-property.

In the remainder of this section, we will work in the stochastic process 
setting. Let $(X,Y)$ be a stationary ergodic process taking values in 
the Polish space $E\times F$.  We define
$\mathcal{Y}_{n,m}=\sigma\{Y_k:n\le k\le m\}$ and
$\mathcal{X}_{n,m}=\sigma\{X_k:n\le k\le m\}$ for $n\le m$, and we 
consider the observation and generating fields
$\mathcal{Y}=\sigma\{Y_0\}$, $\mathcal{X}=\mathcal{X}_{-\infty,0}\vee
\mathcal{Y}_{-\infty,0}$.  In this setting, the conditional $K$-property
relative to $\mathcal{Y}_{-\infty,0}$ reduces to
$$
	\bigcap_{k=1}^\infty(\mathcal{Y}_{-\infty,0}\vee
	\mathcal{X}_{-\infty,-k})=\mathcal{Y}_{-\infty,0}
	\quad\mathop{\mathrm{mod}}\mathbf{P}.
$$
If $\mathcal{Y}$ is trivial (that is, the observations $Y$ are noninformative),
this reduces to the statement that $X$ has a trivial past tail $\sigma$-field,
that is, $X$ is regular (or purely nondeterministic) in the sense of
Kolmogorov.  This property is often fairly easy to check: for example, any 
Markov chain whose law converges weakly to a unique invariant measure is regular
(cf.\ \cite[Prop.\ 3]{Tot70}).  When $\mathcal{Y}$ is nontrivial, the 
conditional $K$-property is generally not so easy to check, however.
We therefore give a condition, arising from filtering theory \cite{TvH12},
that allows to deduce conditional mixing properties from their
more easily verifiable unconditional counterparts.

We will require two assumptions.  The first assumption states that the 
pair $(X,Y)$ is absolutely regular in the sense of Volkonski{\u\i} and 
Rozanov \cite{VR59} (this property is also known as $\beta$-mixing).
Absolute regularity is a strengthening of the regularity property;
assuming regularity of $(X,Y)$ is not sufficient for what follows
\cite{vH12}.  Many techniques have been developed to verify the
absolute regularity property; for example, any Harris recurrent
and aperiodic Markov chain is absolutely regular \cite{MT09}.

\begin{definition}
\label{def:absreg}
The process $(X,Y)$ is said to be \emph{absolutely regular} if
$$
	\big\|
	\mathbf{P}[(X_k,Y_k)_{k\ge n}\in\cdot\,|
	\mathcal{X}_{-\infty,0}\vee\mathcal{Y}_{-\infty,0}]
	- \mathbf{P}[(X_k,Y_k)_{k\ge n}\in\cdot\,]
	\big\|_{\rm TV}\xrightarrow{n\to\infty}0\quad
	\mbox{in }L^1.
$$
\end{definition}

By itself, however, absolute regularity of $(X,Y)$ is not sufficient for 
the conditional $K$-property, as can be seen in Example \ref{ex:filt}.
In this example, the relation between the processes $X$ and $Y$ is very
singular, so that things go wrong when we condition.  The following
nondegeneracy assumption rules out this possibility.

\begin{definition}
\label{def:nondeg}
The process $(X,Y)$ is said to be \emph{nondegenerate} if
$$
	\mathbf{P}[Y_1,\ldots,Y_m\in\cdot\,|
	\mathcal{Z}_{-\infty,0}\vee
	\mathcal{Z}_{m+1,\infty}]
	\sim
	\mathbf{P}[Y_1,\ldots,Y_m\in\cdot\,|
	\mathcal{Y}_{-\infty,0}\vee
	\mathcal{Y}_{m+1,\infty}]
	\quad \mbox{a.s.}
$$
for every $1\le m<\infty$, where $\mathcal{Z}_{n,m}:=
\mathcal{X}_{n,m}\vee\mathcal{Y}_{n,m}$.
\end{definition}

The nondegeneracy assumption ensures that the null sets of the 
law of the observations $Y$ do not depend too much on the unobserved
process $X$.  The assumption is often easily verified.  For example,
if $Y_k=f(X_k)+\eta_k$ where $\eta_k$ is an i.i.d.\ sequence of random
variables with
strictly positive density, then the conditional distributions
in Definition \ref{def:nondeg} have strictly positive densities and
are therefore equivalent a.s.

\begin{theorem}[\cite{TvH12}]
\label{thm:infdim}
If $(X,Y)$ is absolutely regular and nondegenerate, then
$$
	\bigcap_{k=1}^\infty(\mathcal{Y}_{-\infty,0}\vee
	\mathcal{X}_{-\infty,-k})=\mathcal{Y}_{-\infty,0}
	\quad\mathop{\mathrm{mod}}\mathbf{P}.
$$
\end{theorem}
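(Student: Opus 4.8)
The plan is to establish the nontrivial inclusion $\bigcap_{k\ge 1}(\mathcal{Y}_{-\infty,0}\vee\mathcal{X}_{-\infty,-k})\subseteq\mathcal{Y}_{-\infty,0}$; the reverse inclusion is immediate, since $\mathcal{Y}_{-\infty,0}\subseteq\mathcal{Y}_{-\infty,0}\vee\mathcal{X}_{-\infty,-k}$ for every $k$. Writing $\mathcal{T}=\bigcap_{k\ge 1}(\mathcal{Y}_{-\infty,0}\vee\mathcal{X}_{-\infty,-k})$, the $\sigma$-fields $\mathcal{Y}_{-\infty,0}\vee\mathcal{X}_{-\infty,-k}$ decrease to $\mathcal{T}$, so by reverse martingale convergence it suffices to show that
$$
	\mathbf{E}[F\mid\mathcal{Y}_{-\infty,0}\vee\mathcal{X}_{-\infty,-k}]
	\xrightarrow{k\to\infty}\mathbf{E}[F\mid\mathcal{Y}_{-\infty,0}]
	\quad\mbox{in }L^1
$$
for every bounded $(\mathcal{X}_{-\infty,0}\vee\mathcal{Y}_{-\infty,0})$-measurable $F$: applying this to a bounded $\mathcal{T}$-measurable $F$ (note $\mathcal{T}\subseteq\mathcal{X}_{-\infty,0}\vee\mathcal{Y}_{-\infty,0}$) gives $F=\mathbf{E}[F\mid\mathcal{T}]=\mathbf{E}[F\mid\mathcal{Y}_{-\infty,0}]$, forcing $F$ to be $\mathcal{Y}_{-\infty,0}$-measurable. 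Since both conditioning fields contain $\mathcal{Y}_{-\infty,0}$, every bounded $\mathcal{Y}_{-\infty,0}$-measurable factor pulls out of both sides; as products $g\cdot h$ with $g$ bounded $\mathcal{Y}_{-\infty,0}$-measurable and $h$ bounded $\mathcal{X}_{-N,0}$-measurable span an $L^1$-dense subspace, it therefore suffices to treat $F=h(X_{-N},\ldots,X_0)$ for a fixed finite window $N\ge 0$ and bounded $h$ (write $X_{-N:0}:=(X_{-N},\ldots,X_0)$). In words, one must show that the remote hidden past becomes irrelevant for the near hidden states once all observations are given.

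If the observations were absent (i.e.\ $\mathcal{Y}$ trivial), this would reduce to $\mathbf{E}[h(X_{-N:0})\mid\mathcal{X}_{-\infty,-k}]\to\mathbf{E}[h(X_{-N:0})]$, which is exactly the triviality of the past tail $\bigcap_k\mathcal{X}_{-\infty,-k}$; this follows from absolute regularity (Definition \ref{def:absreg}), as $\beta$-mixing of $(X,Y)$ forces the joint tail $\bigcap_k\mathcal{Z}_{-\infty,-k}$, and a fortiori the hidden tail, to be $\mathbf{P}$-trivial. The difficulty in the conditional statement is that $\mathcal{Y}_{-\infty,0}$ contains the intervening observations $Y_{-k+1},\ldots,Y_0$, which can link the remote hidden past to the near hidden states even when the unconditional process mixes rapidly; unconditional absolute regularity gives no direct control of this conditional dependence. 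That this link can be fatal is precisely the content of Example \ref{ex:filt}, where the singular relation $Y_k=|\xi_k-\xi_{k-1}|$ lets the observations transmit information across arbitrarily long times. Nondegeneracy (Definition \ref{def:nondeg}) is exactly the hypothesis that rules this out.

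To exploit absolute regularity conditionally, I would remove the binding effect of the intervening observations by a change of measure. Nondegeneracy asserts that, on any finite block, the conditional law of the observations given the hidden and observed data on either side is equivalent to the conditional law given the observed data alone; this furnishes a strictly positive and finite Radon--Nikodym density, and hence a reference measure $\mathbf{Q}\sim\mathbf{P}$ under which the block of intervening observations is decoupled from the hidden states while the law of $X$ is unchanged. Under $\mathbf{Q}$ the conditioning no longer binds the hidden process, so that---modulo the change of measure---the convergence reduces to the unconditional tail triviality of $X$ supplied by absolute regularity. The conclusion is then transferred back to $\mathbf{P}$ through the Bayes formula
$$
	\mathbf{E}_{\mathbf{P}}[h(X_{-N:0})\mid\mathcal{G}_k]=
	\frac{\mathbf{E}_{\mathbf{Q}}[Z\,h(X_{-N:0})\mid\mathcal{G}_k]}
	{\mathbf{E}_{\mathbf{Q}}[Z\mid\mathcal{G}_k]},\qquad
	\mathcal{G}_k:=\mathcal{Y}_{-\infty,0}\vee\mathcal{X}_{-\infty,-k},
$$
in which the density $Z=d\mathbf{P}/d\mathbf{Q}$ carries the entire observation-dependence of the limit, and the equivalence $\mathbf{Q}\sim\mathbf{P}$ ensures that $\mathbf{Q}$-a.s.\ statements persist $\mathbf{P}$-a.s.

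The main obstacle is that this change of measure must be carried out over an unbounded observation record: nondegeneracy is stated only for finite blocks $Y_1,\ldots,Y_m$, whereas the conditioning involves the entire observation half-line together with a cut-off $k$ that itself tends to infinity. Making the heuristic rigorous therefore requires constructing the decoupling densities on finite blocks, controlling them \emph{uniformly}, and interchanging the decoupling limit with the limit $k\to\infty$, so that the residual dependence introduced by $Z$ is shown to vanish in $L^1$. This quantitative marriage of the finite-block nondegeneracy with the $\beta$-mixing rate of absolute regularity is the technical heart of the argument, and is exactly what is carried out in \cite{TvH12}; I would follow that route, using the mixing coefficient to bound the coupling error on each block and nondegeneracy to keep the change of measure nonsingular uniformly in $k$.
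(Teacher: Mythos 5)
First, a point of comparison: the paper itself contains no proof of this theorem. It is stated with the citation \cite{TvH12} in its header and used as an imported black box; the only in-paper commentary is that the argument in \cite{TvH12} actually yields the stronger statement that $X$ is conditionally absolutely regular relative to $\mathcal{Y}_{-\infty,\infty}$. So your proposal cannot be measured against an internal argument, only on its own completeness.

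Judged that way, your soft reductions are correct and cleanly done: passing from the $\sigma$-field identity to $L^1$-convergence of $\mathbf{E}[F\mid\mathcal{Y}_{-\infty,0}\vee\mathcal{X}_{-\infty,-k}]$ toward $\mathbf{E}[F\mid\mathcal{Y}_{-\infty,0}]$ via reverse martingale convergence is valid, as is the monotone-class reduction to finite-window functionals $h(X_{-N},\ldots,X_0)$ after pulling out bounded $\mathcal{Y}_{-\infty,0}$-measurable factors; you also diagnose the difficulty correctly (the pathology of Example \ref{ex:filt}) and assign the two hypotheses their proper roles. The genuine gap is that the analytic core --- proving that conditioning on the full observation past does not resurrect the correlations that absolute regularity destroys --- is never executed. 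Your decoupling measure $\mathbf{Q}\sim\mathbf{P}$ exists only blockwise: nondegeneracy is a finite-block hypothesis, there is in general no single equivalent measure decoupling an infinite observation record from the hidden path (the blockwise densities need not remain uniformly controlled, and the limiting decoupled and true laws are typically mutually singular), and the uniform estimates and interchange of the block limit with $k\to\infty$ are precisely what you delegate back to \cite{TvH12}. A correct reduction followed by an appeal to the cited reference for the key estimate is not a proof --- though it is, structurally, exactly what the paper does, namely quote \cite{TvH12}. If a self-contained argument is the goal, the missing ingredient is this uniform conditional-decoupling lemma, and nothing in the sketch substitutes for it.
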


Theorem \ref{thm:infdim} provides a practical method to check the 
conditional $K$-property.  However, the proof of Theorem 
\ref{thm:infdim} actually yields a much stronger statement. It 
is shown in \cite[Theorem 3.5]{TvH12} that if $(X,Y)$ is absolutely 
regular and nondegenerate, then $X$ is \emph{conditionally} absolutely 
regular relative to $\mathcal{Y}_{-\infty,\infty}$ in the sense that
$$
	\big\|
	\mathbf{P}[(X_k)_{k\ge n}\in\cdot\,|
	\mathcal{X}_{-\infty,0}\vee\mathcal{Y}_{-\infty,\infty}]
	- \mathbf{P}[(X_k)_{k\ge n}\in\cdot\,|
	\mathcal{Y}_{-\infty,\infty}]
	\big\|_{\rm TV}\xrightarrow{n\to\infty}0\quad
	\mbox{in }L^1.
$$
Moreover, it is shown\footnote{
	Some of the statements in \cite{TvH12} are time-reversed as 
	compared to their counterparts stated here.  However, as both the 
	absolute regularity and the nondegeneracy assumptions are 
	invariant under time reversal (cf.\ \cite{VR59} for the
	former; the latter is trivial), the present statements 
	follow immediately.
} that under the same assumptions \cite[Proposition 3.9]{TvH12}
$$
	\mathbf{P}[(X_k)_{k\le 0}\in\cdot\,|\mathcal{Y}_{-\infty,0}]
	\sim
	\mathbf{P}[(X_k)_{k\le 0}\in\cdot\,|\mathcal{Y}_{-\infty,\infty}]
	\quad\mbox{a.s.}
$$
From these properties, we can deduce the following result.

\begin{theorem}
\label{thm:soptinfdim}
In the setting of the present section, 
suppose that $(X,Y)$ is absolutely regular and nondegenerate, and consider
a loss function of the form $\ell_0(u) = l(u,X_0)$.  Then the conclusions
of Theorem \ref{thm:sopt} hold.
\end{theorem}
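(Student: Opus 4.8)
The plan is to verify the three hypotheses of Theorem \ref{thm:sopt} for the natural generating field $\mathcal{X}=\mathcal{X}_{-\infty,0}\vee\mathcal{Y}_{-\infty,0}=\mathcal{Z}_{-\infty,0}$, exploiting that the loss depends only on the single coordinate $X_0$ together with the two strengthened consequences of absolute regularity and nondegeneracy quoted above. With this choice one computes $T^{-n}\mathcal{X}=\mathcal{Z}_{-\infty,n}$ and $\mathcal{Y}_{-\infty,0}\vee T^{n}\mathcal{X}=\mathcal{Y}_{-\infty,0}\vee\mathcal{X}_{-\infty,-n}$, so Assumption 1 holds since $T^{-1}\mathcal{X}=\mathcal{Z}_{-\infty,1}\supseteq\mathcal{X}$, and Theorem \ref{thm:infdim} supplies the identity $\bigcap_{k\ge 1}(\mathcal{Y}_{-\infty,0}\vee T^{k}\mathcal{X})=\mathcal{Y}_{-\infty,0}$. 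Assumption 3 is then immediate, as both essential infima in its statement are conditioned on the same field $\mathcal{Y}_{-\infty,0}$. The first line of Assumption 2 is equally trivial: since $\ell_0(u)=l(u,X_0)$ is $\mathcal{X}\subseteq T^{-n}\mathcal{X}$-measurable for every $n\ge 0$, the conditional expectation equals $\ell_0(u)$ identically. Thus everything reduces to the second line of Assumption 2.

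For the second line I would first eliminate the decision variable. Write $\mathcal{G}_n=\mathcal{Y}_{-\infty,0}\vee\mathcal{X}_{-\infty,-n}\downarrow\mathcal{G}_\infty=\mathcal{Y}_{-\infty,0}$, and let $\pi_n,\pi_\infty$ be regular conditional distributions of $X_0$ given $\mathcal{G}_n,\mathcal{G}_\infty$. Every $u\in\mathbb{U}_0$ is $\mathcal{Y}_{-\infty,0}$-measurable, hence $\mathcal{G}_n$- and $\mathcal{G}_\infty$-measurable, so the substitution (freezing) property of conditional expectations gives $\mathbf{E}[\ell_0(u)\mid\mathcal{G}_n]-\mathbf{E}[\ell_0(u)\mid\mathcal{G}_\infty]=\int l(u,x)\,(\pi_n-\pi_\infty)(dx)$. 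With $\lambda(x):=\sup_v|l(v,x)|$, so that $\mathbf{E}[\lambda(X_0)]\le\mathbf{E}[\Lambda]<\infty$, a truncation at level $M$ bounds the essential supremum over $u$ pointwise by $M\|\pi_n-\pi_\infty\|_{\rm TV}+\int\lambda\mathbf{1}_{\lambda>M}\,(\pi_n+\pi_\infty)(dx)$, whose expectation is at most $M\,\mathbf{E}\|\pi_n-\pi_\infty\|_{\rm TV}+2\,\mathbf{E}[\lambda(X_0)\mathbf{1}_{\lambda(X_0)>M}]$. Letting $n\to\infty$ and then $M\to\infty$, the second line of Assumption 2 follows once I establish $\|\pi_n-\pi_\infty\|_{\rm TV}\to 0$ in $L^1$. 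Note that total-variation convergence supplies the uniformity over $u$ for free, which is exactly why no complexity assumption on $l$ is required here.

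Establishing $\|\pi_n-\pi_\infty\|_{\rm TV}\to 0$ in $L^1$ is the heart of the matter, and it is here that the two strengthened properties enter. I would introduce the smoothing distributions $\sigma_n=\mathbf{P}[X_0\in\cdot\mid\mathcal{Y}_{-\infty,\infty}\vee\mathcal{X}_{-\infty,-n}]$ and $\sigma_\infty=\mathbf{P}[X_0\in\cdot\mid\mathcal{Y}_{-\infty,\infty}]$, so that $\pi_n=\mathbf{E}[\sigma_n\mid\mathcal{G}_n]$ and $\pi_\infty=\mathbf{E}[\sigma_\infty\mid\mathcal{G}_\infty]$ by the tower property, and split $\|\pi_n-\pi_\infty\|_{\rm TV}\le\mathbf{E}[\|\sigma_n-\sigma_\infty\|_{\rm TV}\mid\mathcal{G}_n]+\|\mathbf{E}[\sigma_\infty\mid\mathcal{G}_n]-\mathbf{E}[\sigma_\infty\mid\mathcal{G}_\infty]\|_{\rm TV}$, using that conditional expectation contracts the total-variation norm. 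The first term tends to $0$ in $L^1$ by the conditional absolute regularity of \cite[Theorem 3.5]{TvH12}, which after a stationarity shift reads $\|\mathbf{P}[X_0\in\cdot\mid\mathcal{X}_{-\infty,-n}\vee\mathcal{Y}_{-\infty,\infty}]-\mathbf{P}[X_0\in\cdot\mid\mathcal{Y}_{-\infty,\infty}]\|_{\rm TV}\to 0$ after marginalizing onto the coordinate $X_0$.

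The second term is a reverse-martingale convergence of the fixed random measure $\sigma_\infty$ along the decreasing fields $\mathcal{G}_n\downarrow\mathcal{G}_\infty$, and this is the step I expect to be the main obstacle: such convergence does \emph{not} hold in total variation in general, and the $\sigma$-field identity of Theorem \ref{thm:infdim} alone is insufficient (one cannot, for example, take the marginal law of $X_0$ as a reference measure, since these conditional laws need not be absolutely continuous with respect to it). This is precisely what the equivalence \cite[Proposition 3.9]{TvH12} repairs: marginalized to $X_0$ it gives $\sigma_\infty\sim\pi_\infty$ a.s., so $\rho:=d\sigma_\infty/d\pi_\infty$ exists, and since $\pi_\infty$ is $\mathcal{G}_\infty$-measurable I can disintegrate $\mathbf{E}[\sigma_\infty\mid\mathcal{G}_n]=\hat\rho_n\cdot\pi_\infty$ with $\hat\rho_n=\mathbf{E}[\rho\mid\mathcal{G}_n]$ on the product space $(\Omega\times E,\,\mathbf{P}(d\omega)\,\pi_\infty(\omega,dx))$. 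The densities $\hat\rho_n$ form a genuine reverse martingale, so the downward martingale convergence theorem yields $\hat\rho_n\to 1$ in $L^1$, which is exactly $\mathbf{E}\|\mathbf{E}[\sigma_\infty\mid\mathcal{G}_n]-\pi_\infty\|_{\rm TV}\to 0$. Combining the two terms gives the required convergence and completes the verification of the hypotheses of Theorem \ref{thm:sopt}. The delicate points I would watch are the measurable choice of the disintegration and the density $\rho$ on the standard Borel space $E$, the measurable-Fubini interchange justifying $\mathbf{E}[\sigma_\infty\mid\mathcal{G}_n]=\hat\rho_n\cdot\pi_\infty$, and the identity $\bigcap_n(\mathcal{G}_n\otimes\mathcal{E})=\mathcal{G}_\infty\otimes\mathcal{E}$ ensuring the downward martingale has limit $\hat\rho_\infty=1$.
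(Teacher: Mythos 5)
Your proposal is correct in substance, but it takes precisely the route that the paper deliberately avoids. You verify the hypotheses of Theorem \ref{thm:sopt} directly for $\mathcal{X}=\mathcal{X}_{-\infty,0}\vee\mathcal{Y}_{-\infty,0}$; the paper instead remarks that the fact that conditional absolute regularity is relative to $\mathcal{Y}_{-\infty,\infty}$ rather than $\mathcal{Y}_{-\infty,0}$ ``complicates a direct verification of the assumptions of Theorem \ref{thm:sopt}'' and follows what it calls a simpler route: it re-proves the \emph{conclusions} of that theorem, first establishing the ergodic tower property relative to the full observation field $\mathcal{Y}_{-\infty,\infty}$ (Lemma \ref{lem:idimtgdiff}, an adaptation of Corollary \ref{cor:mtgdiff}), then downgrading $\mathcal{Y}_{-\infty,\infty}$ to $\mathcal{Y}_{-\infty,k}$ (Corollary \ref{cor:idimtgdiff}) using \cite[Proposition 3.9]{TvH12} together with triviality of the past tail under $\mathbf{P}[\,\cdot\,|\mathcal{Y}_{-\infty,0}]$ and a standard merging theorem for equivalent measures along decreasing $\sigma$-fields \cite[Theorem III.14.10]{Lin02}, and finally repeating the construction of $\mathbf{u}^\star$ verbatim. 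Both arguments consume exactly the same two inputs from \cite{TvH12}, and your second term is essentially a hand-rolled version of the merging argument the paper imports from Lindvall, transported to the product space $(\Omega\times E,\mathbf{M})$ with $\mathbf{M}(d\omega,dx)=\mathbf{P}(d\omega)\,\pi_\infty(\omega,dx)$. What your route buys is an actual verification of the hypotheses of Theorem \ref{thm:sopt} (confirming the paper's parenthetical claim that this ``should be possible along the lines of \cite{TvH12}''), so the theorem then applies as a black box; what the paper's route buys is that it never touches the product-space disintegration and its attendant $\sigma$-field subtleties.

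Those subtleties are the one place where your write-up falls short of a proof. The identity $\bigcap_n(\mathcal{G}_n\otimes\mathcal{E})=\mathcal{G}_\infty\otimes\mathcal{E}$ $\mathop{\mathrm{mod}}\mathbf{M}$, which you list as a point ``to watch,'' cannot be waved through: exchanging countable intersections with suprema of $\sigma$-fields is false in general, and the failure of exactly such identities is a recurring theme of this very paper (cf.\ Example \ref{ex:filt} and condition 3 of Definition \ref{def:ckauto}). It does hold in your setting, but for a structural reason that deserves to be stated: under $\mathbf{M}$ the $E$-coordinate is conditionally independent of the full first-coordinate $\sigma$-field $\mathcal{B}$ given $\mathcal{G}_\infty$, because the disintegrating kernel $\pi_\infty(\omega,dx)$ is $\mathcal{G}_\infty$-measurable in $\omega$. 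Consequently, for bounded $f$ measurable with respect to $\mathcal{B}$ and $g$ measurable with respect to $\mathcal{E}$ one has
$$
	\mathbf{E}_{\mathbf{M}}[fg\,|\,\mathcal{G}_n\otimes\mathcal{E}]
	=g\,\mathbf{E}[f|\mathcal{G}_n],
$$
since the conditional independence propagates to every intermediate field $\mathcal{G}_\infty\subseteq\mathcal{G}_n\subseteq\mathcal{B}$. Letting $n\to\infty$, the left-hand side converges in $L^1(\mathbf{M})$ to $\mathbf{E}_{\mathbf{M}}[fg\,|\,\bigcap_n(\mathcal{G}_n\otimes\mathcal{E})]$ by reverse martingale convergence, while the right-hand side converges to $g\,\mathbf{E}[f|\mathcal{G}_\infty]$, which is $\mathcal{G}_\infty\otimes\mathcal{E}$-measurable; a monotone class argument then shows that every bounded $\bigcap_n(\mathcal{G}_n\otimes\mathcal{E})$-measurable function has a $\mathcal{G}_\infty\otimes\mathcal{E}$-measurable version, which is the required identity. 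With this lemma supplied (the other points you flag---joint measurability of $\rho$ on the Polish space $E$, the rectangle computation identifying $\mathbf{E}[\sigma_\infty|\mathcal{G}_n]=\hat\rho_n\cdot\pi_\infty$, and running the truncation with $\Lambda$ rather than the possibly nonmeasurable $\sup_v|l(v,\cdot)|$---are routine), your verification of the second line of Assumption 2 is complete, and the remaining assumptions are immediate from Theorem \ref{thm:infdim} and the $T^{-n}\mathcal{X}$-measurability of the loss, exactly as you say.
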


The key point about Theorem \ref{thm:soptinfdim} is that no complexity 
assumption is imposed: the loss function $l(u,x)$ may be an arbitrary 
measurable function (as long as it is dominated in $L^1$ in accordance 
with our standing assumption).  The explanation for this is that the 
conditional absolute regularity property is so strong that the regular 
conditional probabilities 
$\mathbf{P}[X_0\in\cdot\,|\mathcal{Y}_{-\infty,\infty}\vee 
\mathcal{X}_{-\infty,-n}]$ converge in total variation.  Therefore, the 
corresponding reverse martingales converge uniformly over any dominated 
family of measurable functions.  The strength of the conditional mixing 
property therefore eliminates the need for any additional complexity 
assumptions.  In contrast, we may certainly have pathwise optimal strategies
when absolute regularity fails, but then a complexity assumption is essential
(cf.\ Example \ref{ex:asp2}).

The proof of Theorem \ref{thm:soptinfdim} will be given in section 
\ref{sec:proofinfdim}.  The proof is a straightforward adaptation of 
Theorem \ref{thm:sopt}; unfortunately, the fact that the conditional 
absolute regularity property is relative to 
$\mathcal{Y}_{-\infty,\infty}$ rather than $\mathcal{Y}_{-\infty,0}$ 
complicates a direct verification of the assumptions of Theorem 
\ref{thm:sopt} (while this should be possible along the lines of 
\cite{TvH12}, we will follow the simpler route here).  
The results of \cite{TvH12} could also be used to obtain the conclusion of 
Corollary \ref{cor:smeanopt} in the setting of Theorem 
\ref{thm:soptinfdim} under somewhat stronger nondegeneracy assumptions.

\subsection{Hidden Markov models and nonlinear filters}
\label{sec:filt}

The goal of the present section is to explore some implications of our 
results to filtering theory.  For simplicity of exposition, we will 
restrict attention to the classical setting of (general state space) 
hidden Markov models (see, e.g., \cite{CMR05}).

We adopt the stochastic process setting and notations of the previous
section.  In addition, we assume that $(X,Y)$ is a hidden Markov model,
that is, a Markov chain whose transition kernel can be factored as
$\tilde P(x,y,dx',dy')=P(x,dx')\,\Phi(x',dy')$.  This implies that
the process $X$ is a Markov chain in its own right, and that the
observations $Y$ are conditionally independent given $X$.
In the following, we will assume that the observation kernel $\Phi$
has a density, that is, $\Phi(x,dy)=g(x,y)\,\varphi(dy)$ for some
measurable function $g$ and reference measure $\varphi$. 

A fundamental object in this theory is the nonlinear filter $\Pi_k$, 
defined as
$$
	\Pi_k := \mathbf{P}[X_k\in\cdot\,|Y_0,\ldots,Y_k].
$$
The measure-valued process $\Pi=(\Pi_k)_{k\ge 0}$ is itself a
(nonstationary) Markov chain \cite{vH12} with transition kernel 
$\mathscr{P}$.  To study the stationary behavior of the filter, 
which is of substantial interest in applications (see, for example,
\cite{Han09spa} and the references therein),
one must understand the relationship between the ergodic
properties of $X$ and $\Pi$.  
The following result, proved in \cite{vH12}, is essentially
due to Kunita \cite{Kun71}.

\begin{theorem}
\label{thm:kunita}
Suppose that the transition kernel $P$ possesses a unique invariant
measure (that is, $X$ is uniquely ergodic).  Then
the filter transition kernel $\mathscr{P}$ possesses a unique
invariant measure (that is, $\Pi$ is uniquely ergodic)
if and only if
$$
	\bigcap_{k=1}^\infty(\mathcal{Y}_{-\infty,0}\vee
	\mathcal{X}_{-\infty,-k})=\mathcal{Y}_{-\infty,0}
	\quad\mathop{\mathrm{mod}}\mathbf{P}.
$$
\end{theorem}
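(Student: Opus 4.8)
The plan is to characterise the invariant measures of the filter transition kernel $\mathscr{P}$ explicitly and to show that there is exactly one precisely when the stated $\sigma$-field identity holds. Throughout I would work with the stationary two-sided extension $(X_k,Y_k)_{k\in\mathbb{Z}}$, which exists because $X$ is uniquely ergodic. The first step is to produce one distinguished invariant measure from the \emph{backward filter}
$$
	\Pi_n^- := \mathbf{P}[X_n\in\cdot\,|\mathcal{Y}_{-\infty,n}].
$$
By stationarity the law of $\Pi_n^-$ does not depend on $n$, and the filtering recursion shows that $(\Pi_n^-)_{n\in\mathbb{Z}}$ is a stationary Markov chain with transition kernel $\mathscr{P}$; hence $\mu^-:=\mathrm{Law}(\Pi_0^-)$ is an invariant measure of $\mathscr{P}$. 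Moreover, the barycenter map $\mu\mapsto\int\rho\,\mu(d\rho)$ intertwines $\mathscr{P}$ with $P$, so unique ergodicity of $X$ forces every invariant measure of $\mathscr{P}$ to have the same barycenter, namely the unique invariant law $\lambda$ of $X$. Thus the entire content of the theorem is the \emph{uniqueness} of the invariant measure among those sharing the common $X$-marginal $\lambda$.

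For the implication that the identity yields unique ergodicity, I would take an arbitrary invariant measure $\mu$ of $\mathscr{P}$ and realise it as a stationary process. The key step is to couple $\mu$ with the underlying model: one builds a stationary process $(\pi_n,X_n,Y_n)_{n\in\mathbb{Z}}$ in which $(\pi_n)$ is the filter chain with $\pi_n\sim\mu$, $(X_n,Y_n)$ is the hidden Markov model, and $\pi_n$ is a version of $\mathbf{P}[X_n\in\cdot\,|\mathcal{G}_{-\infty,n}]$, where $\mathcal{G}_{-\infty,n}=\mathcal{Y}_{-\infty,n}\vee\mathcal{T}$ and $\mathcal{T}$ carries the information about the remote past of $X$ encoded in the stationary initialisation of the filter. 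Concretely, one expects $\pi_n$ to be measurable with respect to $\bigcap_{k\ge1}(\mathcal{Y}_{-\infty,n}\vee\mathcal{X}_{-\infty,n-k})$. A reverse-martingale argument, together with the hypothesised identity $\bigcap_k(\mathcal{Y}_{-\infty,0}\vee\mathcal{X}_{-\infty,-k})=\mathcal{Y}_{-\infty,0}$, then collapses this conditional law onto $\mathbf{P}[X_n\in\cdot\,|\mathcal{Y}_{-\infty,n}]=\Pi_n^-$, whence $\mu=\mu^-$.

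For the converse I would argue by contraposition. If the identity fails, then $\bigcap_k(\mathcal{Y}_{-\infty,0}\vee\mathcal{X}_{-\infty,-k})$ strictly contains $\mathcal{Y}_{-\infty,0}$, so there is residual information about the remote past of $X$ not determined by the observations. I would use this to manufacture a second stationary filter whose marginal law differs from $\mu^-$: one initialises the filter using the nontrivial tail $\sigma$-field rather than the observations alone, producing an invariant measure $\mu\ne\mu^-$ and contradicting unique ergodicity. Making ``initialise using the tail'' precise is most cleanly done by exhibiting a bounded $X_0$-measurable function whose $\mathcal{Y}_{-\infty,0}$-conditional expectation differs from its conditional expectation given the larger intersection field, and propagating this discrepancy forward through $\mathscr{P}$.

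The hardest part will be the measure-theoretic heart of the forward direction: rigorously identifying the remote dependence of an \emph{arbitrary} stationary filter with the intersection field $\bigcap_k(\mathcal{Y}_{-\infty,0}\vee\mathcal{X}_{-\infty,-k})$, and justifying the exchange of the intersection over $k$ with the conditioning that defines the filter. This is precisely the point at which Kunita's original argument was incomplete, and it is where the Polish (standard Borel) structure of $E\times F$ and the existence of regular conditional probabilities become essential. I would handle it by showing that the reverse martingale $\mathbf{P}[X_0\in\cdot\,|\mathcal{Y}_{-\infty,0}\vee\mathcal{X}_{-\infty,-k}]$ converges in total variation to a limit that is genuinely measurable with respect to the intersection field, so that the $\sigma$-field identity can be applied at the level of conditional distributions; this reduces the equality of $\sigma$-fields to the $\mathbf{P}$-a.s.\ equality of the corresponding regular conditional laws of $X_0$.
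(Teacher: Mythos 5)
A preliminary remark: the paper does not prove Theorem \ref{thm:kunita} at all --- it is imported from the filtering literature (``proved in \cite{vH12}, \ldots essentially due to Kunita \cite{Kun71}''), so there is no internal argument to compare your proposal against; it has to be measured against the known proof in that reference.

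Your skeleton --- the stationary filter $\Pi_n^-$ as a distinguished invariant measure, the barycenter intertwining with $P$, realising an arbitrary invariant measure as a coupled stationary process, reverse-martingale convergence --- is indeed the skeleton of that proof. But the step you yourself call the ``measure-theoretic heart'' is not merely hard: as you have set it up, it fails. For a general $\mathscr{P}$-invariant measure $\mu$, the coupled process satisfies $\pi_n=\mathbf{P}[X_n\in\cdot\,|\,\sigma(\pi_m:m\le n)\vee\mathcal{Y}_{-\infty,n}]$, and the initialisation $\sigma$-field $\mathcal{T}$ carries randomness that is \emph{exogenous} to the $(X,Y)$-process; it is not ``information about the remote past of $X$.'' Consequently $\pi_n$ need not be measurable with respect to $\bigcap_k(\mathcal{Y}_{-\infty,n}\vee\mathcal{X}_{-\infty,n-k})$, nor with respect to any $\sigma$-field generated by $(X,Y)$: for the deterministically alternating chain with trivial observations (Example \ref{ex:merhav}, which fits the hidden Markov framework), the measure $\frac{1}{2}(\delta_{1/4}+\delta_{3/4})$ on filter space is $\mathscr{P}$-invariant with the correct barycenter, and the associated $\pi_0$ is pure exogenous randomisation correlated with, but not a function of, the $X$-path. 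The identity fails in that example, of course, but it shows that the measurability you ``expect'' cannot be extracted from the coupling; when the identity does hold, that measurability is \emph{equivalent} to the desired conclusion $\pi_n=\Pi_n^-$, so taking it as an intermediate step is circular. The actual proof avoids measurability altogether via a second-moment (convex-order) sandwich: setting $\hat\Pi_0:=\mathbf{P}[X_0\in\cdot\,|\,\bigcap_k(\mathcal{Y}_{-\infty,0}\vee\mathcal{X}_{-\infty,-k})]$, one uses the Markov property of the coupling --- given $X_{-k}$ and $Y_{-k+1},\ldots,Y_0$, the state $X_0$ is conditionally independent of everything at times $\le -k$, including $\pi_{-k}$ --- to write $\pi_0(f)$ as a conditional expectation of the filter started from $\delta_{X_{-k}}$; conditional Jensen and reverse-martingale convergence then give $\mathbf{E}[\Pi_0^-(f)^2]\le\mathbf{E}[\pi_0(f)^2]\le\mathbf{E}[\hat\Pi_0(f)^2]$, and since $\mathbf{E}[\pi_0(f)\,|\,\mathcal{Y}_{-\infty,0}]=\Pi_0^-(f)$, the $\sigma$-field identity (which makes the two bounds coincide) forces $\mathbf{E}[(\pi_0(f)-\Pi_0^-(f))^2]=0$, i.e.\ $\mu=\mu^-$.

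Your converse has a second, related gap. Failure of the identity produces an event in $\bigcap_k(\mathcal{Y}_{-\infty,0}\vee\mathcal{X}_{-\infty,-k})$ outside $\mathcal{Y}_{-\infty,0}$, but it does not automatically produce a bounded function of $X_0$ whose two conditional expectations differ: a priori the extra tail information could be conditionally independent of $X_0$ given $\mathcal{Y}_{-\infty,0}$, in which case your tail-initialised filter $\hat\Pi_0$ equals $\Pi_0^-$ a.s.\ and generates no new invariant measure. Ruling out this degenerate case --- showing that such conditional independence of the tail from $X_0$ given the observations already forces the $\sigma$-field identity --- uses the Markov structure of $(X,Y)$ in an essential way and is a nontrivial part of the cited proof, not a formality to be asserted.
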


Evidently, ergodicity of the filter is closely related to the 
conditional $K$-property.  We will exploit this fact to prove a new 
optimality property of nonlinear filters.

The usual interpretation of the filter $\Pi_k$ is that one aims
to track to current location $X_k$ of the unobserved
process on the basis of the observation history $Y_0,\ldots,Y_k$.
By the elementary property of conditional expectations,
$\Pi_k(f)$ provides, for any bounded test function $f$,
an optimal mean-square error estimate of $f(X_k)$:
$$
	\mathbf{E}\big[\{f(X_k)-\Pi_k(f)\}^2\big] \le
	\mathbf{E}\big[\{f(X_k)-\hat f_k(Y_0,\ldots,Y_k)\}^2\big]
	\quad\mbox{for any measurable }\hat f_k.
$$
This interpretation may not be satisfying, however, if only
one sample path of the observations is available (recall Examples
\ref{ex:merhav} and \ref{ex:filt}): one would rather show that
$$
	\liminf_{T\to\infty}
	\Bigg[
	\frac{1}{T}\sum_{k=1}^T
	\{f(X_k)-\hat f_k(Y_0,\ldots,Y_k)\}^2 -
	\frac{1}{T}\sum_{k=1}^T
	\{f(X_k)-\Pi_k(f)\}^2
	\Bigg]\ge 0\quad\mbox{a.s.}
$$
for any alternative sequence of estimators $(\hat f_k)_{k\ge 0}$.
If this property holds for any bounded test function $f$,
the filter will be said to be \emph{pathwise optimal}.

\begin{corollary}
Suppose that the filtering process $\Pi$ is uniquely ergodic.
Then the filter is both mean-square optimal and pathwise optimal.
\end{corollary}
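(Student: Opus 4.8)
The plan is to recognize the filtering problem as a particular instance of the abstract decision problem and then invoke the equimeasurable extension of Theorem \ref{thm:soptfinite}. First I would fix a bounded test function $f$ and set up the decision problem with observation $\sigma$-field $\mathcal{Y}=\sigma\{Y_0\}$, decision space $U=[\inf_x f(x),\sup_x f(x)]$, and loss
$$
	\ell(u,\omega) = \{u-f(X_0(\omega))\}^2,
$$
so that $\ell_k(u)=\ell(u,T^k\omega)=\{u-f(X_k)\}^2$ and the time-average loss $L_T(\mathbf{u})=\frac{1}{T}\sum_{k=1}^T\{u_k-f(X_k)\}^2$ is exactly the quantity appearing in the definition of pathwise optimality of the filter. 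Since a strategy is admissible precisely when $u_k$ is $\mathcal{Y}_{0,k}=\sigma\{Y_0,\ldots,Y_k\}$-measurable, the competing estimators $u_k=\hat f_k(Y_0,\ldots,Y_k)$ are exactly the admissible strategies.

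Next I would identify the filter as the mean-optimal strategy. For the quadratic loss, $\mathbf{E}[\ell_k(u)|\mathcal{Y}_{0,k}]$ is minimized exactly at $u=\mathbf{E}[f(X_k)|\mathcal{Y}_{0,k}]=\mathbf{E}[f(X_k)|Y_0,\ldots,Y_k]=\Pi_k(f)$, which lies in $U$ and is $\mathcal{Y}_{0,k}$-measurable. In particular, the choice $\tilde u_k=\Pi_k(f)$ attains the essential infimum exactly and hence satisfies the defining inequality of Lemma \ref{lem:meanopt} (with slack $0\le k^{-1}$); thus $\Pi_k(f)$ is a legitimate mean-optimal strategy $\mathbf{\tilde u}$, and the asserted pathwise optimality of the filter is precisely the statement that this $\mathbf{\tilde u}$ is pathwise optimal in the sense of Definition \ref{def:sopt}.

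It then remains to verify the two hypotheses of the equimeasurable extension of Theorem \ref{thm:soptfinite} furnished by Corollary \ref{cor:equiisgreat}. For the conditional ergodicity, I would use that unique ergodicity of the filter is equivalent, by Theorem \ref{thm:kunita}, to the identity $\bigcap_{k\ge 1}(\mathcal{Y}_{-\infty,0}\vee\mathcal{X}_{-\infty,-k})=\mathcal{Y}_{-\infty,0}$; taking $\mathcal{X}=\mathcal{X}_{-\infty,0}\vee\mathcal{Y}_{-\infty,0}$ as in the discussion following Theorem \ref{thm:soptfinite}, this is exactly the assertion that the system is a conditional $K$-automorphism relative to $\mathcal{Y}_{-\infty,0}$. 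For the complexity assumption, since $f$ is bounded the decision space $U$ is a compact metric space and $u\mapsto\ell(u,\omega)$ is continuous, so Lemma \ref{lem:moko} shows that $\ell$ is equimeasurable. Corollary \ref{cor:equiisgreat} then yields that the mean-optimal strategy $\tilde u_k=\Pi_k(f)$ is pathwise optimal, which is the desired pathwise optimality of the filter; mean-square optimality is the standard $L^2$-projection property of conditional expectation, already noted before the statement.

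The argument is essentially a matter of translation, so there is no serious analytic obstacle once the dictionary is in place. The only points requiring care are the bookkeeping of the indexing---that both the decision $u_k$ and the filter value $\Pi_k(f)$ use $Y_0,\ldots,Y_k$, so that the admissibility constraint $u_k\in\mathbb{U}_{0,k}$ matches the filtering convention---and the verification that $\Pi_k(f)$ genuinely qualifies as the mean-optimal strategy $\mathbf{\tilde u}$ to which Theorem \ref{thm:soptfinite} refers. The substantive content has already been packaged into Theorem \ref{thm:kunita} (reducing conditional ergodicity to filter stability) and Corollary \ref{cor:equiisgreat} (handling the infinite decision space via equimeasurability), so the corollary follows by combining these.
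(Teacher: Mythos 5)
Your proposal is correct and follows essentially the same route as the paper: identify $\Pi_k(f)$ as the mean-optimal strategy for the loss $\ell_0(u)=\{f(X_0)-u\}^2$, obtain the conditional $K$-property from Theorem \ref{thm:kunita}, note equimeasurability (via Lemma \ref{lem:moko}), and conclude pathwise optimality of the mean-optimal strategy. The paper cites Corollary \ref{cor:smeanopt} directly where you cite Corollary \ref{cor:equiisgreat}, but the latter is just the packaged consequence of Lemma \ref{lem:equi} and Corollary \ref{cor:smeanopt}, so the arguments coincide in substance.
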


\begin{proof}
Note that the filter $\Pi_k(f)$ is the mean-optimal policy for the
partial information decision problem with loss
$\ell_0(u)=\{f(X_0)-u\}^2$.  As the latter is equimeasurable,
the result follows directly from Theorem \ref{thm:kunita} and
Corollary \ref{cor:smeanopt}.
\qed\end{proof}

The interaction between our main results and the ergodic theory of 
nonlinear filters is therefore twofold.  On the one hand, our main 
results imply that ergodic nonlinear filters are always pathwise 
optimal.  Conversely, Theorem \ref{thm:kunita} shows that ergodicity
of the filter is a sufficient condition for our main results to hold
in the context of hidden Markov models with equimeasurable loss.
This provides another route to establishing the conditional $K$-property:
the filtering literature provides a variety of methods to verify
ergodicity of the filter \cite{vH09,CvH10,CR11,TvH12}.  It should be
noted, however, that ergodicity of the filter is not necessary
for the conditional $K$-property to hold, even in the setting of hidden
Markov models.

\begin{example}
Consider the hidden Markov model $(X,Y)$ where $X$ is the stationary
Markov chain such that $X_0\sim\mathrm{Uniform}([0,1])$ and
$X_{k+1}=2X_k\mathop{\mathrm{mod}}1$, $Y_k=0$ for all $k\in\mathbb{Z}$
(that is, we have noninformative observations).
Clearly the tail $\sigma$-field $\bigcap_n\mathcal{X}_{-\infty,n}$
is nontrivial, and thus the filter fails to be ergodic by Theorem
\ref{thm:kunita}.  Nonetheless, we claim that the conditional $K$-property
holds, so that our main
results apply for any equimeasurable loss; in particular, the filter
is pathwise optimal.

The key point is that, even in the hidden Markov model setting, one need 
not choose the ``canonical'' generating $\sigma$-field 
$\mathcal{X}=\mathcal{X}_{-\infty,0}$ in Definition \ref{def:ckauto}. In 
the present example, we choose instead 
$\mathcal{X}=\sigma\{\mathbf{1}_{X_k>1/2}:k\le 0\}$. 
To verify the conditional $K$-property,
note that $(\mathbf{1}_{X_k>1/2})_{k\in\mathbb{Z}}$ are i.i.d.\
$\mathop{\mathrm{Bernoulli}}(1/2)$ random variables and
$$
	X_k = \sum_{\ell=0}^\infty 2^{-\ell-1}\mathbf{1}_{X_{k+\ell}>1/2}
	\quad\mbox{a.s.}\quad\mbox{for all }k\in\mathbb{Z}.
$$
Thus $\mathcal{X}\subset T^{-1}\mathcal{X}$ by construction,
$\bigvee_k T^{-k}\mathcal{X}=\sigma\{X_n:n\in\mathbb{Z}\}$ is a 
generating $\sigma$-field, and $\bigcap_k T^k\mathcal{X}$ is trivial 
by the Kolmogorov zero-one law.
\end{example}

Let us now consider the decision problem in the setting of a hidden
Markov model with equimeasurable loss function $\ell_0(u)=l(u,X_0)$.
If the filter is ergodic, then Corollary \ref{cor:equiisgreat} ensures
that the mean-optimal strategy $\mathbf{\tilde u}$ is pathwise optimal.
In this setting, the mean-optimal strategy can be expressed
in terms of the filter:
$$
	\tilde u_k = 
	\mathop{\mathrm{arg\,min}}_{u\in U}
	\mathbf{E}[l(u,X_k)|Y_0,\ldots,Y_k] =
	\mathop{\mathrm{arg\,min}}_{u\in U}
	\int l(u,x)\,\Pi_k(dx).
$$
When $X_k$ takes values in a finite set $E=\{1,\ldots,d\}$, the filter
can be recursively computed in a straightforward manner \cite{CMR05}.
In this case, the mean-optimal strategy $\mathbf{\tilde u}$ can be 
implemented directly.  On the other hand, when $E$ is a continuous space,
the conditional measure $\Pi_k$ is an infinite-dimensional
object which cannot be computed exactly except in special cases.  However,
$\Pi_k$ can often be approximated very efficiently by recursive Monte Carlo 
approximations $\Pi_k^N=\frac{1}{N}\sum_{i=1}^N\delta_{Z_k^N(i)}$, known 
as particle filters \cite{CMR05}, that converge to the true filter $\Pi_k$
as the number of particles increases $N\to\infty$.  This suggests to approximate 
the mean-optimal strategy $\mathbf{\tilde u}$ by
$$
	\tilde u_k \approx \tilde u_k^N :=
	\mathop{\mathrm{arg\,min}}_{u\in U}
	\int l(u,x)\,\Pi_k^N(dx) =
	\mathop{\mathrm{arg\,min}}_{u\in U}
	\frac{1}{N}\sum_{i=1}^N
	l(u,Z_k^N(i)).
$$
The strategy $\mathbf{\tilde u}^N$ is a type of sequential stochastic
programming algorithm to approximate the mean-optimal strategy.
In this setting, it is of interest to establish whether the strategy
$\mathbf{\tilde u}^N$ is in fact approximately pathwise optimal, at
least in the weak sense.
To this end, we prove the following approximation lemma.

\begin{lemma}
\label{lem:pfilt}
In the hidden Markov model setting with equimeasurable loss
$\ell_0(u)=l(u,X_0)$, suppose that the filter is ergodic, and 
let $\Pi_k^N$ be an approximation of $\Pi_k$.  If
$$
	\lim_{N\to\infty}
	\limsup_{T\to\infty}
	\mathbf{E}\Bigg[
	\frac{1}{T}\sum_{k=1}^T 
	\esssup_{u\in\mathbb{U}_{0,k}}|\Pi_k^N(l(u,\cdot\,))-
	\Pi_k(l(u,\cdot\,))|
	\Bigg] = 0,
$$
then the strategy $\mathbf{\tilde u}^N$ is approximately
weakly pathwise optimal in the sense that
$$
	\lim_{N\to\infty}
	\liminf_{T\to\infty}
	\mathbf{P}[L_T(\mathbf{u})-L_T(\mathbf{\tilde u}^N)\ge
	-\varepsilon]=1\quad
	\mbox{for every }\varepsilon>0
$$
holds for every admissible strategy $\mathbf{u}$.
\end{lemma}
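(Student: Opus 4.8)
The plan is to compare the approximate strategy $\mathbf{\tilde u}^N$ with the exact mean-optimal strategy $\mathbf{\tilde u}$, exploiting that the latter is already under control: since the filter is ergodic and the loss equimeasurable, Theorem \ref{thm:kunita} and Corollary \ref{cor:smeanopt} guarantee that $\mathbf{\tilde u}$ is pathwise optimal and $L_T(\mathbf{\tilde u})\to L^\star$ a.s. Write $\Delta_k^N := \esssup_{u\in\mathbb{U}_{0,k}}|\Pi_k^N(l(u,\cdot))-\Pi_k(l(u,\cdot))|$ for the uniform filter error; since $\Pi_k^N$ is $\mathcal{Y}_{0,k}$-measurable, $\mathbf{\tilde u}^N$ is admissible, and $\Pi_k(l(u,\cdot))=\mathbf{E}[\ell_k(u)|\mathcal{Y}_{0,k}]$ for every $u\in\mathbb{U}_{0,k}$ by freezing the $\mathcal{Y}_{0,k}$-measurable decision. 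First I would establish a one-sided bound in conditional mean. As $\tilde u_k^N$ minimizes $u\mapsto\Pi_k^N(l(u,\cdot))$ pointwise (using a near-minimizer as in Lemma \ref{lem:meanopt} if no exact minimizer exists), we have $\Pi_k^N(l(\tilde u_k^N,\cdot))\le\Pi_k^N(l(\tilde u_k,\cdot))$, and replacing $\Pi_k^N$ by $\Pi_k$ on both sides at a cost of at most $\Delta_k^N$ each yields
$$
	\mathbf{E}[\ell_k(\tilde u_k^N)|\mathcal{Y}_{0,k}]
	-\mathbf{E}[\ell_k(\tilde u_k)|\mathcal{Y}_{0,k}]
	=\Pi_k(l(\tilde u_k^N,\cdot))-\Pi_k(l(\tilde u_k,\cdot))
	\le 2\Delta_k^N.
$$
Taking expectations, averaging over $k$, and using $\mathbf{E}[L_T(\mathbf{\tilde u})]\to L^\star$ (which follows from $L_T(\mathbf{\tilde u})\to L^\star$ a.s.\ and the uniform integrability of $\Lambda_T=\frac1T\sum_{k=1}^T\Lambda\circ T^k$), this gives
$$
	\limsup_{T\to\infty}\mathbf{E}[L_T(\mathbf{\tilde u}^N)]\le L^\star+\delta_N,
	\qquad
	\delta_N:=2\limsup_{T\to\infty}\mathbf{E}\Big[\frac{1}{T}\sum_{k=1}^T\Delta_k^N\Big],
$$
and the hypothesis is precisely that $\delta_N\to 0$ as $N\to\infty$.

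Next I would upgrade this one-sided mean bound to a two-sided $L^1$ bound by invoking the pathwise optimality of $\mathbf{\tilde u}$. Since $\mathbf{\tilde u}^N$ is admissible, pathwise optimality of $\mathbf{\tilde u}$ gives $\liminf_{T\to\infty}\{L_T(\mathbf{\tilde u}^N)-L_T(\mathbf{\tilde u})\}\ge 0$ a.s.; together with $L_T(\mathbf{\tilde u})\to L^\star$ a.s.\ this forces $\liminf_{T\to\infty}\{L_T(\mathbf{\tilde u}^N)-L^\star\}\ge 0$ a.s., so the negative part $(L_T(\mathbf{\tilde u}^N)-L^\star)_-\to 0$ a.s. As this negative part is dominated by $\Lambda_T+|L^\star|$ and is therefore uniformly integrable, $\mathbf{E}[(L_T(\mathbf{\tilde u}^N)-L^\star)_-]\to 0$. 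Since $|x|=x_++x_-$ and $x_+=x+x_-$, combining this with the mean bound from the first step yields $\limsup_{T\to\infty}\mathbf{E}[|L_T(\mathbf{\tilde u}^N)-L^\star|]\le\delta_N$.

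Finally I would conclude by splitting, for an arbitrary admissible $\mathbf{u}$,
$$
	L_T(\mathbf{u})-L_T(\mathbf{\tilde u}^N)
	=\{L_T(\mathbf{u})-L_T(\mathbf{\tilde u})\}
	+\{L_T(\mathbf{\tilde u})-L_T(\mathbf{\tilde u}^N)\}.
$$
For the first bracket, pathwise optimality of $\mathbf{\tilde u}$ gives $\mathbf{P}[L_T(\mathbf{u})-L_T(\mathbf{\tilde u})<-\varepsilon/2]\to 0$. For the second, $L_T(\mathbf{\tilde u})\to L^\star$ a.s.\ together with Markov's inequality applied to the $L^1$ bound of the second step gives $\limsup_{T\to\infty}\mathbf{P}[L_T(\mathbf{\tilde u}^N)-L_T(\mathbf{\tilde u})>\varepsilon/2]\le 4\delta_N/\varepsilon$. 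Combining via $\mathbf{P}[A\cap B]\ge 1-\mathbf{P}[A^c]-\mathbf{P}[B^c]$ gives $\liminf_{T\to\infty}\mathbf{P}[L_T(\mathbf{u})-L_T(\mathbf{\tilde u}^N)\ge-\varepsilon]\ge 1-4\delta_N/\varepsilon$, and letting $N\to\infty$ proves the claim.

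The main obstacle is the mismatch between the one-sided, in-mean control furnished by the hypothesis and the two-sided, in-probability conclusion required, made sharper by the fact that the loss is only dominated in $L^1$. A direct attempt to bound $L_T(\mathbf{\tilde u}^N)-L_T(\mathbf{\tilde u})$ pathwise leads to an average of $\mathcal{Y}_{0,k}$-martingale differences that need not satisfy a strong law under mere $L^1$ domination; routing the lower bound through the already-established a.s.\ pathwise optimality of $\mathbf{\tilde u}$, rather than re-deriving it, is what keeps the argument clean, leaving only routine uniform-integrability and Markov estimates.
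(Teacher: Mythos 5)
Your proof is correct and follows essentially the same route as the paper's: split $\mathbf{P}[L_T(\mathbf{u})-L_T(\mathbf{\tilde u}^N)<-\varepsilon]$ against the exact mean-optimal strategy $\mathbf{\tilde u}$, exploit that $\tilde u_k^N$ minimizes $u\mapsto\Pi_k^N(l(u,\cdot))$ so that the comparison under $\Pi_k^N$ is nonpositive and the transfer to $\Pi_k$ costs $2\Delta_k^N$, control the negative part via the uniform-integrability argument of Lemma \ref{lem:wkimplmn}, and finish with Markov's inequality. The only cosmetic difference is that you route the estimates through the constant $L^\star$ (using $L_T(\mathbf{\tilde u})\to L^\star$ a.s.\ from Corollary \ref{cor:smeanopt}), whereas the paper bounds $\mathbf{E}[L_T(\mathbf{\tilde u}^N)-L_T(\mathbf{\tilde u})]$ directly without ever invoking $L^\star$.
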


\begin{proof}
We begin by noting that
$$
	\mathbf{P}[L_T(\mathbf{u})-L_T(\mathbf{\tilde u}^N)<
	-\varepsilon] \le
	\mathbf{P}[L_T(\mathbf{u})-L_T(\mathbf{\tilde u})<-\varepsilon/2] +
	\mathbf{P}[L_T(\mathbf{\tilde u}^N)-L_T(\mathbf{\tilde u})>
	\varepsilon/2].
$$
Under the present assumptions, the mean-optimal strategy
$\mathbf{\tilde u}$ is (weakly) pathwise optimal.  It follows\footnote{
	As particle filters employ a random sampling mechanism, the strategy
	$\mathbf{\tilde u}^N$ is technically speaking not admissible in the
	sense of this paper: $\Pi_k^N$ (and therefore $\tilde u_k^N$) depends
	also on auxiliary sampling variables $\xi_0,\ldots,\xi_k$ that are 
	independent of $Y_0,\ldots,Y_k$.
	However, it is easily seen that all our results still hold when 
	such \emph{randomized} strategies are considered.  Indeed, it
	suffices to condition on $(\xi_k)_{k\ge 0}$, so that all our 
	results apply immediately under the conditional distribution.
}
as in the proof of Lemma \ref{lem:wkimplmn} that
$\mathbf{E}[(L_T(\mathbf{\tilde u}^N)-L_T(\mathbf{\tilde u}))_-]\to 0$
as $T\to\infty$, 
and we obtain for any admissible strategy $\mathbf{u}$ and
$\varepsilon>0$
$$
	\limsup_{T\to\infty}
	\mathbf{P}[L_T(\mathbf{u})-L_T(\mathbf{\tilde u}^N)<
	-\varepsilon] \le
	\frac{2}{\varepsilon}
	\limsup_{T\to\infty}
	\mathbf{E}[L_T(\mathbf{\tilde u}^N)-L_T(\mathbf{\tilde u})].
$$
To proceed, we estimate
\begin{align*}
	\mathbf{E}[L_T(\mathbf{\tilde u}^N)-L_T(\mathbf{\tilde u})] &=
	\mathbf{E}\Bigg[
	\frac{1}{T}\sum_{k=1}^T\int
	\{l(\tilde u_k^N,x)-l(\tilde u_k,x)\}\,\Pi_k(dx)
	\Bigg] \\
	&\le
	\mathbf{E}\Bigg[
	\frac{1}{T}\sum_{k=1}^T\int
	\{l(\tilde u_k^N,x)-l(\tilde u_k,x)\}\,\Pi_k^N(dx)
	\Bigg] \\
	&\qquad  + 2\,
	\mathbf{E}\Bigg[
	\frac{1}{T}\sum_{k=1}^T
	\esssup_{u\in\mathbb{U}_{0,k}}
	|\Pi_k^N(l(u,\cdot\,))-\Pi_k(l(u,\cdot\,))|
	\Bigg].
\end{align*}
But note that by the definition of $\mathbf{\tilde u}^N$
$$
	\int\{l(\tilde u_k^N,x)-l(\tilde u_k,x)\}\,\Pi_k^N(dx)
	=
	\inf_{u\in U}\int l(u,x)\,\Pi_k^N(dx) -
	\int l(\tilde u_k,x)\,\Pi_k^N(dx) \le 0.
$$
The proof is therefore easily completed by applying the assumption.
\qed\end{proof}

Evidently, the key difficulty in this problem is to control the 
time-average error of the filter approximation (in a norm determined by 
the loss function $l$) uniformly over the time horizon.  This problem is 
intimately related with the ergodic theory of nonlinear filters. The 
requisite property follows from the results in \cite{Han09spa} under 
reasonable ergodicity assumptions but under very stringent complexity 
assumptions on the loss (essentially that $\{l(u,\cdot\,):u\in U\}$ is 
uniformly Lipschitz).  Alternatively, one can apply the results in 
\cite{DL00}, which require exceedingly strong ergodicity assumptions but 
weaker complexity assumptions.  Let us note that one could similarly 
obtain a pathwise version of Lemma \ref{lem:pfilt}, but the requisite 
pathwise approximation property of particle filters has not been 
investigated in the literature.

\subsection{The conditions of Algoet, Weissman, Merhav, and Nobel}

The aim of this section is to briefly discuss the assumptions imposed in 
previous work on the pathwise optimality property due to Algoet 
\cite{Alg94}, Weissman and Merhav \cite{WM04}, and Nobel \cite{Nob03}. 
Let us emphasize that, while our results cover a much broader 
range of decision problems, none of these previous results follow in 
their entirety from our general results.  This highlights once more that 
our results are, unfortunately, nowhere close to a complete 
characterization of the pathwise optimality property.

\subsubsection{Algoet}

Algoet's results \cite{Alg94}, which cover the full information setting 
only, were already discussed at length in the introduction and in 
section \ref{sec:popt}.  The existence of a pathwise optimal strategy 
can be obtained in this setting under no additional assumptions from 
Theorem \ref{thm:sopt}, which even goes beyond Algoet's result in that 
it gives an explicit expression for the optimal asymptotic loss. 
However, Algoet establishes that in fact the mean-optimal strategy 
$\mathbf{\tilde u}$ is pathwise optimal in this setting, while our 
general Corollary \ref{cor:smeanopt} can only establish this under an 
additional complexity assumption.  We do not know whether this complexity
assumption can be weakened in general.

\subsubsection{Weissman and Merhav}

Weissman and Merhav \cite{WM04} consider the stochastic process setting 
$(X,Y)$, where $X_k$ takes values in $\{0,1\}$ and $Y_k$ takes values in 
$\mathbb{R}$ for all $k\in\mathbb{Z}$, and where the loss function takes 
the form $\ell_0(u)=l(u,X_1)$ and is assumed to be uniformly bounded.  
As $X$ is binary-valued, it is immediate that any loss function $l$ is 
equimeasurable.  Therefore, our results show that the mean-optimal 
strategy $\mathbf{\tilde u}$ is pathwise optimal whenever the model is a 
conditional $K$-automorphism relative to $\mathcal{Y}_{-\infty,0}$.

The assumption imposed by Weissman and Merhav in \cite{WM04} is as follows:
$$
	\sum_{k=1}^\infty
	\sup_{r\ge 1}\mathbf{E}[
	|\mathbf{P}[X_{r+k}=a|X_r=a,\mathcal{Y}_{0,r+k-1}]-
	\mathbf{P}[X_{r+k}=a|\mathcal{Y}_{0,r+k-1}]
	|]<\infty
	\mbox{ for }a=0,1.
$$
Using stationarity, this condition is equivalent to 
$$
	\sum_{k=0}^\infty
	\sup_{r\ge 1}\mathbf{E}[
	|\mathbf{P}[X_{1}=a|X_{-k}=a,\mathcal{Y}_{-r-k,0}]-
	\mathbf{P}[X_{1}=a|\mathcal{Y}_{-r-k,0}]
	|]<\infty
	\mbox{ for }a=0,1,
$$
which readily implies
$$
	\sum_{k=0}^\infty
	\mathbf{E}[
	|\mathbf{P}[X_{1}=a|\sigma\{X_{-k}\}\vee\mathcal{Y}_{-\infty,0}]-
	\mathbf{P}[X_{1}=a|\mathcal{Y}_{-\infty,0}]
	|]<\infty.
$$
If the $\sigma$-field $\sigma\{X_{-k}\}\vee\mathcal{Y}_{-\infty,0}$ 
could be replaced by the larger $\sigma$-field 
$\mathcal{X}_{-\infty,-k}\vee\mathcal{Y}_{-\infty,0}$ in this 
expression, then Assumption 3 of Corollary \ref{cor:smeanopt}
would follow immediately.  However, the smaller $\sigma$-field 
appears to yield a slightly better variant of the assumption imposed
in \cite{WM04}.  This is possible because
the result is restricted to the special choice of loss $\ell_0(u)=
l(u,X_1)$ that depends on $X_1$ only.
On the other hand, it is to be expected that in most cases the 
assumption of \cite{WM04} is much more stringent than that of Corollary 
\ref{cor:smeanopt}.  Note that Assumption 3 of Corollary 
\ref{cor:smeanopt} is purely qualitative in nature: 
it states, roughly speaking, that two $\sigma$-fields coincide.  This is 
a structural property of the model.  On the other hand, the assumption 
of \cite{WM04} is inherently \emph{quantitative} in nature: it requires 
that a certain mixing property holds at a sufficiently fast rate (the 
mixing coefficients must be summable).  A quantitative bound on 
the mixing rate is both much more restrictive and much harder to verify, 
in general, as compared to a purely structural property.

In a sense, the approach of Weissman and Merhav is much closer in spirit 
to the weak pathwise optimality results in this paper than it is to the
pathwise optimality results.  Indeed, if 
we replace the weak pathwise optimality property
$$
	\mathbf{P}[L_T(\mathbf{u})-L_T(\mathbf{u}^\star)< -\varepsilon]
	\xrightarrow{T\to\infty}0\quad\mbox{for every }\varepsilon>0
$$
by its quantitative counterpart
$$
	\sum_{T=1}^\infty
	\mathbf{P}[L_T(\mathbf{u})-L_T(\mathbf{u}^\star)< -\varepsilon]
	<\infty\quad\mbox{for every }\varepsilon>0,
$$
then pathwise optimality will automatically follow from the 
Borel-Cantelli lemma.
In the same spirit, if in Theorem \ref{thm:wkopt} we replace the
uniform conditional mixing assumption by the corresponding
quantitative counterpart
$$
	\sum_{k=1}^T
	\mathbf{E}\bigg[
	\esssup_{u,u'\in\mathbb{U}_{0}}
	|\mathbf{E}[\{\bar\ell_0^M(u)\circ T^{-k}\} ~\bar\ell_0^M(u')|
	\mathcal{Y}_{-\infty,0}]|
	\bigg]
	= O(T^\alpha)
$$
for some $\alpha<1$ (that may depend on $M$), then we easily obtain a pathwise
version of Lemma \ref{lem:wkerg} below (using Etemadi's well-known device
\cite{Ete81}), and consequently the conclusion of Theorem \ref{thm:wkopt}
is replaced by that of Theorem \ref{thm:sopt}.  It is unclear whether 
such quantitative mixing conditions provide a distinct mechanism for
pathwise optimality as compared to qualitative structural conditions as
in our main results.

\subsubsection{Nobel}

Nobel \cite{Nob03} considers the stochastic process setting $(X,Y)$ with 
observations of the additive form $Y_k=X_k+N_k$, where 
$N=(N_k)_{k\in\mathbb{Z}}$ is an $L^2$-martingale difference 
sequence independent of $X$.  The loss function considered is the mean-square 
loss $\ell_0(u) = (u-X_1)^2$.  This very special scenario is essential 
for the result given in \cite{Nob03}; on the other hand, it is not 
assumed that $(X,Y)$ is even stationary or that the decision space $U$ is 
a compact set (when $U=\mathbb{R}$, the quadratic loss is not 
dominated). In order to compare with our general results, we will 
additionally assume that $(X,Y)$ is stationary and ergodic and that 
$X_k$ are uniformly bounded random variables (so that we may choose 
$U=[-\|X_1\|_\infty,\|X_1\|_\infty]$ without loss of generality).

While this is certainly a decision problem with partial information,
the key observation is that this special problem is in fact a decision
problem with full information in disguise.  Indeed, note that we
can write for any strategy $\mathbf{u}$
$$
	L_T(\mathbf{u}) =
	\frac{1}{T}\sum_{k=1}^T (u_k-Y_{k+1})^2 +
	\frac{1}{T}\sum_{k=1}^T \{X_{k+1}^2 - Y_{k+1}^2\} +
	\frac{1}{T}\sum_{k=1}^T 2u_kN_{k+1}.
$$
The last term of this expression converges to zero a.s.\ as $T\to\infty$
for any admissible strategy $\mathbf{u}$ by the martingale law of large
numbers, as $(u_kN_{k+1})_{k\in\mathbb{Z}}$ 
is an $L^2$-martingale difference sequence.  On the other hand, the
second to last term of this expression does not depend on the strategy
$\mathbf{u}$ at all.  Therefore,
$$
	\liminf_{T\to\infty}\{L_T(\mathbf{u})-L_T(\mathbf{\tilde u})\} =
	\liminf_{T\to\infty}\Bigg\{
	\frac{1}{T}\sum_{k=1}^T (u_k-Y_{k+1})^2 -
	\frac{1}{T}\sum_{k=1}^T (\tilde u_k-Y_{k+1})^2 \Bigg\}
	\quad\mbox{a.s.},
$$
which corresponds to the decision problem with the full information 
loss $\ell_0(u)=(u-Y_1)^2$.  Thus pathwise optimality of the mean-optimal
strategy $\mathbf{\tilde u}$ follows from Algoet's result.  (The main
difficulty in \cite{Nob03} is to introduce suitable truncations to deal
with the lack of boundedness, which we avoided here.)

Of course, we could deduce the result from our general theory in the 
same manner: reduce first to a full information decision problem as 
above, and then invoke Corollary \ref{cor:smeanopt} in the full 
information setting.  However, a more relevant test of our general 
theory might be to ask whether one can deduce the result directly from 
Corollary \ref{cor:smeanopt}, without first reducing to the full 
information setting.  Unfortunately, it is not clear whether it is 
possible, in general, to find a generating $\sigma$-field $\mathcal{X}$ 
such that Assumption 3 of Corollary \ref{cor:smeanopt} holds.

One might interpret the additive noise model as a type of 
``informative'' observations: while $X$ cannot be reconstructed from the 
observations $Y$, the law of $X$ can certainly be reconstructed from the 
law of $Y$ even if the former were not known \emph{a priori} (this idea 
is exploited in \cite{WM04, Nob03} to devise universal prediction 
strategies that do not require prior knowledge of the law of $X$).  In 
the hidden Markov model setting, there is in fact a connection between 
``informative'' observations and the conditional $K$-property.  In 
particular, if $(X,Y)$ is a hidden Markov model where $X_k$ takes a 
finite number of values, and $Y_k=X_k+\xi_k$ where $\xi_k$ are i.i.d.\ 
and independent of $X$, then the conditional $K$-property holds, and we 
therefore have pathwise optimal strategies for \emph{any} dominated 
loss. This follows from observability conditions in the Markov setting, 
cf.\ \cite[section 6.2]{CvH10} and the references therein. However, the 
ideas that lead to this result do not appear to extend to more general 
situations.

\section{Proofs}
\label{sec:proofs}

\subsection{Proof of Theorem \ref{thm:sopt}}
\label{sec:proofsopt}

Throughout the proof, we fix a generating $\sigma$-field
$\mathcal{X}$ that satisfies the conditions of Theorem \ref{thm:sopt}.
In the following, we define the $\sigma$-fields
$$
	\mathcal{G}_k^n = 
	\mathcal{Y}_{-\infty,k}
	\vee T^{n-k}\mathcal{X},\qquad\quad
	\mathcal{G}_k^\infty = \bigcap_n \mathcal{G}_k^n.
$$
Note that $\mathcal{G}_k^n$ is decreasing in $n$ and increasing in $k$.

We begin by establishing the following lemma.

\begin{lemma}
\label{lem:mtgdiff}
For any admissible strategy $\mathbf{u}$ and any $m,n\in\mathbb{Z}$
$$
	\frac{1}{T}\sum_{k=1}^T\{
	\mathbf{E}[\ell_k(u_k)|\mathcal{G}_k^m]
	-\mathbf{E}[\ell_k(u_k)|
	\mathcal{G}_k^n]\}\xrightarrow{T\to\infty}0
	\quad\mbox{a.s.}
$$
\end{lemma}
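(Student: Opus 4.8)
The plan is to exhibit the partial sums $M_T=\sum_{k=1}^T d_k$, where $d_k:=\mathbf{E}[\ell_k(u_k)|\mathcal{G}_k^m]-\mathbf{E}[\ell_k(u_k)|\mathcal{G}_k^n]$, as a martingale with respect to a single increasing filtration, and then to deduce $M_T/T\to 0$ a.s.\ from a martingale strong law. First I would reduce to the case $n=m+1$: since $\mathbf{E}[\ell_k(u_k)|\mathcal{G}_k^m]-\mathbf{E}[\ell_k(u_k)|\mathcal{G}_k^n]=\sum_{j=m}^{n-1}\{\mathbf{E}[\ell_k(u_k)|\mathcal{G}_k^j]-\mathbf{E}[\ell_k(u_k)|\mathcal{G}_k^{j+1}]\}$ is a finite sum (and the case $m>n$ follows by symmetry), it suffices to treat consecutive indices.

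Fix $n=m+1$ and set $\mathcal{F}_k:=\mathcal{G}_k^{m+1}$. The crucial structural observations are that $\mathcal{F}_k$ is increasing in $k$ (as $\mathcal{Y}_{-\infty,k}$ increases and $T^{m+1-k}\mathcal{X}$ increases in $k$, using $\mathcal{X}\subset T^{-1}\mathcal{X}$), that $\mathcal{G}_k^{m+1}\subseteq\mathcal{G}_k^m$, and that $\mathcal{G}_k^m=\mathcal{Y}_{-\infty,k}\vee T^{m-k}\mathcal{X}\subseteq\mathcal{Y}_{-\infty,k+1}\vee T^{m-k}\mathcal{X}=\mathcal{F}_{k+1}$. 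Consequently $d_k$ is $\mathcal{F}_{k+1}$-measurable, while the tower property (using $\mathcal{G}_k^{m+1}\subseteq\mathcal{G}_k^m$) gives $\mathbf{E}[d_k|\mathcal{F}_k]=0$. Thus $(M_T)_{T\ge 0}$ is a martingale relative to $(\mathcal{F}_{T+1})_{T\ge 0}$, and it remains to prove $M_T/T\to 0$ a.s.

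The difficulty is that the loss is dominated only in $L^1$, so a martingale strong law cannot be applied to $(d_k)$ directly. I would resolve this by truncation. For $M>0$ let $\ell^M=\ell\,\mathbf{1}_{\Lambda\le M}$ and let $d_k^M$ be the corresponding increment; exactly as above $\sum_{k\le T}d_k^M$ is a martingale, and now $|d_k^M|\le 2M$ is bounded. Since martingale differences are orthogonal, $N_T:=\sum_{k=1}^T d_k^M/k$ is $L^2$-bounded (with $\sup_T\mathbf{E}[N_T^2]\le 4M^2\sum_k k^{-2}<\infty$), hence converges a.s.; Kronecker's lemma then yields $\frac{1}{T}\sum_{k=1}^T d_k^M\to 0$ a.s. For the truncation error, I would use the identity $\mathcal{G}_k^j=T^{-k}\mathcal{H}^j$ with $\mathcal{H}^j:=\mathcal{Y}_{-\infty,0}\vee T^j\mathcal{X}$, together with the measure-preserving change-of-variables rule $\mathbf{E}[f\circ T^k|T^{-k}\mathcal{H}]=\mathbf{E}[f|\mathcal{H}]\circ T^k$. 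Applied to the fixed function $f=\Lambda\mathbf{1}_{\Lambda>M}$ (which carries no dependence on the decision, so that no delicate substitution for the random $u_k$ is needed), this gives the pathwise bound $|d_k-d_k^M|\le R_M\circ T^k$, where $R_M:=\mathbf{E}[\Lambda\mathbf{1}_{\Lambda>M}|\mathcal{H}^m]+\mathbf{E}[\Lambda\mathbf{1}_{\Lambda>M}|\mathcal{H}^{m+1}]\in L^1$ is a fixed function with $\mathbf{E}[R_M]=2\,\mathbf{E}[\Lambda\mathbf{1}_{\Lambda>M}]$.

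Combining the two pieces via Birkhoff's ergodic theorem gives $\limsup_{T}|\frac{1}{T}\sum_{k=1}^T d_k|\le\limsup_T\frac{1}{T}\sum_{k=1}^T R_M\circ T^k=\mathbf{E}[R_M]=2\,\mathbf{E}[\Lambda\mathbf{1}_{\Lambda>M}]$ a.s.\ for every $M$; letting $M\to\infty$ and using $\Lambda\in L^1$ (dominated convergence) yields the claim. The main obstacle is precisely the $L^1$-only domination: identifying the correct increasing filtration $\mathcal{F}_k=\mathcal{G}_k^{m+1}$ so that $M_T$ is a genuine martingale is the key structural step, and the truncation argument—together with the observation that the truncation error is controlled by an ergodic average of a \emph{fixed} integrable function rather than by the randomly selected decisions—is what bridges the bounded ($L^2$) martingale law and the $L^1$ setting.
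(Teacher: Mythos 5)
Your proof is correct and follows essentially the same route as the paper's: the same telescoping reduction to consecutive indices, the same identification of the martingale structure via the inclusions $\mathcal{G}_k^{j+1}\subseteq\mathcal{G}_k^j\subseteq\mathcal{G}_{k+1}^{j+1}$, the same truncation $\ell\,\mathbf{1}_{\Lambda\le M}$, the same $L^2$-bounded martingale plus Kronecker's lemma argument, and the same use of the ergodic theorem to bound the truncation error by $2\,\mathbf{E}[\Lambda\mathbf{1}_{\Lambda>M}]$ before letting $M\to\infty$. The only cosmetic difference is that you reduce to the case $n=m+1$ at the outset, whereas the paper carries the full telescoping sum over $j$ throughout.
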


\begin{proof}
Assume $m<n$ without loss of generality.  Fix $r<\infty$, and define
$$
	\Delta_k^j = 
	\mathbf{E}[\ell_k(u_k)\mathbf{1}_{\Lambda\circ T^k\le r}
	|\mathcal{G}_k^j] -
	\mathbf{E}[\ell_k(u_k)\mathbf{1}_{\Lambda\circ T^k\le r}
	|\mathcal{G}_k^{j+1}]
$$
for $m\le j<n$.
Then it is easily seen that we have the inequality
\begin{multline*}
	\bigg|\frac{1}{T}\sum_{k=1}^T\{
	\mathbf{E}[\ell_k(u_k)|\mathcal{G}_k^m]
	-\mathbf{E}[\ell_k(u_k)|
	\mathcal{G}_k^n]\}\bigg| \le
	\sum_{j=m}^{n-1}
	\bigg|\frac{1}{T}\sum_{k=1}^T
	\Delta_k^j\bigg| + \mbox{}\\
	\frac{1}{T}\sum_{k=1}^T \{
	\mathbf{E}[\Lambda\mathbf{1}_{\Lambda>r}|\mathcal{G}_0^m]+
	\mathbf{E}[\Lambda\mathbf{1}_{\Lambda>r}|\mathcal{G}_0^n]
	\}\circ T^k.
\end{multline*}
By the ergodic theorem, the second term on the right converges
to $\kappa(r):=\mathbf{E}[2\Lambda\mathbf{1}_{\Lambda>r}]$
a.s.\ as $T\to\infty$.  It remains to consider the first term.

To this end, note the inclusions $\mathcal{G}_k^{j+1}\subseteq 
\mathcal{G}_k^j\subseteq\mathcal{G}_{k+1}^{j+1}$.
It follows that 
$$
	\Delta_k^j \mbox{ is }
	\mathcal{G}_{k+1}^{j+1}\mbox{-measurable},\quad
	\mathbf{E}[\Delta_k^j|\mathcal{G}_k^{j+1}]=0,\quad
	\mbox{and }
	|\Delta_k^j|\le 2r
$$
for $0\le j<n$.
Thus $(\Delta_k^j)_{k\ge 1}$ is a uniformly bounded martingale difference 
sequence with respect to the filtration $(\mathcal{G}^{j+1}_{k+1})_{k\ge 
1}$, and we consequently have
$$
	\frac{1}{T}\sum_{k=1}^T\Delta_k^j\xrightarrow{T\to\infty}0
	\quad\mbox{a.s.}
$$
by the simplest form of the martingale law of large numbers (indeed,
it is easily seen that $M_n = \sum_{k=1}^n \Delta_k^j/k$ is an 
$L^2$-bounded martingale, so that the result follows from the martingale 
convergence theorem and Kronecker's lemma).

Putting together these results, we obtain
$$
	\limsup_{T\to\infty}
	\bigg|\frac{1}{T}\sum_{k=1}^T\{
	\mathbf{E}[\ell_k(u_k)|\mathcal{G}_k^m]
	-\mathbf{E}[\ell_k(u_k)|
	\mathcal{G}_k^n]\}\bigg| \le
	\kappa(r)\quad\mbox{a.s.}
$$
for arbitrary $r<\infty$. Letting $r\to\infty$ completes the proof.
\qed\end{proof}

We can now establish a lower bound on the loss of any strategy.

\begin{corollary}
\label{cor:mtgdiff}
Under the assumptions of Theorem \ref{thm:sopt}, we have
$$
	\frac{1}{T}\sum_{k=1}^T\{
	\ell_k(u_k)-\mathbf{E}[\ell_k(u_k)|
	\mathcal{G}^\infty_k]\}\xrightarrow{T\to\infty}0
	\quad\mbox{a.s.}
$$
for any admissible strategy $\mathbf{u}$.  In particular,
$$
	\liminf_{T\to\infty} L_T(\mathbf{u}) \ge
	\mathbf{E}\Big[
	\essinf_{u\in\mathbb{U}_0}
	\mathbf{E}[\ell_0(u)|\mathcal{G}_0^\infty]
	\Big] = L^\star\quad
	\mbox{a.s.}
$$
\end{corollary}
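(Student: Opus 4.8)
The plan is to compare the loss of $\mathbf{u}$ with its conditional expectation given $\mathcal{G}_k^\infty$ by telescoping through the finite-index fields $\mathcal{G}_k^j$: the finite portion is killed by Lemma \ref{lem:mtgdiff}, and the two end portions are controlled by the two uniform convergence statements of Assumption 2 together with Birkhoff's ergodic theorem. The device that couples the time-$k$ quantities to the time-$0$ objects appearing in the hypotheses is a shift un-translation. Since $u_k$ is $\mathcal{Y}_{0,k}$-measurable, the function $v_k:=u_k\circ T^{-k}$ is $\mathcal{Y}_{-k,0}$-measurable, so $v_k\in\mathbb{U}_0$ and $\ell_k(u_k)=\ell_0(v_k)\circ T^k$. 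A direct computation gives $T^k\mathcal{G}_k^j=T^k\mathcal{Y}_{-\infty,k}\vee T^{j}\mathcal{X}=\mathcal{Y}_{-\infty,0}\vee T^{j}\mathcal{X}$, which is independent of $k$; intersecting over $j$ yields $T^k\mathcal{G}_k^\infty=\bigcap_{j\ge 1}(\mathcal{Y}_{-\infty,0}\vee T^{j}\mathcal{X})=:\mathcal{H}_\infty$. Using the rule $\mathbf{E}[f\circ T^k|\mathcal{C}]=\mathbf{E}[f|T^k\mathcal{C}]\circ T^k$, every conditioned summand unshifts to $\mathbf{E}[\ell_k(u_k)|\mathcal{G}_k^j]=\mathbf{E}[\ell_0(v_k)|\mathcal{Y}_{-\infty,0}\vee T^{j}\mathcal{X}]\circ T^k$, and likewise $\mathbf{E}[\ell_k(u_k)|\mathcal{G}_k^\infty]=\mathbf{E}[\ell_0(v_k)|\mathcal{H}_\infty]\circ T^k$.

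With this in hand I would bound, for any finite $n,m\ge 1$,
\begin{align*}
\bigg|\frac{1}{T}\sum_{k=1}^T\{\ell_k(u_k)-\mathbf{E}[\ell_k(u_k)|\mathcal{G}_k^\infty]\}\bigg|
&\le\bigg|\frac{1}{T}\sum_{k=1}^T\{\ell_k(u_k)-\mathbf{E}[\ell_k(u_k)|\mathcal{G}_k^{-n}]\}\bigg|\\
&\quad{}+\bigg|\frac{1}{T}\sum_{k=1}^T\{\mathbf{E}[\ell_k(u_k)|\mathcal{G}_k^{-n}]-\mathbf{E}[\ell_k(u_k)|\mathcal{G}_k^{m}]\}\bigg|\\
&\quad{}+\bigg|\frac{1}{T}\sum_{k=1}^T\{\mathbf{E}[\ell_k(u_k)|\mathcal{G}_k^{m}]-\mathbf{E}[\ell_k(u_k)|\mathcal{G}_k^\infty]\}\bigg|.
\end{align*}
The middle term vanishes a.s.\ as $T\to\infty$ by Lemma \ref{lem:mtgdiff}. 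By the shift identity the $k$-th summand of the first term equals $\{\ell_0(v_k)-\mathbf{E}[\ell_0(v_k)|\mathcal{Y}_{-\infty,0}\vee T^{-n}\mathcal{X}]\}\circ T^k$; since $v_k\in\mathbb{U}_0$, its absolute value is at most $\Phi_n\circ T^k$, where $\Phi_n:=\esssup_{u\in\mathbb{U}_0}|\mathbf{E}[\ell_0(u)|\mathcal{Y}_{-\infty,0}\vee T^{-n}\mathcal{X}]-\ell_0(u)|$ is exactly the quantity in the first line of Assumption 2. Birkhoff's theorem gives $\frac{1}{T}\sum_{k=1}^T\Phi_n\circ T^k\to\mathbf{E}[\Phi_n]$ a.s., so the first term has $\limsup_T\le\mathbf{E}[\Phi_n]=\|\Phi_n\|_1$. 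The third term is treated identically, with the second line of Assumption 2 furnishing $\Psi_m:=\esssup_{u\in\mathbb{U}_0}|\mathbf{E}[\ell_0(u)|\mathcal{Y}_{-\infty,0}\vee T^{m}\mathcal{X}]-\mathbf{E}[\ell_0(u)|\mathcal{H}_\infty]|$ and yielding $\limsup_T\le\mathbf{E}[\Psi_m]$. Combining the three bounds and letting $n,m\to\infty$ (along which $\|\Phi_n\|_1,\|\Psi_m\|_1\to 0$ by Assumption 2) establishes the first display of the corollary.

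For the ``in particular'' clause, the first display lets me replace $L_T(\mathbf{u})$ by $\frac{1}{T}\sum_{k=1}^T\mathbf{E}[\ell_k(u_k)|\mathcal{G}_k^\infty]$ up to an $o(1)$ error. Writing the $k$-th summand as $\mathbf{E}[\ell_0(v_k)|\mathcal{H}_\infty]\circ T^k$ and using $v_k\in\mathbb{U}_0$, it is bounded below a.s.\ by $(\essinf_{u\in\mathbb{U}_0}\mathbf{E}[\ell_0(u)|\mathcal{H}_\infty])\circ T^k$. A final application of the ergodic theorem bounds the Ces\`aro average from below by $\mathbf{E}[\essinf_{u\in\mathbb{U}_0}\mathbf{E}[\ell_0(u)|\mathcal{H}_\infty]]$, which equals $\mathbf{E}[\essinf_{u\in\mathbb{U}_0}\mathbf{E}[\ell_0(u)|\mathcal{G}_0^\infty]]$ since $\mathcal{G}_0^\infty=\mathcal{H}_\infty$, and by Assumption 3 equals $L^\star$. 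Hence $\liminf_T L_T(\mathbf{u})\ge L^\star$ a.s.

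I expect the main obstacle to be the interchange of the $n,m\to\infty$ limits with the Ces\`aro average and the $T\to\infty$ limit. Applying martingale convergence summand-by-summand fails because the decision $v_k$ varies with $k$, so no single (reverse) martingale governs all terms at once. The uniformity over $\mathbb{U}_0$ built into Assumption 2 is exactly what resolves this: it produces fixed dominating functions $\Phi_n,\Psi_m$, independent of the particular $v_k$, whose $L^1$-norms vanish, so that a single Birkhoff average controls each end portion and the three limits decouple cleanly. The supporting shift computation ($T^k\mathcal{G}_k^j=\mathcal{Y}_{-\infty,0}\vee T^j\mathcal{X}$ and $v_k\in\mathbb{U}_0$) is routine but indispensable, since it is what aligns the time-$k$ data with the stationary time-$0$ quantities in the hypotheses.
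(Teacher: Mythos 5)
Your proof is correct and follows essentially the same route as the paper's: the same three-term telescoping through $\mathcal{G}_k^{-n}$ and $\mathcal{G}_k^{m}$, with the middle term killed by Lemma \ref{lem:mtgdiff} and the two end terms dominated by (shifts of) the fixed time-zero essential suprema from Assumption 2, to which Birkhoff's ergodic theorem applies, followed by the essential-infimum lower bound, the ergodic theorem, and Assumption 3 for the ``in particular'' clause. The only difference is cosmetic: you spell out the shift identities $T^k\mathcal{G}_k^j=\mathcal{Y}_{-\infty,0}\vee T^j\mathcal{X}$ and $u_k\circ T^{-k}\in\mathbb{U}_{-k,0}\subset\mathbb{U}_0$, which the paper leaves implicit when it writes its bounds directly as $\esssup_{u\in\mathbb{U}_k}$ of time-$k$ quantities and invokes the ergodic theorem.
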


\begin{proof}
We begin by noting that
\begin{align*}
	&\bigg|
	\frac{1}{T}\sum_{k=1}^T\{
	\mathbf{E}[\ell_k(u_k)|
	\mathcal{G}_k^n]-
	\mathbf{E}[\ell_k(u_k)|
	\mathcal{G}_k^\infty]\}
	\bigg| \le
	\frac{1}{T}\sum_{k=1}^T
	\esssup_{u\in\mathbb{U}_k}
	|\mathbf{E}[\ell_k(u)|
	\mathcal{G}_k^n]-
	\mathbf{E}[\ell_k(u)|
	\mathcal{G}_k^\infty]|
	\\ &\qquad\qquad\mbox{}
	\xrightarrow{T\to\infty}
	\mathbf{E}\bigg[
	\esssup_{u\in\mathbb{U}_0}
	|\mathbf{E}[\ell_0(u)|
	\mathcal{G}_0^n]-
	\mathbf{E}[\ell_0(u)|
	\mathcal{G}_0^\infty]|
	\bigg]\quad\mbox{a.s.}
\end{align*}
by the ergodic theorem.  Similarly,
\begin{align*}
	&\bigg|
	\frac{1}{T}\sum_{k=1}^T\{
	\mathbf{E}[\ell_k(u_k)|
	\mathcal{G}_k^m]-\ell_k(u_k)\}
	\bigg| \le
	\frac{1}{T}\sum_{k=1}^T
	\esssup_{u\in\mathbb{U}_k}
	|\mathbf{E}[\ell_k(u)|
	\mathcal{G}_k^m]-\ell_k(u)|
	\\ &\qquad\qquad\mbox{}
	\xrightarrow{T\to\infty}
	\mathbf{E}\bigg[
	\esssup_{u\in\mathbb{U}_0}
	|\mathbf{E}[\ell_0(u)|
	\mathcal{G}_0^m]-\ell_0(u)|
	\bigg]\quad\mbox{a.s.}
\end{align*}
Therefore, using Lemma \ref{lem:mtgdiff} and Assumption 2 of Theorem
\ref{thm:sopt}, the first statement of the Corollary follows by letting 
$n\to\infty$ and $m\to-\infty$.

For the second statement, it suffices to note that
$$
	\frac{1}{T}\sum_{k=1}^T
	\mathbf{E}[\ell_k(u_k)|\mathcal{G}^\infty_k]
	\ge
	\frac{1}{T}\sum_{k=1}^T
	\essinf_{u\in\mathbb{U}_k}
	\mathbf{E}[\ell_k(u)|\mathcal{G}^\infty_k]
	\xrightarrow{T\to\infty} L^\star\quad\mbox{a.s.}
$$
by the ergodic theorem and Assumption 3 of Theorem \ref{thm:sopt}.
\qed\end{proof}

As was explained in the introduction, a pathwise optimal strategy
could easily be obtained of one can prove ``ergodic tower property''
of the form 
$$
	\frac{1}{T}\sum_{k=1}^T\{\ell_k(u_k)-\mathbf{E}[\ell_k(u_k)|
	\mathcal{Y}_{0,k}]\}\xrightarrow{T\to\infty}0
	\quad\mbox{a.s.}\quad?
$$
Corollary \ref{cor:mtgdiff} establishes just such a property, but where the 
$\sigma$-field $\mathcal{Y}_{0,k}$ is replaced by the larger 
$\sigma$-field $\mathcal{G}^\infty_k$.  This yields a lower 
bound on the asymptotic loss, but it is far
from clear that one can choose a $\mathcal{Y}_{0,k}$-adapted strategy
that attains this bound.

Therefore, what remains is to show that there exists an admissible 
strategy $\mathbf{u}^\star$ that attains the lower bound in Corollary 
\ref{cor:mtgdiff}.  A promising candidate is the mean-optimal strategy 
$\mathbf{\tilde u}$. Unfortunately, we are not able to prove  
pathwise optimality of the mean-optimal strategy in the general setting 
of Theorem \ref{thm:sopt}.  However, we will obtain a pathwise optimal 
strategy $\mathbf{u}^\star$ by a judicious modification of the 
mean-optimal strategy $\mathbf{\tilde u}$. The key idea is the following 
``uniform'' version of the martingale convergence theorem, which we 
prove following Neveu \cite[Lemma V-2-9]{Nev75}.

\begin{lemma}
\label{lem:neveu}
The following holds:
$$
	\essinf_{u\in\mathbb{U}_{-k,0}}
	\mathbf{E}[\ell_0(u)|\mathcal{Y}_{-k,0}] 
	\xrightarrow{k\to\infty} 
	\essinf_{u\in\mathbb{U}_0}
	\mathbf{E}[\ell_0(u)|\mathcal{Y}_{-\infty,0}]
	\quad\mbox{a.s.\ and in }L^1.
$$
\end{lemma}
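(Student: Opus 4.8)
The plan is to abbreviate the two sides as $W_k := \essinf_{u\in\mathbb{U}_{-k,0}}\mathbf{E}[\ell_0(u)|\mathcal{Y}_{-k,0}]$ and $W_\infty := \essinf_{u\in\mathbb{U}_0}\mathbf{E}[\ell_0(u)|\mathcal{Y}_{-\infty,0}]$, to establish the two one-sided bounds $\limsup_k W_k \le W_\infty$ and $\liminf_k W_k \ge W_\infty$ separately, and then to upgrade the resulting a.s.\ convergence to $L^1$ convergence by a uniform integrability argument. Throughout I would use that $\mathcal{Y}_{-k,0}\uparrow\mathcal{Y}_{-\infty,0}$ and $\mathbb{U}_{-k,0}\uparrow\mathbb{U}_0$ as $k\to\infty$, together with L\'evy's upward martingale convergence theorem.

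For the upper bound I would fix any $u\in\mathbb{U}_0$. Since $\mathbb{U}_0=\bigcup_m\mathbb{U}_{-m,0}$, we have $u\in\mathbb{U}_{-k,0}$ for all $k$ large enough, whence $W_k\le\mathbf{E}[\ell_0(u)|\mathcal{Y}_{-k,0}]$ for such $k$; L\'evy's theorem then gives $\limsup_k W_k\le\mathbf{E}[\ell_0(u)|\mathcal{Y}_{-\infty,0}]$ a.s. Applying this to a countable family $(u^n)\subseteq\mathbb{U}_0$ realizing the essential infimum, so that $W_\infty=\inf_n\mathbf{E}[\ell_0(u^n)|\mathcal{Y}_{-\infty,0}]$ a.s.\ (as in the proof of Lemma \ref{lem:meanopt}), and taking the infimum over the countable index $n$ yields $\limsup_k W_k\le W_\infty$ a.s.

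The lower bound is the step I expect to be the main obstacle, since the minimization class $\mathbb{U}_{-k,0}$ and the conditioning $\sigma$-field $\mathcal{Y}_{-k,0}$ are \emph{both} smaller than their limiting counterparts, so no direct comparison of the conditional expectations is available. The key trick is to exploit only the inclusion $\mathbb{U}_{-k,0}\subseteq\mathbb{U}_0$: for every $u\in\mathbb{U}_{-k,0}$ one has $\mathbf{E}[\ell_0(u)|\mathcal{Y}_{-\infty,0}]\ge W_\infty$ a.s.\ by definition of $W_\infty$, and since $\mathcal{Y}_{-k,0}\subseteq\mathcal{Y}_{-\infty,0}$ the tower property gives $\mathbf{E}[\ell_0(u)|\mathcal{Y}_{-k,0}]\ge\mathbf{E}[W_\infty|\mathcal{Y}_{-k,0}]$. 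Taking the essential infimum over $u\in\mathbb{U}_{-k,0}$ (the right-hand side being independent of $u$) yields $W_k\ge\mathbf{E}[W_\infty|\mathcal{Y}_{-k,0}]$. Since $|W_\infty|\le\mathbf{E}[\Lambda|\mathcal{Y}_{-\infty,0}]\in L^1$, another application of L\'evy's theorem gives $\mathbf{E}[W_\infty|\mathcal{Y}_{-k,0}]\to W_\infty$ a.s., whence $\liminf_k W_k\ge W_\infty$ a.s. Combining the two bounds gives $W_k\to W_\infty$ a.s.

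For the $L^1$ convergence I would observe the domination $|W_k|\le\mathbf{E}[\Lambda|\mathcal{Y}_{-k,0}]$, valid because every $\mathbf{E}[\ell_0(u)|\mathcal{Y}_{-k,0}]$ is bounded in absolute value by $\mathbf{E}[\Lambda|\mathcal{Y}_{-k,0}]$. The dominating sequence $(\mathbf{E}[\Lambda|\mathcal{Y}_{-k,0}])_{k\ge1}$ is a closed (L\'evy) martingale and is therefore uniformly integrable; since $\{|W_k|>M\}\subseteq\{\mathbf{E}[\Lambda|\mathcal{Y}_{-k,0}]>M\}$, it follows that $(W_k)_{k\ge1}$ is uniformly integrable as well. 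A.s.\ convergence together with uniform integrability then yields convergence in $L^1$ by Vitali's theorem, which completes the proof.
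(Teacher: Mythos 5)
Your proof is correct, but it takes a genuinely different route from the paper's. The paper follows Neveu's supermartingale argument: it enumerates countable families realizing the essential infima, shows that $M_k := \essinf_{u\in\mathbb{U}_{-k,0}}\mathbf{E}[\ell_0(u)|\mathcal{Y}_{-k,0}]$ is a uniformly integrable \emph{supermartingale} with respect to $(\mathcal{Y}_{-k,0})_{k\ge 0}$, invokes the (super)martingale convergence theorem to obtain a limit $M_\infty$ a.s.\ and in $L^1$, and then identifies $M_\infty$ with the target $M$ in two steps: the inequality $M_\infty\le M$ (which parallels your upper bound), and the equality $\mathbf{E}[M_\infty]=\mathbf{E}[M]$, which the paper obtains via truncated infima $M_k^N$ and a monotone exchange of the limits $k\to\infty$, $N\to\infty$. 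Your argument replaces this entire identification step by the single observation that $W_k\ge\mathbf{E}[W_\infty|\mathcal{Y}_{-k,0}]$ — which follows from the inclusion $\mathbb{U}_{-k,0}\subseteq\mathbb{U}_0$, the defining property of the essential infimum, and the tower property — after which L\'evy's upward theorem closes the sandwich; you then recover $L^1$ convergence from uniform integrability (domination of $|W_k|$ by the closed martingale $\mathbf{E}[\Lambda|\mathcal{Y}_{-k,0}]$) rather than getting it for free from supermartingale convergence. What your route buys is economy: no supermartingale structure and no double-limit exchange are needed, only two applications of L\'evy's theorem and Vitali's theorem, and the exceptional-null-set bookkeeping (countable realizing family for the upper bound) is handled correctly. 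What the paper's route buys is that the supermartingale framing is the standard Neveu machinery and generalizes directly to settings where one wants convergence of $(M_k)$ before knowing what the limit should be; but as a proof of this specific lemma, your sandwich argument is, if anything, the more streamlined of the two.
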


\begin{proof}
Using the construction of the essential supremum
as in the proof of Lemma \ref{lem:meanopt}, we can choose 
for each $0\le k<\infty$ a countable family 
$\mathbb{U}_{-k,0}^{\rm c}\subset\mathbb{U}_{-k,0}$ such that
$$
	\essinf_{u\in\mathbb{U}_{-k,0}}
	\mathbf{E}[\ell_0(u)|\mathcal{Y}_{-k,0}] =
	\inf_{u\in\mathbb{U}_{-k,0}^{\rm c}}
	\mathbf{E}[\ell_0(u)|\mathcal{Y}_{-k,0}]
	\quad\mbox{a.s.},
$$
and a countable family $\mathbb{U}_0^{\rm c}\subset\mathbb{U}_0$ such that
$$
	\essinf_{u\in\mathbb{U}_0}
	\mathbf{E}[\ell_0(u)|\mathcal{Y}_{-\infty,0}] =
	\inf_{u\in\mathbb{U}_0^{\rm c}}
	\mathbf{E}[\ell_0(u)|\mathcal{Y}_{-\infty,0}]
	\quad\mbox{a.s.}
$$
For every $0\le k<\infty$, choose an arbitrary ordering 
$(U_k^n)_{n\in\mathbb{N}}$ of the elements of the countable set
$\mathbb{U}_{-k,0}^{\rm c}\cup(\mathbb{U}_0^{\rm c}\cap
\mathbb{U}_{-k,0})$.  Then we clearly have
$$
	M_k := \essinf_{u\in\mathbb{U}_{-k,0}}
	\mathbf{E}[\ell_0(u)|\mathcal{Y}_{-k,0}] =
	\min_{0\le l\le k}\inf_{n\in\mathbb{N}}
	\mathbf{E}[\ell_0(U_l^n)|\mathcal{Y}_{-k,0}]
	\quad\mbox{a.s.}
$$
and
$$
	M := \essinf_{u\in\mathbb{U}_0}
	\mathbf{E}[\ell_0(u)|\mathcal{Y}_{-\infty,0}] =
	\inf_{0\le l<\infty}\inf_{n\in\mathbb{N}}
	\mathbf{E}[\ell_0(U_l^n)|\mathcal{Y}_{-\infty,0}]
	\quad\mbox{a.s.}
$$
Our aim is to prove that $M_k\to M$ a.s.\ and in $L^1$ as $k\to\infty$.

We begin by noting that $|M_k|\le \mathbf{E}[\Lambda|
\mathcal{Y}_{-k,0}]$.  Therefore, the sequence $(M_k)_{k\ge 0}$ is 
uniformly integrable.  Moreover, $(M_k)_{k\ge 0}$ is a supermartingale 
with respect to the filtration $(\mathcal{Y}_{-k,0})_{k\ge 0}$: indeed, we 
can easily compute
$$
	\mathbf{E}[M_{k+1}|\mathcal{Y}_{-k,0}] \le
	\mathbf{E}\Big[
	\min_{0\le l\le k}\inf_{n\in\mathbb{N}}
	\mathbf{E}[\ell_0(U_l^n)|\mathcal{Y}_{-k-1,0}]
	\Big|\mathcal{Y}_{-k,0}\Big]
	\le M_k.
$$
Thus $M_k\to M_\infty$ a.s.\ and in $L^1$ by the martingale 
convergence theorem for some random variable $M_\infty$.  We must now show 
that $M_\infty=M$ a.s.  Note that
$$
	M_\infty = \lim_{k\to\infty} M_k \le 
	\lim_{k\to\infty}\mathbf{E}[\ell_0(U_l^n)|\mathcal{Y}_{-k,0}]
	=
	\mathbf{E}[\ell_0(U_l^n)|\mathcal{Y}_{-\infty,0}]\quad
	\mbox{a.s.}
$$
for every $n\in\mathbb{N}$ and $0\le l<\infty$, so $M_\infty\le M$ a.s.
To complete the proof, it therefore suffices to show that 
$\mathbf{E}[M_\infty]=\mathbf{E}[M]$.

To this end, define for $N\in\mathbb{N}$ and $0\le k\le\infty$
$$
	M_k^N = 
	\min_{l\le N\wedge k}\min_{n\le N}
	\mathbf{E}[\ell_0(U_l^n)|\mathcal{Y}_{-k,0}].
$$
As $(M_k^N)_{k\ge 0}$ is again a supermartingale, clearly
$\mathbf{E}[M_k^N]$ is doubly nonincreasing in $k$ and $N$.
The exchange of limits is therefore permitted, so that
$$
	\mathbf{E}[M_\infty]
	=\lim_{k\to\infty}\lim_{N\to\infty}\mathbf{E}[M_k^N]
	=\lim_{N\to\infty}\lim_{k\to\infty}\mathbf{E}[M_k^N]
	=\mathbf{E}[M].
$$
This completes the proof.
\qed\end{proof}

\begin{corollary}
\label{cor:neveu}
Suppose that Assumption 3 of Theorem \ref{thm:sopt} holds.  Then
$$
	\mathbf{E}[\ell_k(\tilde u_k)|
        \mathcal{G}^\infty_k]\circ T^{-k}
	\xrightarrow{k\to\infty}
	\essinf_{u\in\mathbb{U}_0}
	\mathbf{E}[\ell_0(u)|\mathcal{G}^\infty_0]
	\quad\mbox{in }L^1.	
$$
\end{corollary}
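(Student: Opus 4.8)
The plan is to transport the whole statement to time $0$ by the shift, and then to bridge the gap between two conditionings purely at the level of expectations using Assumption 3. First I would record the shift identities. Since $T$ is invertible and measure-preserving, $\mathbf{E}[f|\mathcal{H}]\circ T^{-k}=\mathbf{E}[f\circ T^{-k}|T^k\mathcal{H}]$, and a direct computation on generators gives $T^k\mathcal{Y}_{0,k}=\mathcal{Y}_{-k,0}$ and (using that $T^k$ commutes with countable intersections) $T^k\mathcal{G}_k^\infty=\mathcal{G}_0^\infty$. Setting $v_k:=\tilde u_k\circ T^{-k}$, one checks $\ell_k(\tilde u_k)\circ T^{-k}=\ell_0(v_k)$ and $v_k\in\mathbb{U}_{-k,0}$, whence
$$
	\mathbf{E}[\ell_k(\tilde u_k)|\mathcal{G}_k^\infty]\circ T^{-k}
	=\mathbf{E}[\ell_0(v_k)|\mathcal{G}_0^\infty].
$$
So the claim reduces to proving $\mathbf{E}[\ell_0(v_k)|\mathcal{G}_0^\infty]\to B$ in $L^1$, where $B:=\essinf_{u\in\mathbb{U}_0}\mathbf{E}[\ell_0(u)|\mathcal{G}_0^\infty]$.

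Next I would transport the defining property of $\tilde u_k$ from Lemma \ref{lem:meanopt}. Composing its inequality with $T^{-k}$ and using the bijection $u\mapsto u\circ T^{-k}$ between $\mathbb{U}_{0,k}$ and $\mathbb{U}_{-k,0}$ (under which the essential infimum is preserved) yields
$$
	M_k\le\mathbf{E}[\ell_0(v_k)|\mathcal{Y}_{-k,0}]\le M_k+k^{-1},\qquad
	M_k:=\essinf_{w\in\mathbb{U}_{-k,0}}\mathbf{E}[\ell_0(w)|\mathcal{Y}_{-k,0}].
$$
By Lemma \ref{lem:neveu}, $M_k\to A:=\essinf_{u\in\mathbb{U}_0}\mathbf{E}[\ell_0(u)|\mathcal{Y}_{-\infty,0}]$ in $L^1$, and the sandwich forces $\mathbf{E}[\ell_0(v_k)|\mathcal{Y}_{-k,0}]\to A$ in $L^1$ as well; taking expectations gives $\mathbf{E}[\ell_0(v_k)]\to\mathbf{E}[A]$.

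The final and decisive step bridges the gap between conditioning on $\mathcal{Y}_{-k,0}$ and on the much larger field $\mathcal{G}_0^\infty$ entirely through means. Because $v_k\in\mathbb{U}_{-k,0}\subseteq\mathbb{U}_0$ — this is exactly where the distinction $\mathbb{U}_0\subsetneq\mathbb{U}_{-\infty,0}$ is needed — the definition of the essential infimum gives the pointwise bound $\mathbf{E}[\ell_0(v_k)|\mathcal{G}_0^\infty]\ge B$ a.s. Hence, by the one-sided-bound-plus-convergent-means device,
$$
	\big\|\mathbf{E}[\ell_0(v_k)|\mathcal{G}_0^\infty]-B\big\|_1
	=\mathbf{E}[\ell_0(v_k)]-\mathbf{E}[B],
$$
and Assumption 3 asserts $\mathbf{E}[A]=\mathbf{E}[B]$, so the right-hand side tends to $\mathbf{E}[A]-\mathbf{E}[B]=0$. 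Dominated integrability holds throughout since every quantity is controlled by $\mathbf{E}[\Lambda\,|\,\cdot]$ with $\Lambda\in L^1$.

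I expect the main obstacle to be conceptual rather than computational: the transported strategy $v_k$ is only near-optimal relative to the small field $\mathcal{Y}_{-k,0}$, yet we must establish optimality relative to the strictly larger $\mathcal{G}_0^\infty$. The resolution is that the inequality $\ge B$ comes for free from admissibility ($v_k\in\mathbb{U}_0$), while the matching upper estimate is only available after integrating, where Assumption 3 identifies $\mathbf{E}[A]$ with $\mathbf{E}[B]$. The points demanding care are the verification of the shift identity $T^k\mathcal{G}_k^\infty=\mathcal{G}_0^\infty$ and the fact that the essential infimum commutes with the measure-preserving change of variables.
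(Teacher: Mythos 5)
Your proposal is correct and follows essentially the same route as the paper's own proof: transport $\tilde u_k$ to time $0$ via the shift, control the mean $\mathbf{E}[\ell_0(v_k)]$ through Lemma \ref{lem:meanopt} and Lemma \ref{lem:neveu}, and then combine the pointwise bound $\mathbf{E}[\ell_0(v_k)|\mathcal{G}_0^\infty]\ge\essinf_{u\in\mathbb{U}_0}\mathbf{E}[\ell_0(u)|\mathcal{G}_0^\infty]$ with Assumption 3 to convert convergence of expectations into $L^1$ convergence. The only differences are cosmetic (you verify the shift identities explicitly and use both sides of the sandwich where the paper uses only the upper bound), so there is nothing to add.
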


\begin{proof}
Define $\hat u_k = \tilde u_k\circ T^{-k} \in \mathbb{U}_{-k,0}$, 
so that
$$
	\mathbf{E}[\ell_k(\tilde u_k)|
        \mathcal{G}^\infty_k]\circ T^{-k} =
	\mathbf{E}[\ell_0(\hat u_k)|
        \mathcal{G}^\infty_0].
$$
By stationarity and the definition of $\mathbf{\tilde u}$, we have
\begin{align*}
	\mathbf{E}[
	\mathbf{E}[\ell_0(\hat u_k)|
        \mathcal{G}^\infty_0]] &=
	\mathbf{E}[
	\mathbf{E}[\ell_0(\hat u_k)|
        \mathcal{Y}_{-k,0}]] \\ &\mbox{}\le
	\mathbf{E}\Big[\essinf_{u\in\mathbb{U}_{-k,0}}
	\mathbf{E}[\ell_0(u)|
        \mathcal{Y}_{-k,0}]\Big] + k^{-1}.
\end{align*}
Therefore, by Lemma \ref{lem:neveu}, we have
$$
	\limsup_{k\to\infty}
	\mathbf{E}[
	\mathbf{E}[\ell_0(\hat u_k)|
        \mathcal{G}^\infty_0]] \le
	\mathbf{E}\Big[\essinf_{u\in\mathbb{U}_0}
	\mathbf{E}[\ell_0(u)|
        \mathcal{Y}_{-\infty,0}]\Big] = L^\star.
$$
On the other hand, note that
$$
	\mathbf{E}[\ell_0(\hat u_k)|
        \mathcal{G}^\infty_0] \ge
	\essinf_{u\in\mathbb{U}_0}
	\mathbf{E}[\ell_0(u)|
        \mathcal{G}^\infty_0]\quad\mbox{a.s.}
$$
Using Assumption 3, we therefore have
$$
	\limsup_{k\to\infty}
	\Big\|	\mathbf{E}[\ell_0(\hat u_k)|
        \mathcal{G}^\infty_0] -
	\essinf_{u\in\mathbb{U}_0}
	\mathbf{E}[\ell_0(u)|
        \mathcal{G}^\infty_0]\Big\|_1 
	\le 0.
$$
This completes the proof.
\qed\end{proof}

We are now in the position to construct the pathwise optimal strategy
$\mathbf{u}^\star$.  By Corollary \ref{cor:neveu}, we can choose a
(nonrandom) sequence $k_n\uparrow\infty$ such that
$$
	\mathbf{E}[\ell_{k_n}(\tilde u_{k_n})|
        \mathcal{G}^\infty_{k_n}]\circ T^{-k_n}
	\xrightarrow{n\to\infty}
	\essinf_{u\in\mathbb{U}_0}
	\mathbf{E}[\ell_0(u)|\mathcal{G}^\infty_0]
	\quad\mbox{a.s.}
$$
Let us define
$$
	u^\star_k = \tilde u_{k_n}\circ T^{k-k_n}\quad\mbox{for }
	k_n\le k<k_{n+1},~n\in\mathbb{N}.
$$
Then clearly $\mathbf{u}^\star=(u_k^\star)_{k\ge 1}$ is an
admissible strategy.

\begin{lemma}
\label{lem:optcost}
Suppose that the assumptions of Theorem \ref{thm:sopt} hold.  Then
$$
	\lim_{T\to\infty}L_T(\mathbf{u}^\star)= L^\star
	\quad\mbox{a.s.}
$$
\end{lemma}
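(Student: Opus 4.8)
The plan is to reduce the claim to an averaging statement about the conditional losses $\phi_k := \mathbf{E}[\ell_k(u_k^\star)|\mathcal{G}_k^\infty]$ and then exploit the block structure of $\mathbf{u}^\star$. Since $\mathbf{u}^\star$ is admissible, Corollary \ref{cor:mtgdiff} already gives $\frac1T\sum_{k=1}^T\{\ell_k(u_k^\star)-\phi_k\}\to 0$ a.s., so it suffices to prove that $\frac1T\sum_{k=1}^T\phi_k\to L^\star$ a.s.

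First I would establish a shift-covariance identity for $\phi_k$. Two elementary facts drive everything: (i) for the invertible measure-preserving map $T$ one has $\mathbf{E}[f|\mathcal{F}]\circ T^m = \mathbf{E}[f\circ T^m|T^{-m}\mathcal{F}]$; and (ii) a direct computation from $\mathcal{G}_j^n=\mathcal{Y}_{-\infty,j}\vee T^{n-j}\mathcal{X}$ gives $T^{-m}\mathcal{G}_j^n=\mathcal{G}_{j+m}^n$, hence $T^{-m}\mathcal{G}_j^\infty=\mathcal{G}_{j+m}^\infty$. Writing $n(k)$ for the block index with $k_{n(k)}\le k<k_{n(k)+1}$, the definition $u_k^\star=\tilde u_{k_n}\circ T^{k-k_n}$ yields $\ell_k(u_k^\star)=\ell_{k_n}(\tilde u_{k_n})\circ T^{k-k_n}$ by simply unfolding the definitions of $\ell_k$ and the shift. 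Combining this with (i)--(ii) gives $\phi_k=\mathbf{E}[\ell_{k_n}(\tilde u_{k_n})|\mathcal{G}_{k_n}^\infty]\circ T^{k-k_n}=c_{n(k)}\circ T^k$, where $c_n:=\mathbf{E}[\ell_{k_n}(\tilde u_{k_n})|\mathcal{G}_{k_n}^\infty]\circ T^{-k_n}$ is exactly the quantity chosen before the lemma to converge a.s. to $L_0^\star:=\essinf_{u\in\mathbb{U}_0}\mathbf{E}[\ell_0(u)|\mathcal{G}_0^\infty]$. Next I would split $\phi_k = L_0^\star\circ T^k + d_{n(k)}\circ T^k$ with $d_n:=c_n-L_0^\star$. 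The ergodic theorem (using ergodicity of $T$ and $L_0^\star\in L^1$) handles the first piece: $\frac1T\sum_{k=1}^T L_0^\star\circ T^k\to\mathbf{E}[L_0^\star]=L^\star$ a.s. Everything therefore reduces to showing $\frac1T\sum_{k=1}^T d_{n(k)}\circ T^k\to 0$ a.s.

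The hard part is this last step, because the summands carry a moving index $n(k)$ inside a shifting average, so Birkhoff's theorem does not apply directly. The device I would use is a decreasing majorant. Rewriting $c_n=\mathbf{E}[\ell_0(\hat u_{k_n})|\mathcal{G}_0^\infty]$ with $\hat u_{k_n}=\tilde u_{k_n}\circ T^{-k_n}\in\mathbb{U}_0$ (the same unfolding as above), the domination $|\ell_0(u)|\le\Lambda$ gives the \emph{uniform} bound $|d_n|\le H:=\mathbf{E}[\Lambda|\mathcal{G}_0^\infty]+|L_0^\star|\in L^1$ valid for all $n$. Hence $D_N:=\sup_{m\ge N}|d_m|$ satisfies $D_N\le H$ and $D_N\downarrow 0$ a.s. Fixing $N$ and using that $n(k)\ge N$ once $k\ge k_N$ (so that $D_{n(k)}\le D_N$ there), I would bound $\frac1T\sum_{k=1}^T|d_{n(k)}\circ T^k|$ by a contribution from the finitely many terms $k<k_N$, which is $O(1/T)$, plus $\frac1T\sum_{k=1}^T D_N\circ T^k$; Birkhoff then yields $\limsup_T\frac1T\sum_{k=1}^T|d_{n(k)}\circ T^k|\le\mathbf{E}[D_N]$ a.s., simultaneously for every $N$ off a fixed null set. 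Finally, dominated convergence ($D_N\downarrow 0$ with $D_N\le H\in L^1$) forces $\mathbf{E}[D_N]\to 0$, so this $\limsup$ is $0$ a.s. Combining the two pieces gives $\frac1T\sum_{k=1}^T\phi_k\to L^\star$, and with Corollary \ref{cor:mtgdiff} the conclusion $L_T(\mathbf{u}^\star)\to L^\star$ a.s. follows. The finitely many indices $k<k_1$, where $u_k^\star$ is left unconstrained, contribute only a negligible Cesàro term throughout and can be ignored.
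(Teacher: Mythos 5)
Your proposal is correct and follows essentially the same route as the paper's proof: the same reduction via Corollary \ref{cor:mtgdiff}, the same shift-covariance identity $\mathbf{E}[\ell_k(u_k^\star)|\mathcal{G}_k^\infty]\circ T^{-k}=c_{n(k)}$ with the uniform domination $|c_n|\le\mathbf{E}[\Lambda|\mathcal{G}_0^\infty]\in L^1$ (steps the paper compresses into the words ``by construction''). The only difference is that the paper concludes by citing Maker's generalized ergodic theorem \cite[Corollary 10.8]{Kal02}, whereas your decomposition $\phi_k=L_0^\star\circ T^k+d_{n(k)}\circ T^k$ together with the decreasing majorant $D_N=\sup_{m\ge N}|d_m|$, Birkhoff applied to $D_N$, and dominated convergence is precisely the standard proof of that theorem, so you have re-derived the required special case inline rather than invoking it as a black box.
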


\begin{proof}
By construction,
$$
	\mathbf{E}[\ell_k(u_k^\star)|
        \mathcal{G}^\infty_{k}]\circ T^{-k}
	\xrightarrow{k\to\infty}
	\essinf_{u\in\mathbb{U}_0}
	\mathbf{E}[\ell_0(u)|\mathcal{G}^\infty_0]
	\quad\mbox{a.s.}
$$
Moreover,
$$
	\sup_{k\ge 1}\big|\mathbf{E}[\ell_k(u_k^\star)|
        \mathcal{G}^\infty_{k}]\circ T^{-k}\big| \le
	\mathbf{E}[\Lambda|\mathcal{G}^\infty_0]\in L^1.
$$
Therefore, by Maker's generalized ergodic theorem \cite[Corollary 10.8]{Kal02}
$$
	\frac{1}{T}\sum_{k=1}^T
	\mathbf{E}[\ell_k(u_k^\star)|
        \mathcal{G}^\infty_{k}]
	\xrightarrow{T\to\infty}
	\mathbf{E}\Big[\essinf_{u\in\mathbb{U}_0}
	\mathbf{E}[\ell_0(u)|\mathcal{G}^\infty_0]\Big]=
	L^\star\quad\mbox{a.s.}
$$
Thus $L_T(\mathbf{u}^\star)\to L^\star$ a.s.\ as $T\to\infty$ by
Corollary \ref{cor:mtgdiff}.
\qed\end{proof}

The proof of Theorem \ref{thm:sopt} is now complete.
Indeed, if $\mathbf{u}$ is admissible, then
$$
	\liminf_{T\to\infty}\{L_T(\mathbf{u})-L_T(\mathbf{u}^\star)\}
	=\liminf_{T\to\infty}L_T(\mathbf{u})-L^\star\ge 0\quad
	\mbox{a.s.}
$$
by Lemma \ref{lem:optcost} and Corollary \ref{cor:mtgdiff}, so
$\mathbf{u}^\star$ is pathwise optimal.

\subsection{Proof of Corollary \ref{cor:smeanopt}}
\label{sec:proofsmeanopt}

The prove pathwise optimality, it suffices to show
$L_T(\mathbf{\tilde u})\to L^\star$ a.s.

\begin{lemma}
Under the assumptions of Corollary \ref{cor:smeanopt}, the mean-optimal
strategy $\mathbf{\tilde u}$ (Lemma \ref{lem:meanopt}) satisfies
$L_T(\mathbf{\tilde u})\to L^\star$ a.s.\ as $T\to\infty$.
\end{lemma}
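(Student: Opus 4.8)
The plan is to prove the claimed almost sure convergence $L_T(\mathbf{\tilde u})\to L^\star$ directly. First I would note that the three hypotheses of Corollary \ref{cor:smeanopt} imply the hypotheses of Theorem \ref{thm:sopt}: the pointwise identity in Assumption 3 forces the two essential infima defining $L^\star$ to coincide a.s., and hence their expectations agree. Consequently Corollary \ref{cor:mtgdiff} is available, and applied to the admissible strategy $\mathbf{\tilde u}$ it gives
$$
	\frac{1}{T}\sum_{k=1}^T\{\ell_k(\tilde u_k)-\mathbf{E}[\ell_k(\tilde u_k)|\mathcal{G}_k^\infty]\}\xrightarrow{T\to\infty}0\quad\mbox{a.s.}
$$
Thus it suffices to prove that $\frac{1}{T}\sum_{k=1}^T\mathbf{E}[\ell_k(\tilde u_k)|\mathcal{G}_k^\infty]\to L^\star$ a.s. As in the proof of Lemma \ref{lem:optcost}, I would deduce this from Maker's generalized ergodic theorem \cite[Corollary 10.8]{Kal02} applied to the backward-shifted quantities
$$
	g_k := \mathbf{E}[\ell_k(\tilde u_k)|\mathcal{G}_k^\infty]\circ T^{-k}=\mathbf{E}[\ell_0(\hat u_k)|\mathcal{G}_0^\infty],\qquad \hat u_k:=\tilde u_k\circ T^{-k}\in\mathbb{U}_{-k,0},
$$
where the second identity uses stationarity exactly as in Corollary \ref{cor:neveu}. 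Since $\sup_k|g_k|\le\mathbf{E}[\Lambda|\mathcal{G}_0^\infty]\in L^1$, Maker's theorem will deliver $\frac{1}{T}\sum_{k=1}^T g_k\circ T^k\to\mathbf{E}[\lim_k g_k]$ a.s., \emph{provided} the $g_k$ converge a.s.

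The crux of the argument, and the step where the stronger hypotheses of Corollary \ref{cor:smeanopt} enter, is therefore to prove the \emph{pathwise} convergence
$$
	g_k\xrightarrow{k\to\infty}M^\infty:=\essinf_{u\in\mathbb{U}_0}\mathbf{E}[\ell_0(u)|\mathcal{G}_0^\infty]\quad\mbox{a.s.}
$$
Corollary \ref{cor:neveu} only furnishes the analogous statement in $L^1$, which is too weak to feed into Maker's theorem; it is precisely the extra \emph{a.s.} uniform convergence in the third line of Assumption 2 that allows the upgrade to an a.s. statement, and hence allows us to work with $\mathbf{\tilde u}$ itself rather than the modified strategy $\mathbf{u}^\star$ constructed in the proof of Theorem \ref{thm:sopt}. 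The lower bound $g_k\ge M^\infty$ a.s. is immediate from $\hat u_k\in\mathbb{U}_{-k,0}\subseteq\mathbb{U}_0$. For the matching upper bound I would chain the three hypotheses. Assumption 3, applied to $\hat u_k\in\mathbb{U}_0$, gives $g_k=\mathbf{E}[\ell_0(\hat u_k)|\mathcal{Y}_{-\infty,0}]$ a.s.; the third line of Assumption 2 bounds the error in passing to the finite horizon uniformly,
$$
	|\mathbf{E}[\ell_0(\hat u_k)|\mathcal{Y}_{-\infty,0}]-\mathbf{E}[\ell_0(\hat u_k)|\mathcal{Y}_{-k,0}]|\le\esssup_{u\in\mathbb{U}_{-k,0}}|\mathbf{E}[\ell_0(u)|\mathcal{Y}_{-k,0}]-\mathbf{E}[\ell_0(u)|\mathcal{Y}_{-\infty,0}]|\xrightarrow{k\to\infty}0\quad\mbox{a.s.};
$$
and the mean-optimality of Lemma \ref{lem:meanopt}, transported by $T^{-k}$, yields $\mathbf{E}[\ell_0(\hat u_k)|\mathcal{Y}_{-k,0}]\le\essinf_{u\in\mathbb{U}_{-k,0}}\mathbf{E}[\ell_0(u)|\mathcal{Y}_{-k,0}]+k^{-1}$. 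By Lemma \ref{lem:neveu} the right-hand essential infimum converges a.s. to $\essinf_{u\in\mathbb{U}_0}\mathbf{E}[\ell_0(u)|\mathcal{Y}_{-\infty,0}]$, which equals $M^\infty$ a.s. by a further application of Assumption 3 (the pointwise equality of conditional expectations for each $u$ forces equality of the essential infima). Combining these displays gives $\limsup_k g_k\le M^\infty$ a.s.

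With $g_k\to M^\infty$ a.s. established, Maker's theorem gives $\frac{1}{T}\sum_{k=1}^T\mathbf{E}[\ell_k(\tilde u_k)|\mathcal{G}_k^\infty]\to\mathbf{E}[M^\infty]$ a.s., and $\mathbf{E}[M^\infty]=L^\star$ by the identity of essential infima noted at the outset. Together with the reduction from Corollary \ref{cor:mtgdiff}, this yields $L_T(\mathbf{\tilde u})\to L^\star$ a.s., as required. I expect the main obstacle to be the upper bound for $g_k$: one must control the \emph{single random} strategy $\hat u_k$ simultaneously by the uniform-in-$u$ estimate of Assumption 2 and by the mean-optimality inequality, which is exactly why uniformity of the convergence (rather than convergence for each fixed $u$) is indispensable.
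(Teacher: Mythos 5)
Your proposal is correct and follows essentially the same route as the paper's proof: the same chain (definition of $\mathbf{\tilde u}$ plus Lemma \ref{lem:meanopt}, Lemma \ref{lem:neveu}, the third line of Assumption 2, Assumption 3, domination by $\mathbf{E}[\Lambda|\mathcal{G}_0^\infty]$, Maker's generalized ergodic theorem, and finally Corollary \ref{cor:mtgdiff}) appears in the paper, merely with the reduction via Corollary \ref{cor:mtgdiff} placed at the end rather than the start and the sandwich argument left more implicit. Your explicit identification of the a.s.\ (rather than $L^1$) convergence of $\mathbf{E}[\ell_0(\hat u_k)|\mathcal{G}_0^\infty]$ as the point where the strengthened hypotheses enter matches the paper's intent exactly.
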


\begin{proof}
By the definition of $\mathbf{\tilde u}$ and Lemma \ref{lem:neveu},
$$
	\mathbf{E}[\ell_k(\tilde u_k)|\mathcal{Y}_{0,k}]\circ T^{-k}
	\xrightarrow{k\to\infty}
	\essinf_{u\in\mathbb{U}_0}\mathbf{E}[\ell_0(u)|\mathcal{Y}_{-\infty,0}]
	\quad\mbox{a.s.}
$$
Therefore, the third part of Assumption 2 of Corollary \ref{cor:smeanopt} 
implies that
$$
	\mathbf{E}[\ell_k(\tilde u_k)|\mathcal{Y}_{-\infty,k}]\circ T^{-k}
	\xrightarrow{k\to\infty}
	\essinf_{u\in\mathbb{U}_0}\mathbf{E}[\ell_0(u)|\mathcal{Y}_{-\infty,0}]
	\quad\mbox{a.s.}
$$
But by Assumption 3 of Corollary \ref{cor:smeanopt} and stationarity, we obtain
$$
	\mathbf{E}[\ell_k(\tilde u_k)|\mathcal{G}_k^\infty]\circ T^{-k}
	\xrightarrow{k\to\infty}
	\essinf_{u\in\mathbb{U}_0}\mathbf{E}[\ell_0(u)|\mathcal{Y}_{-\infty,0}]
	\quad\mbox{a.s.}
$$
Moreover, we have
$$
	\sup_{k\ge 1}\big|\mathbf{E}[\ell_k(\tilde u_k)|
        \mathcal{G}^\infty_{k}]\circ T^{-k}\big| \le
	\mathbf{E}[\Lambda|\mathcal{G}^\infty_0]\in L^1.
$$
Maker's generalized ergodic theorem \cite[Corollary 10.8]{Kal02} 
therefore yields
$$
	\frac{1}{T}\sum_{k=1}^T
	\mathbf{E}[\ell_k(\tilde u_k)|
        \mathcal{G}^\infty_{k}]
	\xrightarrow{T\to\infty}
	\mathbf{E}\Big[\essinf_{u\in\mathbb{U}_0}
	\mathbf{E}[\ell_0(u)|\mathcal{Y}_{-\infty,0}]\Big]=
	L^\star\quad\mbox{a.s.}
$$
As the assumptions of Corollary \ref{cor:smeanopt} imply those of
Theorem \ref{thm:sopt}, the result as well as pathwise optimality
of $\mathbf{\tilde u}$ now follow from Corollary \ref{cor:mtgdiff}.
\qed\end{proof}

\subsection{Proof of Theorem \ref{thm:wkopt}}
\label{sec:proofwkopt}

The proof of the Theorem is once again based on a variant of the
``ergodic tower property'' described in the introduction.  
In the present setting, the result follows rather easily from
the conditional weak mixing assumption.

\begin{lemma}
\label{lem:wkerg}
Suppose that the assumption of Theorem \ref{thm:wkopt} holds.  Then
$$
	\frac{1}{T}\sum_{k=1}^T
	\{\ell_k(u_k)-\mathbf{E}[\ell_k(u_k)|\mathcal{Y}_{-\infty,k}]\}
	\xrightarrow{T\to\infty}0\quad\mbox{in }L^1
$$
for every admissible strategy $\mathbf{u}$.
\end{lemma}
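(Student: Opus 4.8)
The plan is to prove that $S_T := \frac{1}{T}\sum_{k=1}^T\{\ell_k(u_k)-\mathbf{E}[\ell_k(u_k)|\mathcal{Y}_{-\infty,k}]\}$ converges to $0$ in $L^1$ by first truncating the loss and then estimating a second moment. First I would introduce the truncated centered losses $\bar\ell_k^M(u_k):=\ell_k(u_k)\mathbf{1}_{\Lambda\circ T^k\le M}-\mathbf{E}[\ell_k(u_k)\mathbf{1}_{\Lambda\circ T^k\le M}|\mathcal{Y}_{-\infty,k}]$ and the corresponding average $S_T^M:=\frac{1}{T}\sum_{k=1}^T\bar\ell_k^M(u_k)$. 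Since $|\ell_k(u_k)|\le\Lambda\circ T^k$, stationarity yields the bound
$$
	\|S_T-S_T^M\|_1\le\frac{1}{T}\sum_{k=1}^T\|\ell_k(u_k)\mathbf{1}_{\Lambda\circ T^k>M}\|_1\cdot 2\le 2\,\mathbf{E}[\Lambda\mathbf{1}_{\Lambda>M}],
$$
which is \emph{uniform in $T$} and vanishes as $M\to\infty$; this is precisely what the truncation in the hypothesis of Theorem~\ref{thm:wkopt} is designed to absorb. It therefore suffices to control $\|S_T^M\|_1\le\|S_T^M\|_2$ for each fixed $M$, where $\bar\ell_k^M(u_k)$ is bounded by $2M$.

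For fixed $M$ I would expand the second moment as
$$
	\mathbf{E}[(S_T^M)^2]=\frac{1}{T^2}\sum_{j,k=1}^T\mathbf{E}[\bar\ell_j^M(u_j)\,\bar\ell_k^M(u_k)].
$$
The diagonal $j=k$ contributes at most $4M^2/T\to0$. For an off-diagonal term with $j<k$, the idea is to recast the cross term into the form appearing in the hypothesis. Using that $T$ is measure preserving, I would shift by $-k$ so that the later index lands at $0$; the key substitution identity $\bar\ell_k^M(u_k)\circ T^{-k}=\bar\ell_0^M(u_k\circ T^{-k})$, which follows from $\mathcal{Y}_{-\infty,k}=T^{-k}\mathcal{Y}_{-\infty,0}$ together with $u_k\circ T^{-k}\in\mathbb{U}_{-k,0}\subseteq\mathbb{U}_0$, then gives
$$
	\mathbf{E}[\bar\ell_j^M(u_j)\,\bar\ell_k^M(u_k)]=\mathbf{E}\big[\mathbf{E}[\{\bar\ell_0^M(v)\circ T^{-(k-j)}\}\,\bar\ell_0^M(w)\,|\,\mathcal{Y}_{-\infty,0}]\big],
$$
with $v=u_j\circ T^{-j}$ and $w=u_k\circ T^{-k}$ both lying in $\mathbb{U}_0$. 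Since $(v,w)$ is one admissible pair, the inner conditional expectation is bounded in absolute value a.s.\ by $\Theta_m^M:=\esssup_{u,u'\in\mathbb{U}_0}|\mathbf{E}[\{\bar\ell_0^M(u)\circ T^{-m}\}\,\bar\ell_0^M(u')|\mathcal{Y}_{-\infty,0}]|$ with $m=k-j$. Summing over all pairs and using $\sum_{1\le j<k\le T}\|\Theta_{k-j}^M\|_1\le T\sum_{m=1}^{T-1}\|\Theta_m^M\|_1$, I obtain $\mathbf{E}[(S_T^M)^2]\le 4M^2/T+\frac{2}{T}\sum_{m=1}^{T-1}\|\Theta_m^M\|_1$.

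Since $\Theta_m^M\ge0$, the hypothesis of Theorem~\ref{thm:wkopt} reads exactly $\lim_{M\to\infty}\limsup_{T\to\infty}\frac{1}{T}\sum_{m=1}^T\|\Theta_m^M\|_1=0$; writing $c(M)$ for this double limit gives $\limsup_{T\to\infty}\|S_T^M\|_2^2\le 2c(M)$. Combining with the truncation estimate yields $\limsup_{T\to\infty}\|S_T\|_1\le 2\,\mathbf{E}[\Lambda\mathbf{1}_{\Lambda>M}]+\sqrt{2c(M)}$, and letting $M\to\infty$ gives $\|S_T\|_1\to0$, as desired. The step I expect to be the main obstacle—and the one deserving the most care—is the reduction of the random-decision cross term to the conditional covariance controlled by the hypothesis: one must verify the shift-substitution identity for the \emph{random} decisions $u_j,u_k$, confirm that $u_j\circ T^{-j}$ and $u_k\circ T^{-k}$ genuinely belong to $\mathbb{U}_0$ (so that the causal structure $\mathbb{U}_{0,k}\to\mathbb{U}_{-k,0}\subseteq\mathbb{U}_0$ lines up), and thereby justify replacing the specific conditional covariance by the essential supremum $\Theta_{k-j}^M$. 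Everything else—the diagonal contribution, the truncation reduction, and the passage from $L^2$ to $L^1$—is routine.
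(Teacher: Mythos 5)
Your proposal is correct and follows essentially the same route as the paper's proof: the same truncation to $\bar\ell^M$ with the uniform-in-$T$ error bound $2\,\mathbf{E}[\Lambda\mathbf{1}_{\Lambda>M}]$, the same second-moment expansion, the same shift-by-the-later-index reduction using $u_j\circ T^{-j}\in\mathbb{U}_{-j,0}\subseteq\mathbb{U}_0$ to dominate each cross term by the essential-supremum conditional covariance of the hypothesis, and the same $L^2$-to-$L^1$ conclusion. The only (cosmetic) difference is that you treat the diagonal terms separately via the $4M^2/T$ bound, whereas the paper absorbs them into the $k=0$ term of the mixing sum.
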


\begin{proof}
Define $\bar\ell_k^M(u)=\bar\ell_0^M(u)\circ T^k$ for $u\in U$.
We begin by noting that
$$
	\mathbf{E}\bigg[\bigg(
	\frac{1}{T}\sum_{k=1}^T \bar\ell_k^M(u_k)
	\bigg)^2
	\bigg]=
	\frac{1}{T^2}\sum_{n,m=1}^T
	\mathbf{E}[
	\bar\ell_n^M(u_n) \bar\ell_m^M(u_m)
	].
$$
Suppose that $m\le n$.  Then by stationarity and as $\mathbf{u}$ is admissible
\begin{align*}
	\mathbf{E}[\bar\ell_n^M(u_n) \bar\ell_m^M(u_m)] 
	&=
	\mathbf{E}[\bar\ell_0^M(u_n\circ T^{-n}) ~
	\{\bar\ell_0^M(u_m\circ T^{-m})\circ T^{-(n-m)}\}] \\
	&\le
	\mathbf{E}\bigg[
	\esssup_{u,u'\in\mathbb{U}_0}
	|\mathbf{E}[\bar\ell_0^M(u') ~
	\{\bar\ell_0^M(u)\circ T^{-(n-m)}\}|
	\mathcal{Y}_{-\infty,0}]|\bigg].
\end{align*}
We can therefore estimate
\begin{align*}
	\mathbf{E}\bigg[\bigg(
	\frac{1}{T}\sum_{k=1}^T \bar\ell_k^M(u_k)
	\bigg)^2
	\bigg] &\le
	\frac{2}{T^2}\sum_{n=1}^T\sum_{k=0}^{n-1}
	\mathbf{E}\bigg[
	\esssup_{u,u'\in\mathbb{U}_0}|
	\mathbf{E}[\bar\ell_0^M(u') ~
	\{\bar\ell_0^M(u)\circ T^{-k}\}|
	\mathcal{Y}_{-\infty,0}]|\bigg] \\
	&=
	\frac{2}{T^2}\sum_{k=0}^{T-1}
	(T-k)\,
	\mathbf{E}\bigg[
	\esssup_{u,u'\in\mathbb{U}_0}|
	\mathbf{E}[\bar\ell_0^M(u') ~
	\{\bar\ell_0^M(u)\circ T^{-k}\}|
	\mathcal{Y}_{-\infty,0}]|\bigg] \\
	&\le
	\frac{2}{T}\sum_{k=0}^{T-1}
	\mathbf{E}\bigg[
	\esssup_{u,u'\in\mathbb{U}_0}|
	\mathbf{E}[\bar\ell_0^M(u') ~
	\{\bar\ell_0^M(u)\circ T^{-k}\}|
	\mathcal{Y}_{-\infty,0}]|\bigg].
\end{align*}
By the uniform conditional mixing assumption, it follows that
$$
	\lim_{M\to\infty}
	\limsup_{T\to\infty}
	\bigg\|
	\frac{1}{T}\sum_{k=1}^T \bar\ell_k^M(u_k)
	\bigg\|_2=0.
$$
On the other hand, note that
$$
	\sup_{T\ge 1}
	\bigg\|
	\frac{1}{T}\sum_{k=1}^T
	\{\ell_k(u_k)-\mathbf{E}[\ell_k(u_k)|\mathcal{Y}_{-\infty,k}]\}
	-
	\frac{1}{T}\sum_{k=1}^T \bar\ell_k^M(u_k)
	\bigg\|_1
	\le
	\mathbf{E}[2\Lambda\mathbf{1}_{\Lambda>M}]
	\xrightarrow{M\to\infty}0.
$$
The result now follows by applying the triangle inequality.
\qed\end{proof}

\begin{corollary}
\label{cor:wkerg}
Under the assumption of Theorem \ref{thm:wkopt}, we have
$$
	\mathbf{P}\Big[L_T(\mathbf{u})-L^\star\le -\varepsilon\Big]
	\xrightarrow{T\to\infty}0\quad\mbox{for every }\varepsilon>0
$$
for every admissible strategy $\mathbf{u}$.
\end{corollary}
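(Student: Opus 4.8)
The plan is to combine the $L^1$ ergodic statement of Lemma \ref{lem:wkerg} with a pathwise lower bound furnished by Birkhoff's ergodic theorem. Setting $g:=\essinf_{u\in\mathbb{U}_0}\mathbf{E}[\ell_0(u)|\mathcal{Y}_{-\infty,0}]$, so that $L^\star=\mathbf{E}[g]$ by definition, I would start from the decomposition
\begin{align*}
	L_T(\mathbf{u})-L^\star
	&=
	\Bigg\{
	\frac{1}{T}\sum_{k=1}^T
	\big(\ell_k(u_k)-\mathbf{E}[\ell_k(u_k)|\mathcal{Y}_{-\infty,k}]\big)
	\Bigg\} \\
	&\quad{}+
	\Bigg\{
	\frac{1}{T}\sum_{k=1}^T
	\big(\mathbf{E}[\ell_k(u_k)|\mathcal{Y}_{-\infty,k}]-g\circ T^k\big)
	\Bigg\}
	+
	\Bigg\{
	\frac{1}{T}\sum_{k=1}^T g\circ T^k-L^\star
	\Bigg\}
\end{align*}
and treat the three bracketed terms separately. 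The first converges to $0$ in $L^1$ directly by Lemma \ref{lem:wkerg}. The third converges to $0$ almost surely and in $L^1$ by Birkhoff's ergodic theorem, once one notes that $g\in L^1$ because $|g|\le\mathbf{E}[\Lambda|\mathcal{Y}_{-\infty,0}]$.

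The crux is to show that the middle bracket is nonnegative. Since $\mathbf{u}$ is admissible, $u_k$ is $\mathcal{Y}_{0,k}$-measurable and hence $u_k\in\mathbb{U}_k$, which gives the pointwise inequality
$$
	\mathbf{E}[\ell_k(u_k)|\mathcal{Y}_{-\infty,k}]
	\ge
	\essinf_{u\in\mathbb{U}_k}
	\mathbf{E}[\ell_k(u)|\mathcal{Y}_{-\infty,k}]
	\quad\mbox{a.s.}
$$
It then suffices to identify the right-hand side with $g\circ T^k$. This is a stationarity computation: the map $u\mapsto u\circ T^{-k}$ is a bijection of $\mathbb{U}_k$ onto $\mathbb{U}_0$, one has $\ell_k(u)=\ell_0(u\circ T^{-k})\circ T^k$ and $\mathcal{Y}_{-\infty,k}=T^{-k}\mathcal{Y}_{-\infty,0}$, and the measure-preserving property of $T$ yields $\mathbf{E}[\ell_k(u)|\mathcal{Y}_{-\infty,k}]=\mathbf{E}[\ell_0(u\circ T^{-k})|\mathcal{Y}_{-\infty,0}]\circ T^k$. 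Pushing the essential infimum through the measure-preserving composition $f\mapsto f\circ T^k$ gives $\essinf_{u\in\mathbb{U}_k}\mathbf{E}[\ell_k(u)|\mathcal{Y}_{-\infty,k}]=g\circ T^k$ a.s., so the middle bracket is indeed nonnegative.

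With these facts in hand the conclusion is immediate. Writing $A_T$ and $C_T$ for the first and third bracketed sums, on the event $\{L_T(\mathbf{u})-L^\star\le-\varepsilon\}$ the nonnegative middle term can be dropped, so this event is contained in $\{A_T+C_T\le-\varepsilon\}$. Hence, by Markov's inequality,
$$
	\mathbf{P}[L_T(\mathbf{u})-L^\star\le-\varepsilon]
	\le
	\mathbf{P}[|A_T+C_T|\ge\varepsilon]
	\le
	\frac{1}{\varepsilon}\big(\mathbf{E}|A_T|+\mathbf{E}|C_T|\big)
	\xrightarrow{T\to\infty}0,
$$
since both $A_T\to0$ and $C_T\to0$ in $L^1$. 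I expect the only delicate point to be the stationarity identification of the essential infimum with $g\circ T^k$ — in particular verifying the bijection $\mathbb{U}_k\cong\mathbb{U}_0$ and handling the fact that the infimum is an \emph{essential} rather than a pointwise infimum; everything else is a routine combination of Lemma \ref{lem:wkerg}, Birkhoff's theorem, and Markov's inequality.
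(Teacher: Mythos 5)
Your proof is correct and takes essentially the same route as the paper: the paper likewise combines Lemma \ref{lem:wkerg} with the pointwise bound $\mathbf{E}[\ell_k(u_k)|\mathcal{Y}_{-\infty,k}]\ge\essinf_{u\in\mathbb{U}_k}\mathbf{E}[\ell_k(u)|\mathcal{Y}_{-\infty,k}]$ and the ergodic theorem applied to this sequence of essential infima, concluding from $L^1$ convergence. Your three-term decomposition, the stationarity identification $\essinf_{u\in\mathbb{U}_k}\mathbf{E}[\ell_k(u)|\mathcal{Y}_{-\infty,k}]=g\circ T^k$, and the final Markov inequality step merely spell out what the paper compresses into ``by the ergodic theorem'' and ``the result follows directly.''
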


\begin{proof}
Let $\mathbf{u}$ be any admissible strategy. Then by Lemma \ref{lem:wkerg}
$$
	L_T(\mathbf{u})-\frac{1}{T}\sum_{k=1}^T
	\mathbf{E}[\ell_k(u_k)|\mathcal{Y}_{-\infty,k}]
	\xrightarrow{T\to\infty}0\quad\mbox{in }L^1.
$$
On the other hand, note that
$$
	\frac{1}{T}\sum_{k=1}^T
        \mathbf{E}[\ell_k(u_k)|\mathcal{Y}_{-\infty,k}] \ge
	\frac{1}{T}\sum_{k=1}^T
	\essinf_{u\in\mathbb{U}_k}
        \mathbf{E}[\ell_k(u)|\mathcal{Y}_{-\infty,k}] 
	\xrightarrow{T\to\infty}L^\star\quad\mbox{in }L^1
$$
by the ergodic theorem.  The result follows directly.
\qed\end{proof}

In view of Corollary \ref{cor:wkerg}, in order to establish weak pathwise 
optimality of $\mathbf{\tilde u}$ it evidently suffices to prove that 
$\mathbf{\tilde u}$ satisfies the ergodic theorem $L_T(\mathbf{\tilde 
u}) \to L^\star$ in $L^1$.  However, most of the work was already done in
the proof of Theorem \ref{thm:sopt}.

\begin{lemma}
\label{lem:neveu0}
Under the assumption of Theorem \ref{thm:wkopt},
$L_T(\mathbf{\tilde u})\to L^\star$ in $L^1$.
\end{lemma}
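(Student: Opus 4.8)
The plan is to reduce the claim to an $L^1$ ergodic theorem for the conditional losses and then exploit the mean-optimality of $\mathbf{\tilde u}$ together with Lemma \ref{lem:neveu}; most of the necessary machinery has already been assembled in the proof of Theorem \ref{thm:sopt}. First I would apply Lemma \ref{lem:wkerg} to the strategy $\mathbf{\tilde u}$ itself, which gives that $L_T(\mathbf{\tilde u})-\frac1T\sum_{k=1}^T\mathbf{E}[\ell_k(\tilde u_k)|\mathcal{Y}_{-\infty,k}]\to 0$ in $L^1$. Thus it suffices to prove that the Cesàro average of the conditional losses $\mathbf{E}[\ell_k(\tilde u_k)|\mathcal{Y}_{-\infty,k}]$ converges to $L^\star$ in $L^1$.

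Next I would introduce the shifted decisions $\hat u_k:=\tilde u_k\circ T^{-k}\in\mathbb{U}_{-k,0}$, so that by stationarity $\mathbf{E}[\ell_k(\tilde u_k)|\mathcal{Y}_{-\infty,k}]=b_k\circ T^k$ with $b_k:=\mathbf{E}[\ell_0(\hat u_k)|\mathcal{Y}_{-\infty,0}]$. The key step is to show $b_k\to M$ in $L^1$, where $M:=\essinf_{u\in\mathbb{U}_0}\mathbf{E}[\ell_0(u)|\mathcal{Y}_{-\infty,0}]$ (so that $\mathbf{E}[M]=L^\star$). This I would obtain from a one-sided squeeze rather than from martingale convergence: since $\hat u_k\in\mathbb{U}_0$ we have $b_k\ge M$ a.s., while the defining inequality of $\mathbf{\tilde u}$ (Lemma \ref{lem:meanopt}), together with the stationarity manipulation already used in Corollary \ref{cor:neveu}, gives $\mathbf{E}[b_k]=\mathbf{E}[\ell_0(\hat u_k)]\le\mathbf{E}[\essinf_{u\in\mathbb{U}_{-k,0}}\mathbf{E}[\ell_0(u)|\mathcal{Y}_{-k,0}]]+k^{-1}$, whose right-hand side converges to $\mathbf{E}[M]=L^\star$ by Lemma \ref{lem:neveu}. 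As $b_k-M\ge 0$ and $\mathbf{E}[b_k-M]\to 0$, we conclude $\mathbf{E}[|b_k-M|]\to 0$.

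Finally I would write $b_k=M+r_k$ with $r_k\ge 0$ and $\mathbf{E}[r_k]\to 0$, and split $\frac1T\sum_{k=1}^T b_k\circ T^k=\frac1T\sum_{k=1}^T M\circ T^k+\frac1T\sum_{k=1}^T r_k\circ T^k$. The first term converges to $\mathbf{E}[M]=L^\star$ in $L^1$ by the ordinary ergodic theorem, and the second is bounded in $L^1$ by $\frac1T\sum_{k=1}^T\mathbf{E}[r_k]$ (using $r_k\ge 0$ and measure-preservation of $T$), which is the Cesàro mean of a null sequence and hence tends to $0$. Combining this with the reduction furnished by Lemma \ref{lem:wkerg} yields $L_T(\mathbf{\tilde u})\to L^\star$ in $L^1$.

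The main obstacle is the second step: passing from the one-sided control afforded by mean-optimality to genuine $L^1$ convergence of $b_k$. The delicate point is that $b_k$ is a conditional expectation on the fixed field $\mathcal{Y}_{-\infty,0}$ of the \emph{non-stationary} family $\ell_0(\hat u_k)$, so one cannot invoke reverse martingale convergence directly; indeed, in the present setting we do not have the uniform reverse-martingale hypothesis of Corollary \ref{cor:smeanopt}. The pair of facts $b_k\ge M$ and $\mathbf{E}[b_k]\to\mathbf{E}[M]$ is precisely what substitutes for it. The concluding handling of the $k$-dependence in the Cesàro average is a Maker-type step, but it is elementary in $L^1$ because the remainder $r_k$ is nonnegative and $T$ is measure-preserving.
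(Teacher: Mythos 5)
Your proof is correct and follows essentially the same route as the paper's: the reduction via Lemma \ref{lem:wkerg}, the shift to $\hat u_k=\tilde u_k\circ T^{-k}\in\mathbb{U}_{-k,0}$, and the one-sided squeeze combining the pointwise bound $b_k\ge M$ with the mean bound coming from Lemma \ref{lem:meanopt} and Lemma \ref{lem:neveu} are precisely the steps in the paper's proof. The only divergence is the final Ces\`aro step: the paper invokes Maker's generalized ergodic theorem to pass from $\|b_k-M\|_1\to 0$ to $L^1$ convergence of $\frac{1}{T}\sum_{k=1}^T b_k\circ T^k$, whereas you split off the remainder $r_k=b_k-M\ge 0$, bound its Ces\`aro average in $L^1$ by $\frac{1}{T}\sum_{k=1}^T\mathbf{E}[r_k]$ using measure preservation, and apply the ordinary $L^1$ ergodic theorem to the stationary term $M$. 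Your elementary treatment is, if anything, a slight improvement in rigor at this point: Maker's theorem as usually stated (and as cited in the paper) assumes almost sure convergence of the integrands together with an integrable envelope, neither of which is literally available when one only knows $b_k\to M$ in $L^1$, while your triangle-inequality argument needs nothing beyond measure preservation and delivers exactly the $L^1$ statement required.
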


\begin{proof}
By the definition of $\mathbf{\tilde u}$, we have
$$
	\mathbf{E}[\ell_k(\tilde u_k)|\mathcal{Y}_{0,k}]\circ T^{-k}
	\le
	\essinf_{u\in\mathbb{U}_{-k,0}}
	\mathbf{E}[\ell_0(u)|\mathcal{Y}_{-k,0}] + k^{-1}
	\quad\mbox{a.s.}
$$
Therefore, by Lemma \ref{lem:neveu}, we obtain
$$
	\limsup_{k\to\infty}\mathbf{E}[
	\mathbf{E}[\ell_k(\tilde u_k)|\mathcal{Y}_{-\infty,k}]\circ T^{-k}
	]
	\le 
	\mathbf{E}\bigg[
	\essinf_{u\in\mathbb{U}_0}
	\mathbf{E}[\ell_0(u)|\mathcal{Y}_{-\infty,0}]
	\bigg]=L^\star.
$$
On the other hand,
$$
	\mathbf{E}[\ell_k(\tilde u_k)|\mathcal{Y}_{-\infty,k}]\circ T^{-k}
	\ge
	\essinf_{u\in\mathbb{U}_0}
	\mathbf{E}[\ell_0(u)|\mathcal{Y}_{-\infty,0}]
	\quad\mbox{a.s.}
$$
for all $k\in\mathbb{N}$.  It follows that
\begin{multline*}
	\limsup_{k\to\infty}
	\bigg\|
	\mathbf{E}[\ell_k(\tilde u_k)|\mathcal{Y}_{-\infty,k}]\circ T^{-k}-
	\essinf_{u\in\mathbb{U}_0}
        \mathbf{E}[\ell_0(u)|\mathcal{Y}_{-\infty,0}]
	\bigg\|_1 = \\
	\limsup_{k\to\infty}\mathbf{E}[
	\mathbf{E}[\ell_k(\tilde u_k)|\mathcal{Y}_{-\infty,k}]\circ T^{-k}
	] - 
	\mathbf{E}\bigg[
	\essinf_{u\in\mathbb{U}_0}
	\mathbf{E}[\ell_0(u)|\mathcal{Y}_{-\infty,0}]
	\bigg] \le 0.
\end{multline*}
Therefore, by Maker's generalized ergodic theorem \cite[Corollary 10.8]{Kal02}
$$
	\frac{1}{T}\sum_{k=1}^T\mathbf{E}[\ell_k(\tilde u_k)|\mathcal{Y}_{-\infty,k}]
	\xrightarrow{T\to\infty}L^\star\quad\mbox{in }L^1.
$$
The result now follows using Lemma \ref{lem:wkerg}.
\qed\end{proof}

Combining Corollary \ref{cor:wkerg} and Lemma \ref{lem:neveu0} completes
the proof of Theorem \ref{thm:wkopt}.

\subsection{Proof of Theorem \ref{thm:wkmixcv}}
\label{sec:proofwkmixcv}

The implication $1\Rightarrow 2$ of Theorem \ref{thm:wkmixcv} follows 
immediately from Theorem \ref{thm:woptfinite}.  In the following, we 
will prove the converse implication $2\Rightarrow 1$: that is, we will 
show that if $(\Omega,\mathcal{B},\mathbf{P},T)$ is \emph{not} 
conditionally weak mixing relative to $\mathcal{Y}$, then we 
can construct a bounded loss function $\ell$ with some finite decision 
space $U$ for which there exists no weakly pathwise optimal strategy.

We begin by providing a ``diagonal'' characterization of conditional
weak mixing.

\begin{lemma}
\label{lem:diagcmix}
$(\Omega,\mathcal{B},\mathbf{P},T)$ is conditionally
weak mixing relative to $\mathcal{Z}$ if and only if
$$
	\frac{1}{T}\sum_{k=1}^T
	|\mathbf{E}[\{h\circ T^{-k}\}~h|\mathcal{Z}]
	- \mathbf{E}[h\circ T^{-k}|\mathcal{Z}]\,
	\mathbf{E}[h|\mathcal{Z}]
	|
	\xrightarrow{T\to\infty}0
	\quad\mbox{in }L^1
$$
for every $h\in L^2$, provided that $\mathcal{Z}\subseteq T^{-1}\mathcal{Z}$.
\end{lemma}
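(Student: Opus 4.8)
The plan is to treat the two implications separately, with the forward one essentially free and the converse carrying all the weight. Throughout write
\[
	c_k(f,g):=\mathbf{E}[\{f\circ T^{-k}\}\,g|\mathcal{Z}]-\mathbf{E}[f\circ T^{-k}|\mathcal{Z}]\,\mathbf{E}[g|\mathcal{Z}],
\]
so that conditional weak mixing (Definition \ref{def:cwkmix}) is the assertion $\frac{1}{T}\sum_{k=1}^T\|c_k(\mathbf{1}_B,\mathbf{1}_A)\|_1\to 0$ for all $A,B$, while the displayed diagonal condition is $\frac{1}{T}\sum_{k=1}^T\|c_k(h,h)\|_1\to 0$ for all $h\in L^2$. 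Since $(f,g)\mapsto c_k(f,g)$ is bilinear, and since the centred identity $c_k(f,g)=\mathbf{E}[\{f\circ T^{-k}\}G|\mathcal{Z}]$ with $G:=g-\mathbf{E}[g|\mathcal{Z}]$ yields $\|c_k(f,g)\|_1\le\|f\|_2\,\|g\|_2$ uniformly in $k$ by conditional Cauchy--Schwarz, the very $L^2$-approximation already used in the paper to pass from indicators to $L^2$ shows that conditional weak mixing is \emph{equivalent} to $\frac{1}{T}\sum_k\|c_k(f,g)\|_1\to 0$ for all $f,g\in L^2$. Specialising to $f=g=h$ gives the diagonal condition, which is the forward implication.

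For the converse I would first note that it suffices to prove $\frac{1}{T}\sum_k\|c_k(f,g)\|_1\to 0$ for bounded $f,g$ (indicators are a special case, and general $L^2$ functions are recovered by the approximation above). Here the invariance hypothesis enters: because $\mathcal{Z}\subseteq T^{-1}\mathcal{Z}$, every $\mathcal{Z}$-measurable function remains $\mathcal{Z}$-measurable after composition with $T^{-k}$ for $k\ge 0$, so centring commutes with the negative shifts and $c_k(g,g)=\mathbf{E}[\{G\circ T^{-k}\}G|\mathcal{Z}]$. By Cesàro--Cauchy--Schwarz and $\|c_k(f,g)\|_1\le\|c_k(f,g)\|_2$ it is enough to establish
\[
	\frac{1}{T}\sum_{k=1}^T\mathbf{E}[c_k(f,g)^2]\xrightarrow{T\to\infty}0 .
\]
The engine for this is the relatively independent self-joining $\mathbf{Q}$ of $\mathbf{P}$ with itself over $\mathcal{Z}$, determined by $\mathbf{E}_{\mathbf{Q}}[(a\circ\pi_1)(b\circ\pi_2)]=\mathbf{E}[\mathbf{E}[a|\mathcal{Z}]\,\mathbf{E}[b|\mathcal{Z}]]$ for the coordinate projections $\pi_1,\pi_2:\Omega\times\Omega\to\Omega$.

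A direct computation using $\pi_i\circ S^{-k}=T^{-k}\circ\pi_i$ (with $S:=T\times T$) gives $\mathbf{E}[c_k(f,g)^2]=\langle\{(f\circ\pi_1)(f\circ\pi_2)\}\circ S^{-k},\,(G\circ\pi_1)(G\circ\pi_2)\rangle_{\mathbf{Q}}$, whence by the mean ergodic theorem the Cesàro averages converge to $\langle (f\circ\pi_1)(f\circ\pi_2),\,P\{(G\circ\pi_1)(G\circ\pi_2)\}\rangle_{\mathbf{Q}}$, where $P$ is the orthogonal projection onto the $S$-invariant functions. The point is that one need not identify $P$: running the \emph{same} computation with $f$ replaced by $g$ shows $\|P\{(G\circ\pi_1)(G\circ\pi_2)\}\|_{\mathbf{Q}}^2=\lim_T\frac{1}{T}\sum_k\mathbf{E}[c_k(g,g)^2]$, and this vanishes by the diagonal hypothesis applied to the single function $g$ (for bounded $g$, $\mathbf{E}[c_k(g,g)^2]\le\|c_k(g,g)\|_\infty\,\|c_k(g,g)\|_1$). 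Thus $P\{(G\circ\pi_1)(G\circ\pi_2)\}=0$, the limiting inner product is zero, and the converse follows. It is this trick---deriving the off-diagonal statement from the diagonal one at a \emph{single} function---that avoids any appeal to relative spectral theory; note that naive polarization is useless here, since the bilinear form $c_k$ is not symmetric and its diagonal determines only the symmetric part $c_k(f,g)+c_k(g,f)$, which can cancel.

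The step I expect to be the genuine obstacle is making the mean ergodic theorem applicable, i.e.\ ensuring that $S=T\times T$ preserves $\mathbf{Q}$ so that its Koopman operator is unitary. This is transparent when $\mathcal{Z}$ is \emph{invariant}, the case relevant to Theorem \ref{thm:wkmixcv}, because then $\mathbf{E}[\cdot|\mathcal{Z}]$ commutes with the shift and $\mathbf{Q}$ is $S$-invariant. Under the weaker standing hypothesis $\mathcal{Z}\subseteq T^{-1}\mathcal{Z}$ this fails in general (already for $\mathcal{Z}$ the one-sided past of an i.i.d.\ shift one has $\mathbf{E}[\mathbf{E}[a\circ T|\mathcal{Z}]^2]\ne\mathbf{E}[\mathbf{E}[a|\mathcal{Z}]^2]$), so the self-joining is not shift-invariant and the clean argument above must be supplemented. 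The crux of the full proof is therefore to supply a substitute adapted to the increasing filtration $(T^{-k}\mathcal{Z})_{k\ge 0}$---for instance a shift-invariant joining tailored to the one-sided structure, or a contraction/one-sided form of the ergodic theorem controlling the double sum $\frac{1}{T^2}\sum_{k,l}\langle\Theta_k,\Theta_l\rangle_{\mathbf{Q}}$ with $\Theta_k=\{(f\circ\pi_1)(f\circ\pi_2)\}\circ S^{-k}$. The remaining ingredients---the uniform bound on $\|c_k\|_1$, the reduction to bounded functions, and the reduction of the various essential suprema to countable ones---are routine and parallel the arguments already carried out in Section \ref{sec:proofs}.
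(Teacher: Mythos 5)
Your forward implication and the supporting reductions (bilinearity of $c_k$, the uniform bound $\|c_k(f,g)\|_1\le\|f\|_2\,\|g\|_2$, truncation to bounded functions, Ces\`aro--Cauchy--Schwarz) are all sound, and in the case where $\mathcal{Z}$ is genuinely invariant your joining argument does work: centring both slots gives $\mathbf{E}[c_k(g,g)^2]=\langle\Psi\circ S^{-k},\Psi\rangle_{\mathbf{Q}}$ with $\Psi=(G\circ\pi_1)(G\circ\pi_2)$, the mean ergodic theorem identifies the Ces\`aro limit with $\|P\Psi\|_{\mathbf{Q}}^2$, and the diagonal hypothesis at the single function $g$ kills it, hence also the mixed averages. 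But the lemma is stated under the weaker hypothesis $\mathcal{Z}\subseteq T^{-1}\mathcal{Z}$, and there---as you yourself observe---the relatively independent self-joining $\mathbf{Q}$ need not be $S$-invariant, so the Koopman operator of $S$ is not even a well-defined isometry of $L^2(\mathbf{Q})$ and the mean ergodic theorem step collapses. You flag this as ``the crux'' and gesture at possible substitutes (a one-sided joining, a contraction form of the ergodic theorem) without supplying any. That is a genuine gap: what you have proved is the lemma for invariant $\mathcal{Z}$, which happens to cover the application in Theorem \ref{thm:wkmixcv} but not the statement as posed.

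It is instructive to see how the paper's proof sidesteps this obstruction, because it uses the semi-invariance only in a one-sided, essentially finitary way. Fix $h\in L^2$ and let $\mathscr{A}$ be the set of $g\in L^2$ with $\frac{1}{T}\sum_{k}\|c_k(g,h)\|_1\to 0$; this is a closed subspace. For $B\in\mathcal{Z}$ and $m\in\mathbb{Z}$, the function $\mathbf{1}_B\circ T^{m-k}=\mathbf{1}_{T^{k-m}B}$ is $\mathcal{Z}$-measurable for all $k\ge m$, i.e.\ for all but finitely many terms of the Ces\`aro average---this is the only use of $\mathcal{Z}\subseteq T^{-1}\mathcal{Z}$, and strictness of the inclusion is harmless. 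Factoring this indicator out of the conditional expectation gives $c_k((h\mathbf{1}_B)\circ T^m,h)=\mathbf{1}_{T^{k-m}B}\,c_{k-m}(h,h)$, so the diagonal hypothesis at $h$ alone puts $(h\mathbf{1}_B)\circ T^m$ in $\mathscr{A}$; likewise $c_k(\mathbf{1}_B\circ T^m,h)=0$ identically. Finally, if $g$ is orthogonal to the closed span $K$ of all such functions, then $\mathbf{E}[\{g\circ T^{-k}\}h|\mathcal{Z}]=0$ and $\mathbf{E}[g\circ T^{-k}|\mathcal{Z}]=0$ a.s.\ for every $k$, so $K^\perp\subseteq\mathscr{A}$, whence $\mathscr{A}\supseteq K\oplus K^\perp=L^2$. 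No ergodic theorem, no joining, and no invariance beyond the stated semi-invariance enters; this Hilbert-space decomposition is precisely the substitute your sketch is missing, and you would need it (or something like it) to complete your proof under the lemma's actual hypothesis.
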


\begin{proof}
It suffices to show that if the equation display in the lemma holds, then
$(\Omega,\mathcal{B},\mathbf{P},T)$ is conditionally weak mixing relative
to $\mathcal{Z}$. To this end, let us fix $h\in L^2$ and denote by 
$\mathscr{A}$ the class of all functions $g\in L^2$ such that
$$
	\frac{1}{T}\sum_{k=1}^T
	|\mathbf{E}[\{g\circ T^{-k}\}~h|\mathcal{Z}]
	- \mathbf{E}[g\circ T^{-k}|\mathcal{Z}]\,
	\mathbf{E}[h|\mathcal{Z}]
	|
	\xrightarrow{T\to\infty}0
	\quad\mbox{in }L^1.
$$
Clearly $\mathscr{A}$ is closed linear subspace of $L^2$.
Note that $\mathscr{A}$ certainly contains every random variable
of the form $h\mathbf{1}_B\circ T^m$ or $\mathbf{1}_B\circ T^m$
for $m\in\mathbb{Z}$ and $B\in\mathcal{Z}$.  Therefore,
the closed linear span $K$ of all such random variables is included
in $\mathscr{A}$.  On the other hand, suppose that $g\in K^\perp$.
Then for every $k\in\mathbb{Z}$, we have
$$
	\mathbf{E}[
	\mathbf{E}[\{g\circ T^{-k}\}~h|\mathcal{Z}]\,\mathbf{1}_B]
 	= \mathbf{E}[
	g~ \{h\mathbf{1}_B\circ T^k\}] = 0
$$
for all $B\in\mathcal{Z}$.  It follows
that $\mathbf{E}[\{g\circ T^{-k}\}~h|\mathcal{Z}]=0$ a.s.\ 
for all $k\in\mathbb{N}$.  Similarly, we find that
$\mathbf{E}[g\circ T^{-k}|\mathcal{Z}]=0$ a.s.\ 
for all $k\in\mathbb{N}$.  Thus evidently $K^\perp\subseteq\mathscr{A}$
also.  Therefore, $\mathscr{A}$ contains $K\oplus K^\perp=L^2$, and
the proof is complete.
\qed\end{proof}

In the remainder of this section, we suppose that 
$(\Omega,\mathcal{B},\mathbf{P},T)$ is not 
conditionally weakly mixing relative to $\mathcal{Y}$.
By Lemma \ref{lem:diagcmix}, there is a function $h\in L^2$
such that
$$
	\limsup_{T\to\infty}
	\mathbf{E}\Bigg[
	\frac{1}{T}\sum_{k=1}^T
	|\mathbf{E}[\{H\circ T^{-k}\}~H|\mathcal{Y}]|
	\bigg]\ge\varepsilon>0
$$
where $H:=h-\mathbf{E}[h|\mathcal{Y}]$.  By approximation
in $L^2$, we may clearly assume without loss of generality that $h$
takes values in $[0,1]$, so that $H$ takes values in $[-1,1]$.
We will fix such a function in the sequel, and consider the loss function
$$
	\ell(u,\omega) = u\,H(\omega)
$$
where we initially choose decisions $u\in[-1,1]$ (the decision space
will be discretized at the end of the proof as required by
Theorem \ref{thm:wkmixcv}).  We claim that for the loss function $\ell$
there exists no weakly pathwise optimal strategy.  This will
be proved by a randomization procedure that will be explained presently.

In the following $([0,1],\mathcal{I})$ denotes the unit interval
with its Borel $\sigma$-field.

\begin{lemma}
\label{lem:couple}
Suppose that $(\Omega,\mathcal{B},\mathbf{P})$ is a standard probability
space.  Then there exists a $(\mathcal{Y}\otimes\mathcal{I})$-measurable 
map $\iota:\Omega\times[0,1]\to\Omega$ such that
$$
	\mathbf{E}[X|\mathcal{Y}](\omega) =
	\int_0^1
	X(\iota(\omega,\lambda))\,d\lambda
	\qquad\mathbf{P}\mbox{-a.e.\ }\omega\in\Omega.
$$
for any bounded ($\mathcal{B}$-)measurable function $X:\Omega\to\mathbb{R}$.
\end{lemma}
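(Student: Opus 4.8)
The plan is to build $\iota$ from a regular conditional probability together with a measurable version of the quantile (inverse distribution function) transform. First I would invoke standardness of $(\Omega,\mathcal{B},\mathbf{P})$ to obtain a regular conditional probability: a kernel $Q:\Omega\times\mathcal{B}\to[0,1]$ such that $Q(\omega,\cdot)$ is a probability measure for every $\omega$, the map $\omega\mapsto Q(\omega,A)$ is $\mathcal{Y}$-measurable for every $A\in\mathcal{B}$, and $\mathbf{E}[X|\mathcal{Y}](\omega)=\int X\,dQ(\omega,\cdot)$ $\mathbf{P}$-a.e.\ for every bounded measurable $X$. The identity we must prove is then exactly the statement that, for $\mathbf{P}$-a.e.\ $\omega$, the map $\lambda\mapsto\iota(\omega,\lambda)$ pushes Lebesgue measure on $[0,1]$ forward to $Q(\omega,\cdot)$; the problem is therefore to realize the kernel $Q$ as a jointly measurable random change of variables.

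Next, since $(\Omega,\mathcal{B})$ is standard Borel, I would fix a Borel isomorphism $\psi$ of $\Omega$ onto a Borel set $B\subseteq[0,1]$ and define the conditional distribution function
$$
	F(\omega,t)=Q\big(\omega,\psi^{-1}([0,t]\cap B)\big),
	\qquad (\omega,t)\in\Omega\times[0,1].
$$
By $\mathcal{Y}$-measurability of $Q(\cdot\,,A)$, the map $\omega\mapsto F(\omega,t)$ is $\mathcal{Y}$-measurable for each $t$, while $t\mapsto F(\omega,t)$ is a nondecreasing right-continuous distribution function for each $\omega$. I would then introduce the generalized inverse
$$
	G(\omega,\lambda)=\inf\{t\in[0,1]:F(\omega,t)\ge\lambda\}.
$$
The elementary identity $\{(\omega,\lambda):G(\omega,\lambda)\le s\}=\{(\omega,\lambda):\lambda\le F(\omega,s)\}$, valid by right-continuity of $F(\omega,\cdot)$, shows that $G$ is $(\mathcal{Y}\otimes\mathcal{I})$-measurable, and the classical quantile transform shows that for each fixed $\omega$ the push-forward of Lebesgue measure under $\lambda\mapsto G(\omega,\lambda)$ is precisely the law on $[0,1]$ with distribution function $F(\omega,\cdot)$, namely $\psi_\ast Q(\omega,\cdot)$. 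I would then set $\iota(\omega,\lambda)=\psi^{-1}(G(\omega,\lambda))$, which is $(\mathcal{Y}\otimes\mathcal{I})$-measurable as $\psi^{-1}$ is Borel.

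It remains to verify the integral identity, and here the key point is that although the quantile construction only matches the one-dimensional law of $\psi$, this suffices to recover the full kernel because $\psi$ is a bimeasurable bijection: for $\mathbf{P}$-a.e.\ $\omega$,
$$
	\iota(\omega,\cdot)_\ast\mathrm{Leb}
	=\psi^{-1}_\ast\big(G(\omega,\cdot)_\ast\mathrm{Leb}\big)
	=\psi^{-1}_\ast\big(\psi_\ast Q(\omega,\cdot)\big)
	=Q(\omega,\cdot),
$$
so that $\int_0^1 X(\iota(\omega,\lambda))\,d\lambda=\int X\,dQ(\omega,\cdot)=\mathbf{E}[X|\mathcal{Y}](\omega)$ for every bounded measurable $X$. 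I expect the main obstacle to be joint measurability, which has two sources: the existence of a genuinely $\mathcal{Y}$-measurable (rather than merely $\mathbf{P}$-a.e.\ defined) version of the regular conditional probability, which is exactly where standardness of the space is used, and the measurable selection implicit in inverting $F$, which is handled cleanly by the explicit level-set identity for $G$ above rather than by an abstract selection theorem. Once these are in place, the remaining verifications are routine.
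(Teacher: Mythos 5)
Your proposal is correct and is in substance the same argument the paper uses: the paper's proof simply cites Kallenberg (existence of regular conditional probabilities on standard spaces together with the transfer/randomization lemma), and your Borel-isomorphism-plus-quantile-transform construction is precisely the standard proof of that transfer lemma, so you have merely unpacked what the paper delegates to the reference. The only point to patch is that $\psi^{-1}(G(\omega,\lambda))$ is undefined when $G(\omega,\lambda)\notin B$; since $G(\omega,\cdot)_*\mathrm{Leb}=\psi_*Q(\omega,\cdot)$ assigns full mass to $B$ for every $\omega$, this occurs only for a Lebesgue-null set of $\lambda$, where $\iota$ may be set to an arbitrary fixed point of $\Omega$ without affecting measurability or the integral identity.
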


\begin{proof}
As $(\Omega,\mathcal{B},\mathbf{P})$ is a standard probability space,
this is \cite[Lemma 3.22]{Kal02} together with the
existence of regular conditional probabilities \cite[Theorem 6.3]{Kal02}.
\qed\end{proof}

Consider the quantity
$$
	A_T^\lambda(\omega) = 
	\frac{1}{T}\sum_{k=1}^T
	H(T^k\iota(\omega,\lambda))\,
	H(T^k\omega).
$$
Then we can compute
\begin{align*}
	\int_0^1(A_T^\lambda)^2\,d\lambda &=
	\frac{1}{T^2}\sum_{m,n=1}^T
	H(T^m\omega)\,
	H(T^n\omega)\int_0^1
	H(T^m\iota(\omega,\lambda))\,
	H(T^n\iota(\omega,\lambda))\,d\lambda \\
	&=
	\frac{1}{T^2}\sum_{m,n=1}^T
	H(T^m\omega)\,
	H(T^n\omega)\,
	\mathbf{E}[\{H\circ T^m\}\{H\circ T^n\}|\mathcal{Y}](\omega).
\end{align*}
In particular, using the invariance of $\mathcal{Y}$, we have
\begin{align*}
	\bigg[\int_0^1\mathbf{E}[(A_T^\lambda)^2]\,d\lambda\bigg]^{1/2} &=
	\mathbf{E}\Bigg[
	\frac{1}{T^2}\sum_{m,n=1}^T
	\mathbf{E}[\{H\circ T^m\}\{H\circ T^n\}|\mathcal{Y}]^2
	\Bigg]^{1/2} 
\displaybreak[0]
\\ &\ge 
	\mathbf{E}\Bigg[
	\frac{1}{T^2}\sum_{m,n=1}^T
	|\mathbf{E}[\{H\circ T^m\}\{H\circ T^n\}|\mathcal{Y}]|
	\Bigg]
\displaybreak[0]
\\ &\ge 
	\mathbf{E}\Bigg[
	\frac{1}{T^2}\sum_{n=1}^T\sum_{m=1}^n
	|\mathbf{E}[\{H\circ T^{m-n}\}\,H|\mathcal{Y}]|
	\Bigg]
\displaybreak[0]
\\ &=
	\mathbf{E}\Bigg[
	\frac{1}{T^2}\sum_{k=0}^{T-1} 
	(T-k)
	|\mathbf{E}[\{H\circ T^{-k}\}\,H|\mathcal{Y}]|
	\Bigg]
\\ &\ge
	\mathbf{E}\Bigg[
	\frac{1}{2T}\sum_{k=0}^{\lfloor T/2\rfloor} 
	|\mathbf{E}[\{H\circ T^{-k}\}\,H|\mathcal{Y}]|
	\Bigg].
\end{align*}
By our choice of $H$, it follows that
$$
	\limsup_{T\to\infty} \mathbf{E}[(A_T^\lambda)^2]\ge
	\frac{\varepsilon^2}{16}
$$
for some $\lambda=\lambda_0\in[0,1]$.  Define
$$
	u_k(\omega) = H(T^k\iota(\omega,\lambda_0)).
$$
Then $u_k$ is $\mathcal{Y}$-measurable for all $k$ (and is therefore 
admissible if we choose, for the time being, the continuous decision 
space $U=[-1,1]$), and $L_T(\mathbf{u}) = A_T^{\lambda_0}$.  Moreover,
$$
	\frac{\varepsilon^2}{16} \le
	\limsup_{T\to\infty}
	\mathbf{E}[(L_T(\mathbf{u}))^2]
	\le \frac{\varepsilon^2}{64} +
	\limsup_{T\to\infty}
	\mathbf{P}\bigg[L_T(\mathbf{u})>\frac{\varepsilon}{8}\bigg] +
	\limsup_{T\to\infty}
	\mathbf{P}\bigg[L_T(\mathbf{u})<-\frac{\varepsilon}{8}\bigg]
$$
implies that we may assume without loss of generality that
$$
	\limsup_{T\to\infty}
	\mathbf{P}\bigg[L_T(\mathbf{u})<-\frac{\varepsilon}{8}\bigg]>0	
$$
(if this is not the case, simply substitute $-\mathbf{u}$ for $\mathbf{u}$
in the following).  But note that the strategy $\mathbf{\tilde u}$ defined
by $\tilde u_k=0$ for all $k$ is mean-optimal (indeed,
$\mathbf{E}[\ell_k(u)|\mathcal{Y}]=u\,\mathbf{E}[H|\mathcal{Y}]\circ T^k
=0$ for all $u$ by construction).  Thus evidently
$$
	\limsup_{T\to\infty}
	\mathbf{P}\bigg[L_T(\mathbf{u})-L_T(\mathbf{\tilde u})
	<-\frac{\varepsilon}{8}\bigg]>0,
$$
so $\mathbf{\tilde u}$ is not weakly pathwise optimal.
It follows from Lemma \ref{lem:wkimplmn} that no weakly pathwise optimal
strategy can exist if we choose the decision space $U=[-1,1]$.

To complete the proof of Theorem \ref{thm:wkmixcv}, it remains to show 
that this conclusion remains valid if we replace $U=[-1,1]$ by some
finite set.
This is easily attained by discretization, however.  Indeed, let
$U=\{k\varepsilon/16:k=-\lfloor 16/\varepsilon\rfloor,\ldots,
\lfloor 16/\varepsilon\rfloor\}$, and construct a new strategy $\mathbf{u}'$
such that $u_k'$ equals the value of $u_k$ (which takes values in $[-1,1]$)
rounded to the nearest element of $U$.  Clearly $\mathbf{\tilde u}$
and $\mathbf{u}'$ both take values in the finite set $U$, and we have
$|L_T(\mathbf{u})-L_T(\mathbf{u}')|\le \varepsilon/16$.  Therefore,
$$
	\limsup_{T\to\infty}
	\mathbf{P}\bigg[L_T(\mathbf{u}')-L_T(\mathbf{\tilde u})
	<-\frac{\varepsilon}{16}\bigg]>0,
$$
and it follows again by Lemma \ref{lem:wkimplmn} that no weakly pathwise optimal
strategy exists.

\subsection{Proof of Theorem \ref{thm:soptinfdim}}
\label{sec:proofinfdim}

By stationarity, we can rewrite the conditional absolute regularity property as
$$
	\big\|
	\mathbf{P}[(X_k)_{k\ge 0}\in\cdot\,|
	\mathcal{X}_{-\infty,-n}\vee\mathcal{Y}_{-\infty,\infty}]
	- \mathbf{P}[(X_k)_{k\ge 0}\in\cdot\,|
	\mathcal{Y}_{-\infty,\infty}]
	\big\|_{\rm TV}\xrightarrow{n\to\infty}0\quad
	\mbox{in }L^1.
$$
Using a simple truncation argument (as the loss is dominated in $L^1$),
this implies
$$
	\esssup_{u\in\mathbb{U}_0}
	\big|\mathbf{E}[l(u,X_0)|\mathcal{X}_{-\infty,-n}\vee\mathcal{Y}_{-\infty,\infty}]
	-\mathbf{E}[l(u,X_0)|\mathcal{Y}_{-\infty,\infty}]\big|
	\xrightarrow{n\to\infty}0\quad\mbox{in }L^1.
$$
If only we could replace $\mathcal{Y}_{-\infty,\infty}$ by
$\mathcal{Y}_{-\infty,0}$ in this expression, all the assumptions
of Theorem \ref{thm:sopt} would follow immediately.  Unfortunately,
it is not immediately obvious whether this replacement is possible
without additional assumptions.

\begin{remark}
In general, it is not clear whether a conditional $K$-automorphism
relative to $\mathcal{Y}_{-\infty,\infty}$ is necessarily a conditional
$K$-automorphism relative to $\mathcal{Y}_{-\infty,0}$.  In this context,
it is interesting to note that the corresponding property does hold for 
conditional weak mixing.  We briefly sketch the proof.  Suppose that
$(\Omega,\mathcal{B},\mathbf{P},T)$ is conditionally weakly mixing
relative to $\mathcal{Y}_{-\infty,\infty}$.  We claim that then also
$$
	\frac{1}{T}\sum_{k=1}^T
	|\mathbf{E}[\{f\circ T^{-k}\}~g|\mathcal{Y}_{-\infty,0}]
	- \mathbf{E}[f\circ T^{-k}|\mathcal{Y}_{-\infty,0}]\,
	\mathbf{E}[g|\mathcal{Y}_{-\infty,0}]
	|
	\xrightarrow{T\to\infty}0
	\quad\mbox{in }L^1
$$
for every $f,g\in L^2$.  Indeed, the conclusion is clearly true whenever
$f$ is $\mathcal{Y}_{-\infty,n}$-measurable for some $n\in\mathbb{Z}$.
By approximation in $L^2$, the conclusion holds whenever $f$ is
$\mathcal{Y}_{-\infty,\infty}$-measurable, and it therefore suffices
to consider $f\in L^2(\mathcal{Y}_{-\infty,\infty})^\perp$.  But in this
case we have $\mathbf{E}[f\circ T^{-k}|\mathcal{Y}_{-\infty,\infty}]=
\mathbf{E}[f\circ T^{-k}|\mathcal{Y}_{-\infty,0}]=0$ for all $k$, and
$$
	\bigg\|
	\frac{1}{T}\sum_{k=1}^T
	|\mathbf{E}[\{f\circ T^{-k}\}~g|\mathcal{Y}_{-\infty,0}]
	|
	\bigg\|_1\le
	\bigg\|
	\frac{1}{T}\sum_{k=1}^T
	|\mathbf{E}[\{f\circ T^{-k}\}~g|\mathcal{Y}_{-\infty,\infty}]
	|
	\bigg\|_1
	\xrightarrow{T\to\infty}0
	\quad\mbox{in }L^1
$$
by Jensen's inequality and
the conditional weak mixing property relative to $\mathcal{Y}_{-\infty,\infty}$.
\end{remark}

As we cannot directly replace $\mathcal{Y}_{-\infty,\infty}$ by
$\mathcal{Y}_{-\infty,0}$, we take an alternative approach.
We begin by noting that, using the conditional absolute regularity
property as described above, we obtain the following trivial adaptation
of Corollary \ref{cor:mtgdiff}.

\begin{lemma}
\label{lem:idimtgdiff}
Under the assumptions of Theorem \ref{thm:soptinfdim}, we have
$$
	\frac{1}{T}\sum_{k=1}^T\{
	l(u_k,X_k)-\mathbf{E}[l(u_k,X_k)|
	\mathcal{Y}_{-\infty,\infty}]\}\xrightarrow{T\to\infty}0
	\quad\mbox{a.s.}
$$
for any admissible strategy $\mathbf{u}$.
\end{lemma}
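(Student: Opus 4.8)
The plan is to repeat the proof of Corollary \ref{cor:mtgdiff} almost verbatim, with the remote field $\mathcal{G}_k^\infty$ replaced by $\mathcal{Y}_{-\infty,\infty}$ and the approximating fields $\mathcal{G}_k^n$ replaced by $\mathcal{H}_k^n:=\mathcal{X}_{-\infty,k-n}\vee\mathcal{Y}_{-\infty,\infty}$, the role of Assumption 2 of Theorem \ref{thm:sopt} being taken over by the conditional absolute regularity property derived above. First I would record the structural facts that make the argument run: $\mathcal{H}_k^n$ is decreasing in $n$; by stationarity $\mathcal{H}_k^n=T^{-k}\mathcal{H}_0^n$ with $\mathcal{H}_0^n=\mathcal{X}_{-\infty,-n}\vee\mathcal{Y}_{-\infty,\infty}$; and, crucially, $\mathcal{H}_k^j=\mathcal{H}_{k+1}^{j+1}$, so that $(\mathcal{H}_{k+1}^{j+1})_{k\ge1}$ is an increasing filtration. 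These are precisely the inclusions exploited in Lemma \ref{lem:mtgdiff}, so that lemma carries over unchanged with $\mathcal{G}$ replaced by $\mathcal{H}$.

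The decisive simplification is that $l(u_k,X_k)$ is already $\mathcal{H}_k^0$-measurable: $X_k$ is $\mathcal{X}_{-\infty,k}$-measurable and $u_k$ is $\mathcal{Y}_{0,k}\subseteq\mathcal{Y}_{-\infty,\infty}$-measurable, so $\mathbf{E}[l(u_k,X_k)\,|\,\mathcal{H}_k^0]=l(u_k,X_k)$. Thus the ``first line'' of Assumption 2 is vacuous here, and the telescoping need only run from $\mathcal{H}_k^0$ down to $\mathcal{H}_k^n$. For fixed $n$ I would split
$$l(u_k,X_k)-\mathbf{E}[l(u_k,X_k)|\mathcal{Y}_{-\infty,\infty}]=\big\{\mathbf{E}[l(u_k,X_k)|\mathcal{H}_k^0]-\mathbf{E}[l(u_k,X_k)|\mathcal{H}_k^n]\big\}+\big\{\mathbf{E}[l(u_k,X_k)|\mathcal{H}_k^n]-\mathbf{E}[l(u_k,X_k)|\mathcal{Y}_{-\infty,\infty}]\big\}.$$

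The Cesàro average of the first bracket tends to $0$ a.s.\ by the analog of Lemma \ref{lem:mtgdiff} (endpoints $0$ and $n$): truncating the loss at level $r$ produces a uniformly bounded martingale-difference sequence along $(\mathcal{H}_{k+1}^{j+1})_{k\ge1}$ whose Cesàro averages vanish by the martingale law of large numbers, while the truncation remainder is dominated by $\frac{1}{T}\sum_{k=1}^T\{\mathbf{E}[\Lambda\mathbf{1}_{\Lambda>r}|\mathcal{H}_0^0]+\mathbf{E}[\Lambda\mathbf{1}_{\Lambda>r}|\mathcal{H}_0^n]\}\circ T^k\to2\mathbf{E}[\Lambda\mathbf{1}_{\Lambda>r}]$ by the ergodic theorem, which vanishes as $r\to\infty$. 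For the second bracket I would bound the summand by $\esssup_{u\in\mathbb{U}_k}|\mathbf{E}[l(u,X_k)|\mathcal{H}_k^n]-\mathbf{E}[l(u,X_k)|\mathcal{Y}_{-\infty,\infty}]|$ (legitimate since $u_k\in\mathbb{U}_{0,k}\subseteq\mathbb{U}_k$), apply the ergodic theorem to pass to the constant $\mathbf{E}[\esssup_{u\in\mathbb{U}_0}|\mathbf{E}[l(u,X_0)|\mathcal{H}_0^n]-\mathbf{E}[l(u,X_0)|\mathcal{Y}_{-\infty,\infty}]|]$, and then let $n\to\infty$: this expectation vanishes, being exactly the uniform reverse-martingale limit supplied by conditional absolute regularity. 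As the bound holds simultaneously for all $n$ off a single null set, letting $n\to\infty$ gives $\limsup_T|\frac{1}{T}\sum_{k=1}^T\{l(u_k,X_k)-\mathbf{E}[l(u_k,X_k)|\mathcal{Y}_{-\infty,\infty}]\}|=0$ a.s.

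The one point that needs genuine care — and the main obstacle — is the interchange of the random selection $u_k$ with the essential supremum in the second bracket: one must verify that $\esssup_{u\in\mathbb{U}_k}$ of the relevant random variable dominates its value at $u=u_k$ almost surely, and that this essential supremum equals $\Phi_n\circ T^k$ for the fixed function $\Phi_n:=\esssup_{u\in\mathbb{U}_0}|\mathbf{E}[l(u,X_0)|\mathcal{H}_0^n]-\mathbf{E}[l(u,X_0)|\mathcal{Y}_{-\infty,\infty}]|$, as is required for Birkhoff's theorem to apply. Both are resolved exactly as in the proof of Corollary \ref{cor:mtgdiff}, by reducing the essential supremum to a countable supremum in the manner of Lemma \ref{lem:meanopt}; no ingredient beyond those already in hand is needed, which is what makes this a ``trivial adaptation.''
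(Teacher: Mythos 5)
Your proposal is correct and is essentially the paper's own proof: the paper gives no details, stating the lemma as a ``trivial adaptation of Corollary \ref{cor:mtgdiff}'' via the conditional absolute regularity property, and your argument---replacing $\mathcal{G}_k^n$ by $\mathcal{H}_k^n=\mathcal{X}_{-\infty,k-n}\vee\mathcal{Y}_{-\infty,\infty}$, observing that $l(u_k,X_k)$ is already $\mathcal{H}_k^0$-measurable so only the reverse-martingale direction is needed, and combining the martingale-difference law of large numbers with the ergodic theorem and the $L^1$ convergence of the essential suprema---is precisely that adaptation spelled out, including the correct structural facts ($\mathcal{H}_k^j=\mathcal{H}_{k+1}^{j+1}$, stationarity $\mathcal{H}_k^n=T^{-k}\mathcal{H}_0^n$) that make Lemma \ref{lem:mtgdiff} carry over.
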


We will now proceed to replace $\mathcal{Y}_{-\infty,\infty}$ by
$\mathcal{Y}_{-\infty,k}$ in Lemma \ref{lem:idimtgdiff}.
To this end, we use the additional property
established in \cite[Proposition 3.9]{TvH12}:
$$
	\mathbf{P}[(X_k)_{k\le 0}\in\cdot\,|\mathcal{Y}_{-\infty,0}]
	\sim
	\mathbf{P}[(X_k)_{k\le 0}\in\cdot\,|\mathcal{Y}_{-\infty,\infty}]
	\quad\mbox{a.s.}
$$
Theorem \ref{thm:infdim} implies that the past tail $\sigma$-field
$\bigcap_{n}\mathcal{X}_{-\infty,n}$ is $\mathbf{P}[\,\cdot\,|
\mathcal{Y}_{-\infty,0}]$-trivial a.s.\ (cf.\ \cite{Wei83}).
Thus a standard argument \cite[Theorem III.14.10]{Lin02} yields
$$
	\big\|
	\mathbf{P}[(X_k)_{k\le n}\in\cdot\,|\mathcal{Y}_{-\infty,0}]
	-
	\mathbf{P}[(X_k)_{k\le n}\in\cdot\,|\mathcal{Y}_{-\infty,\infty}]
	\big\|_{\rm TV}\xrightarrow{n\to -\infty}0
	\quad\mbox{in }L^1.
$$
Therefore, by stationarity and a simple truncation argument, we have
$$
	\esssup_{u\in\mathbb{U}_0}\big|
	\mathbf{E}[l(u,X_0)|\mathcal{Y}_{-\infty,n}]
	-
	\mathbf{E}[l(u,X_0)|\mathcal{Y}_{-\infty,\infty}]
	\big|\xrightarrow{n\to\infty}0
	\quad\mbox{in }L^1.
$$
This yields the following consequence.

\begin{corollary}
\label{cor:idimtgdiff}
Under the assumptions of Theorem \ref{thm:soptinfdim}, we have
$$
	\frac{1}{T}\sum_{k=1}^T\{
	l(u_k,X_k)-\mathbf{E}[l(u_k,X_k)|
	\mathcal{Y}_{-\infty,k}]\}\xrightarrow{T\to\infty}0
	\quad\mbox{a.s.}
$$
for any admissible strategy $\mathbf{u}$.  In particular,
$$
	\liminf_{T\to\infty} L_T(\mathbf{u}) \ge
	\mathbf{E}\Big[
	\essinf_{u\in\mathbb{U}_0}
	\mathbf{E}[\ell_0(u)|\mathcal{Y}_{-\infty,0}]
	\Big] = L^\star\quad
	\mbox{a.s.}
$$
\end{corollary}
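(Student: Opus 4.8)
The plan is to follow the template of Corollary \ref{cor:mtgdiff}, but to cope with the fact that the conditional absolute regularity property established above is expressed relative to $\mathcal{Y}_{-\infty,\infty}$ rather than $\mathcal{Y}_{-\infty,k}$. Since Lemma \ref{lem:idimtgdiff} already replaces $l(u_k,X_k)$ by $\mathbf{E}[l(u_k,X_k)|\mathcal{Y}_{-\infty,\infty}]$ in the Ces\`aro average, it suffices to prove that
$$
	\frac{1}{T}\sum_{k=1}^T D_k \xrightarrow{T\to\infty} 0
	\quad\mbox{a.s.},\qquad
	D_k := \mathbf{E}[l(u_k,X_k)|\mathcal{Y}_{-\infty,\infty}]
	- \mathbf{E}[l(u_k,X_k)|\mathcal{Y}_{-\infty,k}].
$$
The naive bound $|D_k|\le\esssup_{u\in\mathbb{U}_k}|\cdots|$ is too lossy here: by stationarity this essential supremum equals $G_0\circ T^k$ with $G_0=\esssup_{u\in\mathbb{U}_0}|\mathbf{E}[l(u,X_0)|\mathcal{Y}_{-\infty,\infty}]-\mathbf{E}[l(u,X_0)|\mathcal{Y}_{-\infty,0}]|$, and the ergodic theorem only yields the bound $\mathbf{E}[G_0]$, which need not vanish. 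The remedy is a two-scale decomposition through an intermediate field $\mathcal{Y}_{-\infty,k+n}$.

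For fixed $n\ge 0$, write $D_k = A_k^n + B_k^n$ with $A_k^n=\mathbf{E}[l(u_k,X_k)|\mathcal{Y}_{-\infty,\infty}]-\mathbf{E}[l(u_k,X_k)|\mathcal{Y}_{-\infty,k+n}]$ and $B_k^n=\mathbf{E}[l(u_k,X_k)|\mathcal{Y}_{-\infty,k+n}]-\mathbf{E}[l(u_k,X_k)|\mathcal{Y}_{-\infty,k}]$. For the far-future term I bound $|A_k^n|\le\esssup_{u\in\mathbb{U}_k}|\cdots| = G_n\circ T^k$, where $G_n=\esssup_{u\in\mathbb{U}_0}|\mathbf{E}[l(u,X_0)|\mathcal{Y}_{-\infty,n}]-\mathbf{E}[l(u,X_0)|\mathcal{Y}_{-\infty,\infty}]|$ by the shift identity (as $u$ ranges over $\mathbb{U}_k$, $u\circ T^{-k}$ ranges over $\mathbb{U}_0$, and $\mathcal{Y}_{-\infty,\infty}$ is invariant while $T^{-k}\mathcal{Y}_{-\infty,k+n}=\mathcal{Y}_{-\infty,n}$). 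The ergodic theorem then gives $\frac{1}{T}\sum_{k=1}^T G_n\circ T^k\to\mathbf{E}[G_n]$ a.s., and the uniform reverse martingale convergence established just before the corollary (deduced from conditional absolute regularity) is precisely $\mathbf{E}[G_n]\to 0$ as $n\to\infty$. No complexity assumption on $l$ is needed here, because total-variation convergence controls the essential supremum over all $L^1$-dominated families simultaneously.

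For the near-future term $B_k^n$, a conditional-expectation difference across a fixed gap of $n$ observations, I mimic Lemma \ref{lem:mtgdiff}. Telescoping, $B_k^n=\sum_{j=0}^{n-1}\delta_k^j$ with $\delta_k^j=\mathbf{E}[l(u_k,X_k)|\mathcal{Y}_{-\infty,k+j+1}]-\mathbf{E}[l(u_k,X_k)|\mathcal{Y}_{-\infty,k+j}]$; after truncating $l(u_k,X_k)$ at level $r$ on $\{\Lambda\circ T^k\le r\}$, each $(\delta_k^j)_k$ is a uniformly bounded martingale difference sequence with respect to the filtration $(\mathcal{Y}_{-\infty,k+j})_k$ (indeed $\delta_k^j$ is $\mathcal{Y}_{-\infty,k+j+1}$-measurable and $\mathbf{E}[\delta_k^j|\mathcal{Y}_{-\infty,k+j}]=0$), so $\frac{1}{T}\sum_{k=1}^T\delta_k^j\to 0$ a.s.\ by the martingale law of large numbers, the truncation error being absorbed by the ergodic theorem exactly as in Lemma \ref{lem:mtgdiff}. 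Summing over the finitely many $j$ gives $\frac{1}{T}\sum_{k=1}^T B_k^n\to 0$ a.s.\ for each fixed $n$. Combining the two estimates yields $\limsup_{T\to\infty}|\frac{1}{T}\sum_{k=1}^T D_k|\le\mathbf{E}[G_n]$ a.s.\ for every $n$, and letting $n\to\infty$ proves the first display. The ``in particular'' assertion then follows verbatim from the proof of Corollary \ref{cor:mtgdiff}: bound $\mathbf{E}[l(u_k,X_k)|\mathcal{Y}_{-\infty,k}]$ below by $\essinf_{u\in\mathbb{U}_k}\mathbf{E}[l(u,X_k)|\mathcal{Y}_{-\infty,k}]$, whose Ces\`aro average converges to $L^\star$ by stationarity and the ergodic theorem.

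The main obstacle is exactly the mismatch of conditioning fields flagged in the text: the single essential-supremum bound discards too much, so the crux is the two-scale split, which isolates a fixed-depth gap (handled by a bounded martingale difference argument) from the deep future (handled uniformly, without any complexity hypothesis, by conditional absolute regularity), with the limits taken in the order $T\to\infty$ first and $n\to\infty$ second.
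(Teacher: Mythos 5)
Your proof is correct and takes essentially the same route as the paper's own (sketched) argument: the identical two-scale decomposition through the intermediate field $\mathcal{Y}_{-\infty,k+n}$, with the fixed-gap term handled by the truncated martingale-difference argument of Lemma \ref{lem:mtgdiff}, the deep-future term handled by the ergodic theorem plus the uniform $L^1$-convergence of $\esssup_{u\in\mathbb{U}_0}|\mathbf{E}[l(u,X_0)|\mathcal{Y}_{-\infty,n}]-\mathbf{E}[l(u,X_0)|\mathcal{Y}_{-\infty,\infty}]|$ established just before the corollary, and the limits taken in the order $T\to\infty$ then $n\to\infty$. One trivial notational slip: $(\delta_k^j)_{k\ge 1}$ is a martingale difference sequence with respect to the filtration $(\mathcal{Y}_{-\infty,k+j+1})_{k\ge 1}$ rather than $(\mathcal{Y}_{-\infty,k+j})_{k\ge 1}$, though the two properties you verify in the parenthetical are exactly the right ones.
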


\begin{proof}[Sketch]
Following almost verbatim the proof of Lemma \ref{lem:mtgdiff},
one can prove
$$
	\frac{1}{T}\sum_{k=1}^T\{
	\mathbf{E}[l(u_k,X_k)|\mathcal{Y}_{-\infty,k}]-
	\mathbf{E}[l(u_k,X_k)|
	\mathcal{Y}_{-\infty,k+r}]\}\xrightarrow{T\to\infty}0
	\quad\mbox{a.s.}
$$
for any $r\in\mathbb{N}$.  On the other hand, we have
\begin{align*}
	&\limsup_{T\to\infty}\bigg|
	\frac{1}{T}\sum_{k=1}^T\{
	\mathbf{E}[l(u_k,X_k)|\mathcal{Y}_{-\infty,k+r}]-
	\mathbf{E}[l(u_k,X_k)|
	\mathcal{Y}_{-\infty,\infty}]\}\bigg| \\
	&\quad\le
	\lim_{T\to\infty}
	\frac{1}{T}\sum_{k=1}^T
	\esssup_{u\in\mathbb{U}_k}
	\big|\mathbf{E}[l(u,X_k)|\mathcal{Y}_{-\infty,k+r}]-
	\mathbf{E}[l(u,X_k)|
	\mathcal{Y}_{-\infty,\infty}]
	\big| \\
	&\quad=
	\mathbf{E}\bigg[
	\esssup_{u\in\mathbb{U}_0}
	\big|\mathbf{E}[l(u,X_0)|\mathcal{Y}_{-\infty,r}]-
	\mathbf{E}[l(u,X_0)|
	\mathcal{Y}_{-\infty,\infty}]
	\big|
	\bigg]
	\quad\mbox{a.s.}
\end{align*}
by the ergodic theorem.  It was shown above that the latter quantity
converges to zero as $r\to\infty$, and the result now follows using
Lemma \ref{lem:idimtgdiff}.
\qed\end{proof}

The remainder of the proof of Theorem \ref{thm:soptinfdim} is identical 
to that of Theorem \ref{thm:sopt} modulo trivial modifications, and is 
therefore omitted.

\section*{Acknowledgment}

This work was partially supported by NSF grant DMS-1005575.

\bibliographystyle{spmpsci}
\bibliography{ref}

\end{document}